\numberwithin{equation}{section}
\theoremstyle{definition}
\newtheorem{example}{Example}[section]
\newtheorem{definition}[example]{Definition}
\newtheorem{remark}[example]{Remark}
\newtheorem*{ack}{Acknowledgement} 
\theoremstyle{plain}
\newtheorem{prop}[example]{Proposition}
\newtheorem{cor}[example]{Corollary}
\newtheorem{lem}[example]{Lemma}
\newtheorem{theorem}[example]{Theorem}
\newtheorem{mainresult}{Main Result}
\DeclareMathOperator{\im}{im}
\DeclareMathOperator{\coker}{coker}
\DeclareMathOperator{\Gal}{Gal}
\DeclareMathOperator{\Hom}{Hom}
\DeclareMathOperator{\Spec}{Spec}
\DeclareMathOperator{\tor}{tor}
\DeclareMathOperator{\rank}{rank}
\DeclareMathOperator{\ab}{ab}
\DeclareMathOperator{\Res}{Res}
\DeclareMathOperator{\GL}{GL}
\DeclareMathOperator{\ur}{ur}
\numberwithin{equation}{section}
\newcommand*\linenomathpatch[1]{%
  \cspreto{#1}{\linenomath}%
  \cspreto{#1*}{\linenomath}%
  \csappto{end#1}{\endlinenomath}%
  \csappto{end#1*}{\endlinenomath}%
}
\newcommand*\linenomathpatchAMS[1]{%
  \cspreto{#1}{\linenomathAMS}%
  \cspreto{#1*}{\linenomathAMS}%
  \csappto{end#1}{\endlinenomath}%
  \csappto{end#1*}{\endlinenomath}%
}
 \let\linenomathAMS\linenomathWithnumbers
\patchcmd\linenomathAMS{\advance\postdisplaypenalty\linenopenalty}{}{}{}
  \let\linenomathAMS\linenomathNonumbers
\patchcmd{\mmeasure@}{\measuring@true}{
  \measuring@true
  \ifnum-\linenopenaltypar>\interdisplaylinepenalty
    \advance\interdisplaylinepenalty-\linenopenalty
  \fi
  }{}{}
\newcommand{\mf}[1]{{\mathfrak{#1}}}
\newcommand{\bb}[1]{{\mathbb{#1}}}
\newcommand{\mca}[1]{{\mathcal{#1}}}
\newcommand{\inj}{\hookrightarrow}
\newcommand{\surj}{\twoheadrightarrow}
\newcommand{\congto}{\overset{\cong}{\to}}
\newcommand{\Z}{\bb{Z}}
\newcommand{\Zp}{\bb{Z}_{p}}
\newcommand{\Q}{\bb{Q}}
\newcommand{\Qp}{\bb{Q}_{p}}
\newcommand{\R}{\bb{R}}
\newcommand{\F}{\bb{F}}
\newcommand{\p}{\mf{p}}
\newcommand{\ol}{\overline}
\newcommand{\ul}{\underline}
\newcommand{\ds}{\displaystyle}
\newcommand{\wt}[1]{{\widetilde{#1}}}
\newcommand{\wh}[1]{{\widehat{#1}}}
\subjclass[2020]{Primary 57M12, 11R23, 20E18; Secondary 57K10, 57M10, 11R29} 
\keywords{Iwasawa theory, knot, link, 3-manifold, Alexander polynomial, arithmetic topology}
\begin{document}

\title{\Large 
The Iwasawa invariants of $\Zp^{\,d}$-covers of links}
\dedicatory{\normalsize To the memory of John Coates}

\author{%\large 
Sohei Tateno$^{\ast 1}$}
\email{inu.kaimashita@gmail.com}
\address{$\,^{\ast 1}$ Graduate School of Mathematics, Nagoya University; Furocho, Chikusa-ku, 464-8602, Nagoya-shi, Aichi, Japan}

\author{%\large 
Jun Ueki$^{\ast 2}$} 
\email{uekijun46@gmail.com}
\address{$\,^{\ast 2}$ Department of Mathematics, Faculty of Science, Ochanomizu University; 2-1-1 Otsuka, Bunkyo-ku, 112-8610, Tokyo, Japan}

%\date{\today \ \now} 

\newenvironment{nouppercase}{%
\let\uppercase\relax%
\renewcommand{\uppercasenonmath}[1]{}}{}

\begin{nouppercase}
\maketitle 
\end{nouppercase}

\begin{abstract} 
Let \glssymbol{p}$p$ be a prime number and let \glssymbol{d}$d\in \Z_{>0}$. 
In this paper, following the analogy between knots and primes, we study the $p$-torsion growth in a compatible system of $(\Z/p^n\Z)^d$-covers of 3-manifolds and establish several analogues of Cuoco--Monsky's multivariable versions of Iwasawa's class number formula. 
Our main goal is to establish the Cuoco--Monsky-type formula for branched covers of links in rational homology 3-spheres. 
In addition, we prove a precise formula over integral homology 3-spheres inspired by Greenberg's conjecture. 
We also derive results on reduced Alexander polynomials and the periodicity of Betti numbers. 
Furthermore, we investigate the twisted Whitehead links in $S^3$ 
and point out that the Iwasawa $\mu$-invariant of a $\Zp^{\,2}$-cover can be an arbitrary non-negative integer. 
We also calculate the Iwasawa $\mu$ and $\lambda$-invariants of the Alexander polynomials of all links in Rolfsen's table.
\end{abstract}

{\small 
\tableofcontents
} 
\section{Introduction} 
Let $p$ be a prime number and let \glssymbol{zp}$\Zp$ denote the ring of $p$-adic integers defined by $\Zp:=\varprojlim \Z/p^n\Z$. 
It is known that there is a deep analogy between low-dimensional topology and number theory (cf.~\cite{Morishita2012, Morishita2024}). 
Among other things,  the analogy between the Alexander--Fox theory for $\Z$-covers and the Iwasawa theory for $\Zp$-extensions was initially pointed out by Mazur in the 1960s \cite{Mazur1963} and has played an important role until the present day. 
In addition, profinite refinements of the Alexander--Fox theory have also been of deep interest in recent studies of geometric topology 
\cite{BoileauFriedl2020AMS, YiLiu2023Invent}. %Reid2018ICM}. 
On the number theory side, 
Iwasawa \cite{Iwasawa1959} initially proved a celebrated theorem called \emph{Iwasawa's class number formula} 
that describes the regularity of the $p$-exponents of the class numbers in a $\Zp$-extension of a number field. 
Afterward, Cuoco--Monsky \cite{CuocoMonsky1981} established its multi-variable analogue for $\Zp^{\,d}$-extensions with $d \in \Z_{>1}$, and Monsky \cite{Monsky1989} gave further refinement (cf.~\Cref{ss.NT}). 
On the topology side, an analogue of Iwasawa's formula for $\Zp$-covers of knots and links (\Cref{thm.d=1}) has been developed by Morishita, the second author, and others \cite{HillmanMateiMorishita2006,KadokamiMizusawa2008,Ueki2}. 
In this paper, we pursue their works to establish several Cuoco--Monsky-type results for $\Zp^{\,d}$-covers of 3-manifolds and shed deeper insight there. 

For an abelian group $G$, \glssymbol{exponential}$e(G)$ denotes the $p$-exponent of the size of the $p$-torsion subgroup of $G$. 
For a $\Zp$-module $\mca{M}$, \glssymbol{zprk}$r(\mca{M})$ denotes the free $\Zp$-rank of $\mca{M}$.  
For a 3-manifold $M$, we write \glssymbol{homology}$H_1(M)=H_1(M;\Z)$. A closed connected orientable 3-manifold \glssymbol{closedmanifold}$M$ is called a rational homology 3-sphere (\glssymbol{qhs3}$\Q$HS$^3$) if $H_i(M;\Q)\cong H_i(S^3;\Q)$ holds for all $i$, or equivalently, if $H_1(M)$ is a finite group. One of our goals is to prove the following. 
\begin{mainresult}[Theorems \ref{thm.branched}, \ref{rem.alpha}]
\label{mainresult} 
Let $M$ be a $\Q$HS\,$^3$ and $L$ a link in $M$. Let  $(M_{p^n}\to M)_n$ be a branched $\Zp^{\,d}$-cover over $(M,L)$, that is, a compatible system of branched $(\Z/p^n\Z)^d$-covers branched along $L$.
Suppose that every $M_{p^n}$ is a $\Q$HS\,$^3$ and that $L$ does not decompose in $(M_{p^n}\to M)_n$. 
Then there exist some $\mu,\lambda\in\Z_{\geq0}$, depending only on $(M_{p^n}\to M)_n$, such that 
\[
e(H_1(M_{p^n}))=\left(\mu p^n+\lambda n+O(1)\right)p^{(d-1)n} 
\]
holds, 
where \glssymbol{landauo}$O$ is the Bachmann--Landau notation with respect to $n$. 

Furthermore, the $O(1)p^{(d-1)n}=O(p^{(d-1)})$ part may be refined to $\mu_1p^{(d-1)n}+O(np^{(d-2)n})$ for some $\mu_1\in\R$. If $d=2$, then $\mu_1\in\Q$. 

Let $(X_{p^n}\to X)_n$ be a system of unbranched $(\Z/p^n\Z)^d$-covers
that are restrictions of $(M_{p^n}\to M)_n$ to the link exteriors. 
Then $\mu$ and $\lambda$ are those of the characteristic element \glssymbol{char}${\rm char}\,\mca{H}$ 
of a $\Lambda$-module defined by $\mca{H}:=\varprojlim_n H_1(X_{p^n};\Zp)$. 
\end{mainresult}
These \glssymbol{iwasawamu}\glssymbol{iwasawalambda}$\mu, \lambda$ are called \emph{the Iwasawa invariants} of the $\Zp$-cover. For a 3-manifold $M$, the link exterior of $L$ in $M$ is defined by \glssymbol{VL}\glssymbol{IntVL}$X:=M\setminus {\rm Int}\,V_L$, where ${\rm Int}\,V_L$ denotes the interior of a tubular neighborhood $V_L$ of $L$. 
The system $(X_{p^n}\to X)_n$ of $(\Z/p^n\Z)^d$-covers corresponds to a homomorphism 
\glssymbol{pi1}$\tau:\pi_1(X)\to \Zp^d$ 
that induces a surjective homomorphism \glssymbol{whpi1}$\wh{\tau}:\wh{\pi}_1(X)\surj \Zp^d$ 
on the pro-$p$ completion, 
and the system $(M_{p^n}\to M)_n$ of branched covers is re-obtained by the Fox-completions. 
We remark that there are many $\Zp^d$-covers that are not derived from $\Z^d$-covers (cf.~\Cref{eg.Z52}). 
If we take topological generators $(t_1,\ldots, t_d)$ of $\Zp^{\,d}$, then we have an isomorphism of topological rings 
so-called the Iwasawa--Serre isomorphism 
\glssymbol{Lambda}\[\Zp[\![\Zp^d]\!]\congto \Lambda:=\Zp[\![T_1,\ldots,T_d]\!];\, t_i\mapsto 1+T_i,\]  
where $\Zp[\![\Zp^d]\!]$ denotes the complete group ring, and $\Zp[\![T_1,\ldots,T_d]\!]$ denotes the formal power series (cf.~\cite{Greenberg1973AJM}, \cite[Theorem 3.3.9]{Sharifi.IT2023OCT}). 
The image $\Zp^{\,d}$ of $\wh{\tau}$ acts on \glssymbol{projectivelimitmodule}$\mca{H}=\varprojlim H_1(X_{p^n};\Zp) \cong \wh{\pi}_1(X)^{\rm ab}$ by the conjugation, and $\mca{H}$ becomes a finitely generated torsion $\Lambda$-module. 
We define the invariants $\mu$ and $\lambda$ of $0\neq F\in \Lambda$ in Subsection \ref{ss.pW}, 
and the characteristic elements of finitely generated $\Lambda$-modules in \Cref{sec.char}. 

To illustrate the proof of \Cref{mainresult}, 
let \glssymbol{ipn}$\mca{I}_{p^n}$ and \glssymbol{jpn}$\mca{J}_{p^n}$ denote the ideals of $\Lambda$ topologically generated by $\{(1+T_i)^{p^n}-1\mid 1\leq i\leq d\}$ 
and $\{((1+T_i)^{p^n}-1)/T_i\mid 1\leq i\leq d\}$ respectively. 
First, we establish \emph{the fundamental two exact sequences} (Theorems \ref{twoexact}, \ref{twoexact2}) 
that are highly versatile, and derive the relationship between 
$\mca{H}$ and $H_1(X_{p^n})$ in a slightly general setting: 
\begin{mainresult}[\Cref{thm.unbr-asymptotic}] \label{mr.HXpn}
Let \glssymbol{cptmanifold}$X$ be a compact connected orientable 3-manifold. 
Let $(X_{p^n}\to X)_n$ be a $\Zp^{\,d}$-cover, that is, a compatible system of $(\Z/p^n\Z)^d$-covers,
and define a $\Lambda$-module by $\mathcal{H}:=\varprojlim H_1(X_{p^n};\Zp)$. Then 
\begin{gather*} 
e(\mathcal{H}/\mathcal{I}_{p^n}\mathcal{H})=e(H_1(X_{p^n}))+O(n),\\ 
r(H_1(X_{p^n},\Zp))=r(\mathcal{H}/\mathcal{I}_{p^n}\mathcal{H})+O(1). 
\end{gather*}
\end{mainresult} 
The two fundamental exact sequences also yield a result on the reduced Alexander polynomials and its $p$-adic variant (\Cref{reduced}), which is a well-known folklore at least for some special cases. We will use it to discuss the vanishing conditions in \Cref{mr.ZHS}. 

Secondly, based on Cuoco--Monsky's result on $\Lambda$-modules and its modification 
(\Cref{22510}, \Cref{Monsky21}), we obtain the following. 

\begin{mainresult}[\Cref{2252}] \label{mr.unbr} 
Let $(X_{p^n}\to X)_n$ be a $\Zp^{\,d}$-cover over a compact connected orientable $3$-manifold $X$. 

{\rm (1)} If $r(\mca{H}/\mathcal{J}_{p^n}\mca{H})=O(p^{(d-2)n})$, 
then there exists $\mu\in\Z_{\geq0}$, depending only on $(X_{p^n}\to X)_n$, such that
\[
e(H_1(X_{p^n}))=\mu p^{dn}+O(np^{(d-1)n}).
\]

{\rm (2)} If $r(\mca{H}/\mathcal{I}_{p^n}\mca{H})=O(p^{(d-2)n})$, 
then there exist $\mu,\lambda\in\Z_{\geq 0}$, depending only on $(X_{p^n}\to X)_n$, such that
\[
e(H_1(X_{p^n}))=\left(\mu p^{(d-1)}+\lambda n+O(1)\right)p^{(d-1)n}.
\]

These $\mu$ and $\lambda$ are those of the characteristic element 
${\rm char}\,\mca{H} \in \Lambda$. 
\end{mainresult}
If the $\Zp^{\,d}$-cover is derived from a $\Z^d$-cover, then ${\rm char}\,\mca{H}$ is the image of the Alexander polynomial (\Cref{zcovertheorem}). 
To determine $\mu$ and $\lambda$, we discuss the basic properties of the characteristic elements and invoke a multi-variable analogue of the $p$-adic Weierstrass preparation theorem developed by Monsky \cite{Monsky1989} (cf.~Section 3). 
One of our main efforts will be made in the analysis of the rank assumptions. 
We prove that the assumptions of \Cref{mainresult} imply the rank assumptions of \Cref{mr.unbr} for the link exteriors (\Cref{decompositionprop}). 

Thirdly, we discuss the gap between the first homology groups of branched and unbranched covers of links, which has been studied as a highly interesting object by Hartley, Mayberry, Murasugi, and Sakuma \cite{HartleyMurasugi1978, MayberryMurasugi1982, Sakuma1980MT}. 
We invoke Hartley--Murasugi's result \cite[Lemma 2.8]{HartleyMurasugi1978} to 
estimate the gaps, completing the proof of \Cref{mainresult}: 
\begin{mainresult}[\Cref{gaptheorem}]\label{MR5} 
Let $M$ be a $\Q$HS\,$^3$ and $L$ a link in $M$. Let $(M_{p^n}\to M)_n$ a branched $\Zp^{\,d}$-cover over $(M,L)$ and let $(X_{p^n}\to X)$ denote the restrictions to the exterior. 
If $L$ does not decompose in $(M_{p^n}\to M)_n$, then 
\[e(H_1(M_{p^n}))=e(H_1(X_{p^n}))+O(n).\] 
\end{mainresult} 

A 3-manifold $M$ is called an integral homology 3-sphere (\glssymbol{zhs3}$\Z$HS$^3$) if $H_i(M;\Z)\cong H_i(S^3;\Z)$ holds for all $i$, or equivalently, $H_1(M;\Z)=0$ holds. 
If we assume that $M$ is a $\Z$HS$^3$, then the results of Mayberry--Murasugi \cite{MayberryMurasugi1982} and Porti \cite{Porti2004} yield the following finer formula, 
which Greenberg conjectured for $\Zp^d$-extensions of number fields \cite[Section 7]{CuocoMonsky1981}. 
Let \glssymbol{W}$W$ denote the set of roots of unity of $p$-power order in an algebraic closure $\ol{\Q}$ of $\Q$. Let \glssymbol{Lambdaz}$\Lambda_{{\Z},d}=\Lambda_{\Z}$ denote the Laurent polynomial ring $\Z[t_1^{\pm 1},\ldots, t_d^{\pm 1}]$. For a $c$-component link $L$ in a $\Q$HS$^3$, let $\Delta_L\in \Lambda_{\Z,c}$ denote the Alexander polynomial of $L$, which will be defined in \Cref{def.Alex} (2).

\begin{mainresult}[Theorems \ref{thm.integral}, \ref{thm.integral.Delta}] \label{mr.ZHS} 
Let $M$ be a $\Z$HS\,$^3$, $L$ a $c$-component link in $M$ with $c\geq d$, and 
suppose that the Alexander polynomial $\Delta_L(t_1,\ldots,t_c)$ does not vanish on $(W\setminus \{1\})^c$. 
Let $(M_{p^n}\to M)_n$ be a branched $\Zp^{\,d}$-cover over $(M,L)$. 
Then, every $M_{p^n}$ is a $\Q$HS\,$^3$, and there exists $f(U,V)\in\Q[U,V]$ with
$\deg_{V}f\leq1$ and the total degree $\deg f\leq d$ such that,  
for every sufficiently large $n$, 
\[
e(H_1(M_{p^n}))=f(p^n, n)
\]
holds. In other words, there exist some $\mu,\lambda\in\Z_{\geq 0}$ and $\mu_1,\ldots,\mu_{d-1},\lambda_1,\ldots,\lambda_{d-1},\nu \in \Q$ such that, for every sufficiently large $n$, 
\[
e(H_1(M_{p^n}))= \mu p^{dn}+\lambda np^{(d-1)n}+\mu_1p^{(d-1)n}+\lambda_1 np^{(d-2)n}+\cdots +\mu_{d-1} p^n+\lambda_{d-1} n+\nu
\] 
holds. 

These $\mu$ and $\lambda$ are those of ${\rm char}\,\mca{H}$ if {\rm (1)} $c=d$ or if {\rm (2)} $\Delta_L$ does not vanish on $W^c\setminus\{(1,1,\ldots,1)\}$. 
\end{mainresult} 

In topology, unbranched covers are generally more challenging and subtle than branched ones, often tied to unresolved questions. For example, Mayberry--Murasugi's work (\Cref{2260}) nearly completes branched cases, while unbranched cases ([30, Section 14]) remain less explored. Our approach first establishes results for unbranched cases, which we then use to derive those for branched cases. 

The Hartley--Murasugi technique (\Cref{lem.HM}, \Cref{gaptheorem}) approximates the gap between the sizes of the first homology groups of branched and unbranched covers, and this approximation requires the non-decomposability of the branch links as an assumption.  
In the $\mathbb{Z}$HS$^3$ case, non-vanishing of the Alexander polynomial on $W^c\setminus\{(1,1,\ldots,1)\}$ implies non-decomposability of $L$ (\Cref{prop.notdecompose}) and, by Porti (\Cref{2260}), ensures that every finite abelian cover branched along $L$ is a $\Q$HS$^3$, both key to \Cref{mainresult}. 

\ 

The relationship between the torsion size of $H_1(M)$ and the 1st Betti number $\beta_1(M)$ of a 3-manifold $M$ is like that of siblings as research objects.
Although the analogue of $\beta_1(M)$ on the number theory side is always trivial, $\beta_1$'s of 3-manifolds deserve to be studied in the pro-$p$ setting. 
In fact, Kionke studies the $p$-adic Betti number in view of the Atiyah conjecture \cite{Kionke2020JLMS}. 
The polynomial periodicity of Betti numbers for $\Z^d$-coverings has been researched by Adams--Sarnak \cite{AdamsSarnak1994}, Hironaka \cite{Hironaka1992Invent}, and Sakuma \cite{Sakuma1995Canada}. 
Combining with our study on the rank growth (\Cref{lemmaforbetti}), we derive the following. 

Let $M$ be a $\Q$HS$^3$ and $L$ a $d$-component link in $M$. 
Let $(M_n\to M)_n$ denote the compatible system of branched $(\Z/n\Z)^d$-covers obtained from the unique $\Z^d$-cover over the exterior. 
Then, by \cite[Corollary 1.4]{AdamsSarnak1994} and {\cite[Theorem 7.5]{Sakuma1995Canada}} (\Cref{22511}), 
the 1st Betti numbers \glssymbol{betti}$\beta_1(M_n)$ are \emph{polynomial periodic}, that is, 
there exists 
$N\in \Z_{\geq 1}$ and $f_0(U),\ldots, f_{N-1}(U)\in \Q(U)$ such that 
$\beta_1(M_n)\equiv f_i(n)$ if $n\equiv i$ mod $N$. 
As a consequence, there exists some $N'$ and $\{g_0,\ldots, g_{N'-1}\} \subset \{f_0,\ldots, f_{N-1}\}$ 
such that $\beta_1(M_{p^n})\equiv g_i(n)$ if $n\equiv i$ mod $N'$. 
Put $D(M,L):={\rm max}\{{\rm deg}\,g_i\mid i=0,1,\ldots, N'-1\}$. 
\begin{mainresult}[\Cref{2284}] Let the setting be as above with $d\geq 2$ and 
Suppose that $\Delta_L(t_1,\ldots, t_d)$ does not vanish on $W^d\setminus\{(1,1,\ldots,1)\}$. 
Then $D(M,L)\leq d-2$ holds. In particular, if $d=2$, then $\beta_1(M_{p^n})$ is periodic. 
\end{mainresult} 

The quest for $\Zp$-extensions with prescribed Iwasawa invariants or modules has been of particular interest (cf.~\cite[Theorem 1]{Iwasawa1973}, \cite[Theorem 2]{Ozaki2004JMSJ}), 
and its topological analogue for $\Zp$-covers has been made in \cite[Proposition 4.15]{KadokamiMizusawa2008}, \cite[Theorem 5.2]{Ueki3}, \cite{UekiYoshizaki-plimits}. 
In this view, we study the twisted Whitehead links $W_m$ ($m\in \Z_{\geq 0}$) with 
\[\Delta_{W_{2m-1}}(x,y)=m(x-1)(y-1)\] 
and verify the following. 
\begin{mainresult}[\Cref{11257}] 
Let $L= W_{2p^\kappa -1}$ in $S\,^3$ with $\kappa\in \Z_{\geq 0}$. Then the $\Zp^{\,2}$-cover satisfies  
\[e(H_1(M_{p^n},\Zp))=\left(\kappa p^n+2n-2\kappa \right)p^n-2n+\kappa.\]
In particular, for any $\kappa\in \Z_{\geq 0}$, there exists a 2-component link $L$ whose $\Zp^{\,2}$-cover satisfies 
$\mu=\kappa$. 
\end{mainresult} 
See \Cref{rem.nu} for the details of further results on prescribed Iwasawa invariants.  
Another explicit example that we will see is ``Magen David'' $L=6_1^2$ in $S^3$ (\Cref{eg.L612}). 
We note that there is no implication between the assumptions of Main Results \ref{mainresult} and \ref{mr.ZHS} (cf.~\Cref{remark.MR1and5}); 
\Cref{eg.L612} is for both, while \Cref{11257} is only for the latter.   
At the end of this paper, we also attach \Cref{table.mulambda} (\Cref{eg.table}) for the Alexander--Iwasawa polynomial $\Delta_L(1+T_1,\ldots)$ and their $\mu$ and $\lambda$ of all links in Rolfsen's table \cite{Rolfsen1990}.\\  

This paper is organized as follows. 
In Section 2, we review the Iwasawa class number formula for $\Zp$-extensions and the Cuoco--Monsky formula for $\Zp^{\,d}$-extensions together with historical background and motivations on the number theory side, 
as well as attach a brief remark on arithmetic topology and previous results for the $d=1$ cases. 
In Section 3, we recollect the results of Cuoco--Monsky on a multi-variable analogue of the $p$-adic Weierstrass preparation theorem that determines the Iwasawa invariants of $F\in \Lambda$ 
and a result of Monsky (\Cref{22521}) that is vital for the proof of \Cref{mr.ZHS}. 
In Section 4, we study some basic properties of modules over a Noetherian integrally closed domain $R$ for use in subsequent sections; 
We verify the relationship amongst the Fitting ideal, the characteristic ideal, and the completion. 
In Section 5, we further recall the results of Cuoco and Monsky on $\Lambda$-modules and develop their modification for later use. We will make much effort in replacing $\mca{I}_{p^n}$ by $\mca{J}_{p^n}$ and in the analysis of the rank assumptions as well. 

In Section 6, we establish purely group theoretic results that are crucial to attain our main results, say, 
the fundamental two exact sequences (\Cref{twoexact}), the trivial action (\Cref{actionprop}), the asymptotic formula (\Cref{asymptotic}), and their versions for topological groups. 
In Section 7, we recall the notion of the Alexander polynomials of links and those of $\Z^d$-covers and give a remark on the completion (\Cref{prop.hatH1Xinfty}). 
We also prove a result on the reduced Alexander polynomials and its $p$-adic variant (\Cref{reduced}). 
In Section 8, we establish estimation formulas for the torsion/rank growth in unbranched $\Z^d$-covers or $\Zp^{\,d}$-covers (Theorems \ref{examplezdcover}, \ref{thm.unbr-asymptotic}). 
We also study a $\Zp^{\,d}$-cover derived from a $\Z^d$-cover $X_\infty\to X$ to find a pseudo-isomorphism of 
$\Lambda$-modules between 
$\mca{H}=\varprojlim H_1(X_{p^n};\Zp)$ and 
$\wh{H}_1(X_\infty):=H_1(X_\infty)\otimes_{\Lambda_\Z}\Lambda$ (\Cref{zcovertheorem}). 
In Section 9, by using Hartley--Murasugi's result, we estimate the gaps between branched and unbranched $(\Z/p^n\Z)^d$-covers (\Cref{gaptheorem}). 

In Section 10, we establish several Iwasawa-type formulas for $\Zp^{\,d}$-covers (Theorems \ref{2252}, \ref{thm.branched}, \ref{thm.integral}, \ref{thm.integral.Delta}) that we exhibited in above. 
In Section 11, we investigate the growth of Betti numbers in $\Zp^{\,d}$-covers and prove \Cref{2284}. 
In Section 12, we calculate the Alexander polynomial of the twisted Whitehead links $W_m$. 
In Section 13, we provide several examples to reinforce our result. 
We investigate $W_{2p^{\kappa}}$ with $\kappa\in \Z_{\geq 0}$, ``Magen David'' $L=6^2_1$, and all links in the Rolfsen table. 
We point out remarks on prescribed Iwasawa invariants (\Cref{rem.nu}) and on the $p$-adic torsion in a $\Zp^{\,2}$-cover as well. 

\begin{ack}
This paper contains materials of the doctoral thesis \cite{Tateno-Phd} of the first author, and he would like to thank Hiroshi Suzuki most warmly for his steady guidance and helpful comments as his advisor. 
Especially, the authors could not have completed the proof of the theorems in \Cref{sec.twoexact} without his support. 
Also, the authors would like to express their sincere gratitude to Makoto Sakuma for giving essential information about the comparison between branched and unbranched covers.   
Furthermore, the authors are grateful to Kazuki Hayashi, Tsuyoshi Itoh, Teruhisa Kadokami, Takenori Kataoka, Barry Mazur, Masanori Morishita, Takefumi Nosaka, Manabu Ozaki, Anh T.~Tran, Hyuga Yoshizaki, the anonymous referee, and the faculty for their informative comments and support. 
The second author has been partially supported by 
%Japan Society for the Promotion of Science
JSPS KAKENHI Grant Numbers JP19K14538 and JP23K12969.
\end{ack}  

\section{Backgrounds} 
\subsection{Number theory} \label{ss.NT} 
Let $k$ be a number field, i.e., a finite extension of the field $\Q$ of rational numbers. The class number of $k$ is the size of the ideal class group $Cl(k)$ of $k$. The notion of class numbers in number theory has been crucial as a research object since the era of Kummer \cite{Kummer1847FLT}. 
Kummer invented the notion of class numbers and successfully proved that the Fermat last theorem holds for a prime number $p$ if $p$ does not divide the class number of the $p$-th cyclotomic field. He also proved that the class numbers of cyclotomic fields are related to the particular values of the Riemann zeta function, which is closely related to the distribution of prime numbers.

That is being said, the regularity of class numbers has been mysterious, and it is basically difficult to control. The Gauss conjecture, which states that there are infinitely many quadratic real fields whose class numbers are one, is still an open problem. We do not even know whether there are infinitely many number fields whose class numbers are one or not.

Under such a background, Iwasawa found the following formula that controls the $p$-exponent of the class numbers in any $\Zp$-tower of number fields. 

\begin{theorem}[{Iwasawa, \cite[Theorem 4]{Iwasawa1959}}]\label{2251}
Let $k_\infty/k$ be a $\Zp$-extension and 
let $k_{p^n}/k$ denote the $\Z/p^n\Z$-subextension for each $n\in \Z_{\geq 0}$. 
Then there exist invariants $\mu,\lambda\in \Z_{\geq0}$ and $\nu\in\Z$, depending only on $k_{\infty}/k$, such that, 
for every sufficiently large $n$, 
\[
e(Cl(k_{p^n}))=\mu p^n+\lambda n+\nu
\]
holds. 
\end{theorem}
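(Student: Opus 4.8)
The plan is to run the classical Iwasawa-module argument in the one-variable case. \textbf{Step 1 (passage to Iwasawa modules).} Write $A_n$ for the $p$-Sylow subgroup of $Cl(k_{p^n})$, so that $e(Cl(k_{p^n}))=\log_p|A_n|$, and put $k_\infty=\bigcup_n k_{p^n}$ with $\Gamma:=\Gal(k_\infty/k)\cong\Zp$. Let $L_\infty$ be the maximal unramified abelian pro-$p$ extension of $k_\infty$ and set $X:=\Gal(L_\infty/k_\infty)$; lifting the conjugation action of $\Gamma$ makes $X$ a module over $\Zp[\![\Gamma]\!]\cong\Lambda:=\Zp[\![T]\!]$ (the $d=1$ Iwasawa--Serre isomorphism). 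The first real point is that $X$ is a finitely generated torsion $\Lambda$-module: finite generation follows by topological Nakayama from the finiteness of $X/(p,T)X$, a quotient controlled by the class group at the bottom of the tower, and torsionness is Iwasawa's theorem, which rests on the finiteness of the set of primes ramifying in $k_\infty/k$ together with the fact that each of them is totally ramified in $k_\infty/k_{p^{n_0}}$ for a suitable $n_0$.

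\textbf{Step 2 (structure theorem and the elementary computation).} Apply the structure theorem for finitely generated $\Lambda$-modules to obtain a pseudo-isomorphism $X\sim E:=\bigoplus_i\Lambda/(p^{\mu_i})\oplus\bigoplus_j\Lambda/(f_j^{m_j})$ with the $f_j$ distinguished irreducible, and set $\mu:=\sum_i\mu_i$ and $\lambda:=\sum_j m_j\deg f_j$. For $\omega_n:=(1+T)^{p^n}-1$, a direct calculation — the $d=1$ case of the Weierstrass-type estimate recalled in Section 3 — gives $\log_p|E/\omega_nE|=\mu p^n+\lambda n+c$ for $n$ large: $\Lambda/(p^a,\omega_n)$ is free of rank $p^n$ over $\Z/p^a$, while for distinguished $f$ the ideals $(f^m)$ and $(\omega_n)$ become coprime and the relevant resultant (equivalently, the sum of $p$-adic valuations of $f^m$ over the $p^n$-th roots of unity minus one) grows like $m(\deg f)n+O(1)$. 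Since a pseudo-null $\Lambda$-module is finite, the pseudo-isomorphism changes $\log_p|X/\omega_nX|$ by a bounded amount that is eventually constant, whence $\log_p|X/\omega_nX|=\mu p^n+\lambda n+\nu'$ for all $n\gg0$.

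\textbf{Step 3 (the arithmetic bridge), the main obstacle.} It remains to compare $A_n$ with $X/\omega_nX$. By class field theory $A_n\cong\Gal(M_n/k_{p^n})$ with $M_n$ the maximal unramified abelian $p$-extension of $k_{p^n}$; comparing $M_n$ with $L_\infty$ yields, for $n$ beyond the level $n_0$ at which every ramified prime is totally ramified in $k_\infty/k_{p^n}$, an isomorphism $A_n\cong X/Y_nX$, where $Y_n$ is generated by $\omega_n/\omega_{n_0}$ together with finitely many ``inertia'' elements attached to the ramified primes. The delicate part is to show that the contribution of those inertia terms — and of the passage from $\omega_n$ to $\omega_n/\omega_{n_0}$ — alters $\log_p|A_n|$ only by a quantity that is bounded and, for $n\gg0$, constant; this is where one uses the stabilization of the pertinent local norm indices along the tower, which also confirms that $\mu,\lambda$ are unchanged. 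Feeding Step 2 into this comparison and absorbing every bounded-and-eventually-constant correction into a single $\nu\in\Z$ produces $e(Cl(k_{p^n}))=\mu p^n+\lambda n+\nu$ for all sufficiently large $n$, with $\mu,\lambda,\nu$ determined by the $\Lambda$-module $X$ and hence by $k_\infty/k$ alone.
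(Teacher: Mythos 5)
Your proposal is the standard Iwasawa-module proof (class field theory to identify $A_n$ with $\Gal(M_n/k_{p^n})$, finite generation and torsionness of $X=\Gal(L_\infty/k_\infty)$ over $\Lambda=\Zp[\![T]\!]$, the structure theorem, the elementary computation of $|E/\omega_nE|$, and the inertia/norm bookkeeping in the bridge), and it is correct in outline. The paper does not reprove this result — it cites Iwasawa and only remarks that one uses Artin reciprocity and the tower of Hilbert $p$-class fields — so your argument is exactly the approach the paper points to.
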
 
The values $\mu,\lambda,\nu$ are called the Iwasawa invariants of $k_\infty/k$.
This result is known to be the first asymptotic formula that describes the regularity of the variation of the class numbers in certain towers of number fields. 

To prove this formula, class field theory plays an essential role. 
Let $k$ be a number field and let $k_{\ab}^{\ur}$ denote the Hilbert class field, i.e., the maximal unramified abelian extension of $k$. Then the class field theory asserts that there is an isomorphism 
\[ \Gal(k_{\ab}^{\ur}/k)\cong Cl(k)\] 
called Artin's reciprocity, which allows one to regard the class numbers as the sizes of Galois groups. 
Given a $\Zp$-extension, Iwasawa considered the corresponding tower of Hilbert $p$-class fields and obtained the aforementioned formula.\\ 

Now let $d\geq 2$. In \cite{CuocoMonsky1981}, Cuoco and Monsky generalized Iwasawa's formula to $\Zp^{\,d}$-extensions.

\begin{theorem}[Cuoco--Monsky, {\cite[Theorem I]{CuocoMonsky1981}}]
Let $k_\infty /k$ be a $\Zp^{\,d}$-extension of number fields and 
let $k_{p^n}/k$ denote the $(\Z/p^n\Z)^d$-subextensions for each $n\in \Z_{\geq 0}$. 
Then there exist invariants $\mu,\lambda\in \Z_{\geq0}$, depending only on $k_{\infty}/k$, 
such that 
\[
e(Cl(k_{p^n}))=\left(\mu p^n+\lambda n+O(1)\right)p^{(d-1)n}
\]
holds, 
where $O$ is the Bachmann--Landau notation with respect to $n$. 
\end{theorem}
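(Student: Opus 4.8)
The plan is to mirror Iwasawa's proof of \Cref{2251}: pass from class numbers to Galois modules by class field theory, invoke the structure theory of $\Lambda$-modules, and then replace the one-variable $p$-adic Weierstrass preparation theorem by its multi-variable refinement. Let $A_{p^n}$ be the Sylow $p$-subgroup of $Cl(k_{p^n})$, so that $e(Cl(k_{p^n}))=e(A_{p^n})=\log_p|A_{p^n}|$. Let $L_\infty$ be the maximal unramified abelian pro-$p$ extension of $k_\infty$ and put $X:=\Gal(L_\infty/k_\infty)$; through the Iwasawa--Serre isomorphism $\Zp[\![\Gamma]\!]\cong\Lambda$ with $\Gamma=\Gal(k_\infty/k)$ and $\Gamma_n:=\Gal(k_\infty/k_{p^n})$, this $X$ is a finitely generated $\Lambda$-module, and one first shows it is $\Lambda$-torsion. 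By Artin's reciprocity law, $A_{p^n}$ agrees with the $\Gamma_n$-coinvariant module of $X$ up to a discrepancy governed by the finitely many primes of $k$ ramifying in $k_\infty/k$; since each such prime has inertia subgroup of $\Zp$-rank $\geq 1$ in $\Gamma$, that discrepancy contributes only $O(p^{(d-1)n})$ to $e(A_{p^n})$. Hence $e(Cl(k_{p^n}))=e(X/\mathcal{I}_{p^n}X)+O(p^{(d-1)n})$, where $\mathcal{I}_{p^n}$ is the ideal generated by $\{(1+T_i)^{p^n}-1\}_i$.

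Next I would reduce to a principal module and a determinant computation. By the structure theorem, $X$ is pseudo-isomorphic to an elementary module $E=\bigoplus_i\Lambda/(f_i^{e_i})$ with each $f_i$ equal to $p$ or to an irreducible distinguished polynomial. The kernel and cokernel of the pseudo-isomorphism, and any pseudo-null summand, are supported in codimension $\geq 2$, hence of Hilbert--Samuel dimension $\leq d-1$, so their $\mathcal{I}_{p^n}$-coinvariants contribute only $O(p^{(d-1)n})$ to $e$. Thus $e(X/\mathcal{I}_{p^n}X)=\sum_i e_i\,e(\Lambda/(f_i,\mathcal{I}_{p^n}))+O(p^{(d-1)n})$. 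Now $\Lambda/\mathcal{I}_{p^n}$ is $\Zp$-free of rank $p^{dn}$, and for $n$ large enough that $f_i$ shares no factor with $\prod_j\omega_{p^n}(T_j)$, multiplication by $f_i$ on it is injective; base-changing to $\ol{\Qp}$ gives $\Lambda/\mathcal{I}_{p^n}\otimes\ol{\Qp}\cong\prod_{\zeta\in W(n)^d}\ol{\Qp}$, on which $f_i$ acts diagonally with entries $f_i(\zeta-1)$. Hence $e(\Lambda/(f_i,\mathcal{I}_{p^n}))=\sum_{\zeta\in W(n)^d}v(f_i(\zeta-1))=\Sigma_n(f_i)$.

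The remaining — and hardest — input is the analytic estimate: for any $0\neq F\in\Lambda$, $\Sigma_n(F)=\mu(F)p^{dn}+\lambda(F)\,np^{(d-1)n}+O(p^{(d-1)n})$. Factoring $F=p^{\mu(F)}G$ with $G\bmod p\neq 0$ supplies the $\mu(F)p^{dn}$ term and leaves $\Sigma_n(G)$. Applying a $\Zp$-automorphism of $\Lambda$ induced by an automorphism of $\Gamma\cong\Zp^d$ — crucially, such automorphisms preserve the evaluation sets $\{\zeta-1:\zeta\in W(n)^d\}$ — one may assume, after Weierstrass preparation over $\Zp[\![T_1,\ldots,T_{d-1}]\!]$, that $G$ is a distinguished polynomial in $T_d$ whose reduction mod $p$ has degree $\lambda(F)$. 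Then for each fixed $\zeta'\in W(n)^{d-1}$ the polynomial $G(\zeta'-1;T_d)$ is monic in $T_d$ with all roots in the maximal ideal (by its Newton polygon), and the one-variable identity $v((1+\rho)^{p^n}-1)=n+O(1)$ for $v(\rho)>0$ and $n$ large gives $\sum_{\zeta_d\in W(n)}v(G(\zeta'-1;\zeta_d-1))=\lambda(F)\,n+O(1)$. The main obstacle is that this $O(1)$ depends on $\zeta'$, and a naive bound on it only yields $O(np^{(d-1)n})$ after summing over the $p^{(d-1)n}$ choices of $\zeta'$; controlling the \emph{average} of these errors, so that they total $O(p^{(d-1)n})$, is exactly the content of the multi-variable $p$-adic Weierstrass machinery of Cuoco--Monsky \cite{CuocoMonsky1981} (later refined by Monsky \cite{Monsky1989}), and it forces one to analyze the divisor of $G$ and its intersections with the subgroups $W(n)^d$ rather than reducing mod $p$ at the outset.

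Finally, assembling the three steps yields
\[
e(Cl(k_{p^n}))=\Big(\sum_{f_i\doteq p}e_i\Big)p^{dn}+\Big(\sum_i e_i\lambda(f_i)\Big)np^{(d-1)n}+O(p^{(d-1)n}),
\]
so putting $\mu:=\sum_{f_i\doteq p}e_i\in\Z_{\geq 0}$ and $\lambda:=\sum_i e_i\lambda(f_i)\in\Z_{\geq 0}$ — both depending only on the $\Lambda$-module $X$, hence only on $k_\infty/k$ — and factoring out $p^{(d-1)n}$ gives the asserted formula $e(Cl(k_{p^n}))=(\mu p^n+\lambda n+O(1))p^{(d-1)n}$.
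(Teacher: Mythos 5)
This theorem is quoted in the paper from \cite{CuocoMonsky1981} and is not reproved there; the paper only records the strategy (class field theory, the structure theory of $\Lambda$-modules, and the identification of $\mu,\lambda$ with the invariants of ${\rm char}\,\mathcal{X}$) together with the two key ingredients it later reuses, namely the estimate $\Sigma_n(F)=(\mu(F)p^n+\lambda(F)n+O(1))p^{(d-1)n}$ of \Cref{2259} and the module-theoretic asymptotic for the quotients $G_n=\mathcal{M}/\mathcal{B}_{p^n}$ quoted before \Cref{Monsky21}. Your three-step architecture (Artin reciprocity, reduction to an elementary module, the analytic estimate for $\Sigma_n$) is the correct skeleton of the Cuoco--Monsky proof, and your description of the analytic core --- including the genuine difficulty of averaging the $\zeta'$-dependent $O(1)$ errors rather than bounding them uniformly --- is accurate.

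The gap is in your comparison steps. You assert that the ramification discrepancy and the pseudo-isomorphism/pseudo-null corrections each contribute only $O(p^{(d-1)n})$ to $e$, so that $e(Cl(k_{p^n}))=e(\mathcal{X}/\mathcal{I}_{p^n}\mathcal{X})+O(p^{(d-1)n})$ and likewise for the passage to the elementary module. But the bounds one actually gets from the obvious arguments are of size $O(np^{(d-1)n})$: torsion classes in a quotient of $\mathcal{M}/\mathcal{I}_{p^n}\mathcal{M}$ coming from the free part are killed only by $p^{dn+c}$, and multiplying this by the rank of the ambient module produces the extra factor of $n$ (this is exactly what happens in \Cref{pigap} and \Cref{CM7} of the paper, where the error is $O(np^{(d-1)n})$ even under a rank hypothesis). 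An error of that size is of the same order as the term $\lambda\,np^{(d-1)n}$ you are trying to isolate, so your reduction would recover $\mu$ but not $\lambda$. Cuoco--Monsky avoid this by never passing through $\mathcal{X}/\mathcal{I}_{p^n}\mathcal{X}$: they encode the inertia data into submodules $\mathcal{B}_{p^n}\supseteq\mathcal{I}_{p^n}\mathcal{X}$ and prove the full asymptotic \emph{directly} for $e(\mathcal{X}/\mathcal{B}_{p^n})\;(=e(A_{p^n})$ for large $n)$, crucially exploiting that $r(G_n)=0$ because the class groups are finite. To repair your argument you would either need to reproduce that analysis or justify, with more than the codimension heuristic, why the specific quotients occurring here lose only $O(p^{(d-1)n})$ rather than $O(np^{(d-1)n})$.
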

These $\mu, \lambda$ are also called the Iwasawa invariants of $k_\infty/k$. 
Cuoco--Monsky showed this result by developing the structure theory for $\Lambda$-modules, where $\Lambda=\Zp[\![T_1,\ldots,T_d]\!]$. 
Let $l_\infty/k_\infty$ denote the maximal abelian unramified pro-$p$ extension of $k_\infty$. 
Then $l_\infty/k$ is Galois. 
Put $\Gamma:=\Gal(k_\infty/k)$ and $\mathcal{X}:=\Gal(l_\infty/k_\infty)$. 
Then there is a well-defined action
\[
\Gamma\times \mathcal{X}\ni(\sigma,x)\mapsto \tilde{\sigma}x\tilde{\sigma}^{-1}\in \mathcal{X},
\]
where $\tilde{\sigma}$ is a lifting of $\sigma$ to $\Gal(l_\infty/k)$.
Then, as Greenberg proved \cite{Greenberg1973AJM}, 
$\mathcal{X}$ becomes a finitely generated torsion $\Lambda$-module 
via the Iwasawa--Serre isomorphism $\Zp[\![\Gamma]\!]\congto \Lambda; 
\gamma_i\mapsto 1+T_i$. 
Cuoco--Monsky actually proved that  
the $\mu$ and $\lambda$ in above are those of the characteristic element of $\mathcal{X}$. 
Monsky further investigated the structures of $\Lambda$-modules in a series of papers and finally obtained the following. 
\begin{theorem}[Monsky {\cite[Theorem 3.13]{Monsky1989}}] 
The $O(1)p^{(d-1)n}=O(p^{(d-1)n})$ part in above 
may be refined to $\mu_1p^{(d-1)n}+O(np^{(d-2)n})$ for some $\mu_1\in\R$. If $d=2$, then $\mu_1\in\Q$. 
\end{theorem}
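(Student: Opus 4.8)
The plan is to follow the approach of Cuoco--Monsky \cite{CuocoMonsky1981} and Monsky \cite{Monsky1989}, reducing the statement to a sufficiently precise estimate of the sum $\Sigma_n(F) = \sum_{\zeta \in W(n)^d} v(F(\zeta - 1))$, where $F \doteq {\rm char}\,\mca{X} \in \Lambda$ is the characteristic element of $\mca{X} = \Gal(l_\infty/k_\infty)$. First I would recall that $\mca{X}$ is pseudo-isomorphic to an elementary module $\bigoplus_i \Lambda/\mf{p}_i^{e_i}$ with $F = \prod_i g_i^{e_i}$, where $\mf{p}_i = (g_i)$ is a height-one prime (principal, as $\Lambda$ is a UFD); that a finite kernel or cokernel perturbs $e(\mca{X}/\mca{I}_{p^n}\mca{X})$ only by $O(1)$; and that $e(\Lambda/(g,\mca{I}_{p^n}))$ agrees with $\Sigma_n(g)$ up to an error coming from the characters $\chi$ on which $g$ vanishes and from the failure of $\Zp[(\Z/p^n\Z)^d]$ to decompose as a product of discrete valuation rings. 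Sharpening this comparison so that the total error is $O(np^{(d-2)n})$ is the first task; granting it, the theorem becomes the assertion that $\Sigma_n(F) = \mu p^{dn} + \lambda n p^{(d-1)n} + \mu_1 p^{(d-1)n} + O(np^{(d-2)n})$ with $\mu_1 \in \R$, and $\mu_1 \in \Q$ when $d = 2$.

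Since $\Sigma_n$ is additive in $F$ and vanishes on units of $\Lambda$, it suffices to compute $\Sigma_n(f)$ for $f = p$ and for $f$ irreducible with $\bar f := (f \bmod p) \in \Omega \setminus \{0\}$. The case $f = p$ is immediate: $v(p) = 1$ gives $\Sigma_n(p) = |W(n)^d| = p^{dn}$, producing the leading term $\mu p^{dn}$ with $\mu = \mu(F)$. For irreducible $f$ with $\bar f \neq 0$, I would pass to \emph{generic coordinates}: changing the topological generators by $t_i \mapsto t_i t_d^{p^{k_i}}$ for suitably spread-out exponents $k_i$ is a $\GL_d(\Z)$-change of basis of $\Zp^d$, hence leaves $\Sigma_n$ unchanged (it merely re-indexes $\Hom(\Zp^d,\mu_{p^n})$), while the standard Weierstrass trick makes $f$ distinguished in $T_d$, so $f \doteq u\cdot P(T_d)$ with $u \in \Lambda^{\times}$ and $P$ a distinguished polynomial in $T_d$ of some degree $\ell$ over $\Zp[\![T_1,\ldots,T_{d-1}]\!]$; in particular $\mu(f)=0$.

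Then I would fiber the sum over the last coordinate,
\[
\Sigma_n(f) \;=\; \sum_{\zeta' \in W(n)^{d-1}} \ \sum_{\zeta_d \in W(n)} v\!\left(P_{\zeta'}(\zeta_d - 1)\right),
\]
where $P_{\zeta'} \in \Zp[\zeta'][T_d]$ is $P$ with $T_i$ specialized to $\zeta_i - 1$. As $\zeta_i - 1$ lies in the maximal ideal of the discrete valuation ring $\Zp[\zeta']$, the polynomial $P_{\zeta'}$ is again distinguished, so its $\ell$ roots all have positive valuation; an elementary computation (summing $v(\zeta_d - 1 - \alpha)$ over $\zeta_d \in W(n)$ for each root $\alpha$, using $\sum_{\xi\,\mathrm{prim}\ p^j} v(\xi - 1) = 1$) shows the inner sum equals $\ell n + \delta(\zeta')$ for a bounded defect $\delta(\zeta')$ depending only on the valuations of the roots of $P_{\zeta'}$. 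Hence $\Sigma_n(f) = \ell n\, p^{(d-1)n} + \sum_{\zeta' \in W(n)^{d-1}} \delta(\zeta')$, which gives $\lambda = \sum_i e_i\ell_i$. It remains to analyze $\sum_{\zeta'}\delta(\zeta')$: for $\zeta'$ off the mod-$p$ discriminant locus of $P$ the Newton polygon of $P_{\zeta'}$, and hence $\delta(\zeta')$, stabilizes to a common value $\delta^{\mathrm{gen}}_n$, while the exceptional $\zeta'$ form a codimension-one set (counted with the level weighting) contributing only $O(np^{(d-2)n})$; one then shows $\delta^{\mathrm{gen}}_n$ converges to some $\mu_1^{(i)} \in \R$, whence $\mu_1 = \sum_i e_i\mu_1^{(i)}$. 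When $d = 2$ the surviving variable is a single $\zeta'$ ranging over $W(n)$, where the one-variable $p$-adic Weierstrass theory yields \emph{exact} formulas---eventually arithmetic progressions in $n$---rather than asymptotics, forcing $\delta^{\mathrm{gen}}_n$ to be eventually constant and $\mu_1 \in \Q$.

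The main obstacle is the convergence of $\delta^{\mathrm{gen}}_n$ together with the uniform bookkeeping that keeps every error within $O(np^{(d-2)n})$ through the dimension-reducing recursion; this is the technical heart of \cite{Monsky1989}, resting on a detailed study of Newton polygons of mod-$p$ reductions and a delicate summation lemma, and for $d \geq 3$ one genuinely cannot improve $\mu_1 \in \R$ to $\mu_1 \in \Q$. A secondary obstacle, already noted, is sharpening the Cuoco--Monsky comparison between $e(Cl(k_{p^n}))$ and $\Sigma_n(F)$ to the needed precision, which in turn uses that the module in question has no characters on which its characteristic element vanishes nontrivially.
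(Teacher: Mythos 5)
First, note that the paper does not prove this statement at all: it is quoted verbatim from Monsky \cite[Theorem 3.13]{Monsky1989} as background, and the only form of it the paper ever uses is the module-theoretic version re-imported in \Cref{Monsky21}. So your proposal can only be judged on its own terms, and while its skeleton (reduce to $\Sigma_n(F)$, use additivity over irreducible factors, handle $p$ separately, make each remaining factor distinguished in $T_d$ by a linear automorphism, and fiber the sum over the last coordinate) is indeed the Cuoco--Monsky/Monsky strategy, there is a concrete error at the step that is supposed to produce the $\lambda n p^{(d-1)n}$ term.

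You assert that for a distinguished factor of Weierstrass degree $\ell$ the inner sum $\sum_{\zeta_d\in W(n)}v(P_{\zeta'}(\zeta_d-1))$ equals $\ell n+\delta(\zeta')$ with $\delta(\zeta')$ \emph{bounded}, and you conclude $\lambda=\sum_i e_i\ell_i$. Both claims are false. By \Cref{def.mulambda}, $\lambda(F)$ counts only the multiplicities in $\ol{F_0}$ of primes of the special form $(\ol{\sigma-1})$, and by \Cref{2256}~(3) an irreducible $f$ with $\mu(f)=\lambda(f)=0$ satisfies $\Sigma_n(f)=O(p^{(d-1)n})$ even when its Weierstrass degree is positive. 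A concrete test case is $f=T_d^2-T_1$: it is irreducible and distinguished of degree $\ell=2$ in $T_d$, but $\ol f=\ol{T_d}^2-\ol{T_1}$ is not an associate of any $\ol{\sigma-1}$, so $\lambda(f)=0$. If $\zeta_1$ is a primitive $p^k$-th root of unity, the two roots of $P_{\zeta'}$ have valuation $\tfrac{1}{2}v(\zeta_1-1)$ and the inner sum is $2(n-k)+\tfrac{p}{p-1}$; the defect $\delta(\zeta')=-2k+O(1)$ is unbounded, and summing over $\zeta_1$ (there are $p^k-p^{k-1}$ primitive $p^k$-th roots) makes the would-be $2np^{(d-1)n}$ term cancel exactly, leaving $\Sigma_n(f)=\tfrac{p+2}{p-1}p^{(d-1)n}+O(np^{(d-2)n})$ --- a pure $\mu_1$-contribution, where your scheme predicts a $\lambda$-contribution of $2$. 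This is not a cosmetic slip: separating the factors whose specializations have roots $p$-adically as close to $\mathcal{W}$ as roots of unity minus one (these are precisely the $\ol{\sigma-1}$ divisors and they alone feed $\lambda$) from all the other roots (which feed $\mu_1$), and then proving that the latter contributions converge to a real, in general irrational, constant $\mu_1$, is the entire content of the refinement. Your sketch both misstates this dichotomy and defers its proof to the reference, so as written it does not establish the theorem.
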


According to \cite[Section 7]{CuocoMonsky1981}, 
Greenberg conjectured that there exists a polynomial $f(U,V)$ such that, 
for every sufficiently large $n$, 
\[
e(Cl(k_{p^n}))=f(p^n,n) 
\]
holds. 
Cuoco--Monsky pointed out that the quotients of the $\Lambda$-module $\mathcal{M}=\Zp[\![T_1,T_2]\!]/(p,T_1^5-T_2^5)$ with $p\equiv \pm 2$ mod 5 
does not obey the conjecture. However, the existence of a $\Zp^{\,d}$-extension that does not obey Greenberg's conjecture is not yet known. 

\subsection{Arithmetic topology} 
It is said that Gauss's proof of the quadratic reciprocity law using Gauss sums is based on his insight into the analogy between knots and prime numbers. 
In modern times, years after Mazur's note \cite{Mazur1963}, 
the analogy between low dimensional topology and number theory was
systematically developed in the format of a dictionary by Reznikov and Kapranov \cite{Kapranov1995, Reznikov1997, Reznikov2000}. 
Morishita independently took the analogy seriously with his deep insight and wrote a book, laying the groundwork for the research field \cite{Morishita2002, Morishita2010, Morishita2012}. 
Afterward, M.~Kim gave further perspective and founded arithmetic Chern--Simons theory \cite{MKim2020}. 
Furthermore, Mazur recently announced that by combining the theory of $C^\infty$-category and profinite $B\Gamma$-theory, we may enhance the class field theory and the Iwasawa theory \cite{FengHarrisMazur2023-arXiv}. 

We should remark that profinite refinements of the Alexander--Fox theory are vital in the context of profinite rigidity. 
Yi Liu \cite{YiLiu2023Invent} proved that the hyperbolic volume is almost profinitely rigid by using a certain property of the completed group ring $\Zp[\![t^\wh{\Z}]\!]$ proved by the second author \cite{Ueki5}, where $\wh{\Z}:=\varprojlim \Z/n\Z$. 
Liu works over $\Zp[\![\wh{\Z}^d]\!]$, 
so we may expect further interaction between deep geometric topology and number theory. 
Here we attach a small piece of a basic dictionary of the analogies. 

\begin{longtable}{|c|c|}
\caption{M$^2$KR-dictionary} \label{table.dictionary} 
\endfirsthead 

\hline 
\endhead  

\hline
Number theory& Low dimensional topology \\
\hline
\hline 
The spectrum $\Spec\mathcal{O}_k$ of the ring of integers & A closed connected orientable\\
of a number field $k$ & $3$-manifold $M$\\
\hline 
$\Spec \Z$ of $\Q$ & $S^3$\\
\hline
The ideal class group 
$Cl(k)$&$H_1(M;\Z)$ or \glssymbol{tor}${\rm tor}_\Z H_1(M;\Z)$\\
\hline
Fact: $Cl(k)$ is a finite group&(Assumption: $M$ is a $\Q$HS$^3$)\\
\hline 
Artin reciprocity law &Hurewicz isomorphism \\ 
$\pi_1^{\text{\'{e}t}}(\ol{{\rm Spec}\,\mathcal{O}_k})^{\rm ab} \cong \Gal(k_{\ab}^{\ur}/k) \cong Cl(k)$ & $\pi_1(M)^{\rm ab}\cong {\rm Gal}(M_{\rm ab}\to M)\cong H_1(M;\Z)$ \\ 
\hline \hline 
A non-zero prime ideal $\mathfrak{p}:$ &A knot \glssymbol{knot}$K:S^1\hookrightarrow M$\\ 
$\Spec\mathcal{O}_k/\mathfrak{p}\hookrightarrow\Spec\mathcal{O}_k$ & \\ 
\hline 
A family of primes $S=\{\mathfrak{p}_1,\ldots, \mathfrak{p}_r\}$&A link \glssymbol{link}$L:\sqcup S^1\hookrightarrow M$\\
\hline 
$\pi_1^{\text{\'{e}t}}(\ol{{\rm Spec}\,\mathcal{O}_k}\setminus S)$ & $\pi_1(M\setminus L)$ \\ \hline 

The $\p$-adic number field $k_{\p}$ & A tubular neighborhood \glssymbol{vk}$V_K$ \\ \hline 
The ring of $\p$-adic integers $\mca{O}_{\p}$ & The boundary \glssymbol{boundary}$\partial V_K$ of $V_K$\\ \hline 
Hilbert ramification theory for prime ideals& Hilbert ramification theory for knots\\ 
\hline \hline 
A $\Zp$-extension $k_{\infty}/k$ & A branched $\Zp$-cover $(M_{p^n}\to M)_n$\\
\hline\ 
The $\Z/p^n\Z$-subextension $k_{p^n}/k$ & The $\Z/p^n\Z$-subcover $M_{p^n}\to M$\\ \hline 
Iwasawa theory & Alexander--Fox theory \\ 
& and its $p$-adic refinement \\ \hline \hline 
A $\Zp^{\,d}$-extension $k_{\infty}/k$ & A branched $\Zp^{\,d}$-cover $(M_{p^n}\to M)_n$ \\ 
\hline 
The $(\Z/p^n\Z)^d$-subextension $k_{p^n}$ & The  $(\Z/p^n\Z)^d$-subcover $M_{p^n}\to M$ \\ 
\hline 
Greenberg--Cuoco--Monsky theory & Multivariable Alexander--Fox theory \\ 
& and its $p$-adic refinement \\ \hline 
\end{longtable} 

\subsection{Iwasawa-type formula of $\Zp$-covers of links} Here we recall results for $d=1$ cases in topology. 
The homologies of the cyclic branched covers of links are determined by the Alexander polynomials and 
have been extensively studied as basic invariants. 
Reznikov \cite{Reznikov1997, Reznikov2000} gave further insight there, by focusing on the $p$-torsions of the $p$-cyclic covers after a classical problem in number theory.  

Hillman--Matei--Morishita \cite{HillmanMateiMorishita2006} extended his viewpoint to the system of cyclic $p$-covers and initially proved the Iwasawa-type formula for the case where $M$ is $S^3$ and the $\Zp$-cover is derived from a $\Z$-cover. 
Afterward, Kadokami--Mizusawa \cite{KadokamiMizusawa2008} extended it to the case where $M$ is a $\Q$HS$^3$, which is an analogue of a general number field, and finally, the second author \cite{Ueki2} proved it for the cases where the $\Zp$-cover is not necessarily derived from a $\Z$-cover. 

\begin{theorem} [Iwasawa-type formula; {\cite[Theorem 5.1.7]{HillmanMateiMorishita2006}}, {\cite[Theorem 2.1]{KadokamiMizusawa2008}}, {\cite[Theorem 4.9]{Ueki2}}] \label{thm.Zpcover} \label{thm.d=1}
Let $M$ be a $\Q$HS\,$^3$, $L$ a link in $M$, and let $X=M-{\rm Int}\,V_L$ denote the exterior. 
Let $(X_{p^n}\to X)_n$ be a compatible system of $\Z/p^n\Z$-covers and let $(M_{p^n}\to M)_n$ denote the system of branched covers obtained by the Fox completions. 
Suppose that every $M_{p^n}$ is a $\Q$HS\,$^3$. Then there exist $\mu, \lambda \in \Z_{\geq0}$ and $\nu\in\Z$, depending only on $p$ and $(M_{p^n}\to M)_n$, such that, for every sufficiently large $n$, 
\[
e(H_1(M_{p^n}))=\mu p^n+\lambda n+\nu
\]
holds. 
\end{theorem}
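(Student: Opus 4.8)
The plan is to reduce the assertion to the classical one-variable Iwasawa module computation over $\Lambda=\Zp[\![T]\!]$. First I would set up the pro-$p$ Alexander module: the compatible system $(X_{p^n}\to X)_n$ corresponds to a surjection $\wh{\tau}\colon\wh{\pi}_1(X)\surj\Zp$, and $\mca{H}=\varprojlim_n H_1(X_{p^n};\Zp)\cong\wh{\pi}_1(X)^{\rm ab}$ carries the conjugation action of $\Zp$, hence is a finitely generated $\Lambda$-module. The first substantive point is that $\mca{H}$ is $\Lambda$-torsion, i.e.\ ${\rm char}\,\mca{H}\neq0$: the hypothesis that every $M_{p^n}$, as well as $M$, is a $\Q$HS$^3$ forces $H_1(M_{p^n})$ finite, and, via the surgery presentation of $M_{p^n}$ out of $X_{p^n}$ together with \Cref{reduced} on reduced Alexander polynomials and the rank bound of \Cref{mr.HXpn}, this rules out a free $\Lambda$-summand of $\mca{H}$. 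I then take $\mu,\lambda$ to be the Iwasawa invariants of ${\rm char}\,\mca{H}$ in the sense of \Cref{sec.char}.

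Next I would use the structure theorem in the one-variable case: $\Lambda$ is a two-dimensional regular local UFD in which ``pseudo-null'' means ``finite'', so there is a pseudo-isomorphism $\mca{H}\to\Lambda/(p^{\mu})\oplus\bigoplus_j\Lambda/(g_j^{e_j})$ with the $g_j$ distinguished irreducible, with \emph{finite} kernel and cokernel, and with $\lambda=\sum_j e_j\deg g_j$. The classical Iwasawa lemma on $\Lambda$-modules then yields, for $\omega_{p^n}=(1+T)^{p^n}-1$ and every sufficiently large $n$,
\[
e\bigl(\mca{H}/\omega_{p^n}\mca{H}\bigr)=\mu p^n+\lambda n+\nu_0
\]
for some $\nu_0\in\Z$: one checks the identity on each elementary quotient using the factorization $\omega_{p^n}=\prod_{\zeta}(T-(\zeta-1))$ over the $p^n$-th roots of unity, noting that a fixed distinguished polynomial contributes valuation $\deg g_j$ per new layer once those roots are close enough to $1$, while the finite pseudo-null defect and the finitely many low-layer terms only change $\nu_0$ by a constant for $n$ large. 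This is the $d=1$ case of \Cref{mr.unbr}, except that the error is refined from $O(1)$ to an \emph{eventual constant}.

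Finally I would transport this back to $H_1(M_{p^n})$ in two steps. Using the fundamental exact sequences (\Cref{twoexact}) I would identify $e(H_1(X_{p^n};\Zp))$ with $e(\mca{H}/\omega_{p^n}\mca{H})$ up to a defect supported on the $H_2$-terms and on the coinvariants of the peripheral subgroups; since $\mca{H}$ is torsion, Poincar\'e--Lefschetz duality applied to the finitely many peripheral tori shows this defect stabilizes to a constant. Then $M_{p^n}$ is obtained from $X_{p^n}$ by Dehn filling along the lifts of the meridians of $L$, and Hartley--Murasugi's comparison (\Cref{lem.HM}, \Cref{gaptheorem}) changes $e$ by a further amount that, once the relevant non-decomposition is verified, is again eventually constant. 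Combining the three displays gives $e(H_1(M_{p^n}))=\mu p^n+\lambda n+\nu$ for all $n\gg0$, with $\mu,\lambda$ the Iwasawa invariants of ${\rm char}\,\mca{H}$.

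The hard part is the upgrade of ``bounded'' to ``eventually constant'' in the last two steps, since the general-$d$ tools \Cref{mr.HXpn} and \Cref{gaptheorem} only deliver $O(n)$ and $O(1)$ errors --- far weaker than the exact $\nu$ claimed here. Closing this gap uses features special to $d=1$: pseudo-null $\Lambda$-modules are genuinely finite, so a finite kernel or cokernel is literally annihilated once $\omega_{p^n}$ has acquired enough cyclotomic factors; and the surgery correction for a link in a $\Q$HS$^3$ is governed by the finitely many peripheral tori, whose $\Zp$-homology along the tower stabilizes. A secondary technical point, logically prior to all of this, is the torsionness of $\mca{H}$ itself, which in the multi-component case must be teased out of the $\Q$HS$^3$ hypothesis on the entire tower rather than assumed.
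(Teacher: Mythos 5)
The paper does not prove this statement: it is recalled verbatim from \cite{HillmanMateiMorishita2006, KadokamiMizusawa2008, Ueki2} as background for the $d\geq 2$ theory, so there is no in-paper proof to compare against. Your architecture is the standard one from those references (and is the $d=1$ specialization of the paper's own machinery: \Cref{twoexact2}/\Cref{thm.unbr-asymptotic}, the structure theorem, and the meridian comparison of \Cref{sec.br/unbr}), so the overall route is right. One simplification you missed: for $d=1$ the commutator module $C_{p^n}$ is zero, so the first fundamental exact sequence splits and gives ${\rm tor}_{\Zp}(H_1(X_{p^n};\Zp))\cong{\rm tor}_{\Zp}(\mca{H}/\omega_{p^n}\mca{H})$ \emph{exactly}; no Poincar\'e--Lefschetz argument about $H_2$ or peripheral coinvariants is needed there, and that step contributes no error at all.

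There are, however, two genuine gaps. First, your torsionness argument leans on \Cref{reduced}, which is stated and proved only for $d\geq 2$; the correct $d=1$ argument is the rank count $r(H_1(X_{p^n};\Zp))=r(\mca{H}/\omega_{p^n}\mca{H})+1$ from \Cref{thm.unbr-asymptotic} together with \Cref{MVlemma}, which identifies $r(H_1(X_{p^n};\Zp))$ with the rank of the meridian subgroup, bounded because the $\Q$HS$^3$ hypothesis on the tower forces $L$ to decompose only finitely (so one passes to a cofinal subtower where the boundary tori stabilize). Second, and more seriously, the branched--unbranched comparison is exactly where the claimed refinement from $O(1)$ to an eventual constant $\nu$ lives, and you only assert it: \Cref{gaptheorem} delivers $e(H_1(M_{p^n}))=e(H_1(X_{p^n}))+O(n)$, which is strictly weaker than what the theorem states, and the statement that ``the $\Zp$-homology of the peripheral tori stabilizes'' does not by itself control the order of the meridian classes inside $H_1(M_{p^n})$. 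The actual content of \cite[Theorem 2.1]{KadokamiMizusawa2008} and \cite[Theorem 4.9]{Ueki2} is precisely the computation showing that the quotient $H_1(X_{p^n})\surj H_1(M_{p^n})$ by the meridian subgroup $A_{p^n}$ changes $e$ by a quantity of the form $\mu' p^n+\lambda' n+\nu'$ for $n\gg 0$ (absorbed into the invariants), using that the branch indices and the number of components over each $K_i$ become constant along the tower. Without that computation your proof establishes only $e(H_1(M_{p^n}))=\mu p^n+\lambda n+O(n)$, not the exact formula.
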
 These $\mu,\lambda,\nu$ are called \emph{the Iwasawa invariants} of $(M_{p^n}\to M)_n$.
Furthermore, Tange and the second author established a similar formula for knot group representations \cite{TangeUeki2024MathNach}. 
The $\mu$ is a $p$-adic analogue of the Mahler measure, so it may be regarded as an invariant of a dynamical system. 
The $\lambda$ is related to the genus of a knot, as $\lambda$'s of $\Zp$-fields in number theory are analogues of the genera of Riemann surfaces. 
 
In these cases, the $\mu$ and $\lambda$ are determined by the characteristic element ${\rm char}\,\mca{H}$ of the Iwasawa module $\mca{H}=\varprojlim H_1(X_{p^n};\Zp)$ of the restriction to the link exteriors. 
Namely, the $p$-adic Weierstrass preparation theorem yields that \glssymbol{equaluptomultiplicationbyunit}${\rm char}\,\mca{H} \doteq p^\mu (T^\lambda+p(\text{lower terms}))$ in $\Lambda=\Zp[\![T]\!]$ or its constant extension. 

We aim to establish a similar formula for $\Zp^d$-covers in the subsequent sections. 

\section{$p$-adic preparations} 

In this section, we recollect several results of Cuoco--Monsky \cite{CuocoMonsky1981} and Monsky \cite{Monsky1981PS} on 
the ring $\Lambda=\Zp[\![T_1,\ldots,T_d]\!]$ 
that determines the Iwasawa invariants of $0\neq F\in \Lambda$. 

\subsection{$p$-adic Weierstrass preparation theorem} \label{ss.pW} 
In this subsection, based on Cuoco--Monsky's work \cite[Section 1]{CuocoMonsky1981}, we introduce the notion of the Iwasawa invariants of finitely generated torsion modules over Iwasawa algebra $\Lambda$ with multiple variables, 
together with a multivariable analogue of the $p$-adic Weierstrass preparation theorem {\cite[Theorem 7.3]{Washington}} that was essential in the one variable cases in determining Iwasawa invariants. 

Let \glssymbol{Gamma}$\Gamma$ be a free $\Zp$-module of rank $d$ written multiplicatively and fix a basis $\{\gamma_1, \ldots,\gamma_d\}$ of $\Gamma$. Let \glssymbol{groupring}$\Zp[\![\Gamma]\!]$ denote the complete group ring of $\Gamma$ over $\Zp$. 
Then we have so-called the Iwasawa--Serre isomorphism \[\Zp[\![\Gamma]\!]\congto \Lambda=\Zp[\![T_1,\ldots,T_d]\!];\ \gamma_i\mapsto 1+T_i,\] 
and $\Gamma$ may be identified with a multiplicative subset of $\Lambda$ generated by $\{1+T_i\mid 1\leq i\leq d\}$.  
Each element of $\GL_d(\Zp)$ induces an automorphism of $\Gamma$ that prolongs to a ring automorphism of $\Lambda$. We call such automorphisms linear automorphisms of $\Lambda$. 
Let  $\sigma \in \Gamma$ be a $p$-primitive element, that is, $\sigma\in\Gamma\backslash\Gamma^p$, where $\Gamma^p:=\{\gamma\in \Gamma\mid \gamma=\gamma'^p\mbox{ for some }\gamma'\in\Gamma\}$. Then, for an arbitrarily fixed $j$ with $1\leq j\leq d$, there is a linear automorphism mapping $\sigma$ to $1+T_j$. Let \glssymbol{Omega}$\Omega:=\mathbb{F}_p[\![T_1,\ldots,T_d]\!]$. For each $F\in\Lambda$, 
$\overline{F}\in\Omega$ denote the ${\rm mod}\ p$ reduction of $F$. 
Since $(\overline{T}_j)$ is a height 1 prime ideal of $\Omega$, so is $(\overline{\sigma -1})$. 
For each height 1 prime ideal $\mathfrak{p}$ of $\Omega$, let \glssymbol{valp}$v_{\mathfrak{p}}$ denote the associated discrete valuation. 

\begin{definition} \label{def.mulambda}
For $0\neq F\in \Lambda$, 
let \glssymbol{muf}$\mu=\mu(F)\in\Z_{\geq 0}$ denote the maximal integer satisfying $p^\mu\mid F$. 
In addition, we put $F_0:=F/p^\mu\in \Lambda$ and define 
\glssymbol{lambdaf}\[ \lambda=\lambda(F):=\sum v_{\mathfrak{p}}(\overline{F_0}), \]
where $\p$ runs through all primes of the form $(\overline{\sigma -1})$ with $\sigma$ being a $p$-primitive element of $\Gamma$. 
These $\mu,\lambda$ are called \emph{the Iwasawa invariants} of $F$. 
\end{definition} 
Let \glssymbol{algebraicclosure}$\ol{\Q}_p$ be an algebraic closure of $\Qp$ and fix an embedding $\ol{\Q}\inj \ol{\Q}_p$, so that we have \[
W=\{\xi\in\overline{\Q}_p\mid \xi^{p^n}=1\mbox{ for some }n\geq0\}.
\]
Let \glssymbol{val}$v:\overline{\Q}_p\rightarrow\Q$ denote the multiplicative  
$p$-adic valuation normalized so that $v(p)=1$ and with an unusual convention $v(0)=0$. 
For each $\zeta=(\zeta_1,\ldots,\zeta_d)\in W^d$, we write \glssymbol{fzeta}$F(\zeta-1):=F(\zeta_1-1,\ldots,\zeta_d-1)$.   
Since $v(\zeta_j-1)>0$, we have $v(F(\zeta-1))\geq 0$. For $n\geq0$, we define \glssymbol{Wn}$W(n):=\{\xi\in W\mid \xi^{p^n}=1\}$ and 
\glssymbol{sum}\[ \Sigma_n (F):=\sum_{\zeta\in{W(n)^d}}v(F(\zeta-1)).\] 

\begin{example} 
(1) If $\sigma=\gamma_1\gamma_2=(1+T_1)(1+T_2)$, then $\overline{\sigma -1}=\overline{T_1} \overline{T_2}+\overline{T}_1+\overline{T}_2.$ 

(2) 
Suppose $p\neq 2$. If $\sigma=\gamma_1^2=(1+T_1)^2$, then $\overline{\sigma -1}=\overline{T}_1^2+2\overline{T}_1=\overline{T_1(T_1+2)}.$ 
Since $T_1+2\in\Lambda^*$, one has $(\overline{\sigma -1})=(\overline{T}_1)$ as ideals. In general, if $\sigma={\gamma}_1^k$ and $p\nmid k$, then $(\overline{\sigma -1})=(\overline{T}_1)$.
\end{example}
\begin{example}\label{2255} 
(1) 
For arbitrary $m\in\Z_{\geq 0}$, we have
\[
\Sigma_n(p^m)=\sum_{\zeta\in W(n)^d} v(p^m)=\sum_{\zeta}m v(p)=m\sum_{\zeta} 1=m p^{dn}.
\]

(2) 
If we fix a primitive $p^2$-th root of unity $\xi$, then
\begin{align*}
\Sigma_2(T_1)&=\sum_{\zeta\in W(2)^d} v(\zeta_1-1)\\
&=\big(v(\xi -1)+v(\xi^2-1)+\cdots+v(\xi^{p^2}-1)\big)p^{2(d-1)}\\
&=\left(\frac{p-1}{p-1}+\frac{p(p-1)}{p(p-1)}\right)p^{2(d-1)}=2p^{2(d-1)}.
\end{align*}
In general, we have
\[
\Sigma_n(T_1)=np^{n(d-1)}.
\]

(3) 
If $\gamma_1^{e_1}\cdots\gamma_d^{e_d}\in\Gamma\backslash\Gamma^p$, then we find that
\[
\Sigma_n((1+T_1)^{e_1}\cdots(1+T_d)^{e_d}-1)=\sum_{\zeta\in W(n)^d} v(\zeta_1^{e_1}\cdots\zeta_d^{e_d}-1)=\sum_{\zeta\in W(n)^d} v(\zeta_1-1)=\Sigma_n(T_1).
\]
\end{example}
\begin{lem}[{\cite[Lemma 1.4, 1.5, 1.6]{CuocoMonsky1981}}]\label{2256}
{\rm (1)} 
Let $G\in\Lambda$ with $\overline{F}=\overline{G}\neq 0$. Then
\[
\Sigma_n(F)-\Sigma_n(G)=O(p^{(d-1)n}).
\]

{\rm (2)} 
If there exist $F_1, F_2\in\Lambda$ such that $F=F_1F_2$, then
\[
\Sigma_n(F)=\Sigma_n(F_1)+\Sigma_n(F_2)+O(p^{(d-1)n}).
\]

{\rm (3)} 
If $\mu(F)=\lambda(F)=0$, then $\Sigma_n(F)=O(p^{(d-1)n})$.
\end{lem}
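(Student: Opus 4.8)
The plan is to handle all three parts together by induction on the number of variables $d$, with the one-variable $p$-adic Weierstrass preparation theorem as the base case, since each of them is a statement about $\Sigma_n(F)=\sum_{\zeta\in W(n)^d}v(F(\zeta-1))$. The combinatorial engine I would isolate first is a \emph{zero-counting estimate}: for every $0\neq F\in\Lambda$ one has $\#\{\zeta\in W(n)^d\mid F(\zeta-1)=0\}=O(p^{(d-1)n})$. Once this and an analogous weighted estimate are in hand, the three assertions follow by bookkeeping with the ultrametric inequality and with the multiplicativity $v(ab)=v(a)+v(b)$ of the valuation.

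For the zero count I would induct on $d$. When $d=1$, Weierstrass preparation gives $F\doteq p^{\mu(F)}P(T)$ with $P$ distinguished of degree $\lambda(F)$, so $F$ has at most $\lambda(F)=O(1)$ zeros in the maximal ideal of $\ol{\Q}_p$, hence at most $\lambda(F)$ among $\{\zeta-1\mid\zeta\in W(n)\}$. For $d\geq2$, after dividing out $p^{\mu(F)}$ I may assume $\ol{F}\neq0$ in $\Omega$; writing $F=\sum_i f_i(T_1,\ldots,T_{d-1})T_d^i$, fix a coefficient $f_N$ with $\ol{f_N}\neq0$. For each fixed $\zeta'\in W(n)^{d-1}$, the one-variable series $F_{\zeta'}(T_d):=F(\zeta'-1,T_d)$ over the complete discrete valuation ring $\Zp[\zeta']$ either vanishes identically --- which forces $f_N(\zeta'-1)=0$, a condition satisfied by only $O(p^{(d-2)n})$ values of $\zeta'$ by the inductive hypothesis applied to $f_N$ --- or has at most its Weierstrass degree $\delta(\zeta')$ many zeros of the form $\zeta_d-1$. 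Thus the total count is $\leq O(p^{(d-2)n})\cdot p^n+\sum_{\zeta'}\delta(\zeta')$, and it remains to see that $\sum_{\zeta'}\delta(\zeta')=O(p^{(d-1)n})$; this is the delicate point, addressed at the end.

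Granting the zero count, together with the weighted estimate $\sum_{\zeta\in W(n)^d,\,v(F(\zeta-1))\geq1}v(F(\zeta-1))=O(p^{(d-1)n})$ valid whenever $\ol{F}\neq0$ (which I would prove by the same induction), the deductions run as follows. For (2): the identity $v(F(\zeta-1))=v(F_1(\zeta-1))+v(F_2(\zeta-1))$ holds at every $\zeta$ where neither factor vanishes, and the remaining $\zeta$ --- zeros of $F_1$ or of $F_2$ --- number $O(p^{(d-1)n})$; on that thin set one bounds the discrepancy, when $\gcd(F_1,F_2)=1$ by a resultant estimate (invoking the corresponding statement in $d-1$ variables) and in general by first extracting the common factor using unique factorisation in $\Lambda$. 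For (1): $\ol{F}=\ol{G}$ gives $F-G=pH$, so the ultrametric inequality yields $v(F(\zeta-1))=v(G(\zeta-1))$ whenever $v(F(\zeta-1))<1$; the two sums therefore differ only over $S=\{\zeta\mid v(F(\zeta-1))\geq1\}=\{\zeta\mid v(G(\zeta-1))\geq1\}$, and on $S$ the difference is controlled by the weighted estimate (note $\mu(F)=\mu(G)=0$ here, since $\ol{F}\neq0$). For (3): $\mu(F)=\lambda(F)=0$ says exactly that $\ol{F}$ is nonzero and not divisible by any height-one prime $(\ol{\sigma-1})$ with $\sigma$ a $p$-primitive element; running the same variable-by-variable induction, the $\zeta$ with $v(F(\zeta-1))\geq1$ are handled by the weighted estimate, while the many $\zeta$ with $0<v(F(\zeta-1))<1$ contribute only $O(p^{(d-1)n})$ precisely because none of those linear directions $\ol{\sigma-1}$ divides $\ol{F}$ --- in contrast with the computation $\Sigma_n(T_1)=np^{(d-1)n}$, where $\ol{T_1}=\ol{\gamma_1-1}$ is itself one of these primes.

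The hard part is the step flagged above. Over the finite residue field $\F_p$ one cannot, in general, normalise $\ol{F}$ to be regular in $T_d$ by a coordinate change that is legitimate in this setting: the substitutions $T_i\mapsto T_i+T_d^{c_i}$ that accomplish this do not preserve the set $\{\zeta-1\mid\zeta\in W(n)\}$ on which $\Sigma_n$ is defined, while the linear automorphisms of $\Lambda$, which do preserve it, are not enough over $\F_p$ (for instance $\ol{T_1^p-T_1T_2^{p-1}}$ vanishes on all of $\F_p^2$). One therefore cannot simply reduce to the distinguished-polynomial case; the per-fiber Weierstrass degrees $\delta(\zeta')$ and the weighted zero counts have to be controlled by tracking $p$-adic valuations directly through the induction, and extracting the uniform $O(p^{(d-1)n})$ bounds from this analysis is the real technical content of \cite{CuocoMonsky1981}.
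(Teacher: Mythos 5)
The paper does not prove this lemma at all: it states it with the citation to \cite{CuocoMonsky1981}, Lemmas 1.4--1.6, and adds only the one-line remark that it ``can be shown by using the results of Monsky on $\Lambda$.'' So there is no in-paper argument to match yours against; the only fair comparison is with the strategy of the cited source, which your sketch does reconstruct in outline. Your reductions are sound as far as they go: the ultrametric argument for (1) (writing $F-G=pH$, so $v(F(\zeta-1))=v(G(\zeta-1))$ off the set where $v(F(\zeta-1))\geq 1$), the additivity of $v$ off the zero loci for (2), and the role of the primes $(\overline{\sigma-1})$ in (3) are all the right moving parts, and you correctly diagnose why a naive Weierstrass normalisation over $\F_p$ cannot be pushed through by linear automorphisms alone.

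The genuine gap is that every one of your three deductions rests on estimates you do not establish: the zero count $\#\{\zeta\in W(n)^d\mid F(\zeta-1)=0\}=O(p^{(d-1)n})$, the weighted bound $\sum_{v(F(\zeta-1))\geq 1}v(F(\zeta-1))=O(p^{(d-1)n})$ for $\overline{F}\neq 0$, and the fiberwise bound $\sum_{\zeta'}\delta(\zeta')=O(p^{(d-1)n})$, which you explicitly flag as ``the real technical content'' and leave to \cite{CuocoMonsky1981}. These are precisely the results of Monsky on $\Lambda$ that the paper invokes, so deferring them means the lemma is not actually proved. Note also that in (2) your handling of the exceptional set is thinner than you suggest: with the paper's convention $v(0)=0$, the discrepancy at a zero of $F_1$ is the full value $v(F_2(\zeta-1))$, so the bound you need is $\sum_{F_1(\zeta-1)=0}v(F_2(\zeta-1))=O(p^{(d-1)n})$ --- a mixed estimate that does not follow from the zero count alone, and the appeal to ``a resultant estimate'' is not a proof. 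In (3) the claim that the $\zeta$ with $0<v(F(\zeta-1))<1$ contribute only $O(p^{(d-1)n})$ ``precisely because'' no prime $(\overline{\sigma-1})$ divides $\overline{F_0}$ is the entire content of Cuoco--Monsky's Lemma 1.6 and is asserted rather than argued. If you want a self-contained route, the cleanest is to take Monsky's \cite[Theorem 5.6]{Monsky1981PS} (Proposition 3.8 in this paper) as the black box: applied to $S=W^d$ it gives $\Sigma_n(F)=f(p^n,n)$ for a polynomial $f$ of total degree $\leq d$ and $V$-degree $\leq 1$, after which (1)--(3) reduce to identifying the top coefficients, which is a far lighter task than the induction you outline.
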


One can show \Cref{2256} by using the results of Monsky {\cite{Monsky1981PS}} on $\Lambda$.

\begin{prop} [{\cite[Theorem 1.7]{CuocoMonsky1981}}]\label{2259} 
Let $0\neq F \in \Lambda$. 
Then we have
\[
\Sigma_n(F)=\left(\mu(F) p^n+\lambda(F) n+O(1)\right)p^{(d-1)n}.
\]
\end{prop}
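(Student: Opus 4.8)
The plan is to reduce \Cref{2259} to the three parts of \Cref{2256} by a Weierstrass-type normalization of $F$, treating the $\mu$-part and the $\lambda$-part separately. First I would dispose of the $\mu$-contribution: writing $F = p^{\mu(F)} F_0$ with $p \nmid F_0$, part (2) of \Cref{2256} gives
\[
\Sigma_n(F) = \Sigma_n(p^{\mu(F)}) + \Sigma_n(F_0) + O(p^{(d-1)n}),
\]
and by \Cref{2255}(1) the first term is exactly $\mu(F) p^{dn} = \mu(F) p^n \cdot p^{(d-1)n}$. Since $\mu(F_0)=0$ and $\lambda(F_0)=\lambda(F)$ (the latter because $\overline{F_0}$ is unchanged), it remains to show $\Sigma_n(F_0) = (\lambda(F) n + O(1)) p^{(d-1)n}$, i.e.\ we are reduced to the case $\mu(F)=0$.

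**Next I would handle the $\lambda$-part by splitting off the linear factors at the height-one primes $(\overline{\sigma-1})$.** When $\mu(F)=0$, the definition gives $\lambda(F) = \sum_{\mathfrak p} v_{\mathfrak p}(\overline F)$, a finite sum over primes of the form $(\overline{\sigma-1})$, $\sigma\in\Gamma\setminus\Gamma^p$. For each such $\mathfrak p$ with $v_{\mathfrak p}(\overline F) = e_{\mathfrak p} \geq 1$, one can lift a generator to an element $\sigma - 1 \in \Lambda$ (with $\sigma$ a monomial $(1+T_1)^{a_1}\cdots(1+T_d)^{a_d}$, $\gcd(a_i,p)$ suitably coprime) and, after a linear automorphism of $\Lambda$ (which does not change any $\Sigma_n$, since it merely permutes the $W(n)^d$ indexing set), arrange $\sigma - 1 = T_j$ for some $j$. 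Using \Cref{2256}(1) to pass from $F$ to a convenient $G$ with the same mod-$p$ reduction, and \Cref{2256}(2) repeatedly to peel off one factor $(\sigma - 1)$ at a time, I would get
\[
\Sigma_n(F) = \sum_{\mathfrak p} e_{\mathfrak p}\,\Sigma_n(\sigma_{\mathfrak p} - 1) + \Sigma_n(R) + O(p^{(d-1)n}),
\]
where $R\in\Lambda$ is a remainder with $\mu(R)=\lambda(R)=0$. By \Cref{2255}(2),(3) each $\Sigma_n(\sigma_{\mathfrak p}-1) = \Sigma_n(T_1) = n p^{(d-1)n}$, and by \Cref{2256}(3) we have $\Sigma_n(R) = O(p^{(d-1)n})$. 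Collecting terms yields $\Sigma_n(F) = \lambda(F)\, n\, p^{(d-1)n} + O(p^{(d-1)n})$, which together with the $\mu$-computation gives the claimed formula.

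**The main obstacle** is making the "peel off linear factors" step rigorous inside the non-UFD-friendly but still reasonably behaved ring $\Omega=\mathbb F_p[\![T_1,\ldots,T_d]\!]$: one needs that $\overline F$ factors, up to a unit and up to primes of height $\geq 2$, as a product of the $(\overline{\sigma_{\mathfrak p}-1})^{e_{\mathfrak p}}$ times something coprime to all of them, and that this factorization can be lifted to $\Lambda$ in a way compatible with \Cref{2256}(1),(2). This is exactly where Monsky's structural results on $\Lambda$ (invoked right after \Cref{2256}) do the work — in particular that only the "linear" height-one primes $(\overline{\sigma-1})$ contribute a term of size $n p^{(d-1)n}$ to $\Sigma_n$, while all other height-one primes contribute only $O(p^{(d-1)n})$, which is the content packaged into \Cref{2256}(3). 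A secondary subtlety is verifying that linear automorphisms of $\Lambda$ genuinely leave $\Sigma_n$ invariant: an element of $\GL_d(\Zp)$ induces an automorphism of $\Gamma$ and hence a bijection of $W(n)^d$ (since $W(n)^d \cong (\mathbb Z/p^n)^d$ as a $\GL_d(\Zp)$-set), so $v(F(\zeta-1))$ is merely reindexed and the sum is unchanged.
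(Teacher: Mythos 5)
Your proposal is correct and follows essentially the same route as the paper: factor out $p^{\mu(F)}$ (handled by \Cref{2255}(1)), factor $\overline{F}_0$ into irreducibles in $\Omega$, split $\Sigma_n$ over the factors via \Cref{2256}(1),(2), and then use \Cref{2255}(2),(3) for the linear factors $(\overline{\sigma-1})$ and \Cref{2256}(3) for everything else. The only quibble is that your ``main obstacle'' is not one: $\Omega=\F_p[\![T_1,\ldots,T_d]\!]$ is a genuine UFD, so $\overline{F}_0$ factors honestly into irreducibles and no height-$\geq 2$ caveat is needed.
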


\begin{proof}
Write $F=p^{\mu(F)}F_0$ with $\overline{F}_0\neq 0$. Since $\Omega$ is a UFD, 
there exist $F_1,\ldots, F_k\in\Lambda$ such that $\overline{F}_i$'s are irreducible in $\Omega$ and 
$\overline{F}_0=\overline{F}_1\cdots\overline{F}_k$ holds. 
By \Cref{2256} (1) and (2), we have
\[
\Sigma_n(F)=\Sigma_n(p^{\mu(F)})+\Sigma_n(F_1)+\cdots+\Sigma_n(F_k)+O(p^{(d-1)n}).
\]
By \Cref{2256} (3), if $\lambda(F_j)=0$, then $\Sigma_n(F_j)=O(p^{(d-1)n})$. By \Cref{2255} (1), (2), (3), we have
\begin{align*}
\Sigma_n(F)&=\mu(F) p^{dn}+\lambda(F) n p^{(d-1)n}+O(p^{(d-1)n})\\
&=\left(\mu(F) p^n+\lambda(F) n+O(1)\right)p^{(d-1)n}. \qedhere 
\end{align*}
\end{proof}
\begin{remark} 
In the $d=1$ case, for any $0\neq F\in \Zp[\![T]\!]$, the $p$-adic Weierstrass preparation theorem yields a unique description $F\doteq p^\mu(T^\lambda+(\text{lower terms}))$ that determines the $\mu$ and $\lambda$ of $F$. 
\Cref{2259} ensures that \Cref{def.mulambda} is a natural generalization for $d\geq 2$ cases. 
\end{remark}

\subsection{Results of Monsky}
In this subsection, we briefly review a result of Monsky \cite{Monsky1981PS}, which is vital for our proof of \Cref{mr.ZHS}.
Let $E_d$ denote the $\Zp$-module $\Hom(W^d,W)$.

\begin{definition}
$S\subset W^d$ is said to be \emph{semi-algebraic} if it is a finite union of subsets each of which is defined by finitely many conditions of the following three types
\begin{enumerate}[label=(\alph*)]
\item[i)] $\tau(\zeta)=\varepsilon$,
\item[ii)] $\tau(\zeta)\neq\varepsilon$,
\item[iii)] $\log_p|\langle\tau(\zeta)\rangle|\geq \log_p|\langle\tau'(\zeta)\rangle|+r$,
\end{enumerate}
where $\tau,\tau'\in E_d$, $\varepsilon\in W$, and $r\in\Z$.
\end{definition}

\begin{lem}
$(W\setminus\{1\})^d$ is semi-algebraic. 
\end{lem}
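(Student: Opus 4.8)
The plan is to show that the set $(W\setminus\{1\})^d$ can be written as a finite union of subsets each cut out by conditions of the allowed three types, so that it qualifies as semi-algebraic. The key observation is that $W\setminus\{1\}$ is itself a single condition of type (ii): taking $\tau=\mathrm{id}\colon W\to W$ (viewing $W$ as a $\Zp$-module and $\mathrm{id}\in E_1$, or more precisely the coordinate projection when we work in $W^d$) and $\varepsilon=1\in W$, the condition $\tau(\zeta)\neq\varepsilon$ describes exactly the nontrivial roots of unity.

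Concretely, for $1\le i\le d$ let $\pi_i\colon W^d\to W$ denote the $i$-th coordinate projection; this is an element of $E_d=\Hom(W^d,W)$. Then
\[
(W\setminus\{1\})^d=\bigcap_{i=1}^d\{\zeta\in W^d\mid \pi_i(\zeta)\neq 1\},
\]
and each set in the intersection is defined by a single condition of type (ii) with $\tau=\pi_i$ and $\varepsilon=1$. Since a subset of $W^d$ defined by finitely many conditions of the three listed types is by definition a ``building block'' of semi-algebraic sets, and a single such building block (with no union needed, i.e.\ a union of one piece) is semi-algebraic, we conclude that $(W\setminus\{1\})^d$ is semi-algebraic.

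First I would make sure the notion ``defined by finitely many conditions'' in the definition is read as a finite conjunction, so that the $d$-fold intersection above is itself one of the defining building blocks; then the finite-union clause in the definition is satisfied trivially by a union of a single set. I do not anticipate any real obstacle here: the only thing to be careful about is the bookkeeping that each coordinate projection $\pi_i$ genuinely lies in $E_d=\Hom(W^d,W)$ and that $1\in W$ is an allowed value for $\varepsilon$, both of which are immediate from the definitions. Hence the lemma follows.
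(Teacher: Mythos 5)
Your proof is correct and is essentially identical to the paper's: both express $(W\setminus\{1\})^d$ as $\bigcap_{i=1}^d\{\zeta\in W^d\mid \pi_i(\zeta)\neq 1\}$, a single block cut out by $d$ conditions of type (ii) with the coordinate projections $\pi_i\in E_d$. The extra bookkeeping you add about reading the definition as a finite conjunction is fine but not needed beyond what the paper records.
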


\begin{proof}
For each $1\leq i\leq d$, let $\pi_i$ denote the projection $W^d\surj W$ to the $i$-th component. Then we have
\[
(W\setminus\{1\})^d=\bigcap_{1\leq i\leq d}\{\zeta\in W^d\mid \pi_i(\zeta)\neq 1\}. \qedhere 
\]
\end{proof}

\begin{prop}[{\cite[Theorem 5.6]{Monsky1981PS}}]\label{22521}
Let $S\subset W^d$ be a semi-algebraic set and let $F\in\Lambda$. Then there exists a unique $f(U,V)\in\Q[U,V]$ with $\deg_{V}f\leq1$ and total degree $\deg f\leq d$ such that, for every sufficiently large $n$, 
\[\sum_{\zeta\in S\cap W(n)^d} v(F(\zeta-1))=f(p^n, n)\] 
holds. 
\end{prop}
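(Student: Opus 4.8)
The plan is to reduce the statement to Monsky's machinery on the $\Zp$-module $E_d = \Hom(W^d, W)$ and the valuation function on $W^d$, and to proceed in the following order. First I would observe that the left-hand side $\sum_{\zeta \in S \cap W(n)^d} v(F(\zeta - 1))$ is a sum over a finite set of a function that is itself (locally) an integer-affine combination of terms of the form $\log_p|\langle \tau(\zeta)\rangle|$ for $\tau \in E_d$. Concretely, after a linear automorphism of $\Lambda$ one writes $F = p^{\mu} F_0$ with $\overline{F_0} \neq 0$, factors $\overline{F_0}$ into irreducibles in the UFD $\Omega = \F_p[\![T_1,\ldots,T_d]\!]$, and uses the fact (already invoked in the proof of \Cref{2259} via \Cref{2256}) that the valuation $v(F(\zeta-1))$ decomposes additively up to controlled error into contributions from each irreducible factor, each of which has the shape of a ``distance to a subtorus'' function that is exactly the kind of expression appearing in condition (iii) of the definition of semi-algebraic.

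Next I would handle the combinatorial heart: for a fixed $\tau \in E_d$ and a fixed threshold, the set of $\zeta \in W(n)^d$ satisfying an inequality of type (iii) is itself semi-algebraic, and its cardinality, as well as the sum over it of $\log_p|\langle \tau(\zeta)\rangle|$, is given by a polynomial in $p^n$ and $n$ of the asserted degree. This is where one quotes \cite[Theorem 5.6]{Monsky1981PS} directly — Monsky proves precisely that for semi-algebraic $S$ and $F \in \Lambda$ the counting/summing function stabilizes to a polynomial $f(U,V) \in \Q[U,V]$ with $\deg_V f \le 1$ and $\deg f \le d$. So strictly speaking the proof is: note that $S$ is semi-algebraic (by the preceding Lemma when $S = (W\setminus\{1\})^d$, and by hypothesis in general), note that $S \cap W(n)^d$ is the set of $p^n$-torsion points of $S$, and apply Monsky's theorem verbatim. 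The degree bounds $\deg_V f \le 1$ (linear in $n$) and $\deg f \le d$ (the $p^n$-degree at most $d$) come from the $d$-dimensionality of $W^d$ and the structure of the valuation, exactly as in the proof of \Cref{2259}.

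The uniqueness of $f$ is immediate: a polynomial in $\Q[U,V]$ that agrees with the prescribed values at infinitely many points $(p^n, n)$ with $n \to \infty$ is determined, since $\deg_V f \le 1$ forces $f(p^n, n) = a(p^n) + n\,b(p^n)$ with $a, b \in \Q[U]$, and knowing $a(p^n) + n\, b(p^n)$ for all large $n$ recovers $a$ and $b$ by finite differences in $n$ followed by interpolation in $p^n$. The main obstacle is not in the reduction but in the fact that one is essentially repackaging a nontrivial theorem of Monsky; the honest work — were one to reprove it — would lie in establishing that the valuation function $\zeta \mapsto v(F(\zeta-1))$ is, up to $O(p^{(d-1)n})$ error on $W(n)^d$, governed by finitely many affine-linear inequalities in the $\log_p|\langle\tau(\zeta)\rangle|$, and that summing a piecewise integer-affine function over the $p^n$-torsion points of a semi-algebraic set produces a polynomial in $(p^n, n)$. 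Since \cite[Theorem 5.6]{Monsky1981PS} is available, I would simply cite it, with the remark that the semi-algebraicity of $(W\setminus\{1\})^d$ (the case needed in \Cref{mr.ZHS}) is the Lemma just proved.
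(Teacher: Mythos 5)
Your proposal is correct and matches the paper's treatment: the paper states this proposition as a direct citation of Monsky's Theorem 5.6 in \cite{Monsky1981PS} and supplies no proof of its own, exactly as you conclude one should do. The additional sketch of what a reproof would require and the uniqueness remark are reasonable but go beyond what the paper records.
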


\section{Characteristic elements} \label{sec.char} 

In this section, we study some basic properties of the Fitting ideals and the characteristic elements of modules 
over an integrally closed Noetherian domain $R$, and their behavior in the completions. 
This will be used to discuss the Alexander polynomials later. 

We intend to state the properties in a slightly general setting, so that they are applicable to, for instance, the Alexander modules of knot group representations over the ring of integers of a number field that is not necessarily a UFD.

\subsection{Fitting ideals and Characteristic ideals} 
Let \glssymbol{ring}$R$ be a Noetherian integrally closed domain and 
let \glssymbol{module}$\mathcal{M},\mathcal{N}$ be finitely generated $R$-modules. 
We say that $\mca{M}$ is \emph{pseudo-isomorphic} to $\mca{N}$ if 
there is an $R$-homomorphism $f:\mathcal{M}\to\mathcal{N}$
whose localization $f_\p:\mathcal{M}_{\mathfrak{p}}\to \mathcal{N}_{\mathfrak{p}}$ at every hight 1 prime $\p$ in $R$ is an isomorphism.  

Since $R$ is Noetherian, we may choose an exact sequence
\[
R^r\to R^s\to \mathcal{M}\to 0.
\]
The ideal of $R$ generated by the $s$-subdeterminants of the presentation matrix of $R^r\to R^s$ is called \emph{the Fitting ideal} of $\mathcal{M}$, and we denote it by \glssymbol{fitt}${\rm Fitt}(\mathcal{M})$. If $r<s$, then we define ${\rm Fitt}(\mathcal{M})=0$. It is known that the definition of Fitting ideals is independent of the choices of exact sequences. Details of the theory of Fitting ideals are in \cite[Chapter 3]{Northcott1976FFR}.

The \emph{divisorial hull} \glssymbol{dh}d.h.$\mathfrak{a}$ of a fractional ideal $\mathfrak{a}$ of $R$ is the intersection of the principal ideals that contain $\mathfrak{a}$. We refer to \cite[Lemma 0.1]{Ueki10} for basic properties of d.h.$\mathfrak{a}$. 
If $R$ is a UFD, then d.h.$\mathfrak{a}$ coincides with the principal ideal generated by the gcd of generators of $\mf{a}$.

\begin{prop}\label{11241}
Let $\mathcal{M}$ be a finitely generated $R$-module. If ${\rm d.h.}{\rm Fitt}\,\mathcal{M} \neq 0$, then $\mathcal{M}$ is a torsion $R$-module.
\end{prop}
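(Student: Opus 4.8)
The plan is to show the contrapositive: if $\mathcal{M}$ is not $R$-torsion, then ${\rm d.h.}\,{\rm Fitt}\,\mathcal{M}=0$. First I would recall that a finitely generated module $\mathcal{M}$ over a domain $R$ is torsion if and only if $\mathcal{M}\otimes_R {\rm Frac}(R)=0$, equivalently if and only if its rank $\rho:=\dim_{{\rm Frac}(R)}\mathcal{M}\otimes_R{\rm Frac}(R)$ is zero. So assume $\rho\geq 1$. Choose a finite presentation
\[
R^r\xrightarrow{\ A\ } R^s\to \mathcal{M}\to 0,
\]
with $A$ an $s\times r$ matrix over $R$. Tensoring with the field ${\rm Frac}(R)$ is exact, so the rank of $A$ over ${\rm Frac}(R)$ equals $s-\rho\leq s-1$. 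Hence every $s\times s$ minor of $A$ vanishes already over ${\rm Frac}(R)$, and therefore vanishes in $R$; if $r<s$ there are no such minors and ${\rm Fitt}\,\mathcal{M}=0$ by definition. In either case ${\rm Fitt}\,\mathcal{M}=0$, whence ${\rm d.h.}\,{\rm Fitt}\,\mathcal{M}=0$, since the divisorial hull of the zero ideal is zero (the zero ideal is the intersection of all principal ideals containing it — in particular it is contained in the principal ideal $(0)$). This is exactly the negation of the hypothesis, so the contrapositive is established and the proposition follows.

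The only mild subtlety worth spelling out is the claim that the rank of the presentation matrix over the fraction field is $s-\rho$: this is just the rank--nullity computation applied to the exact sequence $({\rm Frac}(R))^r\to({\rm Frac}(R))^s\to\mathcal{M}\otimes_R{\rm Frac}(R)\to 0$ of vector spaces, using that localization at the generic point is exact and that Fitting ideals commute with base change, so the image of the presentation matrix has dimension $s-\rho$. Since $\rho\geq 1$, this dimension is at most $s-1$, so no $s\times s$ minor can be nonzero.

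I do not expect a genuine obstacle here; the statement is essentially the standard fact that the $s$-th Fitting ideal detects positive rank, repackaged through the divisorial hull. The one point requiring a line of justification is independence of ${\rm Fitt}\,\mathcal{M}$ from the chosen presentation, which is cited from \cite[Chapter 3]{Northcott1976FFR} and used implicitly when we pick a convenient presentation, together with the elementary remark about ${\rm d.h.}$ of the zero ideal, for which one may refer to \cite[Lemma 0.1]{Ueki10}.
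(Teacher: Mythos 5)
Your proof is correct, but it takes a different route from the paper's. The paper argues directly in the forward direction using the standard containment ${\rm Fitt}\,\mathcal{M}\subset{\rm Ann}_R\mathcal{M}$ (Northcott): if ${\rm d.h.}{\rm Fitt}\,\mathcal{M}\neq 0$ then ${\rm Fitt}\,\mathcal{M}\neq 0$, hence ${\rm Ann}_R\mathcal{M}\neq 0$, and a nonzero annihilator over a domain forces $\mathcal{M}$ to be torsion. You instead prove the contrapositive by a rank computation at the generic point: positive rank of $\mathcal{M}\otimes_R{\rm Frac}(R)$ forces the presentation matrix to have rank at most $s-1$ over the fraction field, so all $s\times s$ minors vanish and ${\rm Fitt}\,\mathcal{M}=0$. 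Both hinge on standard properties of the zeroth Fitting ideal, but on different ones: the paper's appeal to ${\rm Fitt}\subset{\rm Ann}$ is a one-line citation, while your argument is more self-contained (exactness of localization plus linear algebra over a field) at the cost of a slightly longer write-up. Each correctly handles the degenerate case: you note that $r<s$ gives ${\rm Fitt}\,\mathcal{M}=0$ by definition, and both treatments use that ${\rm d.h.}$ of the zero ideal is zero, which matches the paper's convention. No gap.
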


\begin{proof} 
By the definition of the divisorial hull, if ${\rm Fitt}\,\mathcal{M}=0$, then ${\rm d.h.}{\rm Fitt}\,\mathcal{M}=0$. 
By the theory of Fitting ideals \cite[Chapter 3, Section 3, Theorem 5]{Northcott1976FFR}, we have \glssymbol{annihilator}${\rm Fitt}\,\mathcal{M}\subset{\rm Ann}_R\mathcal{M}$ (and ${\rm Fitt}\,\mathcal{M}={\rm Ann}_R\mathcal{M}$ holds if $\mathcal{M}$ is generated by one element). 
Thus, the assumption ${\rm d.h.}{\rm Fitt}\,\mathcal{M}\neq 0$ implies ${\rm Fitt}\,\mathcal{M}\neq 0$, and hence ${\rm Ann}_R\mathcal{M}\neq 0$. Therefore, $\mathcal{M}$ is a torsion $R$-module. 
\end{proof}

\begin{definition} \label{def.char}
An $R$-module of the form $\bigoplus_{i=1}^s R/\mathfrak{p}_i^{m_i}$, where $\mathfrak{p}_i$ are height 1 prime ideals in $R$, is called an \emph{elementary $R$-module}.

Let $\mathcal{M}$ be a finitely generated $R$-module. If $\mathcal{M}$ is a torsion $R$-module, then by \cite[Proposition 3.1.6]{Sharifi.IT2023OCT} or \cite[Theorem 2.3.6]{Ochiai2023Iwasawa1en}, there is a unique elementary $R$-module $\mathcal{E}:=\bigoplus_{i=1}^s R/\mathfrak{p}_i^{m_i}$ such that $\mathcal{M}$ is pseudo-isomorphic to $\mathcal{E}$. 
We define the characteristic ideal of $\mathcal{M}$ by \glssymbol{Char}${\rm Char}\,\mathcal{M}=\prod_i \mathfrak{p}_i^{m_i}$. If instead $\mathcal{M}$ is not torsion, then we define ${\rm Char}\,\mathcal{M}=0$. 
If $R$ is a UFD, then every height 1 prime ideal is principal, and \emph{the characteristic element} of $\mathcal{M}$ is defined to be a generator of the principal ideal ${\rm Char}\,\mathcal{M}$ up to multiplication by units.
\end{definition} 

\begin{prop}\label{11242}
Let $\mathcal{M}$ be a finitely generated torsion $R$-module. Then we have
\[
{\rm Char}\,\mathcal{M}= {\rm d.h.}{\rm Fitt}\,\mathcal{M}.
\]
\end{prop}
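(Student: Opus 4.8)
The plan is to prove the equality ${\rm Char}\,\mathcal{M}={\rm d.h.}{\rm Fitt}\,\mathcal{M}$ by reducing everything to height $1$ primes, exploiting that both sides are \emph{divisorial} ideals (intersections of principal fractional ideals, equivalently determined by their localizations at height $1$ primes) in the Noetherian integrally closed domain $R$. Recall that in such an $R$ one has $R=\bigcap_{\mathrm{ht}\,\mathfrak{p}=1} R_{\mathfrak{p}}$, each $R_{\mathfrak{p}}$ is a DVR, and a divisorial ideal is recovered from the family of its valuations $v_{\mathfrak{p}}$. So it suffices to check, for each height $1$ prime $\mathfrak{p}$, that $v_{\mathfrak{p}}({\rm Char}\,\mathcal{M})=v_{\mathfrak{p}}({\rm d.h.}{\rm Fitt}\,\mathcal{M})$, after first observing that both sides are genuinely divisorial: the characteristic ideal $\prod_i \mathfrak{p}_i^{m_i}$ is a product of height $1$ primes hence divisorial, and ${\rm d.h.}{\rm Fitt}\,\mathcal{M}$ is divisorial by construction.

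First I would dispose of the localization step. Fix a height $1$ prime $\mathfrak{p}$; then $R_{\mathfrak{p}}$ is a DVR, in particular a PID. Fitting ideals commute with localization (this is immediate from the determinantal description applied to the localized presentation $R_{\mathfrak{p}}^r\to R_{\mathfrak{p}}^s\to \mathcal{M}_{\mathfrak{p}}\to 0$), so ${\rm Fitt}\,(\mathcal{M}_{\mathfrak{p}})=({\rm Fitt}\,\mathcal{M})_{\mathfrak{p}}$, and over a DVR the divisorial hull of a nonzero ideal is just the ideal generated by a generator of least valuation, i.e.\ $v_{\mathfrak{p}}({\rm d.h.}{\rm Fitt}\,\mathcal{M})=v_{\mathfrak{p}}({\rm Fitt}\,(\mathcal{M}_{\mathfrak{p}}))$. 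Similarly, the pseudo-isomorphism $\mathcal{M}\to\mathcal{E}=\bigoplus_i R/\mathfrak{p}_i^{m_i}$ localizes to an isomorphism $\mathcal{M}_{\mathfrak{p}}\cong\mathcal{E}_{\mathfrak{p}}=\bigoplus_{i:\,\mathfrak{p}_i=\mathfrak{p}} R_{\mathfrak{p}}/\mathfrak{p}^{m_i}$ (all other summands vanish), so $v_{\mathfrak{p}}({\rm Char}\,\mathcal{M})=\sum_{i:\,\mathfrak{p}_i=\mathfrak{p}} m_i$. Thus the whole statement reduces to the single assertion: for a finitely generated torsion module $N$ over a DVR with uniformizer $\pi$, writing $N\cong\bigoplus_j R_{\mathfrak{p}}/(\pi^{a_j})$, one has ${\rm Fitt}\,N=(\pi^{\sum_j a_j})$. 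This is the classical computation: taking the obvious square presentation matrix $\mathrm{diag}(\pi^{a_1},\dots,\pi^{a_t})$ gives the maximal-minor ideal $(\pi^{\sum a_j})$, and one checks invariance under change of presentation either by citing \cite[Chapter 3]{Northcott1976FFR} or by the standard argument that the Fitting ideal is well defined.

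The one subtlety to handle carefully is the torsion-vs.-non-torsion bookkeeping, so that the degenerate cases match. If $\mathcal{M}$ is torsion then ${\rm d.h.}{\rm Fitt}\,\mathcal{M}\neq 0$: indeed ${\rm Fitt}\,\mathcal{M}\subset{\rm Ann}_R\mathcal{M}$ which is nonzero, so ${\rm Fitt}\,\mathcal{M}\neq 0$ and its divisorial hull is a nonzero (divisorial, hence height $\le 1$ supported) ideal — and conversely \Cref{11241} already records that ${\rm d.h.}{\rm Fitt}\,\mathcal{M}\neq0$ forces torsionness, so under our hypothesis both sides are legitimately nonzero divisorial ideals and the localization-at-each-height-$1$-prime comparison above applies verbatim. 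One should also note that for all but finitely many height $1$ primes $\mathfrak{p}$ (namely those not among the $\mathfrak{p}_i$), $\mathcal{M}_{\mathfrak{p}}=0$ so both valuations are $0$, which is what makes the two divisorial ideals agree globally rather than merely at each prime. I do not expect a serious obstacle here; the only thing requiring mild care is making the equality of \emph{divisorial ideals} (not merely of their valuation data) airtight, which is exactly where one invokes that both ${\rm Char}\,\mathcal{M}$ (a product of height $1$ primes) and ${\rm d.h.}{\rm Fitt}\,\mathcal{M}$ are divisorial, together with the fact that a divisorial ideal in $R$ is determined by its localizations at height $1$ primes — a standard property recorded, e.g., in \cite[Lemma 0.1]{Ueki10}.
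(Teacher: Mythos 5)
Your proposal is correct and follows essentially the same route as the paper's proof: localize at each height $1$ prime, use that Fitting ideals commute with localization and that the pseudo-isomorphism becomes an isomorphism after localizing, compute the Fitting ideal of the elementary module, and conclude via the characterization of the divisorial hull as the intersection of the localizations at height $1$ primes (the paper's \cite[Lemma 0.1]{Ueki10}). The only cosmetic difference is that you compute ${\rm Fitt}$ of the elementary module locally over each DVR, while the paper computes ${\rm Fitt}\big(\bigoplus_i R/\mathfrak{p}_i^{m_i}\big)=\prod_i\mathfrak{p}_i^{m_i}$ globally and then localizes; both arguments rely on the same implicit fact that ${\rm Char}\,\mathcal{M}=\prod_i\mathfrak{p}_i^{m_i}$ is itself divisorial.
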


\begin{proof}
Since $\mathcal{M}$ is pseudo-isomorphic to an elementary $R$-module $\mathcal{E}:=\bigoplus_{i=1}^s R/\mathfrak{p}_i^{m_i}$, for each height 1 prime ideal $\mathfrak{p}$ in $R$, we have
\[
\mathcal{M}_{\mathfrak{p}}\cong (\bigoplus_i R/\mathfrak{p}_i^{m_i})_{\mathfrak{p}}.
\]
This implies
\[
({\rm Fitt}\,\mathcal{M})_{\mathfrak{p}}={\rm Fitt}(\mathcal{M}_{\mathfrak{p}})={\rm Fitt}\big((\bigoplus_i R/\mathfrak{p}_i^{m_i})_{\mathfrak{p}}\big)=\big({\rm Fitt}\bigoplus_i R/\mathfrak{p}_i^{m_i}\big)_{\mathfrak{p}}.
\]
Since ${\rm Fitt}(\bigoplus R/{\mathfrak{p}_i^{m_i}})=\prod_i \mathfrak{p}_i^{m_i}$, we have
\[
({\rm Fitt}\,\mathcal{M})_{\mathfrak{p}}=\mathfrak{p}^{\sum_{\mathfrak{p}_i=\mathfrak{p}} m_i}.
\]
By \cite[Lemma 0.1 (1)]{Ueki10}, %\cite[Lemma 3.2]{Hillman2}, 
we have
\[
{\rm d.h.}{\rm Fitt}\,\mathcal{M}=\bigcap_{\mathfrak{p}}({\rm Fitt}\,\mathcal{M})_{\mathfrak{p}}.
\]
Therefore, we obtain
\[
{\rm d.h.}{\rm Fitt}\,\mathcal{M}={\rm d.h.}\big(\prod_i\mathfrak{p}_i^{m_i}\big)={\rm d.h.}{\rm Char}\,\mathcal{M}={\rm Char}\,\mathcal{M}. \qedhere 
\]
\end{proof}

\subsection{Completions} 
Here we discuss the completions of modules and the characteristic ideals. 

\begin{lem}\label{01162}
Let \glssymbol{maximalideal}$\mathfrak{m}$ be a maximal ideal of $R$ and let \glssymbol{rcompletion}$\widehat{R}$ denote the $\mathfrak{m}$-completion of $R$. Suppose $R$ is a UFD. Let $a_1,\ldots,a_n$ be elements in $R$ and let $g$ be their greatest common divisor in $R$. Then $g$ is their greatest common divisor in $\widehat{R}$ as well.
\end{lem}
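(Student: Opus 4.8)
The statement is that gcd is preserved under $\mathfrak{m}$-adic completion when $R$ is a Noetherian UFD. The plan is to reduce to the key case of two elements by induction, and then to the case of testing whether a single element divides another after completion. The essential claim I would isolate is: \emph{if $b \in R$ and $b \mid a$ in $\widehat{R}$, then $b \mid a$ in $R$} (faithful flatness of $\widehat{R}$ over $R$, or rather the fact that $R \to \widehat R$ is a flat local-ish map that detects divisibility). Indeed, $b \mid a$ in $\widehat R$ means $a \in b\widehat R = b R \otimes_R \widehat R$; since $\widehat R$ is flat over $R$ and $\widehat R$ is faithfully flat over $R$ when we complete at a maximal ideal (here one must be slightly careful: $\widehat R$ is faithfully flat over the localization $R_{\mathfrak m}$, not over $R$ itself), we get $bR = b\widehat R \cap R$, hence $b \mid a$ in $R_{\mathfrak m}$, and then one clears the discrepancy.

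First I would set up the two-element reduction: write $g = \gcd(a_1,\dots,a_n)$ and $g' = \gcd_{\widehat R}(a_1,\dots,a_n)$. Since $g \mid a_i$ in $R$, certainly $g \mid a_i$ in $\widehat R$, so $g \mid g'$ in $\widehat R$. For the reverse, it suffices to show $g' \mid g$ in $\widehat R$, which since we will show divisibility descends, amounts to showing the $R$-gcd is already a gcd in $\widehat R$; concretely I would show that any common divisor of the $a_i$ in $\widehat R$ divides $g$ in $\widehat R$. Because $\widehat R$ is again a Noetherian domain, and (one must check or cite) the completion of a Noetherian UFD along a maximal ideal is again a UFD — or at least one only needs that $\widehat R$ is a normal Noetherian domain, so gcd's of the relevant elements exist — the hard content is purely the divisibility transfer. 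The cleanest route: pick a common divisor $h$ of $a_1,\dots,a_n$ in $\widehat R$. We want $h \mid g$. Since $g$ is an $R$-linear combination $g = \sum c_i a_i$ with $c_i \in R$ (as $R$ is a UFD with $n$ elements, $g$ generates the same ideal as... no — careful, in a UFD the gcd need not be an $R$-linear combination of the $a_i$).

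So the linear-combination trick fails in general UFDs, and that is where I expect the main obstacle to lie. The fix is to localize: $R_{\mathfrak m}$ is a Noetherian UFD (localizations of UFDs are UFDs), and $\widehat R$ is the $\mathfrak m R_{\mathfrak m}$-adic completion of $R_{\mathfrak m}$, hence \emph{faithfully flat} over $R_{\mathfrak m}$. Over the local ring $R_{\mathfrak m}$ one still has $\gcd(a_1,\dots,a_n) = g$ up to a unit (gcd is insensitive to the localization here since we assumed $R$ UFD and $g$ is the genuine gcd). Faithful flatness gives $b R_{\mathfrak m} = b\widehat R \cap R_{\mathfrak m}$ for any $b$, so divisibility transfers \emph{between $R_{\mathfrak m}$ and $\widehat R$}. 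Then I would invoke the two-element induction over the \emph{local} UFD $R_{\mathfrak m}$: $\gcd_{\widehat R}(a,b)$ for $a,b \in R_{\mathfrak m}$ can be computed from the factorizations, and one checks that an irreducible $\pi$ of $R_{\mathfrak m}$ with $\pi^k \| a$ and $\pi^l \| b$ stays such that the $\widehat R$-valuations agree, using that $R_{\mathfrak m} \to \widehat R$ sends the maximal ideal into the maximal ideal and is injective with torsion-free cokernel on each $\pi^j R_{\mathfrak m}/\pi^{j+1}R_{\mathfrak m}$; the Artin–Rees / flatness machinery ensures $\pi^j \widehat R \cap R_{\mathfrak m} = \pi^j R_{\mathfrak m}$. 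Finally I would descend from $R_{\mathfrak m}$ back to $R$: if $g''$ is the $\widehat R$-gcd, we have shown $g'' \doteq g$ in $\widehat R$ after passing through $R_{\mathfrak m}$, and since $g \in R$ this gives the claim as stated. The one genuinely delicate point to get right is that the gcd in the statement is taken in $R$ (global), so I must make sure the localization step does not change it — which is fine precisely because $R$ is a UFD and $g$ was chosen as the honest gcd, so $g R_{\mathfrak m}$ is still the gcd ideal locally — and that $\widehat R$ being a domain (true since $R_{\mathfrak m}$ is a Noetherian normal — in fact regular? no, not necessarily — domain and completion of a normal Noetherian local domain is... normal, hence a domain) is used only to make "gcd in $\widehat R$" well-posed. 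I expect the write-up to be short once faithful flatness of $R_{\mathfrak m} \to \widehat R$ and the resulting identity $b\widehat R \cap R_{\mathfrak m} = bR_{\mathfrak m}$ are invoked as the single key input.
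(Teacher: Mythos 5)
You have correctly isolated part of the right toolbox: flatness of $R_{\mathfrak m}\to\widehat R$ (hence $b\widehat R\cap R_{\mathfrak m}=bR_{\mathfrak m}$, so divisibility of elements of $R_{\mathfrak m}$ descends), and you rightly discard the linear-combination trick. But there is a genuine gap at the decisive step. Writing $a_i=g a_i'$, what must be shown is that the cofactors $a_1',\ldots,a_n'$ acquire no common prime divisor in $\widehat R$. Such a hypothetical common divisor $q$ lives in $\widehat R$ and need not be an associate of any element of $R_{\mathfrak m}$: irreducibles of $R_{\mathfrak m}$ can factor nontrivially after completion (e.g.\ $y^2-x^2-x^3$ is irreducible in $\mathbb{C}[x,y]_{(x,y)}$ but splits in $\mathbb{C}[[x,y]]$). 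Your proposed check --- that for each irreducible $\pi$ of $R_{\mathfrak m}$ the exact power of $\pi$ dividing a given element is unchanged, via $\pi^j\widehat R\cap R_{\mathfrak m}=\pi^jR_{\mathfrak m}$ --- only controls divisibility by elements coming from $R_{\mathfrak m}$; it does not exclude a new prime $q$ of $\widehat R$ that properly divides two non-associate irreducibles $\pi\mid a'$ and $\rho\mid b'$ simultaneously. That is exactly the content of the lemma, and the plan as written never closes it.

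The paper closes it differently, and more directly, with no localization and no two-element induction: if a prime element $\widehat P$ of $\widehat R$ divided every $a_i'$, then $\widehat P\widehat R\cap R$ is a nonzero prime of $R$ of height $1$ (going-down for the flat map $R\to\widehat R$); since $R$ is a UFD this contraction is generated by a prime element $P$ of $R$, and then $P$ divides every $a_i'$ in $R$, contradicting the choice of $g$. If you prefer to stay inside your flatness framework, the repair is to use that flatness preserves finite intersections of finitely generated ideals: for non-associate irreducibles $\pi,\rho$ of the UFD $R_{\mathfrak m}$ one has $\pi R_{\mathfrak m}\cap\rho R_{\mathfrak m}=\pi\rho R_{\mathfrak m}$, hence $\pi\widehat R\cap\rho\widehat R=\pi\rho\widehat R$, and a common prime factor $q$ of $\pi$ and $\rho$ in $\widehat R$ would then be forced to be a unit. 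Either route works; the valuation-matching step you describe does not, because it presupposes that the primes of $\widehat R$ relevant to the gcd are the primes of $R_{\mathfrak m}$.
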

\begin{proof}
By the definition of $g$, for each $1\leq i\leq n$, there exists $a'_i\in R$ such that $a_i=g a'_i$. Suppose that there exists a prime element $\widehat{P}$ of $\widehat{R}$ and there exists $a_i''$ for each $1\leq i\leq d$ such that  $a_i'=\widehat{P}a''_i$. Since completions satisfy the going down property, $\widehat{P}\widehat{R}\cap R$ is a height 1 prime ideal of $R$. Since $R$ is a UFD, there exists a prime element $P$ of $R$ such that $PR=P\widehat{R}\cap R$. Therefore, for each $1\leq i\leq n$, $a_i$ can be divided by $P$ in $R$. This contradicts the fact that $g$ is a greatest common divisor in $R$. Therefore, there exists no such a prime element, and so $g$ is a greatest common divisor in $\widehat{R}$ as well.
\end{proof}

\begin{prop} \label{1124} 
Let $\mf{m}$ be a maximal ideal of $R$ such that $R_{\mf m}$ is a UFD, and 
let $\wh{R}$ denote the $\mf{m}$-completion of $R$. 
Let $\mathcal{M}$ be a finitely generated torsion $R$-module and let \glssymbol{mcompletion}$\wh{\mca{M}}$ denote the completed module defined by $\varprojlim_n \mathcal{M}\otimes R/\mf{m}^n$. Then we have 
\[\wh{R}\,{\rm Char}\,\mca{M} ={\rm Char}\,\wh{\mca{M}}.\] 
\end{prop}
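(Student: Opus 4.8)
The strategy is to reduce everything to the local and the Fitting-ideal pictures, then pass to the completion, and finally use \Cref{11242} to translate back into characteristic ideals. First I would invoke \Cref{11242} to rewrite both sides in terms of Fitting ideals: since $\mca{M}$ is a finitely generated torsion $R$-module we have ${\rm Char}\,\mca{M}={\rm d.h.}{\rm Fitt}\,\mca{M}$, and — provided $\wh{\mca{M}}$ is a finitely generated torsion $\wh{R}$-module — we also have ${\rm Char}\,\wh{\mca{M}}={\rm d.h.}{\rm Fitt}\,\wh{\mca{M}}$. Next I would use the compatibility of Fitting ideals with base change: choosing a finite presentation $R^r\to R^s\to\mca{M}\to 0$ and tensoring with $\wh{R}$ (which is flat over $R$, and realizes the completion $\wh{\mca{M}}\cong\mca{M}\otimes_R\wh{R}$ since $\mca{M}$ is finitely generated) gives a presentation of $\wh{\mca{M}}$ with the same matrix, hence ${\rm Fitt}\,\wh{\mca{M}}=\wh{R}\cdot{\rm Fitt}\,\mca{M}$. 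So the claim becomes the purely commutative-algebra assertion
\[
\wh{R}\cdot {\rm d.h.}\,\mathfrak{a}={\rm d.h.}(\wh{R}\,\mathfrak{a})
\]
for the fractional ideal $\mathfrak{a}={\rm Fitt}\,\mca{M}$ of $R$.

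\textbf{Key steps for the divisorial-hull identity.} To prove this I would work locally. By \cite[Lemma 0.1 (1)]{Ueki10}, ${\rm d.h.}\,\mathfrak{a}=\bigcap_{\mathfrak{p}}\mathfrak{a}_{\mathfrak{p}}$, the intersection over height $1$ primes of $R$; the point is that for a torsion module this intersection is supported on the finitely many height $1$ primes $\mathfrak{p}_1,\ldots,\mathfrak{p}_k$ dividing the characteristic ideal, so only those contribute. For each such $\mathfrak{p}$, the going-down property for the flat (faithfully flat on the relevant locus) map $R\to\wh{R}$ and the hypothesis that $R_{\mf m}$ is a UFD let me control the height $1$ primes $\wh{\mathfrak{p}}$ of $\wh{R}$ lying over $\mathfrak{p}$: exactly as in \Cref{01162}, each prime element of $\wh{R}$ dividing a $g\in R$ restricts to a prime element of $R$ dividing $g$, so the gcd of the generators of $\mathfrak{a}$ in $R$ remains their gcd in $\wh{R}$. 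Since on the UFD $R_{\mf m}$ the divisorial hull of $\mathfrak{a}$ is just the principal ideal generated by that gcd $g$, and $\wh{R}$ is likewise a UFD (completion of a regular — or at least of a UFD — local ring), we get ${\rm d.h.}(\wh{R}\,\mathfrak{a})=(g)\wh{R}=\wh{R}\cdot{\rm d.h.}\,\mathfrak{a}$ locally at $\mf m$; away from $\mf m$ the completion map is an isomorphism on the relevant localizations, so nothing changes. Reassembling via $\bigcap_{\mathfrak p}$ gives the global identity.

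\textbf{The subtle point.} The main obstacle is the well-definedness claim hidden in ``${\rm Char}\,\wh{\mca{M}}$'': one must check that $\wh{\mca{M}}$ is again a finitely generated \emph{torsion} $\wh{R}$-module over a Noetherian integrally closed domain, so that \Cref{def.char} applies and the structure theorem gives an elementary module. Finite generation is immediate from $\wh{\mca{M}}\cong\mca{M}\otimes_R\wh{R}$; the torsion property follows from \Cref{11241} together with the Fitting-ideal base change (${\rm d.h.}\,{\rm Fitt}\,\wh{\mca{M}}=\wh{R}\cdot{\rm d.h.}\,{\rm Fitt}\,\mca{M}\neq 0$). The one genuinely delicate ingredient is that $\wh{R}$ is integrally closed — indeed a UFD in the relevant local situations — which is where the hypothesis that $R_{\mf m}$ is a UFD is used, via the fact that the completion of an excellent (e.g.\ finite-type over a field or over $\Z$, or already complete) local ring preserves normality; in the applications to Alexander modules $R$ will be such a ring, so I would state this as the standing hypothesis and cite the relevant normality-of-completion result rather than reprove it. Everything else is bookkeeping with localizations and the flatness of $\wh{R}/R$.
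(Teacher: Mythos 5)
Your proposal is correct and follows essentially the same route as the paper's proof: reduce via \Cref{11242} to Fitting ideals, transport a presentation to the completion so that ${\rm Fitt}\,\wh{\mca{M}}=\wh{R}\,{\rm Fitt}\,\mca{M}$, localize at $\mf{m}$, and use that ${\rm d.h.}$ coincides with the gcd of generators in the UFDs $R_{\mf m}$ and $\wh{R}$ together with \Cref{01162}. The only substantive difference is that you explicitly flag the need for $\wh{R}$ to be a UFD (the paper simply asserts this from $R_{\mf m}$ being one), which is indeed the one point deserving care in the general setting.
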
 

\begin{proof} 
Since $\otimes R/\mf{m}^n$, localizations, and completions in our setting are right exact, a presentation of $\mca{M}$ yields that of $\mca{M}_\mf{m}$ and $\wh{\mca{M}}$, and hence ${\rm Fitt}_R \mca{M}$, ${\rm Fitt}_{R_\mf{m}} \mca{M}_\mf{m}$, and ${\rm Fitt}_{\wh{R}} \wh{\mca{M}}$ are generated by the same generators. 
By \cite[Lemma 0.1 (2)]{Ueki10}, ${\rm d.h.}$ is a local property, so $R_\mf{m}\,{\rm d.h.}{\rm Fitt}_R \mca{M}=({\rm d.h.}{\rm Fitt}_R \mca{M})_\mf{m}={\rm d.h.}{\rm Fitt}_{R_\mf{m}} \mca{M}_\mf{m}$, 
and hence $\wh{R}\,{\rm d.h.}{\rm Fitt}_R \mca{M}=\wh{R}\,{\rm d.h.}{\rm Fitt}_{R_\mf{m}}\mca{M}_\mf{m}$. 
Note that $\wh{R}$ may be seen as the completion of $R_{\mf{m}}$. 
Since $R_\mf{m}$ is a UFD, so is $\wh{R}$, and hence the notion of ${\rm d.h.}$ coincides with the gcd of generators both in $R_\mf{m}$ and in $\wh{R}$. 
By \Cref{01162}, we obtain 
$\wh{R}\,{\rm d.h.}{\rm Fitt}_{R_\mf{m}} \mca{M}_\mf{m}={\rm d.h.}{\rm Fitt}_{\wh{R}} \wh{\mca{M}}$. 
Thus, we have $\wh{R}\,{\rm d.h.}{\rm Fitt}_R \mca{M}={\rm d.h.}{\rm Fitt}_{\wh{R}} \wh{\mca{M}}$, and 
\Cref{11242} yields the assertion. 
\end{proof} 

\begin{remark} If \glssymbol{ringofintegers}$\mca{O}_{k}$ is the ring of integers of a number field $k$, then $R=\mca{O}_{k}[t_1^\Z,\ldots,t_d^\Z]$ is an integrally closed Noetherian closed domain such that the localization $R_{\mf m}$ at the maximal ideal $\mf{m}=(\mf{p},t_1-1,\ldots,t_d-1)$ is a UFD, so \Cref{1124} is applicable. 
\end{remark} 

\begin{prop} \label{prop.hathat}
Let $\mca{M}$ be a finitely generated $\Lambda_{\Z}$-module, 
and let $\wh{M}$ denote the completion with respect to the maximal ideal $\mf{m}=(p,t_1-1,\ldots,t_d-1)\subset \Lambda_\Z$. 
For each $n\in \Z_{>0}$, write $\mca{I}_n=(t_1^n-1,\ldots,t_d^n-1)\subset \Lambda_\Z$. 
Then $\wh{\mca{M}}$ may be naturally identified with the $\Lambda$-module 
$\mca{M}\otimes_{\Lambda_\Z} \Lambda=\varprojlim_n \mca{M}\otimes_{\Lambda_\Z} \Lambda/\mca{I}_{p^n}=\varprojlim_n \mca{M}/\mca{I}_{p^n}\mca{M}\otimes \Zp$. 
\end{prop}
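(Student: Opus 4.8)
The plan is to factor the asserted chain of natural isomorphisms into three links. Write $\mf{m}_\Lambda=(p,T_1,\ldots,T_d)$ for the maximal ideal of $\Lambda$, and note that the ideal $\mca{I}_{p^n}=(t_1^{p^n}-1,\ldots,t_d^{p^n}-1)\subset\Lambda_\Z$ of the statement corresponds under $t_i=1+T_i$ to the ideal generated by $\{(1+T_i)^{p^n}-1\}$, so writing $\mca{I}_{p^n}$ also for its extension to $\Lambda$ is consistent with the paper's notation. \emph{Link 1:} the $\mf{m}$-adic completion $\wh{\Lambda_\Z}$ of $\Lambda_\Z=\Z[t_1^{\pm1},\ldots,t_d^{\pm1}]$ is canonically $\Lambda$; indeed the inclusion $\Z[T_1,\ldots,T_d]\inj\Lambda_\Z$, $T_i\mapsto t_i-1$, carries $(p,T_1,\ldots,T_d)$ into $\mf{m}$ and is an isomorphism modulo every power of it, since each $t_i^{-1}=(1+T_i)^{-1}$ is an $\mf{m}$-adic limit of polynomials in the $T_i$; passing to completions gives $\wh{\Lambda_\Z}\congto\Zp[\![T_1,\ldots,T_d]\!]=\Lambda$, the Iwasawa--Serre isomorphism, which identifies $\Lambda$ with the completed group ring $\varprojlim_n\Zp[(\Z/p^n\Z)^d]=\varprojlim_n\Lambda/\mca{I}_{p^n}\Lambda$. \emph{Link 2:} since $\Lambda_\Z$ is Noetherian and $\mca{M}$ finitely generated, $\mf{m}$-adic completion is exact and equals base change along $\Lambda_\Z\to\wh{\Lambda_\Z}=\Lambda$; hence $\wh{\mca{M}}=\varprojlim_k\mca{M}/\mf{m}^k\mca{M}\cong\mca{M}\otimes_{\Lambda_\Z}\Lambda$, a finitely generated $\Lambda$-module.

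\emph{Link 3} is the identification $\mca{M}\otimes_{\Lambda_\Z}\Lambda\cong\varprojlim_n\mca{M}\otimes_{\Lambda_\Z}\Lambda/\mca{I}_{p^n}\Lambda\cong\varprojlim_n(\mca{M}/\mca{I}_{p^n}\mca{M})\otimes_\Z\Zp$. The rightmost isomorphism is termwise and formal: $\Lambda/\mca{I}_{p^n}\Lambda\cong(\Lambda_\Z/\mca{I}_{p^n})\otimes_\Z\Zp$, both being $\Zp[(\Z/p^n\Z)^d]$, so $\mca{M}\otimes_{\Lambda_\Z}\Lambda/\mca{I}_{p^n}\Lambda\cong(\mca{M}/\mca{I}_{p^n}\mca{M})\otimes_\Z\Zp$. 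For the remaining isomorphism, rewrite it as $\wh{\mca{M}}\cong\varprojlim_n\wh{\mca{M}}/\mca{I}_{p^n}\wh{\mca{M}}$, using $\mca{M}\otimes_{\Lambda_\Z}\Lambda/\mca{I}_{p^n}\Lambda=\wh{\mca{M}}\otimes_\Lambda\Lambda/\mca{I}_{p^n}\Lambda=\wh{\mca{M}}/\mca{I}_{p^n}\wh{\mca{M}}$. I would prove this directly from two inputs: (a) $\mca{I}_{p^n}\subseteq\mf{m}_\Lambda^{\,n+1}$, which follows from $v_p\binom{p^n}{j}+j\geq n+1$ for $1\leq j\leq p^n$; and (b) $\wh{\mca{M}}$, being finitely generated over the complete Noetherian local ring $\Lambda$, is $\mf{m}_\Lambda$-adically separated and complete, with all finitely generated submodules $\mf{m}_\Lambda$-adically closed. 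Injectivity of $\wh{\mca{M}}\to\varprojlim_n\wh{\mca{M}}/\mca{I}_{p^n}\wh{\mca{M}}$ is then Krull's intersection theorem, $\bigcap_n\mca{I}_{p^n}\wh{\mca{M}}\subseteq\bigcap_k\mf{m}_\Lambda^{\,k}\wh{\mca{M}}=0$. For surjectivity I would lift a compatible system $(x_n)_n$ to $\tilde{x}_n\in\wh{\mca{M}}$; then $\tilde{x}_{n+1}-\tilde{x}_n\in\mca{I}_{p^n}\wh{\mca{M}}\subseteq\mf{m}_\Lambda^{\,n+1}\wh{\mca{M}}$ makes $(\tilde{x}_n)_n$ $\mf{m}_\Lambda$-adically Cauchy, hence convergent to some $x\in\wh{\mca{M}}$; since $\mca{I}_{p^i}\subseteq\mca{I}_{p^n}$ for $i\geq n$, each difference $\tilde{x}_m-\tilde{x}_n$ with $m\geq n$ lies in the closed submodule $\mca{I}_{p^n}\wh{\mca{M}}$, so does $x-\tilde{x}_n$, giving $x\equiv x_n$ modulo $\mca{I}_{p^n}\wh{\mca{M}}$. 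Chaining the three links yields the proposition.

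The only genuine difficulty is Link 3, and it is a mild one: the equality is \emph{not} a formal interchange of $\otimes$ and $\varprojlim$, and the filtrations $\{\mca{I}_{p^n}\wh{\mca{M}}\}_n$ and $\{\mf{m}_\Lambda^{\,k}\wh{\mca{M}}\}_k$ are \emph{not} mutually cofinal (e.g.\ $p^k\in\mf{m}_\Lambda^{\,k}$ but $p^k\notin\mca{I}_{p^n}\Lambda$ for any $k$), so one must not argue by cofinality. The argument instead rests on the one-sided containment (a) together with completeness and separatedness of $\wh{\mca{M}}$, and the point to be careful about is to invoke these properties for the \emph{completed} data $\Lambda$ and $\wh{\mca{M}}$, not for $\Lambda_\Z$ and $\mca{M}$, where they fail.
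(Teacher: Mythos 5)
Your proposal is correct, and its skeleton (identify $\wh{\Lambda_\Z}$ with $\Lambda$ via the Iwasawa--Serre isomorphism, then base-change the finitely generated module along $\Lambda_\Z\to\wh{\Lambda_\Z}$) is the same as the paper's. The difference is in how much is actually proved: the paper's argument cites \cite[Lemma 2.3.8]{Sharifi.IT2023OCT} for the ring-level identification $\Lambda=\varprojlim_n\Lambda_\Z/\mf{m}^n$, observes that each $\mca{M}/\mca{I}_{p^n}\mca{M}\otimes\Zp$ is a module over a finite group ring and hence over $\Lambda$, and then says ``which yields the assertion'' --- silently absorbing exactly your Link 3, namely why the $\mca{I}_{p^n}$-adic inverse limit of the module agrees with its $\mf{m}$-adic completion. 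You prove that step from scratch, and your key observations are the right ones: the one-sided containment $\mca{I}_{p^n}\subseteq\mf{m}_\Lambda^{\,n+1}$ (your binomial valuation estimate $j\geq v_p(j)+1$ is correct), completeness and separatedness of $\wh{\mca{M}}$ over the complete local ring $\Lambda$, closedness of the submodules $\mca{I}_{p^n}\wh{\mca{M}}$ for the surjectivity step, and Krull intersection for injectivity. Your warning that the two filtrations are \emph{not} mutually cofinal (so a naive cofinality argument fails) is a genuine subtlety that the paper's proof does not surface. What your version buys is a self-contained and checkable argument; what the paper's version buys is brevity by delegating the hardest point to a standard reference. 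No gaps.
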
 

\begin{proof} 
Recall the embedding $\Lambda_\Z=\Z[t_1^\Z,\ldots, t_d^\Z]=\Z[t_1^{\pm 1},\ldots, t_d^{\pm 1}] \inj \Zp[\![t_1^{\Zp},\ldots, t_d^{\Zp}]\!]\cong \Lambda=\Zp[\![T_1,\ldots, T_d]\!]; t_i\mapsto 1+T_i$ 
obtained via the Iwasawa--Serre isomorphism. 
For each $m\geq n\in \Z_{\geq 0}$, 
$\mca{M}/\mca{I}_{p^n}\mca{M}\otimes \Zp$ is a $\Zp[t_1^{\Z/p^m\Z},\ldots,t_d^{\Z/p^m\Z}]$-module. 
Hence, it is a $\Lambda$-module, and so is the projective limit. 
By \cite[Lemma 2.3.8]{Sharifi.IT2023OCT}, we have a natural identification $\Lambda
=\varprojlim_n \Lambda_\Z/\mf{m}^n$, which yields the assertion. 
\end{proof}

\section{Lemmas on $\Lambda$-modules} 
In this section, we study the sizes of the torsion subgroups and the ranks of quotients of $\Lambda$-modules. We recall some results of Cuoco--Monsky \cite{CuocoMonsky1981} and Monsky \cite{Monsky1989}, as well as modify their results for our use. 
For each $n\geq 1$ and $1\leq j\leq d$, 
define \[\omega_{p^n}(T_j):=(1+T_j)^{p^n}-1,\ \ \nu_{p^n}(T_j):=\frac{(1+T_j)^{p^n}-1}{T_j}\]
and let $\mathcal{I}_{p^n}, \mathcal{J}_{p^n}$ denote the ideals of $\Lambda$ generated by 
\glssymbol{omegapoly}$\{\omega_{p^n}(T_j) \mid 1\leq j\leq d\}$ and \glssymbol{nupoly}$\{\nu_{p^n}(T_j)\mid 1\leq j\leq d\}$ respectively. 

\subsection{Torsion estimates} Here, we recall some results of Cuoco--Monsky \cite{CuocoMonsky1981} and Monsky \cite{Monsky1989} 
on torsion size estimates for quotients of $\Lambda$-modules. 

Let $S$ be a finite set of $\Gamma\setminus\Gamma^p$. Let $\mathcal{M}$ be a finitely generated $\Lambda$-module. Let $r\in\Z_{\geq 0}$. Suppose that, for each $\sigma\in S$, a submodule $\mathcal{M}_{\sigma}$ of $\mathcal{M}$ containing $(\sigma^{p^r}-1)\mathcal{M}$ is given. For $n\geq r$, let $\mathcal{B}_{p^n}:=\mathcal{I}_{p^n}\mathcal{M}+\sum_{\sigma\in S}\frac{\sigma^{p^n}-1}{\sigma^{p^r}-1}\mathcal{M}_{\sigma}$ and let $G_n:=\mathcal{M}/\mathcal{B}_{p^n}$.

\begin{prop}[{\cite[Theorem 4.13]{CuocoMonsky1981}, \cite[Theorem 3.12]{Monsky1989}}]
Let $\mathcal{M}$ be a finitely generated torsion $\Lambda$-module with characteristic element $F={\rm char}\,\mca{M}$. Suppose that $r(G_n)=O(p^{(d-2)n})$. Then there exists $\mu_1\in\R$ such that
\[
e(G_n)=\mu(F) p^{dn}+\lambda(F) np^{(d-1)n}+\mu_1p^{(d-1)n}+O(np^{(d-2)n}).
\]
Moreover, if $d=2$, then $\mu_1\in\Q$.
\end{prop}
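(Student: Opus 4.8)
The plan is to reduce the statement to the one-variable Weierstrass machinery of Cuoco--Monsky via the valuation sums $\Sigma_n$, combined with the structure theorem for $\Lambda$-modules up to pseudo-isomorphism. First I would reduce to the case where $\mca{M}$ is an elementary $\Lambda$-module, i.e.\ a finite direct sum $\bigoplus_i \Lambda/\p_i^{m_i}$ with $\p_i$ height $1$ primes, plus a pseudo-null error term. Since $\mca{M}$ is pseudo-isomorphic to such an $\mca{E}$, and a pseudo-null $\Lambda$-module $\mca{N}$ satisfies $e(\mca{N}/\mca{B}_{p^n}\mca{N})=O(p^{(d-2)n})$ (its support has codimension $\geq 2$, so its quotients by the $\mca{B}_{p^n}$ grow like one fewer power of $p^n$), the kernel and cokernel of $\mca{M}\to\mca{E}$ contribute only to the $O(np^{(d-2)n})$ error, provided one can transport the rank hypothesis $r(G_n)=O(p^{(d-2)n})$ along the pseudo-isomorphism --- which is legitimate because $r(\,\cdot\,)$ of the relevant quotients is additive up to a pseudo-null error, by the long exact sequence and the fact that $\Tor$ against a pseudo-null module is itself small. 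This reduction is essentially \cite[proof of Theorem 4.13]{CuocoMonsky1981} together with Monsky's refinement, so I would cite those and only sketch the transport of hypotheses.

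Next, for a single cyclic piece $\Lambda/(F)$ with $F=p^\mu F_0$, I would identify $e\big(\Lambda/(F,\mca{B}_{p^n})\big)$ with $\Sigma_n(F)$ up to the admissible error. Concretely, $\Lambda/(F,\mca{I}_{p^n})$ surjects onto $\bigoplus_\zeta \Zp[\zeta]/(F(\zeta-1))$ over the $\zeta\in W(n)^d$ (grouped into Galois orbits), so its $p$-exponent is $\Sigma_n(F)+O(p^{(d-1)n})$ by a discriminant/different estimate; the extra terms $\frac{\sigma^{p^n}-1}{\sigma^{p^r}-1}\mca{M}_\sigma$ cut the sum down further but in a controlled way, and here I would lean on Monsky's analysis of how semi-algebraic truncations of $\Sigma_n$ behave (cf.\ \Cref{22521}), or more directly on the bookkeeping already done in \cite[Theorem 4.13]{CuocoMonsky1981}, \cite[Theorem 3.12]{Monsky1989}. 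Then \Cref{2259} gives $\Sigma_n(F)=\big(\mu(F)p^n+\lambda(F)n+O(1)\big)p^{(d-1)n}=\mu(F)p^{dn}+\lambda(F)np^{(d-1)n}+O(p^{(d-1)n})$. The point of the theorem over the naive application of \Cref{2259} is to pin the $O(p^{(d-1)n})$ term down to a genuine coefficient $\mu_1\in\R$ (resp.\ $\mu_1\in\Q$ when $d=2$); this is exactly where the rank hypothesis $r(G_n)=O(p^{(d-2)n})$ is used --- it forces the $np^{(d-2)n}$-scale fluctuations to be genuinely $O(np^{(d-2)n})$ rather than contributing at the $p^{(d-1)n}$ scale, and it is the content of Monsky's theorem that under this hypothesis the $p^{(d-1)n}$-coefficient stabilizes. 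For $d=2$ the rationality of $\mu_1$ comes from \Cref{22521} applied with $S=(W\setminus\{1\})^d$, since then $f(U,V)\in\Q[U,V]$ and the $p^n$-coefficient of $f(p^n,n)$ is rational.

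I would assemble the pieces by summing over the finitely many cyclic summands of $\mca{E}$: the $p^{dn}$-coefficients add to $\mu(F)=\sum \mu(\p_i^{m_i})$ (which equals $\mu$ of $\mathrm{char}\,\mca{M}$ since $\mu$ is additive over products), the $np^{(d-1)n}$-coefficients add to $\lambda(F)$, and the $\mu_1$'s add to a real number (rational if $d=2$); the cross terms and the pseudo-null error stay inside $O(np^{(d-2)n})$. The main obstacle, I expect, is the transport step: verifying carefully that the hypothesis $r(G_n)=O(p^{(d-2)n})$ for $\mca{M}$ is equivalent to (or implies) the same bound for the elementary module $\mca{E}$, and that the finite ``glue'' terms $\mca{M}_\sigma$ can be matched up across the pseudo-isomorphism without creating an uncontrolled discrepancy at the $p^{(d-1)n}$ scale. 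In practice one handles this by noting that $\mca{M}_\sigma$ is sandwiched between $(\sigma^{p^r}-1)\mca{M}$ and $\mca{M}$, so passing to $\mca{E}$ changes each $G_n$ only by a subquotient of a pseudo-null module tensored with $\Lambda/\mca{B}_{p^n}$, whose $p$-exponent and rank are $O(np^{(d-2)n})$ and $O(p^{(d-2)n})$ respectively. Since the statement is quoted verbatim from \cite{CuocoMonsky1981} and \cite{Monsky1989}, I would present this as a recollection with a proof reduced to citing those sources and indicating the (standard) modifications needed so that the $\mca{M}_\sigma$-version subsumes the plain $\mca{I}_{p^n}$-version.
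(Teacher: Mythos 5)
The paper offers no proof of this proposition: it is quoted directly from Cuoco--Monsky and Monsky, and the surrounding text merely specializes it (taking $S=\{\gamma_1,\ldots,\gamma_d\}$ with $\mathcal{M}_{\gamma_i}=\mathcal{M}$, or $S=\emptyset$) to obtain \Cref{Monsky21}. Your decision to present the statement as a recollection whose proof is ultimately a citation therefore matches the paper's treatment exactly, and your sketch of the underlying argument --- reduction to an elementary module, identification of $e(\Lambda/(F,\mathcal{I}_{p^n}))$ with $\Sigma_n(F)$ up to controlled error, \Cref{2259} for the $\mu$ and $\lambda$ terms, and Monsky's semi-algebraic machinery (\Cref{22521}) for $\mu_1$ --- is a reasonable description of what the cited sources do.

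One inaccuracy in the sketch is worth flagging. A pseudo-null $\Lambda$-module $\mathcal{N}$ does \emph{not} in general satisfy $e(\mathcal{N}/\mathcal{B}_{p^n}\mathcal{N})=O(p^{(d-2)n})$: the correct general bound is $O(p^{(d-1)n})$ (see \Cref{CM35}~(2); the $\Zp$-torsion part of $\mathcal{N}$ is responsible for the $p^{(d-1)n}$ scale, and only the \emph{rank} is $O(p^{(d-2)n})$). Consequently, passing to an elementary module is harmless for the $\mu p^{dn}$ and $\lambda np^{(d-1)n}$ terms but not automatically for the refined $\mu_1 p^{(d-1)n}+O(np^{(d-2)n})$ part: the pseudo-null error can itself contribute at the $p^{(d-1)n}$ scale and hence affects $\mu_1$. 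This is precisely why Monsky's refinement is a separate, nontrivial theorem rather than a corollary of the Cuoco--Monsky decomposition, and why your plan of summing the $\mu_1$'s over the cyclic summands while absorbing the pseudo-null error into $O(np^{(d-2)n})$ would not close as written. Since you defer the hard step to the cited sources in any case, this does not invalidate the proposal as a recollection, but the sketch should not suggest that the reduction step is routine at the $p^{(d-1)n}$ scale.
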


In particular, let $\gamma_1,\ldots,\gamma_d\in \Gamma$ be elements corresponding to $1+T_1,\ldots,1+T_d$ via the Iwasawa--Serre isomorphism $\Zp[\![\Gamma]\!]\cong\Zp[\![T_1,\ldots,T_d]\!]$ respectively. Let $S:=\{\gamma_1,\ldots,\gamma_d\}$ and let $\mathcal{M}_{\gamma_i}=\mathcal{M}$ for every $1\leq i\leq d$. Let $r=0$. Then we find that
\[
\mathcal{B}_{p^n}=\mathcal{I}_{p^n}\mathcal{M}+\sum_{1\leq i\leq d}\nu_{p^n}(T_i)\mathcal{M}=\mathcal{J}_{p^n}\mathcal{M}
\]
Thus we obtain the following, where the assertion (2) is the case with $S=\emptyset$.   
\begin{cor} \label{Monsky21} 
Let $\mathcal{M}$ be a finitely generated torsion $\Lambda$-module with characteristic element $F={\rm char}\,\mca{M}$. 

{\rm (1)} 
If $r(\mathcal{M}/\mathcal{J}_{p^n}\mathcal{M})=O(p^{(d-2)n})$, then 
there exists $\mu_1\in\R$ such that
\[
e(\mathcal{M}/\mathcal{J}_{p^n}\mathcal{M})=\mu(F) p^{dn}+\lambda(F) np^{(d-1)n}+\mu_1p^{(d-1)n}+O(np^{(d-2)n}).
\]

{\rm (2)} 
If $r(\mathcal{M}/\mathcal{I}_{p^n}\mathcal{M})=O(p^{(d-2)n})$, then 
there exists $\mu_1\in\R$ such that
\[
e(\mathcal{M}/\mathcal{I}_{p^n}\mathcal{M})=\mu(F) p^{dn}+\lambda(F) np^{(d-1)n}+\mu_1p^{(d-1)n}+O(np^{(d-2)n}).
\] 
\end{cor}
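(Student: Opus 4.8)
The plan is to obtain both parts as immediate specializations of the preceding Proposition (\cite[Theorem 4.13]{CuocoMonsky1981}, \cite[Theorem 3.12]{Monsky1989}): one only has to choose the auxiliary data $(S,\{\mathcal{M}_\sigma\}_{\sigma\in S},r)$ entering its hypothesis so that the quotient $G_n=\mathcal{M}/\mathcal{B}_{p^n}$ becomes $\mathcal{M}/\mathcal{J}_{p^n}\mathcal{M}$ in case (1) and $\mathcal{M}/\mathcal{I}_{p^n}\mathcal{M}$ in case (2), and then to check that the rank hypotheses correspond verbatim. The module $\mathcal{M}$ and its characteristic element $F={\rm char}\,\mca{M}$, together with the invariants $\mu(F),\lambda(F)$ of \Cref{def.mulambda}, are carried over unchanged.

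For (1), let $\gamma_1,\dots,\gamma_d\in\Gamma$ be the elements corresponding to $1+T_1,\dots,1+T_d$ under the Iwasawa--Serre isomorphism; being members of a $\Zp$-basis of $\Gamma$, they lie in $\Gamma\setminus\Gamma^p$. First I would take $S:=\{\gamma_1,\dots,\gamma_d\}$, $r:=0$, and $\mathcal{M}_{\gamma_i}:=\mathcal{M}$ for all $i$, so that the condition $(\gamma_i^{p^0}-1)\mathcal{M}=T_i\mathcal{M}\subseteq\mathcal{M}_{\gamma_i}$ holds trivially. Since $\gamma_i^{p^0}-1=T_i$ and $\gamma_i^{p^n}-1=(1+T_i)^{p^n}-1=\omega_{p^n}(T_i)$, one has $\tfrac{\gamma_i^{p^n}-1}{\gamma_i^{p^0}-1}=\nu_{p^n}(T_i)$, whence
\[
\mathcal{B}_{p^n}=\mathcal{I}_{p^n}\mathcal{M}+\sum_{i=1}^{d}\nu_{p^n}(T_i)\mathcal{M}=\bigl(\mathcal{I}_{p^n}+\mathcal{J}_{p^n}\bigr)\mathcal{M}=\mathcal{J}_{p^n}\mathcal{M},
\]
the last equality because $\omega_{p^n}(T_i)=T_i\,\nu_{p^n}(T_i)\in\mathcal{J}_{p^n}$, i.e. $\mathcal{I}_{p^n}\subseteq\mathcal{J}_{p^n}$. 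Thus $G_n=\mathcal{M}/\mathcal{J}_{p^n}\mathcal{M}$, and the assumption $r(\mathcal{M}/\mathcal{J}_{p^n}\mathcal{M})=O(p^{(d-2)n})$ is literally $r(G_n)=O(p^{(d-2)n})$; applying the Proposition then gives the stated asymptotic for $e(\mathcal{M}/\mathcal{J}_{p^n}\mathcal{M})$, and its $d=2$ refinement $\mu_1\in\Q$ is inherited as well.

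For (2), I would simply take $S:=\emptyset$ (keeping the same $\mathcal{M}$, say $r=0$), so that the sum in the definition of $\mathcal{B}_{p^n}$ is empty, $\mathcal{B}_{p^n}=\mathcal{I}_{p^n}\mathcal{M}$, and $G_n=\mathcal{M}/\mathcal{I}_{p^n}\mathcal{M}$. Again the hypothesis $r(\mathcal{M}/\mathcal{I}_{p^n}\mathcal{M})=O(p^{(d-2)n})$ is exactly the rank condition needed, and the Proposition yields the claim.

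The genuine obstacle in this corollary is essentially absent: the analytic heart of the matter --- the $\Sigma_n$-type estimates of \Cref{ss.pW} and Cuoco--Monsky's structure theory for $\Lambda$-modules --- is already packaged in the Proposition we are allowed to cite. Within the corollary itself the only points requiring attention are the identification $\mathcal{B}_{p^n}=\mathcal{J}_{p^n}\mathcal{M}$ and, above all, confirming that the two phrasings of the $O(p^{(d-2)n})$ rank condition coincide on the nose so that the Proposition truly applies; both are routine once the definitions of $\mathcal{I}_{p^n},\mathcal{J}_{p^n},\omega_{p^n},\nu_{p^n}$ are unwound.
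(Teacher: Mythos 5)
Your proposal is correct and follows exactly the paper's own derivation: the paper likewise specializes the cited Proposition by taking $S=\{\gamma_1,\ldots,\gamma_d\}$, $\mathcal{M}_{\gamma_i}=\mathcal{M}$, $r=0$ to get $\mathcal{B}_{p^n}=\mathcal{J}_{p^n}\mathcal{M}$ for part (1), and $S=\emptyset$ for part (2). Your extra check that $\mathcal{I}_{p^n}\subseteq\mathcal{J}_{p^n}$ (via $\omega_{p^n}(T_i)=T_i\,\nu_{p^n}(T_i)$) is a correct and welcome detail the paper leaves implicit.
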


\begin{remark}
The assumption $r(\mathcal{M}/\mathcal{I}_{p^n}\mathcal{M})=O(p^{(d-2)n})$ is equivalent to ``$\Delta(1+T_1,\ldots,1+T_d)$ has no special prime factors in $\Lambda$" in the sense of Cuoco--Monsky \cite[Theorem 3.13]{CuocoMonsky1981}. One of the purposes of this section is to weaken this rank assumption of 
\Cref{Monsky21} (2). \end{remark}

\subsection{Rank estimates} 
Here, we obtain rank estimates of quotients of $\Lambda$-modules, by modifying Cuoco--Monsky's results in \cite{CuocoMonsky1981}. 
Recall that $W(n)$ denotes the set of $p^n$-th roots of unity. 

\begin{prop}\label{CM32}
Let $0\neq F\in\Lambda$. Then 
\[
\sum_{\zeta\in(W(n)\setminus\{1\})^d}v(F(\zeta-1))=\left(\mu(F) p^n+\lambda(F) n+O(1)\right)p^{(d-1)n}.
\]
\end{prop}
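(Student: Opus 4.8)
The plan is to deduce \Cref{CM32} from \Cref{2259} by separating the sum over all of $W(n)^d$ into the sum over $(W(n)\setminus\{1\})^d$ and the correction terms coming from tuples $\zeta$ that have at least one coordinate equal to $1$. First I would write, by inclusion–exclusion over which coordinates are forced to be $1$,
\[
\Sigma_n(F)=\sum_{\zeta\in(W(n)\setminus\{1\})^d}v(F(\zeta-1))+\sum_{\emptyset\neq S\subseteq\{1,\ldots,d\}}(-1)^{|S|+1}\sum_{\substack{\zeta\in W(n)^d\\ \zeta_i=1\ \forall i\in S}}v(F(\zeta-1)),
\]
where I must be slightly careful because of the unusual convention $v(0)=0$; but since $v$ is defined on all of $\ol{\Q}_p$ the identity is a purely set-theoretic partition of the index set and causes no trouble. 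Each inner sum on the right, after setting the coordinates indexed by $S$ equal to $1$, is $\Sigma_n^{(|S^c|)}(F_S)$ for the specialization $F_S\in\Zp[\![(T_j)_{j\notin S}]\!]$ obtained by substituting $T_i=0$ for $i\in S$ — that is, it is the analogue of $\Sigma_n$ in $d-|S|$ variables.

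Next I would bound each correction term. For a proper subset $S$ with $|S|=s\geq 1$, the corresponding sum ranges over $W(n)^{d-s}$, so by \Cref{2259} applied in $d-s$ variables it is $O\!\big(p^s\cdot p^{(d-s)n}\big)$ in the worst case $s=1$, hence at most $O(np^{(d-1)n})$ — and in fact one gets $\big(\mu(F_S)p^n+\lambda(F_S)n+O(1)\big)p^{(d-s-1)n}$, which for $s\geq 1$ is $O(np^{(d-2)n})$ unless $s=1$. The genuinely delicate case is therefore $s=1$: there the correction is of the form $\big(\mu(F_i)p^n+\lambda(F_i)n+O(1)\big)p^{(d-2)n}$ where $F_i=F(T_1,\ldots,T_{i-1},0,T_{i+1},\ldots,T_d)$, and a priori this is $O(np^{(d-2)n})=O(np^{(d-1)n})$, which is absorbed into the error term on the right-hand side of the desired formula since that is stated only up to an $O(1)\cdot p^{(d-1)n}=O(p^{(d-1)n})$ — wait, no: the desired error is $O(1)p^{(d-1)n}$, and $np^{(d-2)n}$ is $o(p^{(d-1)n})$, so it is indeed absorbed. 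Thus all correction terms are $O(p^{(d-1)n})$, possibly after noting that $np^{(d-2)n}=O(p^{(d-1)n})$.

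Putting this together, $\sum_{\zeta\in(W(n)\setminus\{1\})^d}v(F(\zeta-1))=\Sigma_n(F)+O(p^{(d-1)n})=\big(\mu(F)p^n+\lambda(F)n+O(1)\big)p^{(d-1)n}$, where the last equality is \Cref{2259}. The point worth emphasizing is that $\mu$ and $\lambda$ are unchanged: the main term $\mu(F)p^{dn}+\lambda(F)np^{(d-1)n}$ survives untouched because every discarded term is of strictly smaller order, so the leading coefficients that define the Iwasawa invariants are the same whether one sums over $W(n)^d$ or over $(W(n)\setminus\{1\})^d$.

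I expect the main obstacle to be purely bookkeeping: making the inclusion–exclusion decomposition precise (in particular handling the $v(0)=0$ convention cleanly, and checking that the specializations $F_i$ are genuinely nonzero elements of the smaller power series ring — if some $F_i$ vanishes identically then the convention $v(0)=0$ makes that entire correction sum zero, which only helps, so one need not even split into cases). Everything else is an immediate consequence of \Cref{2259} applied in fewer variables together with the observation $np^{(d-2)n}=O(p^{(d-1)n})$.
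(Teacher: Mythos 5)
Your argument is correct, and its skeleton matches the paper's: both decompose $\Sigma_n(F)$ into the sum over $(W(n)\setminus\{1\})^d$ plus a boundary contribution from tuples with some coordinate equal to $1$, show that the boundary contribution is $O(p^{(d-1)n})$, and then invoke \Cref{2259}. The difference lies in how the boundary term is controlled. The paper disposes of it in one line by citing \Cref{22521} (Monsky's polynomial-exactness result for semi-algebraic subsets of $W^d$), whereas you avoid that machinery: your inclusion--exclusion over the set $S$ of coordinates forced to equal $1$ reduces each piece to a $\Sigma_n$ in $d-|S|$ variables for the specialization $F_S$, to which \Cref{2259} applies (or which vanishes outright if $F_S=0$, thanks to the convention $v(0)=0$), giving a contribution $O(p^{(d-|S|)n})$, which is $O(p^{(d-1)n})$ in the worst case $|S|=1$. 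This is more self-contained, and arguably more transparent than the paper's citation: \Cref{22521} as stated only guarantees a polynomial of total degree at most $d$ in $p^n$, so extracting the $O(p^{(d-1)n})$ bound from it still implicitly requires the same reduction to fewer variables. One simplification available to you: since $v(F(\zeta-1))\geq 0$ for $\zeta\in W^d$, you could replace inclusion--exclusion by the one-sided estimate
\[
0\leq \sum_{\rm others} v(F(\zeta-1))\leq \sum_{i=1}^{d}\ \sum_{\substack{\zeta\in W(n)^d\\ \zeta_i=1}} v(F(\zeta-1)),
\]
which already suffices and spares you the alternating signs.
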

\begin{proof}
We have
\[
\sum_{\zeta\in{W(n)^d}}v(F(\zeta-1))=\sum_{\zeta\in(W(n)\setminus\{1\})^d}v(F(\zeta-1))+\sum_{\rm others} v(F(\zeta-1)).
\]
By \Cref{22521}, we have
\[
\sum_{\rm others} v(F(\zeta-1))=O(p^{(d-1)n}).
\]
By \Cref{2259}, we complete the proof.
\end{proof}

\begin{lem}[{\cite[Consequence of Theorem 3.14]{CuocoMonsky1981}}]\label{CM1}
Let $\mathcal{M}$ be a finitely generated $\Lambda$-module.
Then we have
\[
r(\mathcal{M}/\mathcal{I}_{p^n} \mathcal{M})=O(p^{dn}).
\]
Moreover, if $\mathcal{M}$ is torsion over $\Lambda$, then
\[
r(\mathcal{M}/\mathcal{I}_{p^n}\mathcal{M})=O(p^{(d-1)n})
\]
\end{lem}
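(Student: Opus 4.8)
The plan is to reduce the rank of $\mathcal{M}/\mathcal{I}_{p^n}\mathcal{M}$ to a sum of local ranks at the height $1$ primes $(\zeta-1)=((\zeta_1-1),\ldots,(\zeta_d-1))$ with $\zeta\in(W(n)\setminus\{1\})^d$, and then bound that sum using the valuation estimate of \Cref{CM32}. First I would recall that $\Lambda/\mathcal{I}_{p^n}\Lambda$ is, up to controlled error, a product of the discrete valuation rings $\Zp[\zeta]$ over a set of representatives $\zeta$ of $(W(n)\setminus\{1\})^d$ modulo $\Gal(\ol{\Q}_p/\Qp)$, together with a piece supported where some coordinate of $\zeta$ equals $1$; by the semi-algebraic estimate \Cref{22521} (or directly by counting), the contribution of that degenerate piece to any rank is $O(p^{(d-1)n})$, which is already within the claimed bound in both cases, so it suffices to handle the part coming from $\zeta\in(W(n)\setminus\{1\})^d$. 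After tensoring the presentation $\Lambda^a\to\Lambda^b\to\mathcal{M}\to 0$ with $\Lambda/\mathcal{I}_{p^n}\Lambda$ and then with each $\Zp[\zeta]$, right-exactness gives a presentation of $\mathcal{M}_\zeta:=\mathcal{M}\otimes_\Lambda\Zp[\zeta]$, and $r(\mathcal{M}/\mathcal{I}_{p^n}\mathcal{M})$ equals $\sum_\zeta [\,\Q_p(\zeta):\Q_p\,]\, r_\zeta(\mathcal{M}_\zeta)$ up to an $O(p^{(d-1)n})$ error, where the sum runs over representatives.

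The first assertion is then nearly immediate: each $r_\zeta(\mathcal{M}_\zeta)\leq b$ (the number of generators), and the number of $\zeta\in(W(n)\setminus\{1\})^d$ is at most $p^{dn}$, so the weighted sum is $O(p^{dn})$. For the second assertion, suppose $\mathcal{M}$ is $\Lambda$-torsion with $0\neq F={\rm char}\,\mathcal{M}$ (or just any nonzero element of the annihilator). Then for each $\zeta$, the ideal $F(\zeta-1)\Zp[\zeta]$ annihilates $\mathcal{M}_\zeta$, so whenever $F(\zeta-1)\neq 0$ we get $r_\zeta(\mathcal{M}_\zeta)=0$; that is, $r_\zeta(\mathcal{M}_\zeta)\geq 1$ forces $v(F(\zeta-1))>0$ — in fact $v(F(\zeta-1))\geq 1/e$ for a suitable ramification bound, but even the crude inequality $1\leq v(F(\zeta-1))\cdot C$ for some constant $C$ independent of $n$ will do once one notes $F(\zeta-1)\in\mathfrak{m}_{\Zp[\zeta]}$ and the residue characteristic is $p$. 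Combined with $r_\zeta(\mathcal{M}_\zeta)\leq b$ and $[\Q_p(\zeta):\Q_p]\leq p^{dn}$ (or more sharply $\leq \bm{\varphi}(p^{n})^d$, but this is not needed), we obtain
\[
r(\mathcal{M}/\mathcal{I}_{p^n}\mathcal{M})\;\le\; C'\sum_{\zeta\in(W(n)\setminus\{1\})^d} v(F(\zeta-1))\;+\;O(p^{(d-1)n})
\]
for a constant $C'$ depending only on $\mathcal{M}$. By \Cref{CM32} the right-hand side is $\left(\mu(F)p^n+\lambda(F)n+O(1)\right)p^{(d-1)n}+O(p^{(d-1)n})=O(p^{(d-1)n})$, since when $\mathcal{M}$ is torsion we may as well take $F$ with $\mu(F)=0$ — indeed by \cite[Consequence of Theorem 3.14]{CuocoMonsky1981} one works with $F$ chosen so that $\overline{F}\neq 0$, equivalently $p\nmid F$; but more carefully, the statement we want is uniform in $F$, and the $\mu(F)p^{dn}$ term is exactly what would obstruct the bound, so the essential point is that a torsion module always admits an annihilating element $F$ with $\mu(F)=0$ (any single nonzero relation coming from a height-$1$ prime not dividing $p$, obtained because $\mathcal{M}$ Noetherian-torsion has finitely many associated primes).

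The main obstacle I anticipate is precisely the last point: making sure that the valuation bound is applied with an $F$ for which $\mu(F)=0$, so that \Cref{CM32} delivers $O(p^{(d-1)n})$ rather than the useless $O(p^{dn})$. One clean way around this is to replace ``${\rm char}\,\mathcal{M}$'' in the argument by an element of ${\rm Fitt}_\Lambda\mathcal{M}$ not divisible by $p$, whose existence follows because ${\rm Fitt}_\Lambda\mathcal{M}$ has height $\geq 2$ when $\mathcal{M}$ is torsion (its divisorial hull is ${\rm Char}\,\mathcal{M}\neq 0$ by \Cref{11242}, yet $\mathcal{M}$ torsion does not force $p\mid{\rm char}\,\mathcal{M}$; if it did, one localizes away from $p$ first). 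A secondary, more bookkeeping-level difficulty is controlling the degenerate locus where some $\zeta_i=1$: there one should stratify by which coordinates are trivial, apply the full statement inductively in fewer variables, and check the error terms all sit inside $O(p^{(d-1)n})$ — routine but the kind of place where an off-by-one in the exponent can creep in.
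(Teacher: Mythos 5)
The paper does not actually prove this lemma: it is quoted verbatim as a consequence of Theorem 3.14 of Cuoco--Monsky, so you are supplying an argument where the authors supply a citation. Your proof of the first assertion is correct, though it can be done in one line: $\mathcal{M}/\mathcal{I}_{p^n}\mathcal{M}$ is a quotient of $(\Lambda/\mathcal{I}_{p^n})^{b}$, and $\Lambda/\mathcal{I}_{p^n}\cong\Zp[(\Z/p^n\Z)^d]$ is $\Zp$-free of rank $p^{dn}$.

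The torsion case has a genuine gap, and it sits exactly where the cited theorem does the work. From $F\cdot\mathcal{M}=0$ you correctly deduce that $F(\zeta-1)$ annihilates $\mathcal{M}_\zeta$, so $r_{\zeta}(\mathcal{M}_\zeta)\geq 1$ forces $F(\zeta-1)=0$ --- not merely $v(F(\zeta-1))>0$. Under the paper's convention $v(0)=0$, these are precisely the $\zeta$ that contribute \emph{nothing} to $\Sigma_n(F)$, so your inequality $r(\mathcal{M}/\mathcal{I}_{p^n}\mathcal{M})\leq C'\,\Sigma_n(F)+O(p^{(d-1)n})$ does not follow, and \Cref{CM32} and \Cref{2259} cannot be brought to bear; under the usual convention $v(0)=\infty$ the inequality holds vacuously but its right-hand side is infinite. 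What is actually needed is the bound $|Z_n(\mathcal{M})|=O(p^{(d-1)n})$ on the \emph{number} of zeros of $F$ on $W(n)^d$, i.e.\ that the vanishing locus of a nonzero element of $\Lambda$ on torsion points is contained in finitely many cosets of proper subtori together with a finite set. That is the content of Cuoco--Monsky's Theorem 3.14 (compare the role of $Z_n(\mathcal{M})$ and \Cref{CM150} in the paper), and no step of your argument establishes it. A secondary error: a torsion $\Lambda$-module need not admit an annihilator with $\mu(F)=0$ (take $\mathcal{M}=\Lambda/(p)$, whose annihilator is $(p)$); this particular point is repairable by splitting off the part killed by a power of $p$, for which every $\mathcal{M}_\zeta$ is automatically $\Zp$-torsion, but the zero-counting gap is not.
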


\begin{prop}\label{CM2}
Let $\mathcal{M}$ be a finitely generated $\Lambda$-module. Then we have
\[
r(\mathcal{M}/\mathcal{J}_{p^n}\mathcal{M})=O(p^{dn})
\]
If $\mathcal{M}$ is torsion over $\Lambda$, then
\[
r(\mathcal{M}/\mathcal{J}_{p^n}\mathcal{M})=O(p^{(d-1)n}).
\]
\end{prop}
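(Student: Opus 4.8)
The plan is to reduce the statement for $\mathcal{J}_{p^n}$ to the already-known statement for $\mathcal{I}_{p^n}$ (\Cref{CM1}) by comparing the two quotients $\mathcal{M}/\mathcal{I}_{p^n}\mathcal{M}$ and $\mathcal{M}/\mathcal{J}_{p^n}\mathcal{M}$. Since $\omega_{p^n}(T_j)=T_j\,\nu_{p^n}(T_j)$, we have the ideal inclusion $\mathcal{I}_{p^n}\subseteq\mathcal{J}_{p^n}$, hence a surjection $\mathcal{M}/\mathcal{I}_{p^n}\mathcal{M}\twoheadrightarrow\mathcal{M}/\mathcal{J}_{p^n}\mathcal{M}$. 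Therefore $r(\mathcal{M}/\mathcal{J}_{p^n}\mathcal{M})\leq r(\mathcal{M}/\mathcal{I}_{p^n}\mathcal{M})$, and the desired bounds $O(p^{dn})$ in general and $O(p^{(d-1)n})$ in the torsion case follow immediately from the corresponding bounds in \Cref{CM1}. In other words, once the inclusion $\mathcal{I}_{p^n}\subseteq\mathcal{J}_{p^n}$ is observed, there is essentially nothing left to do: ranks can only drop under a surjection of finitely generated $\Lambda$-modules (after tensoring with $\mathrm{Frac}(\Lambda)$, a surjection of modules induces a surjection of vector spaces), and the bounds are inherited.

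First I would record the elementary identity $\omega_{p^n}(T_j)=T_j\cdot\nu_{p^n}(T_j)$ for each $j$, which shows $\omega_{p^n}(T_j)\in\mathcal{J}_{p^n}$ and hence $\mathcal{I}_{p^n}\subseteq\mathcal{J}_{p^n}$. Next I would invoke right-exactness of $-\otimes_\Lambda$, or simply note that $\mathcal{J}_{p^n}\mathcal{M}\supseteq\mathcal{I}_{p^n}\mathcal{M}$ gives a natural $\Lambda$-surjection $\mathcal{M}/\mathcal{I}_{p^n}\mathcal{M}\twoheadrightarrow\mathcal{M}/\mathcal{J}_{p^n}\mathcal{M}$. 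Then, applying $-\otimes_\Lambda\mathrm{Frac}(\Lambda)$ (or directly using that $r(\mathcal{N})$ is additive on short exact sequences and nonnegative), I get $r(\mathcal{M}/\mathcal{J}_{p^n}\mathcal{M})\leq r(\mathcal{M}/\mathcal{I}_{p^n}\mathcal{M})$. Finally, substituting the two estimates of \Cref{CM1} finishes both assertions.

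I do not anticipate a genuine obstacle here; the only point requiring a small amount of care is making sure the rank comparison is valid over the (non-principal, multivariable) ring $\Lambda$ — but since $\Lambda$ is a Noetherian domain, $r(\mathcal{N})=\dim_{\mathrm{Frac}(\Lambda)}\mathcal{N}\otimes_\Lambda\mathrm{Frac}(\Lambda)$ is well-defined and is monotone under surjections and additive on exact sequences, so the argument goes through verbatim. One could also give an alternative, more hands-on argument paralleling the proof of \Cref{CM1} via \Cref{CM32} applied to a characteristic element of $\mathcal{M}$ together with the observation that $\nu_{p^n}(T_j)$ vanishes precisely at the nontrivial $p^n$-th roots of unity in the $j$-th coordinate; but the reduction to \Cref{CM1} is cleaner and shorter, so that is the route I would take.
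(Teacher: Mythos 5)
Your reduction is correct and it is a genuinely different, shorter route than the paper's. You observe $\omega_{p^n}(T_j)=T_j\,\nu_{p^n}(T_j)$, hence $\mathcal{I}_{p^n}\subseteq\mathcal{J}_{p^n}$, get the surjection $\mathcal{M}/\mathcal{I}_{p^n}\mathcal{M}\surj\mathcal{M}/\mathcal{J}_{p^n}\mathcal{M}$, and inherit both bounds from \Cref{CM1}. The paper instead argues by induction on $d$: inverting $p$ makes $T_j$ and $\nu_{p^n}(T_j)$ comaximal (since $\nu_{p^n}(0)=p^n$), so $(\mathcal{M}/\mathcal{I}_{p^n}\mathcal{M})\otimes\Qp$ splits into the $2^d$ summands $\mathcal{M}/(f_1(T_1),\ldots,f_d(T_d))\mathcal{M}\otimes\Qp$ with $f_i\in\{T_i,\nu_{p^n}(T_i)\}$, of which one is $\mathcal{M}/\mathcal{J}_{p^n}\mathcal{M}\otimes\Qp$ and the rest are controlled by the inductive hypothesis in fewer variables; this yields the two-sided relation $r(\mathcal{M}/\mathcal{J}_{p^n}\mathcal{M})=r(\mathcal{M}/\mathcal{I}_{p^n}\mathcal{M})+O(p^{(d-1)n})$, of which your inequality is the easy half. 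Your argument buys brevity and suffices for the stated proposition; the paper's decomposition buys the sharper asymptotic comparison and, more importantly, sets up a template that is reused later for the torsion estimates (e.g.\ in \Cref{CM6} and \Cref{CM7}), where a one-sided inequality would not be enough.

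One small correction to your justification: $r(\cdot)$ here is the free $\Zp$-rank of the quotient (which is a finitely generated $\Zp$-module because $\Lambda/\mathcal{J}_{p^n}$ is $\Zp$-free of rank $(p^n-1)^d$), not a $\Lambda$-rank. Tensoring with $\mathrm{Frac}(\Lambda)$ would annihilate both quotients, since they are killed by the nonzero ideals $\mathcal{I}_{p^n}$ and $\mathcal{J}_{p^n}$. The monotonicity you need is obtained by applying $-\otimes_{\Zp}\Qp$ to the surjection, which preserves surjectivity and hence gives $r(\mathcal{M}/\mathcal{J}_{p^n}\mathcal{M})\leq r(\mathcal{M}/\mathcal{I}_{p^n}\mathcal{M})$; with that one-line fix the proof stands.
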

\begin{proof}
We prove the assertion by a mathematical induction on $d$. When $d=1$, then, by the one variable Iwasawa theory, we have
\[
r(\mathcal{M}/\nu_{p^n}(T)\mathcal{M})=
\begin{cases}
O(1)&\mbox{if }\mathcal{M}\mbox{ is torsion}\\
O(p^{n})&\mbox{if not}.
\end{cases}
\]
Suppose $d\geq 2$. Since
\begin{align*}
(\mathcal{M}/((1+T)^{p^n}-1)\mathcal{M})\otimes\Q_p 
&=(\mathcal{M}\otimes(\Lambda/((1+T)^{p^n}-1)\Lambda))\otimes\Q_p\\
&=(\mathcal{M}\otimes((\Lambda/T\Lambda)\oplus(\Lambda/\nu_{p^n}(T)\Lambda))\otimes\Q_p\\
&=((\mathcal{M}/T\mathcal{M})\oplus(\mathcal{M}/\nu_{p^n}(T)\mathcal{M}))\otimes\Q_p,
\end{align*}
we have
\begin{align*}
r(\mathcal{M}/\mathcal{I}_{p^n}\mathcal{M})
&=r\big(\bigoplus_f \mathcal{M}/(f_1(T_1),\ldots,f_d(T_d))\big)\\
&=r\big((\mathcal{M}/\mathcal{J}_{p^n}\mathcal{M})\oplus\bigoplus_{\rm others}\mathcal{M}/(f_1(T_1),\ldots,f_d(T_d))\mathcal{M}\big),
\end{align*}
where $f=(f_i)_i$ runs through $\prod_i \{T_i, \nu_{p^n}(T_i)\}$  
and ``others'' stands for $f\neq (\nu_{p^n}(T_i))_i$. 
By induction hypothesis, we must have
\[
r(\mathcal{M}/\mathcal{J}_{p^n}\mathcal{M})=r(\mathcal{M}/\mathcal{I}_{p^n}\mathcal{M})+O(p^{(d-1)n}).
\]
By \Cref{CM1}, we obtain the assertion.
\end{proof}

\subsection{Replacing $\mathcal{I}_{p^n}$ by $\mathcal{J}_{p^n}$} 
In this section, we replace $\mathcal{I}_{p^n}$ by $\mathcal{J}_{p^n}$ in several results in \cite[Sections 2,3]{CuocoMonsky1981}. 
Although most of them are obtained by slight modifications of the original arguments, 
we attach proofs for the convenience of readers.

We define an equivalence relation on $(W(n)\setminus\{1\})^d$ in the following way; 
we say that $\zeta=(\zeta_1,\ldots,\zeta_d)$ and $\zeta'=(\zeta'_1,\ldots,\zeta'_d)$ in $(W(n)\setminus\{1\})^d$ are equivalent if there exists some $\sigma\in {\rm Gal}(\ol{\Q}_p/\Qp)$ satisfying 
$(\sigma(\zeta'_i))_i=(\zeta_i)_i$. 
Suppose that $\zeta\in (W(n)\setminus\{1\})^d$ runs through a complete representative system of the equivalent classes unless otherwise mentioned, and define a $\Lambda$-homomorphism by \glssymbol{varphi}
\[
\varphi_n:\Lambda/\mathcal{J}_{p^n}\Lambda\to\bigoplus_\zeta \Zp[\zeta]; F\mapsto (F(\zeta-1))_\zeta.
\] 
By \cite[Theorem 2.1]{CuocoMonsky1981}, we have that $\coker\varphi_n$ is annihilated by $p^{dn}$. By the definition of $\mathcal{J}_{p^n}$, we have $r(\Lambda/\mathcal{J}_{p^n}\Lambda)=(p^n-1)^d$. 
On the other hand, since the number of elements in the equivalent class of each $\zeta$ is $r(\Zp[\zeta])$, we have 
\[
r(\bigoplus_\zeta \Zp[\zeta])=(p^n-1)^d.
\]
Since both $\Lambda/\mathcal{J}_{p^n}\Lambda$ and $\bigoplus_\zeta \Zp[\zeta]$ are free $\Zp$-modules, we conclude that $\varphi_n$ is an injection with finite cokernel annihilated by $p^{dn}$. 
This argument persists if we replace $\mca{J}_{p^n}$ by any ideal $(f_1,\ldots,f_d)$ with $f_i\in \{\omega_{p^n}(T_i),\nu_{p^n}(T_i)\}$. 

\begin{lem}[{\cite[Lemma 2.4]{CuocoMonsky1981}}]{\label{CM10}}
Let $\mathcal{M}$ be a finitely generated free $\Zp$-module and let $\mathcal{M}'$ be its submodule of finite index. Let $\psi:\mathcal{M}\to \mathcal{M}$ be an endomorphism such that $\psi(\mathcal{M}')\subset \mathcal{M}'$. Suppose that there exists some $a\geq 0$ such that $p^a \mathcal{M}\subset \mathcal{M}'$. Then
\[
|e(\mathcal{M}/\psi(\mathcal{M}))-e(\mathcal{M}'/\psi(\mathcal{M}'))|\leq a(r(\ker \psi))).
\]
\end{lem}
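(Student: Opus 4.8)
The plan is to reduce everything to the standard computation of the size of cokernels of endomorphisms of finite-rank free $\Zp$-modules. First I would recall the basic fact: if $\mathcal{N}$ is a finitely generated free $\Zp$-module and $\psi:\mathcal{N}\to\mathcal{N}$ is any endomorphism, then $\mathcal{N}/\psi(\mathcal{N})$ is finite if and only if $\psi$ is injective, in which case $e(\mathcal{N}/\psi(\mathcal{N}))=v(\det\psi)$ (computed in a $\Zp$-basis, where $v$ is normalized so $v(p)=1$). When $\ker\psi\neq 0$, the quotient is infinite, but one still controls the torsion part: writing $\mathcal{N}=\ker\psi\oplus\mathcal{N}'$ is not available over $\Zp$ in general, but $\mathcal{N}/(\mathrm{tor}\text{-part})$ maps injectively, and $e(\mathrm{tor}_{\Zp}(\mathcal{N}/\psi(\mathcal{N})))$ is what the notation $e(-)$ extracts. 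So the inequality is really a statement comparing $p$-adic valuations of determinants of the induced maps on the torsion-free quotients of $\mathcal{M}$ and $\mathcal{M}'$ by the images of $\ker\psi$.

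The key steps, in order. Step one: since $\psi(\mathcal{M}')\subset\mathcal{M}'$, the map $\psi$ descends to $\overline{\psi}:\mathcal{M}/\mathcal{M}'\to\mathcal{M}/\mathcal{M}'$ on the finite group $\mathcal{M}/\mathcal{M}'$, and we have the snake-lemma exact sequence
\[
0\to\ker(\psi|_{\mathcal{M}'})\to\ker\psi\to\ker\overline{\psi}\to\mathcal{M}'/\psi(\mathcal{M}')\to\mathcal{M}/\psi(\mathcal{M})\to\mathcal{M}/\mathcal{M}'+\psi(\mathcal{M})\to 0
\]
coming from the map of short exact sequences $0\to\mathcal{M}'\to\mathcal{M}\to\mathcal{M}/\mathcal{M}'\to 0$ with vertical maps $\psi$. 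Step two: I would take the $p$-exponent $e(-)$ (i.e. the $p$-part of the order) of each finite term, or rather of the torsion parts, and use additivity of $e$ along exact sequences of finite groups (and sub-additivity/control when some terms are only eventually finite). Since $\mathcal{M}/\mathcal{M}'$ is finite and $p^a\mathcal{M}\subset\mathcal{M}'$, its $p$-exponent is at most $a$ times the rank; more precisely, the relevant pieces $\ker\overline{\psi}$ and $\mathcal{M}/(\mathcal{M}'+\psi(\mathcal{M}))$ are each finite of $p$-exponent bounded in terms of $a$ and the rank. Step three: track how $\ker\psi$ and $\ker(\psi|_{\mathcal{M}'})$ enter; since $p^a\ker\psi\subset\ker(\psi|_{\mathcal{M}'})$, the quotient $\ker\psi/\ker(\psi|_{\mathcal{M}'})$ has $p$-exponent at most $a\cdot r(\ker\psi)$. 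Combining, the difference $e(\mathcal{M}/\psi(\mathcal{M}))-e(\mathcal{M}'/\psi(\mathcal{M}'))$ — interpreted via the torsion submodules, which is the only way these are finite when $\ker\psi\neq 0$ — is pinched between $\pm a\,r(\ker\psi)$ by reading the snake sequence from both ends.

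The main obstacle is bookkeeping when $\psi$ is not injective: then $\mathcal{M}/\psi(\mathcal{M})$ is infinite, so $e(-)$ must be read as $e(\mathrm{tor}_{\Zp}(-))$, and one has to be careful that the snake-lemma sequence interacts correctly with passing to torsion submodules. The cleanest way around this is to split off a free complement: choose a $\Zp$-basis of $\mathcal{M}$ adapted to $\ker\psi$ (possible up to finite index, absorbing another factor bounded by $a\cdot r(\ker\psi)$ into the estimate), reduce to the case where $\psi$ restricted to a complementary free module $\mathcal{M}_0$ is injective, apply the determinant formula $e(\mathcal{M}_0/\psi(\mathcal{M}_0))=v(\det\psi|_{\mathcal{M}_0})$ and its analogue for $\mathcal{M}'_0:=\mathcal{M}_0\cap\mathcal{M}'$, and note $\det\psi|_{\mathcal{M}_0}$ and $\det\psi|_{\mathcal{M}'_0}$ differ multiplicatively by the determinant of $\overline{\psi}$ on the finite group $\mathcal{M}_0/\mathcal{M}'_0$, whose $p$-valuation is bounded by $a\cdot r(\mathcal{M}_0)$ — but here one restricts attention to how $\ker\psi$ contributes, yielding the stated bound $a\cdot r(\ker\psi)$ rather than $a\cdot r(\mathcal{M})$. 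Getting the constant to be $r(\ker\psi)$ rather than $r(\mathcal{M})$ is the one genuinely delicate point, and it is exactly where the hypothesis $p^a\mathcal{M}\subset\mathcal{M}'$ is used only on the $\ker\psi$-direction after the adapted-basis reduction.
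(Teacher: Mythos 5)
First, a point of reference: the paper does not prove this lemma at all — it is quoted verbatim from Cuoco--Monsky \cite[Lemma 2.4]{CuocoMonsky1981} and used as a black box — so there is no in-paper argument to compare yours against, and your sketch has to stand on its own. It correctly sets up the snake-lemma sequence and, to its credit, correctly locates the crux (getting the constant $r(\ker\psi)$ rather than $r(\mathcal{M})$), but it does not cross that gap. Two concrete problems. (i) ``Pinching from both ends of the snake sequence'' does not give the stated bound. Already for $\mathcal{M}=\Zp$, $\mathcal{M}'=p\Zp$, $\psi=\times p$, $a=1$: here $\ker\psi=0$ and the lemma asserts the exact equality $e(\mathcal{M}/\psi\mathcal{M})=e(\mathcal{M}'/\psi\mathcal{M}')=1$, yet $\ker\overline{\psi}=\coker\overline{\psi}=\Z/p\Z$, and the additivity-of-$e$ bookkeeping along the six-term sequence only yields $e(\mathcal{M}/\psi\mathcal{M})-e(\mathcal{M}'/\psi\mathcal{M}')\in[-1,0]$. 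In general $\ker\overline{\psi}$ and $\coker\overline{\psi}$ have $p$-exponent as large as $a\cdot r(\mathcal{M})$, and your accounting loses exactly the cancellation between them that the lemma encodes. (ii) The proposed repair via an adapted basis is not coherent as written: $\psi$ does not preserve a complement $\mathcal{M}_0$ of $\ker\psi$, so ``$\det\psi|_{\mathcal{M}_0}$'' and ``$\mathcal{M}_0/\psi(\mathcal{M}_0)$'' are undefined; the relevant invariant is the $k$-th determinantal divisor of $\psi$ with $k=\rank\psi$, and comparing it between bases of $\mathcal{M}$ and $\mathcal{M}'$ naturally produces an error bounded by $a\cdot k=a\cdot r(\im\psi)$, not $a\cdot r(\ker\psi)$. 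Your closing sentence ``one restricts attention to how $\ker\psi$ contributes'' is precisely the statement to be proved, and no argument is supplied for it.

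Here is a route that does close the gap. Put $K=\ker\psi$, $N=\psi(\mathcal{M})$, $N'=\psi(\mathcal{M}')$, and let $\widetilde{N}$ be the saturation of $N$ in $\mathcal{M}$; since $N'\subseteq N$ have the same $\Qp$-span, the saturation of $N'$ in $\mathcal{M}'$ is $\widetilde{N}\cap\mathcal{M}'$. Then $e(\mathcal{M}/\psi\mathcal{M})=\log_p[\widetilde{N}:N]$ and $e(\mathcal{M}'/\psi\mathcal{M}')=\log_p[\widetilde{N}\cap\mathcal{M}':N']$, and using $[N:N']=[\mathcal{M}:\mathcal{M}'+K]$ together with multiplicativity of indices, the difference equals $\log_p|A|-\log_p|B|$, where $A=(\widetilde{N}\cap(\mathcal{M}'+K))/(\widetilde{N}\cap\mathcal{M}')$ and $B=\mathcal{M}/(\widetilde{N}+K+\mathcal{M}')$ are the kernel and cokernel of the natural map $\widetilde{N}/(\widetilde{N}\cap\mathcal{M}')\to\mathcal{M}/(\mathcal{M}'+K)$. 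Now $A$ embeds into $K/(K\cap\mathcal{M}')$, a quotient of the free rank-$r(K)$ module $K$ killed by $p^a$, and $B$ is a quotient of the free rank-$r(K)$ module $\mathcal{M}/\widetilde{N}$ killed by $p^a$; hence both have order at most $p^{a\,r(\ker\psi)}$. This is the point where the rank of the kernel, rather than of $\mathcal{M}$ or of the image, genuinely enters, and it is the step your proposal is missing.
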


\begin{prop}[{\cite[Theorem 2.5]{CuocoMonsky1981}} with $\mathcal{I}_{p^n}$ replaced by $\mathcal{J}_{p^n}$]\label{CM5}
Let $0\neq F\in\Lambda$ and $\mathcal{M}=\Lambda/(F)$. Then 
\[
e(\mathcal{M}/\mathcal{J}_{p^n} \mathcal{M})=\left(\mu(F) p^n+O(n)\right)p^{(d-1)n}.
\]
Moreover, if $r(\mathcal{M}/\mathcal{I}_{p^n}\mathcal{M})=O(p^{(d-2)n})$, then 
\[
e(\mathcal{M}/\mathcal{J}_{p^n} \mathcal{M})=
\left(\mu(F) p^n+\lambda(F) n+O(1)\right)p^{(d-1)n}.
\]
\end{prop}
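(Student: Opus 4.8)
The plan is to reduce the statement to the corresponding result for $\mathcal{I}_{p^n}$ (namely \cite[Theorem 2.5]{CuocoMonsky1981}, which gives $e(\mathcal{M}/\mathcal{I}_{p^n}\mathcal{M}) = (\mu(F)p^n + \lambda(F)n + O(1))p^{(d-1)n}$ under the rank hypothesis, and $e(\mathcal{M}/\mathcal{I}_{p^n}\mathcal{M}) = (\mu(F)p^n + O(n))p^{(d-1)n}$ unconditionally) by comparing the cokernels of multiplication by the two families of elements. For the unconditional bound, I would instead work directly through $\varphi_n$: since $\mathcal{M} = \Lambda/(F)$, tensoring the injection $\varphi_n \colon \Lambda/\mathcal{J}_{p^n}\Lambda \hookrightarrow \bigoplus_\zeta \mathbb{Z}_p[\zeta]$ with finite cokernel annihilated by $p^{dn}$ against $\Lambda/(F)$ and applying \Cref{CM10} (with $a = dn$) reduces the computation of $e(\mathcal{M}/\mathcal{J}_{p^n}\mathcal{M})$ to that of $\sum_\zeta e(\mathbb{Z}_p[\zeta]/(F(\zeta-1)))$, up to an error of size $dn \cdot r(\ker)$, where the relevant kernel has $\mathbb{Z}_p$-rank $O(p^{(d-1)n})$ by \Cref{CM2} applied to the torsion module $\mathcal{M}$. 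Since $e(\mathbb{Z}_p[\zeta]/(F(\zeta-1))) = v(F(\zeta-1)) \cdot r(\mathbb{Z}_p[\zeta])$ and each equivalence class has $r(\mathbb{Z}_p[\zeta])$ elements, this sum equals $\sum_{\zeta\in(W(n)\setminus\{1\})^d} v(F(\zeta-1))$, which by \Cref{CM32} is $(\mu(F)p^n + \lambda(F)n + O(1))p^{(d-1)n}$. The resulting error term $O(np^{(d-1)n})$ then absorbs the $\lambda(F)n\,p^{(d-1)n}$ contribution, yielding the unconditional estimate $(\mu(F)p^n + O(n))p^{(d-1)n}$.

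For the refined statement under the hypothesis $r(\mathcal{M}/\mathcal{I}_{p^n}\mathcal{M}) = O(p^{(d-2)n})$, the strategy is to show $e(\mathcal{M}/\mathcal{J}_{p^n}\mathcal{M}) = e(\mathcal{M}/\mathcal{I}_{p^n}\mathcal{M}) + O(p^{(d-1)n})$ and invoke \cite[Theorem 2.5]{CuocoMonsky1981} for the $\mathcal{I}_{p^n}$ side. As in the proof of \Cref{CM2}, the key algebraic input is the decomposition
\[
\mathcal{M}/\mathcal{I}_{p^n}\mathcal{M} \text{ and } (\mathcal{M}/\mathcal{J}_{p^n}\mathcal{M}) \oplus \bigoplus_{\text{others}} \mathcal{M}/(f_1(T_1),\ldots,f_d(T_d))\mathcal{M}
\]
agree up to a map with finite kernel and cokernel, where $f = (f_i)$ runs over $\prod_i\{T_i,\nu_{p^n}(T_i)\}$ and "others" means $f \neq (\nu_{p^n}(T_i))_i$. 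Running the $\varphi_n$-style argument (which, as noted in the excerpt, persists for any ideal $(f_1,\ldots,f_d)$ with $f_i \in \{\omega_{p^n}(T_i),\nu_{p^n}(T_i)\}$) together with \Cref{CM10}, I would bound the $e$ of each "others" summand: each such term involves at least one factor $T_i$, so its $\mathbb{Z}_p$-rank is controlled by $r(\mathcal{M}/(T_i,\ldots)) $, and an induction on $d$ (mirroring \Cref{CM2}) shows the total $e$-contribution of the "others" terms is $O(p^{(d-1)n})$. Combining the additivity of $e$ over this near-decomposition with the $\mathcal{I}_{p^n}$ formula gives the claim.

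The main obstacle I anticipate is controlling the error terms precisely enough in the refined case — specifically, verifying that the "others" summands contribute only $O(p^{(d-1)n})$ to $e$ and not more. This requires knowing not just the ranks of the quotients $\mathcal{M}/(f_1,\ldots,f_d)\mathcal{M}$ but also bounding their torsion exponents, which is subtler; the trick, following Cuoco--Monsky, is that each "others" term carries a factor $T_i$ for some $i$, so it is a quotient of $\mathcal{M}/T_i\mathcal{M}$ — a module over $\Lambda_{d-1}$ in fewer variables — and one feeds the $(d-1)$-variable version of the unconditional estimate into the induction. Keeping track of whether the hypothesis $r(\mathcal{M}/\mathcal{I}_{p^n}\mathcal{M}) = O(p^{(d-2)n})$ descends appropriately to these lower-dimensional slices, and ensuring the $O$-constants remain uniform in $n$, is where the bookkeeping is delicate; everything else reduces to the already-cited results \Cref{CM10}, \Cref{CM32}, \Cref{CM2}, and \cite[Theorem 2.5]{CuocoMonsky1981}.
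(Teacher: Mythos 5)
Your argument for the unconditional estimate is essentially the paper's proof: both run the multiplication-by-$F$ diagram over the injection $\varphi_n:\Lambda/\mathcal{J}_{p^n}\hookrightarrow\bigoplus_\zeta\Zp[\zeta]$, apply \Cref{CM10} with $a=dn$, bound $r(\ker(\times F(\zeta-1)))$ by $r(\mathcal{M}/\mathcal{J}_{p^n}\mathcal{M})=O(p^{(d-1)n})$ via \Cref{CM2}, and evaluate $e\big(\bigoplus_\zeta\Zp[\zeta]/(F(\zeta-1))\big)=\sum_{\zeta\in(W(n)\setminus\{1\})^d}v(F(\zeta-1))$ by \Cref{CM32}. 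No complaints there.

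For the refined estimate you diverge from the paper, and this is where the risk lies. The paper does \emph{not} pass through $e(\mathcal{M}/\mathcal{I}_{p^n}\mathcal{M})$ at all: it simply reruns the identical $\varphi_n$ computation and observes that the whole error is $dn\cdot r(\ker(\times F(\zeta-1)))\le dn\cdot r(\Lambda/(\mathcal{J}_{p^n},F))$, which under the rank hypothesis improves from $O(np^{(d-1)n})$ to $O(np^{(d-2)n})=o(p^{(d-1)n})$, so the $\lambda(F)np^{(d-1)n}$ term from \Cref{CM32} survives untouched. Your route instead needs the comparison $e(\mathcal{M}/\mathcal{J}_{p^n}\mathcal{M})=e(\mathcal{M}/\mathcal{I}_{p^n}\mathcal{M})+O(p^{(d-1)n})$ with error genuinely $O(p^{(d-1)n})$ and not $O(np^{(d-1)n})$ — an extra factor of $n$ swallows the $\lambda$ term. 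Note that the paper's own version of this comparison (\Cref{CM7}, proved later and using \Cref{CM6}, which itself depends on \Cref{CM5}) only asserts $O(np^{(d-1)n})$, and is accordingly used only to transfer the $\mu$-term (\Cref{22510}). To make your route work you must check that every place an annihilator exponent $O(n)$ (from \Cref{CM34}-type bounds, when finite kernels bite into free parts) appears, it is multiplied by a rank that is $O(p^{(d-2)n})$ under the hypothesis, while the rank-free error sources (\Cref{avoidlem} and the ``others'' summands, which contribute a genuine $\Theta(p^{(d-1)n})$) stay at $O(p^{(d-1)n})$; you also need the induction to supply the lower-variable unconditional estimates without circularity. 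This bookkeeping is probably salvageable, but you have not carried it out, and it is strictly harder than the one-line observation that makes the paper's direct argument work. I would recommend proving both halves by the single $\varphi_n$ computation.
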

\begin{proof}
Consider the commutative diagram
\[
\xymatrix{
\Lambda/\mathcal{J}_{p^n}\ar[r]^{\times F}\ar[d]_{\varphi_n} & \Lambda/\mathcal{J}_{p^n}\ar[d]_{\varphi_n}\\
\bigoplus_\zeta\Zp[\zeta]\ar[r]^{\times F(\zeta-1)}&\bigoplus_\zeta \Zp[\zeta].
}
\]
By \Cref{CM10}, we have
\[
|e(\Lambda/(\mathcal{J}_{p^n},F))-e(\bigoplus_\zeta\mathbb{Z}_p[\zeta]/(F(\zeta-1))|\leq dn(\ker (\times F(\zeta-1))).
\]
Since
\[
0\to\ker(\times F(\zeta-1))\to\bigoplus_\zeta\Zp[\zeta]\to\bigoplus_\zeta\Zp[\zeta]\to\coker(\times F(\zeta-1))\to 0
\]
is exact, we have
\[
r(\ker(\times F(\zeta-1)))=r(\coker(\times F(\zeta-1)).
\]
Since
\[
0\to\Lambda/\mathcal{J}_{p^n}\stackrel{\varphi_n}\to\bigoplus_\zeta \Zp[\zeta]\to\coker\varphi_n\to 0
\]
is exact, 
\[
\Lambda/(\mathcal{J}_{p^n},F)\to\bigoplus_\zeta\Zp[\zeta]/(F(\zeta-1))\to\coker\varphi_n/(\overline{F(\zeta-1)})\to 0
\]
is also exact. Since $\coker\varphi_n$ is finite, we have an exact sequence
\[
\Lambda/(\mathcal{J}_{p^n},F)\otimes\Q_p\to \big(\bigoplus_\zeta \Zp[\zeta]/(F(\zeta-1))\big)\otimes\Q_p\to 0.
\]
This implies
\[r(\Lambda/(\mathcal{J}_{p^n}, F))\geq r\big(\bigoplus_\zeta \Zp[\zeta]/(F(\zeta-1))\big)=r(\coker(\times(F(\zeta-1)).\] 
By \Cref{CM2}, we have
\[
\Big|e(\Lambda/(\mathcal{J}_{p^n}, F))-e\big(\bigoplus_\zeta \Zp[\zeta]/(F(\zeta-1))\big)\Big|\leq O(np^{(d-1)n}).
\]
Moreover, if we assume $r(\mathcal{M}/\mathcal{J}_{p^n} \mathcal{M})=O(p^{(d-2)n})$, then we have
\[
\Big|e(\Lambda/(\mathcal{J}_{p^n}, F))-e\big(\bigoplus_\zeta \Zp[\zeta]/(F(\zeta-1))\big)\Big|\leq O(np^{(d-2)n}).
\]
Hence it suffices to estimate $e(\bigoplus_\zeta \Zp[\zeta]/(F(\zeta-1)))$. Since
\[
\Big|\bigoplus_\zeta\Zp[\zeta]/F(\zeta-1)\Big|=\prod_\zeta N_{\Q_p(\zeta)/\Q_p} F(\zeta-1)=\prod_{\zeta\in (W(n)\setminus\{1\})^d}F(\zeta-1),
\]
where $N_{\Q_p(\zeta)/\Q_p}$ denotes the norm map, by \Cref{CM32}, we obtain
\begin{align*}
e\big(\bigoplus_\zeta\Zp[\zeta]/F(\zeta-1)\big)&=\sum_{\zeta\in (W(n)\setminus\{1\})^d} v(F(\zeta-1))\\
&=(\mu p^n+\lambda n+O(1))p^{(d-1)n}. \qedhere
\end{align*}
\end{proof}

Let $\mathcal{M}$ be a finitely generated $\Lambda$-module. For each $n\geq 0$, let \glssymbol{Phi}$\Phi_n:\mathcal{M}\to\bigoplus_\zeta \mathcal{M}_{\zeta}$ denote the natural $\Lambda$-homomorphism. 

\begin{lem}[{\cite[Lemma 2.7]{CuocoMonsky1981}} with $\mathcal{I}_{p^n}$ replaced by $\mathcal{J}_{p^n}$] \label{CM31}
Both the kernel and cokernel of $\Phi_n:\mathcal{M}/\mathcal{J}_{p^n}\mathcal{M}\to\bigoplus_\zeta \mathcal{M}_{\zeta}$ are annihilated by $p^{dn}$.
\end{lem}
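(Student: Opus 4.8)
The plan is to reduce the statement about an arbitrary finitely generated $\Lambda$-module $\mathcal{M}$ to the already-established free case, namely the fact (displayed just before Lemma \ref{CM10}) that the map $\varphi_n:\Lambda/\mathcal{J}_{p^n}\Lambda\to\bigoplus_\zeta\Zp[\zeta]$ is injective with cokernel annihilated by $p^{dn}$. First I would choose a finite presentation $\Lambda^a\xrightarrow{\ A\ }\Lambda^b\to\mathcal{M}\to0$. Applying $-\otimes_\Lambda\Lambda/\mathcal{J}_{p^n}$ and, separately, the exact functor $-\otimes_\Lambda\Zp[\zeta]$ for each representative $\zeta$, one obtains two presentations that fit into a commutative ladder whose vertical maps are built from the free-module map $\varphi_n$ (applied coordinatewise, i.e. $\Phi_n$ for $\mathcal{M}=\Lambda^b$ and for $\mathcal{M}=\Lambda^a$). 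Concretely, the diagram is
\[
\xymatrix{
(\Lambda/\mathcal{J}_{p^n})^{a}\ar[r]^{\bar A}\ar[d]_{\varphi_n^{\oplus a}} & (\Lambda/\mathcal{J}_{p^n})^{b}\ar[r]\ar[d]_{\varphi_n^{\oplus b}} & \mathcal{M}/\mathcal{J}_{p^n}\mathcal{M}\ar[r]\ar[d]_{\Phi_n} & 0\\
\bigoplus_\zeta\Zp[\zeta]^{a}\ar[r]^{A(\zeta-1)} & \bigoplus_\zeta\Zp[\zeta]^{b}\ar[r] & \bigoplus_\zeta\mathcal{M}_\zeta\ar[r] & 0,
}
\]
with exact rows; the two outer vertical maps are injective with cokernel killed by $p^{dn}$.

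The key step is then a diagram chase, conveniently organized via the snake lemma. For the cokernel of $\Phi_n$: since $\varphi_n^{\oplus b}$ is surjective onto $\bigoplus_\zeta\Zp[\zeta]^b$ modulo $p^{dn}$, and the bottom-right horizontal map is surjective, every element of $\bigoplus_\zeta\mathcal{M}_\zeta$ multiplied by $p^{dn}$ lifts to $\bigoplus_\zeta\Zp[\zeta]^b$, comes from $(\Lambda/\mathcal{J}_{p^n})^b$, and maps into $\mathcal{M}/\mathcal{J}_{p^n}\mathcal{M}$; hence $p^{dn}\coker\Phi_n=0$. For the kernel: take $x\in\mathcal{M}/\mathcal{J}_{p^n}\mathcal{M}$ with $\Phi_n(x)=0$; lift $x$ to $y\in(\Lambda/\mathcal{J}_{p^n})^b$, so $\varphi_n^{\oplus b}(y)$ lies in the image of $A(\zeta-1)$, say $\varphi_n^{\oplus b}(y)=A(\zeta-1)(z)$ with $z\in\bigoplus_\zeta\Zp[\zeta]^a$. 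Multiplying by $p^{dn}$ we can pull $z$ back through $\varphi_n^{\oplus a}$ to some $w\in(\Lambda/\mathcal{J}_{p^n})^a$, and then $\varphi_n^{\oplus b}(p^{dn}y-\bar A(w))=0$; injectivity of $\varphi_n^{\oplus b}$ forces $p^{dn}y=\bar A(w)$, so $p^{dn}x=0$ in $\mathcal{M}/\mathcal{J}_{p^n}\mathcal{M}$. Thus $p^{dn}\ker\Phi_n=0$ as well.

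I expect the main obstacle to be bookkeeping the exponents: one must be careful that a single factor of $p^{dn}$ suffices throughout — both the cokernel of $\varphi_n^{\oplus b}$ and the "correction" step using $\varphi_n^{\oplus a}$ each consume a $p^{dn}$, but since these are applied to the \emph{same} lifted element at different stages of the chase rather than composed, the total annihilator remains $p^{dn}$ and does not degrade to $p^{2dn}$. The only other point requiring care is the exactness of the bottom row: this uses that $\Zp[\zeta]$ is $\Lambda$-flat (it is torsion-free over $\Zp$, and the presentation tensored with it stays exact because right-exactness of tensor already gives the cokernel, while the middle and left terms are computed directly as $\Zp[\zeta]^b$ and $\Zp[\zeta]^a$). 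Everything else is the standard snake-lemma argument identical in form to \cite[Lemma 2.7]{CuocoMonsky1981}, with $\mathcal{I}_{p^n}$ replaced by $\mathcal{J}_{p^n}$ and with the free-case input recorded above.
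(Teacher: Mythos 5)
Your proposal is correct and follows essentially the same route as the paper: both proofs reduce to the free case (injectivity of $\varphi_n$ and $p^{dn}$-annihilation of $\coker\varphi_n$) and propagate it through a presentation by a snake-lemma-style chase, the only cosmetic difference being that the paper treats the cokernel by tensoring the sequence $\Lambda/\mathcal{J}_{p^n}\to\bigoplus_\zeta\Zp[\zeta]\to\coker\varphi_n\to0$ with $\mathcal{M}$ and runs the kernel chase over a free $\Lambda/\mathcal{J}_{p^n}$-presentation of $\mathcal{M}/\mathcal{J}_{p^n}\mathcal{M}$ rather than a single $\Lambda$-presentation of $\mathcal{M}$. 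One small caveat: $\Zp[\zeta]$ is \emph{not} $\Lambda$-flat (it is a torsion $\Lambda$-module), but as you yourself note, only right-exactness of $-\otimes_\Lambda\Zp[\zeta]$ is needed for the bottom row, so this misstatement is harmless.
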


\begin{proof}
Consider an exact sequence of $\Lambda$-modules
\[
\Lambda/\mathcal{J}_{p^n}\Lambda\stackrel{\varphi_n}\to\bigoplus_\zeta \Zp[\zeta]\to\coker{\varphi_n}\to 0.
\]
Since tensoring $\mathcal{M}$ is right exact, we obtain the exact sequence
\[
\mathcal{M}/\mathcal{J}_{p^n} \mathcal{M}\stackrel{\Phi_n}\to\bigoplus_\zeta \mathcal{M}_{\zeta}\to (\coker\varphi_n)\otimes_{\Lambda} \mathcal{M}\to 0.
\]
Since $\coker \varphi_n$ is annhilated by $p^{dn}$ by \cite[Lemma 2.1]{CuocoMonsky1981}, so is $\coker \Phi_n=(\coker\varphi_n)\otimes_{\Lambda} \mathcal{M}$. Hence the statement for the cokernel holds.

To show the statement for the kernel, consider an exact sequence of $\Lambda/\mathcal{J}_{p^n}$-modules
\[
0\to\ker\pi\to \mathcal{F}\stackrel{\pi}\to \mathcal{M}/\mathcal{J}_{p^n}\mathcal{M}\to 0,
\]
where $\mathcal{F}$ is a finitely generated free $\Lambda/\mathcal{J}_{p^n}$-module. For $\zeta\in(W(n)\setminus\{1\})^d$, since tensoring $\Zp[\zeta]$ is right exact, we have a commutative diagram

\[
\xymatrix{
\ker\pi\ar[r]^{\iota}&\mathcal{F}\ar[r]^{\pi}\ar[d]_{\Psi_n}&\mathcal{M}/\mathcal{J}_{p^n}\mathcal{M}\ar[r]\ar[d]_{\Phi_n}&0\\
\bigoplus_\zeta(\ker\pi\otimes_{\Lambda}\Zp[\zeta])\ar[r]^{\iota '}&\bigoplus_\zeta (\mathcal{F}\otimes_{\Lambda}\Zp[\zeta])\ar[r]&\bigoplus_\zeta((\mathcal{M}/\mathcal{J}_{p^n}\mathcal{M})\otimes_{\Lambda}\Zp[\zeta])\ar[r]&0.
}
\]
Since
\begin{align*}
\mathcal{M}/\mathcal{J}_{p^n}\mathcal{M}\otimes_{\Lambda}\Zp[\zeta]&=(\mathcal{M}\otimes_{\Lambda}{\Lambda/\mathcal{J}_{p^n}})\otimes_{\Lambda}(\Lambda/\mathcal{J}_{p^n}\otimes_{\Lambda/\mathcal{J}_{p^n}}\Zp[\zeta])\\
&=\mathcal{M}\otimes_{\Lambda}\Lambda/\mathcal{J}_{p^n}\otimes_{\Lambda/\mathcal{J}_{p^n}}\Zp[\zeta])\\
&=\mathcal{M}\otimes_{\Lambda}\Zp[\zeta]=\mathcal{M}_{\zeta},
\end{align*}
the diagram becomes
\[
\xymatrix{
\ker\pi\ar[r]^{\iota}&\mathcal{F}\ar[r]^{\pi}\ar[d]_{\Psi_n}&\mathcal{M}/\mathcal{J}_{p^n}\mathcal{M}\ar[r]\ar[d]_{\Phi_n}&0\\
\bigoplus_\zeta(\ker\pi)_{\zeta}\ar[r]^{\iota '}&\bigoplus_\zeta \mathcal{F}
_{\zeta}\ar[r]&\bigoplus_\zeta \mathcal{M}_{\zeta}\ar[r]&0.
}
\]

Let $\overline{x}\in\ker\Phi_n$. Then there exists some $x\in \mathcal{F}$ such that $\pi(x)=\overline{x}$. By diagram chasing, there exists $y\in\bigoplus_\zeta(\ker\pi)_{\zeta}$ such that $\Psi_n(x)=\iota'(y)$. Let $\{u_i\}$ be a system of generators of $\Lambda/J$-module $\ker\pi$. Then $\{u_i\otimes 1\}$ generates $(\ker\pi)_\zeta$ over $\Zp[\zeta]$ for each $\zeta$. Hence $y$ is of the form ($\sum_{i} a_{i,\zeta}(u_i\otimes 1))_{\zeta}$ with $a_{i,\zeta}\in\Zp[\zeta]$. This implies
\[
\Psi_n(x)=\iota'(y)=(\sum_i a_{i,\zeta}\iota'(u_i\otimes 1))_{\zeta}.
\]
Since $(a_{i,\zeta})_{\zeta}\in\bigoplus_\zeta \Zp[\zeta]$, by \cite[Lemma 2.1]{CuocoMonsky1981}, there exists $a_i\in\Lambda$ such that $\varphi_n(a_i)=(p^{dn}a_{i,\zeta})_{\zeta}$ for each $i$. Consider an element $p^{dn}x-\sum_i a_iu_i\in \mathcal{F}$. Then
\begin{align*}
\Psi_n(p^{dn}x-\sum_i a_iu_i) 
&=p^{dn}\Psi_n(x)-(\sum_i u_i\otimes p^{dn}a_{i,\zeta})_{\zeta}\\
&=(p^{dn}\sum_i a_{i,\zeta}(u_i\otimes 1))_{\zeta}-(\sum_{i} u_i\otimes p^{dn}a_{i,\zeta})_{\zeta}\\
&=0.
\end{align*}
Since $\varphi_n$ is injective and $\mathcal{F}$ is a free $\Lambda/\mathcal{J}_{p^n}$-module, $\Psi_n$ is also injective. This implies
\[
p^{dn}x=\sum_{i}a_i u_i
\]
in $\mathcal{F}$. Since $\{u_i\}$ is a system of generators of $\ker\pi$, we obtain
\[
p^{dn}\overline{x}=\pi(p^{dn}x)=0.
\]
This completes the proof.
\end{proof}

\begin{prop}[{\cite[Theorem 2.8]{CuocoMonsky1981}} with $\mathcal{I}_{p^n}$ replaced by $\mathcal{J}_{p^n}$]\label{CM34}
Let $\mathcal{M}$ be a finitely generated $\Lambda$-module. Then there exists $c\geq 0$ such that,
for every $n\geq 0$, 
 $p^{dn+c}$ annihilates $\tor_{\Zp} \mathcal{M}/\mathcal{J}_{p^n} \mathcal{M}$. %for every $n\geq 0$.
\end{prop}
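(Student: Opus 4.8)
The plan is to pass through the natural $\Lambda$-homomorphism $\Phi_n\colon\mathcal{M}/\mathcal{J}_{p^n}\mathcal{M}\to\bigoplus_\zeta\mathcal{M}_\zeta$ of \Cref{CM31} and reduce the assertion to a uniform estimate on the $\Zp$-torsion of the individual modules $\mathcal{M}_\zeta$. First I record an elementary observation: if $f\colon A\to B$ is a homomorphism of $\Zp$-modules with $p^{N}\ker f=0$ and if $p^{M}$ annihilates $\tor_{\Zp}B$, then $p^{N+M}$ annihilates $\tor_{\Zp}A$, since $f$ carries $\tor_{\Zp}A$ into $\tor_{\Zp}B$ and hence $p^{M}x\in\ker f$ and $p^{N+M}x=0$ for every $x\in\tor_{\Zp}A$. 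Applying this with $f=\Phi_n$, whose kernel is annihilated by $p^{dn}$ by \Cref{CM31}, and using $\tor_{\Zp}\!\bigl(\bigoplus_\zeta\mathcal{M}_\zeta\bigr)=\bigoplus_\zeta\tor_{\Zp}\mathcal{M}_\zeta$, it suffices to produce a constant $c\geq0$, depending only on $\mathcal{M}$, such that $p^{c}$ annihilates $\tor_{\Zp}\mathcal{M}_\zeta$ for every $\zeta\in(W\setminus\{1\})^d$.

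To obtain such a $c$, I would fix a finite presentation $\Lambda^{k}\xrightarrow{A}\Lambda^{m}\to\mathcal{M}\to0$, so that by right exactness of $-\otimes_\Lambda\Zp[\zeta]$ one has $\mathcal{M}_\zeta\cong\coker\!\bigl(A(\zeta-1)\colon\Zp[\zeta]^{k}\to\Zp[\zeta]^{m}\bigr)$, where $A(\zeta-1)$ is the matrix obtained from $A$ by the substitution $T_i\mapsto\zeta_i-1$. Since $\Zp[\zeta]$ is the ring of integers of the totally ramified extension $\Qp(\zeta_1,\dots,\zeta_d)/\Qp$, it is a discrete valuation ring, and the elementary divisor theorem presents $\coker(A(\zeta-1))$ as the direct sum of a free module and cyclic torsion modules each of whose annihilators divides $\delta_\zeta$, where $\delta_\zeta$ generates the ideal of $\Zp[\zeta]$ generated by the $\rho(\zeta)\times\rho(\zeta)$ minors of $A(\zeta-1)$ and $\rho(\zeta)$ is the rank of $A(\zeta-1)$. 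Thus $\delta_\zeta$, and therefore $p^{\lceil v(\delta_\zeta)\rceil}$, annihilates $\tor_{\Zp}\mathcal{M}_\zeta$, where $v$ is the normalized valuation of \Cref{ss.pW}; so it is enough to bound $v(\delta_\zeta)$ uniformly in $\zeta$.

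For this, let $\rho_0$ be the rank of $A$ over $\operatorname{Frac}\Lambda$. Then $\rho(\zeta)\leq\rho_0$, and the ideal of $\rho(\zeta)$-minors of $A(\zeta-1)$ is the image in $\Zp[\zeta]$ of the nonzero ideal of $\Lambda$ generated by the $\rho(\zeta)$-minors of $A$; hence it is generated by $\{M(\zeta-1)\}$ with $M$ ranging over those minors, at least one of which is nonzero at $\zeta-1$. Consequently $v(\delta_\zeta)=\min\{v(M(\zeta-1))\mid M(\zeta-1)\neq0\}\leq\max_{M}\sup_{\zeta'\in(W\setminus\{1\})^d}\{v(M(\zeta'-1))\mid M(\zeta'-1)\neq0\}$, the maximum over the finitely many nonzero minors $M\in\Lambda$ of $A$ of size at most $\rho_0$. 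The hard part is to see that this maximum is finite, i.e., that for a fixed $0\neq M\in\Lambda$ the valuation $v(M(\zeta'-1))$ stays bounded as $\zeta'$ runs through roots-of-unity tuples of arbitrarily large $p$-power order: writing $M=p^{\mu(M)}M_0$ with $\overline{M_0}\neq0$ and applying a Weierstrass factorization to $M_0$ reduces this to the one-variable situation, in which the roots of the resulting distinguished polynomial have positive valuation whereas $v(\zeta'_i-1)\leq1/(p-1)$ for all $\zeta'_i\neq1$, so that only finitely many $\zeta'$ need to be examined individually. Granting this, the maximum is a finite constant $c_0$, and with $c:=\lceil c_0\rceil$ one concludes that $p^{dn+c}$ annihilates $\tor_{\Zp}(\mathcal{M}/\mathcal{J}_{p^n}\mathcal{M})$. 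I should stress that the coefficient of $n$ in $dn+c$ stays equal to $d$, rather than being inflated, precisely because over a discrete valuation ring the torsion of each $\mathcal{M}_\zeta$ is killed by a single prime power, so that passing to the finite direct sum $\bigoplus_\zeta\mathcal{M}_\zeta$ in \Cref{CM31} does not accumulate any further power of $p$ beyond the constant $c_0$.
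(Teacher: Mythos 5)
Your first paragraph reproduces the paper's own argument exactly: the paper also pushes $\tor_{\Zp}(\mathcal{M}/\mathcal{J}_{p^n}\mathcal{M})$ through $\Phi_n$, kills the kernel by $p^{dn}$ using \Cref{CM31}, and thereby reduces everything to the existence of a single constant $c$ with $p^c\tor_{\Zp}\mathcal{M}_\zeta=0$ for all $\zeta$. The difference is that the paper obtains this last statement by citing \cite[Lemma 2.6]{CuocoMonsky1981}, whereas you attempt to prove it from scratch via a presentation matrix and elementary divisors over the discrete valuation ring $\Zp[\zeta]$. That reduction (the torsion of $\mathcal{M}_\zeta$ is killed by the generator $\delta_\zeta$ of the ideal of $\rho(\zeta)$-minors, hence by $p^{\lceil v(\delta_\zeta)\rceil}$, so it suffices to bound $v(M(\zeta-1))$ uniformly over $\zeta$ for each of the finitely many nonzero minors $M$ of $A$) is sound.

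The gap is in the final step, namely the claim that for a fixed $0\neq M\in\Lambda$ the quantity $v(M(\zeta-1))$ is bounded over all $\zeta\in(W\setminus\{1\})^d$ with $M(\zeta-1)\neq0$. Your justification --- that a Weierstrass factorization of $M_0$ ``reduces this to the one-variable situation, in which \dots only finitely many $\zeta'$ need to be examined individually'' --- is only valid for $d=1$. For $d\geq2$, after preparing $M_0$ as a distinguished polynomial in, say, $T_d$, its roots are not constants but elements integral over $\Zp[\![T_1,\ldots,T_{d-1}]\!]$; they vary with $(\zeta_1,\ldots,\zeta_{d-1})$. The ultrametric separation argument then yields, for \emph{each} of the infinitely many choices of $(\zeta_1,\ldots,\zeta_{d-1})$, up to $\lambda$ exceptional values of $\zeta_d$ where the valuation exceeds $\lambda/(p-1)$; each such value is finite, but nothing in your argument bounds these contributions uniformly as $(\zeta_1,\ldots,\zeta_{d-1})$ ranges over infinitely many tuples, so the asserted supremum is not shown to be finite. (There is also the preliminary point that making $\overline{M_0}$ regular in $T_d$ may require a change of variables, which must be a linear automorphism of $\Lambda$ coming from $\GL_d(\Zp)$ if it is to preserve the evaluation set $\{\zeta-1\}$.) This uniform boundedness for $d\geq 2$ is precisely the nontrivial content of Cuoco--Monsky's preparatory lemmas underlying their Lemma 2.6, which the paper's proof deliberately outsources to the citation; as written, your proof assumes the hard part.
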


\begin{proof}
By \cite[Lemma 2.6]{CuocoMonsky1981}, there exists some $c\geq 0$ such that
\[
p^c\tor_{\Zp} \mathcal{M}_\zeta=0
\]
for every $\zeta\in (W(n)\setminus\{1\})^d$. Let $x\in\tor_{\Zp}(\mathcal{M}/\mathcal{J}_{p^n}\mathcal{M})$. Then we have
\[
\Phi_n(x)\in\bigoplus_\zeta\tor_{\Zp}\mathcal{M}_{\zeta}.
\]
This implies $p^cx\in\ker\Phi_n$. By \Cref{CM31}, we have $p^{dn+c}x=0$. This completes the proof.
\end{proof}

\begin{lem}[{\cite[Lemma 3.1]{CuocoMonsky1981}} with $\mathcal{I}_{p^n}$ replaced by $\mathcal{J}_{p^n}$]\label{CM33}
Let $\mathcal{M}$ be a pseudo-null $\Lambda$-module. Then we have
\[
\dim_{\F_p}\mathcal{M}/(\mathcal{J}_{p^n},p)\mathcal{M}=O(p^{(d-1)n}).
\]
Moreover, if $p$ is not a zero-divisor on $\mathcal{M}$, then we have
\[
\dim_{\F_p}\mathcal{M}/(\mathcal{J}_{p^n},p)\mathcal{M}=O(p^{(d-2)n}).
\]
\end{lem}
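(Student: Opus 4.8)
\emph{Proof proposal.}
My plan is to deduce the statement from the original Lemma~3.1 of Cuoco--Monsky \cite{CuocoMonsky1981} --- which is precisely the present assertion with $\mathcal{J}_{p^n}$ replaced by $\mathcal{I}_{p^n}$ --- by comparing the two ideals inside $\Lambda$. The key observation is that $\omega_{p^n}(T_j)=T_j\,\nu_{p^n}(T_j)$, so $\omega_{p^n}(T_j)\in(\nu_{p^n}(T_j))\subseteq\mathcal{J}_{p^n}$ for every $j$; hence $(\mathcal{I}_{p^n},p)\subseteq(\mathcal{J}_{p^n},p)$ as ideals of $\Lambda$, and therefore $\mathcal{M}/(\mathcal{J}_{p^n},p)\mathcal{M}$ is a quotient $\Lambda$-module of $\mathcal{M}/(\mathcal{I}_{p^n},p)\mathcal{M}$. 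Thus
\[
\dim_{\F_p}\mathcal{M}/(\mathcal{J}_{p^n},p)\mathcal{M}\ \leq\ \dim_{\F_p}\mathcal{M}/(\mathcal{I}_{p^n},p)\mathcal{M},
\]
and both the bound $O(p^{(d-1)n})$ for pseudo-null $\mathcal{M}$ and its sharpening $O(p^{(d-2)n})$ when $p$ is a non-zero-divisor on $\mathcal{M}$ follow immediately from the $\mathcal{I}_{p^n}$-version.

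If a self-contained argument is wanted instead, I would work over $\Omega=\F_p[\![T_1,\ldots,T_d]\!]$. Set $\overline{\mathcal{M}}:=\mathcal{M}/p\mathcal{M}$, a finitely generated $\Omega$-module, and note that $\overline{\nu_{p^n}(T_j)}=T_j^{p^n-1}$ by the Frobenius identity $(1+T_j)^{p^n}\equiv 1+T_j^{p^n}\bmod p$, so that $\mathcal{M}/(\mathcal{J}_{p^n},p)\mathcal{M}=\overline{\mathcal{M}}/(T_1^{p^n-1},\ldots,T_d^{p^n-1})\overline{\mathcal{M}}$. Pseudo-nullity over the regular ring $\Lambda$ (of dimension $d+1$) gives $\dim_\Lambda\mathcal{M}\leq d-1$, whence $\dim_\Omega\overline{\mathcal{M}}\leq d-1$; and if $p$ is a non-zero-divisor on $\mathcal{M}$ then $\dim_\Omega\overline{\mathcal{M}}=\dim_\Lambda\mathcal{M}-1\leq d-2$. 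It then suffices to prove the sublemma: a finitely generated $\Omega$-module $N$ with $\dim_\Omega N\leq e$ satisfies $\dim_{\F_p}N/(T_1^{m},\ldots,T_d^{m})N=O(m^{e})$ as $m\to\infty$. Applying it with $m=p^n-1$ and $e=d-1$ (resp. $e=d-2$) to $N=\overline{\mathcal{M}}$ concludes. For the sublemma I would pass to a prime filtration of $N$, reducing to $N=\Omega/\mathfrak{q}$ with $\dim(\Omega/\mathfrak{q})=e'\leq e$, and then use the elementary containment $\mathfrak{m}^{\,d(m-1)+1}\subseteq(T_1^{m},\ldots,T_d^{m})$ in $\Omega$ together with the fact that the Hilbert--Samuel function of $\Omega/\mathfrak{q}$ has degree $e'$, which gives $\ell\big((\Omega/\mathfrak{q})/(T_1^{m},\ldots,T_d^{m})\big)=O(m^{e'})=O(m^{e})$.

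There is no serious obstacle here; the content is essentially bookkeeping. For the first route the only things to verify are the factorization $\omega_{p^n}(T_j)=T_j\nu_{p^n}(T_j)$ and that the cited Lemma~3.1 is stated for $\mathcal{M}/(\mathcal{I}_{p^n},p)\mathcal{M}$ in the same generality; for the second route the mild points are the two dimension inequalities for $\overline{\mathcal{M}}$ (standard facts about cutting a module by a non-zero-divisor) and the monomial containment in $\Omega$. I would put the first, one-line route in the text, since it reuses Cuoco--Monsky's estimate without any modification, and keep the second in reserve in case a referee prefers an argument that does not appeal to the precise form of their Lemma~3.1.
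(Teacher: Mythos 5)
Your proposal is correct, and your first route is genuinely different from (and shorter than) the paper's argument, while your second route is essentially what the paper does. The paper's proof sets $\overline{\mathcal{M}}=\mathcal{M}/p\mathcal{M}$, uses $\nu_{p^n}(T_i)\equiv T_i^{p^n-1}\bmod p$ to get $\mathfrak{m}^{d(p^n-1)}\overline{\mathcal{M}}\subset\mathcal{J}_{p^n}\overline{\mathcal{M}}$, and then invokes the Hilbert--Samuel growth bound $\dim_{\F_p}\overline{\mathcal{M}}/\mathfrak{m}^n\overline{\mathcal{M}}=O(n^{d-1})$ (resp.\ $O(n^{d-2})$ when $p$ is a non-zero-divisor) extracted from the proof of Cuoco--Monsky's Lemma~3.1 --- exactly your second, self-contained route, down to the same monomial containment. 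Your first route, by contrast, observes that $\omega_{p^n}(T_j)=T_j\nu_{p^n}(T_j)$ gives $\mathcal{I}_{p^n}\subseteq\mathcal{J}_{p^n}$, hence $(\mathcal{I}_{p^n},p)\subseteq(\mathcal{J}_{p^n},p)$ and $\dim_{\F_p}\mathcal{M}/(\mathcal{J}_{p^n},p)\mathcal{M}\leq\dim_{\F_p}\mathcal{M}/(\mathcal{I}_{p^n},p)\mathcal{M}$, so both bounds follow verbatim from the original lemma. This is valid precisely because the statement is a one-sided upper bound and the original Lemma~3.1 is stated in the same generality; what it buys is a one-line proof, whereas the paper's (and your second) route buys a template that also handles the other ``$\mathcal{I}_{p^n}$ replaced by $\mathcal{J}_{p^n}$'' statements in that section, for which a bare ideal inclusion would not suffice (e.g.\ two-sided comparisons or kernel/cokernel control). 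Either route is acceptable here; if you use the first, state explicitly that you are invoking the $\mathcal{I}_{p^n}$-version of the lemma together with the surjection $\mathcal{M}/(\mathcal{I}_{p^n},p)\mathcal{M}\twoheadrightarrow\mathcal{M}/(\mathcal{J}_{p^n},p)\mathcal{M}$.
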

\begin{proof}
Put $\overline{\mathcal{M}}=\mathcal{M}/p\mathcal{M}$ and $\Omega=\Lambda/p\Lambda$. Let $R=\Omega/{\rm Ann}_{\Omega}\overline{\mathcal{M}}$. Then, by the proof of \cite[Lemma 3.1]{CuocoMonsky1981}, for every $n\geq 0$, we have
\[
\dim_{\F_p}\overline{\mathcal{M}}/\mathfrak{m}^n\overline{\mathcal{M}}=O(n^{d-1}),
\]
where $\mathfrak{m}$ is a unique maximal ideal of $R$. Since $\nu_{p^n}(T_i)=T_i^{p^n-1}\mod p$, we have
\[
\mathfrak{m}^{d(p^n-1)}(\mathcal{M}/p\mathcal{M})\subset \mathcal{J}_{p^n}(\mathcal{M}/p\mathcal{M}).
\]
Thus we obtain
\[
\dim_{\F_p}\mathcal{M}/(\mathcal{J}_{p^n},p)\mathcal{M}=\dim_{\F_p}\overline{\mathcal{M}}/p\overline{\mathcal{M}}\leq \dim \overline{\mathcal{M}}/\mathfrak{m}^{d(p^n-1)}\overline{\mathcal{M}}=O(p^{n(d-1)}).
\]
Likewise, if $p$ is not a zero-divisor on $\mathcal{M}$, then, by the proof of \cite[Lemma 3.1]{CuocoMonsky1981}, we obtain
\[
\dim_{\F_p}\mathcal{M}/(\mathcal{J}_{p^n},p)\mathcal{M}=O(p^{(d-2)n}). \qedhere%\\[-7mm]
\]
\end{proof}
\begin{prop}[{\cite[Theorem 3.2]{CuocoMonsky1981}} with $\mathcal{I}_{p^n}$ replaced by $\mathcal{J}_{p^n}$]\label{CM35}
Let $\mathcal{M}$ be a pseudo-null $\Lambda$-module. Then we have

{\rm (1)} \ \ $r(\mathcal{M}/\mathcal{J}_{p^n}\mathcal{M})=O(p^{(d-2)n})$, 

{\rm (2)} \ \ $e(\mathcal{M}/\mathcal{J}_{p^n}\mathcal{M})=O(p^{(d-1)n})$.
\end{prop}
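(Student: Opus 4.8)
The plan is to deduce both assertions from \Cref{CM33} by the standard d\'evissage isolating the $p$-power torsion, mirroring Cuoco--Monsky's proof of \cite[Theorem 3.2]{CuocoMonsky1981}. Since $\mathcal{M}$ is finitely generated over the Noetherian ring $\Lambda$, its $p$-power torsion submodule stabilizes: there is $k\geq 0$ with $\mathcal{M}[p^\infty]=\mathcal{M}[p^k]$, a $\Lambda$-submodule killed by $p^k$. Put $\mathcal{N}:=\mathcal{M}/\mathcal{M}[p^k]$, so that $p$ acts as a non-zero-divisor on $\mathcal{N}$; both $\mathcal{M}[p^k]$ and $\mathcal{N}$ inherit pseudo-nullity from $\mathcal{M}$, since $(-)_{\mathfrak{p}}$ is exact for every height $\leq 1$ prime $\mathfrak{p}$ of $\Lambda$. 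Applying the right-exact functor $-\otimes_\Lambda\Lambda/\mathcal{J}_{p^n}$ to $0\to\mathcal{M}[p^k]\to\mathcal{M}\to\mathcal{N}\to 0$ yields, for each $n$, a short exact sequence of finitely generated $\Zp$-modules $0\to A_n\to\mathcal{M}/\mathcal{J}_{p^n}\mathcal{M}\to\mathcal{N}/\mathcal{J}_{p^n}\mathcal{N}\to 0$, in which $A_n$ is a quotient of $\mathcal{M}[p^k]/\mathcal{J}_{p^n}\mathcal{M}[p^k]$, hence annihilated by $p^k$.

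For (1): since $A_n$ is killed by $p^k$ one has $r(A_n)=0$, so $r(\mathcal{M}/\mathcal{J}_{p^n}\mathcal{M})=r(\mathcal{N}/\mathcal{J}_{p^n}\mathcal{N})\leq\dim_{\F_p}\mathcal{N}/(\mathcal{J}_{p^n},p)\mathcal{N}$, the last step because the free $\Zp$-rank of a finitely generated $\Zp$-module is at most its minimal number of generators. As $p$ is a non-zero-divisor on the pseudo-null module $\mathcal{N}$, the sharper clause of \Cref{CM33} gives $\dim_{\F_p}\mathcal{N}/(\mathcal{J}_{p^n},p)\mathcal{N}=O(p^{(d-2)n})$, which is (1).

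For (2): the functor $A\mapsto \tor_{\Zp}A$ is left exact on finitely generated $\Zp$-modules, so a short exact sequence $0\to A\to B\to C\to 0$ yields $e(B)\leq e(A)+e(C)$; applied to the sequence above, $e(\mathcal{M}/\mathcal{J}_{p^n}\mathcal{M})\leq e(A_n)+e(\mathcal{N}/\mathcal{J}_{p^n}\mathcal{N})$. Since $A_n$ is killed by $p^k$ and is generated over $\Zp$ by at most $\dim_{\F_p}\mathcal{M}[p^k]/(\mathcal{J}_{p^n},p)\mathcal{M}[p^k]$ elements, we get $e(A_n)\leq k\cdot\dim_{\F_p}\mathcal{M}[p^k]/(\mathcal{J}_{p^n},p)\mathcal{M}[p^k]$, and because $\mathcal{M}[p^k]$ is pseudo-null the general clause of \Cref{CM33} bounds this by $O(p^{(d-1)n})$. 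For the other term, \Cref{CM34} supplies $c\geq 0$ with $p^{dn+c}\tor_{\Zp}(\mathcal{N}/\mathcal{J}_{p^n}\mathcal{N})=0$, so each elementary divisor of the torsion part has $p$-exponent $\leq dn+c$, while the number of such divisors is at most $\dim_{\F_p}\mathcal{N}/(\mathcal{J}_{p^n},p)\mathcal{N}=O(p^{(d-2)n})$; hence $e(\mathcal{N}/\mathcal{J}_{p^n}\mathcal{N})\leq(dn+c)\cdot O(p^{(d-2)n})=O(np^{(d-2)n})$. Since $n=O(p^n)$, both summands are $O(p^{(d-1)n})$, which is (2).

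The arguments are otherwise routine; the step I would double-check most carefully is keeping straight which clause of \Cref{CM33} applies at each use — the weaker $O(p^{(d-1)n})$ bound is forced for the $p$-power torsion piece $\mathcal{M}[p^k]$, on which $p$ is a zero-divisor, whereas the sharper $O(p^{(d-2)n})$ bound is available for $\mathcal{N}$, on which $p$ is a non-zero-divisor — together with the bookkeeping observation that the $O(np^{(d-2)n})$ estimate is absorbed into $O(p^{(d-1)n})$.
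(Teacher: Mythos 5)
Your proposal is correct and follows essentially the same route as the paper: the paper also splits off $\mathcal{M}(p)={\rm tor}_{\Zp}\mathcal{M}$ (your $\mathcal{M}[p^k]$), passes to the induced right-exact sequence modulo $\mathcal{J}_{p^n}$, kills the rank contribution of the torsion piece for (1) via the sharper clause of \Cref{CM33}, and for (2) bounds the torsion piece by $m\cdot\dim_{\F_p}(\cdot)=O(p^{(d-1)n})$ and the $p$-torsion-free quotient by $(dn+c)\cdot O(p^{(d-2)n})$ using \Cref{CM34} and \Cref{CM33}. Your bookkeeping, including the caution about which clause of \Cref{CM33} applies to which piece, matches the paper's argument.
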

\begin{proof}
Let $\mathcal{M}(p):={\rm tor}_{\Zp}\mathcal{M}$ and let $\overline{\mathcal{M}}=\mathcal{M}/\mathcal{M}(p)$. Consider the exact sequence of $\Lambda$-modules
\[
0\to \mathcal{M}(p)\stackrel{\iota}\to \mathcal{M}\to \overline{\mathcal{M}}\to 0.
\]
This induces the exact sequence of finitely generated $\Zp$-modules
\begin{equation}\label{eqCM1}
\mathcal{M}(p)/\mathcal{J}_{p^n}\mathcal{M}(p)\stackrel{\overline{\iota}}\to \mathcal{M}/\mathcal{J}_{p^n}\mathcal{M}\to\overline{\mathcal{M}}/\mathcal{J}_{p^n}\overline{\mathcal{M}}\to 0.
\end{equation}
\noindent 
\ul{Proof of (1).} \ Since $\mathcal{M}(p)$ is $\Zp$-torsion,
\[
0\to(\mathcal{M}/\mathcal{J}_{p^n}\mathcal{M})\otimes_{\Zp}\Q_p\to(\overline{\mathcal{M}}/\mathcal{J}_{p^n}\overline{\mathcal{M}})\otimes_{\Zp}\Q_p\to 0
\]
is also exact. Hence we have
\[
r(\mathcal{M}/\mathcal{J}_{p^n}\mathcal{M})=r(\overline{\mathcal{M}}/\mathcal{J}_{p^n}\overline{\mathcal{M}}).
\]
By the structure theorem of finitely generated modules over principal ideal domains, we have
\[
r(\overline{\mathcal{M}}/\mathcal{J}_{p^n}\overline{\mathcal{M}})\leq \dim_{\F_p}\overline{\mathcal{M}}/(\mathcal{J}_{p^n}, p)\overline{\mathcal{M}}.
\]
By \Cref{CM33}, we have
\[
\dim_{\F_p}\overline{\mathcal{M}}/(\mathcal{J}_{p^n},p)\overline{\mathcal{M}}=O(p^{(d-2)n}).
\]
Putting everything together, we have
\[
r(\mathcal{M}/\mathcal{J}_{p^n}\mathcal{M})\leq O(p^{(d-2)n}).
\]

\noindent
\ul{Proof of (2).} \ By the exact sequence \eqref{eqCM1}, we have
\[
e(\mathcal{M}/\mathcal{J}_{p^n} \mathcal{M})\leq e(\im\overline{\iota})+e(\overline{\mathcal{M}}/\mathcal{J}_{p^n}\overline{\mathcal{M}}).
\]
Since $\mathcal{M}(p)$ is a torsion $\Zp$-module, we must have
\[
e(\im\overline{\iota})\leq e(\mathcal{M}(p)/\mathcal{J}_{p^n} \mathcal{M}(p)).
\]
Let $m$ be a positive integer such that $p^m\mathcal{M}(p)=0$. 
By the structure theorem of finitely generated modules over principal ideal domains and \Cref{CM33} (1), we have
\[
e(\mathcal{M}(p)/\mathcal{J}_{p^n} \mathcal{M}(p))=m \dim_{\F_p}(\mathcal{M}(p)/(\mathcal{J}_{p^n},p)\mathcal{M}(p))=O(p^{(d-1)n}).
\]
By the structure theorem, \Cref{CM34}, and \Cref{CM33} (2), we also have
\[
e(\overline{\mathcal{M}}/\mathcal{J}_{p^n}\overline{\mathcal{M}})\leq(dn+c)\dim_{\F_p}(\overline{\mathcal{M}}/(\mathcal{J}_{p^n},p)\overline{\mathcal{M}})=O(np^{(d-2)n}).
\]
for some $c\geq 0$. Putting everything together, we obtain the assertion.
\end{proof}

\begin{lem}[{\cite[Lemma 3.3]{CuocoMonsky1981}} with $\mathcal{I}_{p^n}$ replaced by $\mathcal{J}_{p^n}$]\label{CM4}
Suppose $\mathcal{M}$ is pseudo-isomorphic to $\mathcal{M}'$ as finitely generated torsion $\Lambda$-modules. Then we have
\[
|r(\mathcal{M}/\mathcal{J}_{p^n}\mathcal{M})-r(\mathcal{M}'/\mathcal{J}_{p^n}\mathcal{M}')|=O(p^{(d-2)n}).
\]
\end{lem}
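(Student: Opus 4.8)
The plan is to reduce the statement to the behaviour of $-\otimes_\Lambda\Lambda/\mathcal{J}_{p^n}$ on short exact sequences, controlling the pseudo-null contributions via \Cref{CM35}\,(1). First I would factor a given pseudo-isomorphism $f\colon\mathcal{M}\to\mathcal{M}'$ through its image $\mathcal{N}:=\im f$ (again finitely generated and torsion), obtaining two short exact sequences of finitely generated $\Lambda$-modules
\[
0\to A\to\mathcal{M}\to\mathcal{N}\to 0,\qquad 0\to\mathcal{N}\to\mathcal{M}'\to B\to 0,
\]
where $A=\Ker f$ and $B=\coker f$ are pseudo-null (all their localizations at height $1$ primes of $\Lambda$ vanish, by the definition of a pseudo-isomorphism). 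The basic tool is: for a short exact sequence $0\to X\to Y\to Z\to 0$ of finitely generated $\Lambda$-modules, applying $-\otimes_\Lambda\Lambda/\mathcal{J}_{p^n}$ (right exact) and then $-\otimes_{\Zp}\Qp$ (exact) yields the exact sequence of $\Qp$-vector spaces $(X/\mathcal{J}_{p^n}X)\otimes\Qp\to(Y/\mathcal{J}_{p^n}Y)\otimes\Qp\to(Z/\mathcal{J}_{p^n}Z)\otimes\Qp\to 0$, whence
\[
r(Z/\mathcal{J}_{p^n}Z)\ \le\ r(Y/\mathcal{J}_{p^n}Y)\ \le\ r(Z/\mathcal{J}_{p^n}Z)+r(X/\mathcal{J}_{p^n}X);
\]
if moreover $X$ is pseudo-null, \Cref{CM35}\,(1) gives $r(X/\mathcal{J}_{p^n}X)=O(p^{(d-2)n})$.

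Applying the basic tool to the first sequence yields $|r(\mathcal{M}/\mathcal{J}_{p^n}\mathcal{M})-r(\mathcal{N}/\mathcal{J}_{p^n}\mathcal{N})|=O(p^{(d-2)n})$, and to the second yields the easy half $r(\mathcal{M}'/\mathcal{J}_{p^n}\mathcal{M}')\le r(\mathcal{N}/\mathcal{J}_{p^n}\mathcal{N})+O(p^{(d-2)n})$. What remains — and what I expect to be the only delicate point — is the reverse inequality $r(\mathcal{N}/\mathcal{J}_{p^n}\mathcal{N})\le r(\mathcal{M}'/\mathcal{J}_{p^n}\mathcal{M}')+O(p^{(d-2)n})$: since $-\otimes_\Lambda\Lambda/\mathcal{J}_{p^n}$ is merely right exact, the inclusion $\mathcal{N}\hookrightarrow\mathcal{M}'$ does not control the ranks in that direction directly, the obstruction being the connecting term ${\rm Tor}_1^\Lambda(B,\Lambda/\mathcal{J}_{p^n})$, whose $\Zp$-rank is not visibly $O(p^{(d-2)n})$.

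To circumvent this I would enlarge $\mathcal{N}$ from below by a principal multiple of $\mathcal{M}'$. Since $B$ is pseudo-null, ${\rm Ann}_\Lambda(B)$ has height $\ge 2$; since $\Lambda$ is a UFD and $\mathcal{M}'$ is torsion, ${\rm char}\,\mathcal{M}'$ has only finitely many prime divisors, so prime avoidance produces $\theta\in{\rm Ann}_\Lambda(B)$ coprime to ${\rm char}\,\mathcal{M}'$. Then $\theta\mathcal{M}'\subseteq\mathcal{N}$, and at each height $1$ prime $\mathfrak{p}$ one has either $\mathcal{M}'_{\mathfrak{p}}=0$ or $\theta\in\Lambda_{\mathfrak{p}}^{\times}$, so that $\mathcal{M}'[\theta]$, $\mathcal{M}'/\theta\mathcal{M}'$, and hence the quotient $\mathcal{N}/\theta\mathcal{M}'$ are pseudo-null. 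Now I would apply the basic tool to $0\to\mathcal{M}'[\theta]\to\mathcal{M}'\xrightarrow{\ \theta\ }\theta\mathcal{M}'\to 0$, which gives $r(\theta\mathcal{M}'/\mathcal{J}_{p^n}\theta\mathcal{M}')\le r(\mathcal{M}'/\mathcal{J}_{p^n}\mathcal{M}')$, and to $0\to\theta\mathcal{M}'\to\mathcal{N}\to\mathcal{N}/\theta\mathcal{M}'\to 0$, which gives $r(\mathcal{N}/\mathcal{J}_{p^n}\mathcal{N})\le r(\theta\mathcal{M}'/\mathcal{J}_{p^n}\theta\mathcal{M}')+O(p^{(d-2)n})$, using \Cref{CM35}\,(1) each time. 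Chaining these is exactly the missing inequality, and a triangle inequality over $\mathcal{M},\mathcal{N},\mathcal{M}'$ completes the proof.

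Alternatively, one may shorten the last two paragraphs by recalling that pseudo-isomorphism is a symmetric relation on finitely generated torsion $\Lambda$-modules (a consequence of the structure theorem cited in \Cref{def.char}): the easy half already shows $r(\mathcal{M}'/\mathcal{J}_{p^n}\mathcal{M}')\le r(\mathcal{M}/\mathcal{J}_{p^n}\mathcal{M})+O(p^{(d-2)n})$ for every pseudo-isomorphism $\mathcal{M}\to\mathcal{M}'$, so feeding in a pseudo-isomorphism $\mathcal{M}'\to\mathcal{M}$ gives the reverse bound at once — the $\theta$-construction above is precisely what exhibits such a reverse morphism in the case at hand. Either way, the symmetry/reverse-direction step is the heart of the matter, the rest being routine bookkeeping of ranks along right-exact sequences.
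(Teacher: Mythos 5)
Your proposal is correct, and your closing ``alternative'' is precisely the paper's proof: the authors write down the exact sequence $\mathcal{M}\stackrel{\iota}{\to}\mathcal{M}'\to\mathcal{M}''\to 0$ with $\mathcal{M}''$ pseudo-null, apply the right-exact functor $-\otimes_\Lambda\Lambda/\mathcal{J}_{p^n}$ to obtain
$r(\mathcal{M}'/\mathcal{J}_{p^n}\mathcal{M}')\le r(\mathcal{M}/\mathcal{J}_{p^n}\mathcal{M})+r(\mathcal{M}''/\mathcal{J}_{p^n}\mathcal{M}'')=r(\mathcal{M}/\mathcal{J}_{p^n}\mathcal{M})+O(p^{(d-2)n})$ by \Cref{CM35} (1), and then simply ``reverse the roles of $\mathcal{M}$ and $\mathcal{M}'$,'' i.e., invoke the symmetry of pseudo-isomorphism on finitely generated torsion $\Lambda$-modules. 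They do not even factor through the image: the rank inequality extracted from a right-exact sequence $P\to Q\to R\to 0$ never sees $\ker(P\to Q)$, so the kernel of the pseudo-isomorphism plays no role. The main body of your argument --- enlarging $\mathcal{N}=\im f$ from below by $\theta\mathcal{M}'$, with $\theta\in{\rm Ann}_\Lambda(\coker f)$ chosen by prime avoidance to be coprime to ${\rm char}\,\mathcal{M}'$, and controlling $\mathcal{M}'[\theta]$, $\mathcal{M}'/\theta\mathcal{M}'$, $\mathcal{N}/\theta\mathcal{M}'$ as pseudo-null --- is a correct and more explicit substitute for that symmetry step; it is in effect a hands-on proof that a pseudo-isomorphism in one direction yields one in the other, which is exactly what the structure theory cited in \Cref{def.char} already supplies. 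So your route is longer but self-contained and makes the ``delicate point'' (the uncontrolled $\mathrm{Tor}_1$ term in the wrong direction) visible, while the paper's version buys brevity by quoting symmetry as known; both rest on the same two ingredients, right-exactness of $-\otimes_\Lambda\Lambda/\mathcal{J}_{p^n}$ and \Cref{CM35} (1).
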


\begin{proof}
Since $\mathcal{M}$ is pseudo-isomorphic to $\mathcal{M}'$, there is an exact sequence
\[
\mathcal{M}\stackrel{\iota}\to \mathcal{M}'\to \mathcal{M}''\to 0
\]
with $\mathcal{M}''$ pseudo-null. This induces an exact sequence
\begin{equation}\label{CM101}
\mathcal{M}/\mathcal{J}_{p^n}\mathcal{M}\stackrel{\overline{\iota}}\to \mathcal{M}'/\mathcal{J}_{p^n}\mathcal{M}'\to \mathcal{M}''/\mathcal{J}_{p^n}\mathcal{M}''\to 0.
\end{equation}
Since $\mathcal{M}''$ is pseudo-null, by \Cref{CM35} (1), we have
\begin{align*}
r(\mathcal{M}'/\mathcal{J}_{p^n}\mathcal{M}')
&\leq r(\mathcal{M}/\mathcal{J}_{p^n} \mathcal{M})+r(\mathcal{M}''/\mathcal{J}_{p^n}\mathcal{M}'')\\
&=r(\mathcal{M}/\mathcal{J}_{p^n}\mathcal{M})+O(p^{(d-2)n}).
\end{align*}
Reversing the roles of $\mathcal{M}$ and $\mathcal{M}'$, we obtain the assertion.
\end{proof}

\begin{lem}[{\cite[Lemma 3.3]{CuocoMonsky1981}}]\label{pigap}
Suppose $\mathcal{M}$ is pseudo-isomorphic to $\mathcal{M}'$ as finitely generated torsion $\Lambda$-modules. Then we have
\[
|e(\mathcal{M}/\mathcal{I}_{p^n}\mathcal{M})-e(\mathcal{M}'/\mathcal{I}_{p^n}\mathcal{M}')|=O(np^{(d-1)n}).
\]
\end{lem}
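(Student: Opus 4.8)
The plan is to replay Cuoco--Monsky's proof of \cite[Lemma 3.3]{CuocoMonsky1981}, of which the present statement is literally the $\mathcal{I}_{p^n}$-case; it runs parallel to our proof of \Cref{CM4}, but tracking the torsion exponent $e$ rather than the rank $r$. Since $\mathcal{M}$ is pseudo-isomorphic to $\mathcal{M}'$, I would first fix a $\Lambda$-homomorphism $\phi\colon\mathcal{M}\to\mathcal{M}'$ with pseudo-null kernel $\mathcal{A}$ and pseudo-null cokernel $\mathcal{B}$, and factor it as $\mathcal{M}\twoheadrightarrow\mathcal{C}\hookrightarrow\mathcal{M}'$ with $\mathcal{C}=\im\phi$, yielding short exact sequences $0\to\mathcal{A}\to\mathcal{M}\to\mathcal{C}\to0$ and $0\to\mathcal{C}\to\mathcal{M}'\to\mathcal{B}\to0$ of finitely generated torsion $\Lambda$-modules. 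Applying $-\otimes_\Lambda\Lambda/\mathcal{I}_{p^n}$ and using right-exactness, together with the Koszul resolution of $\Lambda/\mathcal{I}_{p^n}$ (legitimate because $\omega_{p^n}(T_1),\ldots,\omega_{p^n}(T_d)$ is a regular sequence) to identify the relevant ${\rm Tor}$-terms, produces exact sequences of finitely generated $\Zp$-modules linking $\mathcal{M}/\mathcal{I}_{p^n}\mathcal{M}$, $\mathcal{C}/\mathcal{I}_{p^n}\mathcal{C}$ and $\mathcal{M}'/\mathcal{I}_{p^n}\mathcal{M}'$, in which the ``error'' modules that intervene are: a quotient of $\mathcal{A}/\mathcal{I}_{p^n}\mathcal{A}$ sitting inside $\mathcal{M}/\mathcal{I}_{p^n}\mathcal{M}$; the module $\mathcal{B}/\mathcal{I}_{p^n}\mathcal{B}$; and a quotient of ${\rm Tor}_1^\Lambda(\mathcal{B},\Lambda/\mathcal{I}_{p^n})$, itself a subquotient of finitely many Koszul homology groups of the $\omega_{p^n}(T_j)$ acting on $\mathcal{B}$.

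The engine is the triple of $\mathcal{I}_{p^n}$-versions of \Cref{CM33}, \Cref{CM34}, \Cref{CM35}, i.e.\ the original \cite[Lemma 3.1, Theorem 2.8, Theorem 3.2]{CuocoMonsky1981}: for a pseudo-null $\Lambda$-module $\mathcal{N}$ one has $\dim_{\F_p}\mathcal{N}/(\mathcal{I}_{p^n},p)\mathcal{N}=O(p^{(d-1)n})$ and $r(\mathcal{N}/\mathcal{I}_{p^n}\mathcal{N})=O(p^{(d-2)n})$, while for every finitely generated $\Lambda$-module $\mathcal{L}$ there is $c\ge0$ with $p^{dn+c}\,{\rm tor}_{\Zp}(\mathcal{L}/\mathcal{I}_{p^n}\mathcal{L})=0$. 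From this I would extract the uniform bound I need: if $Q$ is one of the error modules above, then $Q$ is a subquotient of $\mathcal{N}/\mathcal{I}_{p^n}\mathcal{N}$ (or of a finite sum of Koszul homology groups on $\mathcal{N}$) for $\mathcal{N}\in\{\mathcal{A},\mathcal{B}\}$, hence generated over $\Zp$ by $O(p^{(d-1)n})$ elements (the number of generators is inherited by subquotients, since $\Zp$ is a PID), while ${\rm tor}_{\Zp}(Q)$ is annihilated by $p^{dn+c}$ — either because $Q$ is built directly from $\mathcal{N}/\mathcal{I}_{p^n}\mathcal{N}$, or because $Q$ sits as a $\Zp$-submodule of $\mathcal{M}/\mathcal{I}_{p^n}\mathcal{M}$ or $\mathcal{M}'/\mathcal{I}_{p^n}\mathcal{M}'$, to which \cite[Theorem 2.8]{CuocoMonsky1981} applies. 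Hence $e(Q)=e({\rm tor}_{\Zp}Q)=O(np^{(d-1)n})$. Combined with the elementary inequality $e(B)\le e(K)+e(C)$ for a short exact sequence $0\to K\to B\to C\to0$ of finitely generated $\Zp$-modules (which follows from $|{\rm tor}_{\Zp}B|\le|{\rm tor}_{\Zp}K|\cdot|{\rm tor}_{\Zp}C|$), this bounds $e(\mathcal{M}/\mathcal{I}_{p^n}\mathcal{M})$ above by $e(\mathcal{C}/\mathcal{I}_{p^n}\mathcal{C})+O(np^{(d-1)n})$, and $e(\mathcal{M}'/\mathcal{I}_{p^n}\mathcal{M}')$ above by $e(\mathcal{C}/\mathcal{I}_{p^n}\mathcal{C})+O(np^{(d-1)n})$; the reverse inequalities plus the triangle inequality then give the claim.

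The main obstacle is precisely those reverse inequalities, where one must bound $e$ of a \emph{quotient} of a module in terms of $e$ of the module — and $e$ is not monotone under quotients, because forming $\mathcal{C}/\mathcal{I}_{p^n}\mathcal{C}=(\mathcal{M}/\mathcal{I}_{p^n}\mathcal{M})/\mathcal{A}'$ can create ``new'' $\Zp$-torsion (a saturation index) that $e(\mathcal{M}/\mathcal{I}_{p^n}\mathcal{M})$ does not see. The way I would deal with this is to observe that this new torsion is supported on the free part of $\mathcal{A}'$, hence generated over $\Zp$ by at most $r(\mathcal{A}/\mathcal{I}_{p^n}\mathcal{A})=O(p^{(d-2)n})$ elements, and is killed by $p^{dn+c}$ as a $\Zp$-submodule of ${\rm tor}_{\Zp}(\mathcal{C}/\mathcal{I}_{p^n}\mathcal{C})$ (again \cite[Theorem 2.8]{CuocoMonsky1981}, now for $\mathcal{C}$), so its contribution to $e$ is only $O(np^{(d-2)n})$, negligible next to $O(np^{(d-1)n})$; the ${\rm Tor}_1$-term in the second sequence is handled identically once its rank and $\Zp$-generator count are read off from the Koszul description via \cite[Lemma 3.1, Theorem 3.2]{CuocoMonsky1981} for $\mathcal{B}$. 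I expect the genuinely fiddly part to be this bookkeeping for the Koszul homology of a pseudo-null module; no idea beyond the pseudo-null estimates above is needed. I also note where the factor $n$ comes from: each $e(Q)$ is estimated as the product of an $O(n)$ torsion-exponent and an $O(p^{(d-1)n})$ generator count, which is why the final bound reads $O(np^{(d-1)n})$ rather than $O(p^{(d-1)n})$.
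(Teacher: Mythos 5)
Your core mechanism is exactly the paper's: the torsion created when passing to an image is generated by at most $r(\cdot/\mathcal{I}_{p^n}\cdot)$ elements and annihilated by $p^{dn+c}$ (Cuoco--Monsky, Theorem 2.8), so it contributes only $(dn+c)\cdot O(p^{(d-1)n})=O(np^{(d-1)n})$; combined with the pseudo-null estimates this gives the bound. Three of your four inequalities (the two involving the quotient $\mathcal{M}\twoheadrightarrow\mathcal{C}$, and $e(\mathcal{M}'/\mathcal{I}_{p^n}\mathcal{M}')\leq e(\mathcal{C}/\mathcal{I}_{p^n}\mathcal{C})+O(np^{(d-1)n})$) go through as you describe.

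The gap is in the fourth inequality, $e(\mathcal{C}/\mathcal{I}_{p^n}\mathcal{C})\leq e(\mathcal{M}'/\mathcal{I}_{p^n}\mathcal{M}')+O(np^{(d-1)n})$, where the kernel of $\mathcal{C}/\mathcal{I}_{p^n}\mathcal{C}\to\mathcal{M}'/\mathcal{I}_{p^n}\mathcal{M}'$ is a quotient of ${\rm Tor}_1^\Lambda(\mathcal{B},\Lambda/\mathcal{I}_{p^n})$. To bound its $e$ by $(dn+c)\times(\text{generator count})$ you need the number of $\Zp$-generators of this Koszul $H_1$ to be $O(p^{(d-1)n})$ uniformly in $n$. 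That does not follow from the estimates you cite: \cite[Lemma 3.1, Theorems 2.8, 3.2]{CuocoMonsky1981} control only $H_0=\mathcal{B}/\mathcal{I}_{p^n}\mathcal{B}$, and $H_1$ is a subquotient of $\mathcal{B}^{\oplus d}$ of the form $Z_1/B_1$ with $Z_1$ a \emph{submodule}, whose minimal number of generators over $\Lambda$ (hence over $\Zp$ after killing $\mathcal{I}_{p^n}$) has no a priori uniform bound; knowing that $H_1$ is annihilated by $\mathcal{I}_{p^n}+{\rm Ann}\,\mathcal{B}$ does not bound its generator count. So the claim that ``no idea beyond the pseudo-null estimates above is needed'' is not justified as stated. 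The paper avoids this entirely: it never tensors an injection. It takes only the right-exact sequence $\mathcal{M}\to\mathcal{M}'\to\mathcal{M}''\to 0$ with $\mathcal{M}''$ pseudo-null, which stays exact after $\otimes_\Lambda\Lambda/\mathcal{I}_{p^n}$ with no ${\rm Tor}$ correction, proves the single inequality $e(\mathcal{M}'/\mathcal{I}_{p^n}\mathcal{M}')\leq e(\overline{\iota}(\mathcal{M}/\mathcal{I}_{p^n}\mathcal{M}))+e(\mathcal{M}''/\mathcal{I}_{p^n}\mathcal{M}'')\leq e(\mathcal{M}/\mathcal{I}_{p^n}\mathcal{M})+O(np^{(d-1)n})$ by your saturation argument, and then gets the reverse inequality by the symmetry of pseudo-isomorphism for finitely generated torsion $\Lambda$-modules (``reversing the roles''), i.e.\ by running the same one-sided argument on a map $\mathcal{M}'\to\mathcal{M}$. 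Restructure your proof that way and the Koszul bookkeeping disappears.
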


\begin{proof}
Since $\mathcal{M}$ is pseudo-isomorphic to $\mathcal{M}'$, there is an exact sequence
\[
\mathcal{M}\stackrel{\iota}\to \mathcal{M}'\to \mathcal{M}''\to 0
\]
with $\mathcal{M}''$ pseudo-null. This induces an exact sequence
\begin{equation}\label{CM101}
\mathcal{M}/\mathcal{I}_{p^n}\mathcal{M}\stackrel{\overline{\iota}}\to \mathcal{M}'/\mathcal{I}_{p^n}\mathcal{M}'\to \mathcal{M}''/\mathcal{I}_{p^n}\mathcal{M}''\to 0.
\end{equation}

Since the torsion part of $\im\overline{\iota}$ is composed of the image of ${\rm tor}_{\Zp}\mathcal{M}/\mathcal{I}_{p^n}\mathcal{M}$ and the image of a subset of the free part of $\mathcal{M}/\mathcal{I}_{p^n}\mathcal{M}$, by \cite[Theorem 2.8]{CuocoMonsky1981}, we must have
\[
e(\im\overline{\iota})\leq e(\mathcal{M}/\mathcal{I}_{p^n}\mathcal{M})+(dn+c)r(\mathcal{M}/\mathcal{I}_{p^n}\mathcal{M}).
\]
for some $c\geq 0$. Since $r(\mathcal{M}/\mathcal{I}_{p^n}\mathcal{M})=O(p^{(d-2)n})$ by \Cref{CM1}, we have
\begin{equation}\label{CM102}
e(\im\overline{\iota})\leq e(\mathcal{M}/\mathcal{I}_{p^n}\mathcal{M})+O(np^{(d-1)n}).
\end{equation}
By the exact sequence \eqref{CM101}, we also have
\[
e(\mathcal{M}'/\mathcal{I}_{p^n} \mathcal{M}')\leq e(\im\overline{\iota})+e(\mathcal{M}''/\mathcal{I}_{p^n}\mathcal{M}'').
\]
Since $\mathcal{M}''$ is pseudo-null, by \cite[Lemma 3.3]{CuocoMonsky1981}, we have
\begin{equation}\label{CM103}
e(\mathcal{M}'/\mathcal{I}_{p^n}\mathcal{M}')\leq e(\im\overline{\iota})+O(p^{(d-1)n}).
\end{equation}
By \eqref{CM102} and \eqref{CM103}, we obtain
\[
e(\mathcal{M}'/\mathcal{I}_{p^n}\mathcal{M}')\leq e(\mathcal{M}/\mathcal{I}_{p^n} \mathcal{M})+O(p^{(d-1)n}).
\]
Reversing the roles of $\mathcal{M}$ and $\mathcal{M}'$, we obtain the assertion.
\end{proof}

\subsection{Weakening the rank assumption}
In this subsection, we weaken the rank assumption of \Cref{Monsky21} (2) and obtain a key proposition, which will be used in the proof of our main results in Section 7.

\begin{prop}\label{CM6}
Let $F$ be a nonzero element of $\Lambda$ and let $\mathcal{M}=\Lambda/F$. Then there exists $\mu\in\Z_{\geq0}$ such that
\[
e(\mathcal{M}/(f_1(T_1),\ldots,f_j(T_j),T_{j+1},\ldots,T_d))=\mu p^{jn}+O(np^{(j-1)n}),
\]
where $f_i$ are either $\omega_{p^n}(T_i)$ or $\nu_{p^n}(T_i)$.
\end{prop}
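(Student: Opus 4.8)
The plan is to read \Cref{CM6} as a variant of \Cref{CM5} with two mild generalizations: fewer variables (the variables $T_{j+1},\dots,T_d$ are sent to $0$ at the outset), and each one-variable factor is allowed to be $\omega_{p^n}(T_i)$ or $\nu_{p^n}(T_i)$ rather than uniformly $\nu_{p^n}(T_i)$. First I would make the reduction explicit. Writing $\Lambda_j:=\Zp[\![T_1,\dots,T_j]\!]=\Lambda/(T_{j+1},\dots,T_d)$ and letting $\bar F\in\Lambda_j$ be the image of $F$, there is a canonical identification
\[
\mca{M}/(f_1(T_1),\dots,f_j(T_j),T_{j+1},\dots,T_d)\mca{M}\cong\Lambda_j/(\bar F,f_1(T_1),\dots,f_j(T_j)).
\]
If $\bar F=0$, the right-hand side is $\Lambda_j/(f_1,\dots,f_j)$, which is a free $\Zp$-module by the $\Zp$-rank count preceding \Cref{CM10}, so $e=0$ and the assertion holds with $\mu=0$. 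Hence I may assume $\bar F\ne0$; then $\Lambda_j/(\bar F)$ is a finitely generated torsion $\Lambda_j$-module, and I would set $\mu:=\mu(\bar F)\in\Z_{\geq0}$, the Iwasawa $\mu$-invariant of $\bar F$ computed in $\Lambda_j$ --- a quantity depending only on $F$ and $j$, not on $n$ nor on the choices of the $f_i$.

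Next I would rerun the argument in the proof of \Cref{CM5} over $\Lambda_j$, replacing $\mca{J}_{p^n}$ by the ideal $\mca{J}':=(f_1(T_1),\dots,f_j(T_j))$. Let $Y_n\subseteq W(n)^j$ consist of the $\zeta=(\zeta_1,\dots,\zeta_j)$ with $\zeta_i\in W(n)$ when $f_i=\omega_{p^n}(T_i)$ and $\zeta_i\in W(n)\setminus\{1\}$ when $f_i=\nu_{p^n}(T_i)$. As noted in the discussion preceding \Cref{CM10}, the evaluation map $\varphi_n\colon\Lambda_j/\mca{J}'\to\bigoplus_\zeta\Zp[\zeta]$ (with $\zeta$ running over $\Gal(\ol{\Q}_p/\Qp)$-orbit representatives of $Y_n$) is injective with finite cokernel killed by $p^{jn}$. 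Applying \Cref{CM10} to multiplication by $\bar F$ on $\bigoplus_\zeta\Zp[\zeta]$ and on its finite-index submodule $\varphi_n(\Lambda_j/\mca{J}')$, together with the right-exactness bookkeeping from the proof of \Cref{CM5}, yields
\[
\Bigl|\,e\bigl(\Lambda_j/(\bar F,\mca{J}')\bigr)-e\bigl({\textstyle\bigoplus_\zeta}\,\Zp[\zeta]/(\bar F(\zeta-1))\bigr)\,\Bigr|\le jn\cdot r\bigl({\textstyle\bigoplus_\zeta}\,\Zp[\zeta]/(\bar F(\zeta-1))\bigr).
\]
To bound the right-hand side I would use $\mca{I}_{p^n}\subseteq\mca{J}'$, which holds because $\omega_{p^n}(T_i)=T_i\nu_{p^n}(T_i)$ lies in $\mca{J}'$ regardless of whether $f_i$ is $\omega_{p^n}(T_i)$ or $\nu_{p^n}(T_i)$; then $\Lambda_j/(\bar F,\mca{J}')$ is a quotient of $\Lambda_j/(\bar F,\mca{I}_{p^n})$, whose $\Zp$-rank is $O(p^{(j-1)n})$ by the $j$-variable case of \Cref{CM1} (as $\Lambda_j/(\bar F)$ is torsion). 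Since $\varphi_n$ has finite cokernel, $r\bigl({\textstyle\bigoplus_\zeta}\,\Zp[\zeta]/(\bar F(\zeta-1))\bigr)\le r(\Lambda_j/(\bar F,\mca{J}'))=O(p^{(j-1)n})$, so the displayed error is $O(np^{(j-1)n})$.

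It remains to evaluate $e\bigl({\textstyle\bigoplus_\zeta}\,\Zp[\zeta]/(\bar F(\zeta-1))\bigr)$. By the standard norm computation it equals $\sum_{\zeta\in Y_n}v(\bar F(\zeta-1))$, the Galois orbits being absorbed by the local norms and the convention $v(0)=0$ covering those $\zeta$ with $\bar F(\zeta-1)=0$. The tuples in $Y_n$ all of whose components are nontrivial form exactly $(W(n)\setminus\{1\})^j$, on which \Cref{CM32} gives $\sum v(\bar F(\zeta-1))=(\mu(\bar F)p^n+\lambda(\bar F)n+O(1))p^{(j-1)n}$; the remaining tuples --- those with $\zeta_i=1$ for some $i$, which is possible only when $f_i=\omega_{p^n}(T_i)$ --- lie, for each such $i$, in a slice on which $\bar F(\zeta-1)=\bar F|_{T_i=0}$ evaluated at the other $j-1$ (freely varying) coordinates, and each of these at most $j$ sums is $O(p^{(j-1)n})$ by \Cref{2259} (or \Cref{22521}) in $j-1$ variables. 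Adding everything, $e\bigl(\Lambda_j/(\bar F,\mca{J}')\bigr)=\mu(\bar F)p^{jn}+O(np^{(j-1)n})$, which is the claim with $\mu=\mu(\bar F)$. The step I expect to demand the most care is not any single estimate but the uniformity: every error --- from $\varphi_n$, from \Cref{CM10}, and from the degenerate slices --- must be $O(np^{(j-1)n})$ \emph{independently} of which $f_i$ is chosen in each slot and of $n$, so that the single invariant $\mu=\mu(\bar F)$ controls the leading term $\mu p^{jn}$ in all $2^j$ cases simultaneously; the tuples with some $\zeta_i=1$, where $\Zp[\zeta]$ need not be a discrete valuation ring and $\bar F(\zeta-1)$ may vanish, are the delicate point, though being confined to lower-dimensional semi-algebraic pieces they only affect lower-order terms.
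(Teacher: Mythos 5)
Your proposal is correct and follows essentially the same route as the paper: reduce modulo $(T_{j+1},\dots,T_d)$ to the torsion $\Lambda_j$-module $\Lambda_j/(\overline{F})$ (the case $\overline{F}=0$ giving $e=0$) and then apply the \Cref{CM5}-type estimate in $j$ variables. The paper's proof simply cites \Cref{CM5}; your extra care in rerunning that argument for the mixed ideal $(f_1,\dots,f_j)$ with $f_i\in\{\omega_{p^n}(T_i),\nu_{p^n}(T_i)\}$ --- in particular the degenerate slices with $\zeta_i=1$ --- makes explicit a point the paper leaves to the remark preceding \Cref{CM10} that the injectivity of $\varphi_n$ persists for such mixed ideals.
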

\begin{proof}
If $F\in(T_{j+1},\ldots,T_d)$, then
\[
e(\mathcal{M}/(f_1(T_1),\ldots,f_j(T_j),T_{j+1},\ldots,T_d))=e(\Zp[\![T_{j+1},\ldots,T_{d}]\!]/(f_1(T_{j+1}),\ldots,f_d(T_{d}))=0.
\]
Hence we may assume $F\notin(T_{j+1},\ldots,T_d)$. Since $F\notin(T_{j+1},\ldots,T_d)$, we have $\overline{F}\neq 0$ in $\Lambda/(T_{j+1},\ldots,T_d)$. Therefore, $\mathcal{M}/(T_{j+1},\ldots,T_d)\mathcal{M}$ is a torsion $\Z[\![T_1,\ldots,T_j]\!]$-module, 
where $\Lambda/(T_{j+1},\ldots,T_d)\cong\Z[\![T_1,\ldots,T_j]\!]$. 
Thus, by \Cref{CM5}, we have 
\[e(\mathcal{M}/(f_1(T_1),\ldots,f_j(T_j),T_{j+1},\ldots,T_d))= \mu p^{jn}+O(np^{(j-1)n})\]
 for some $\mu\in\Z_{\geq 0}$.
\end{proof}

Let $n$ be a positive integer and let $\omega^i, \nu^i$ denote $\omega_{p^n}(T_i), \nu_{p^n}(T_i)$ respectively. Let $f_i$ be either $\omega_{p^n}(T_i)$ or $\nu_{p^n}(T_i)$.

\begin{lem}\label{injectivelem} The map 
\[
\varphi:\Lambda/(\omega^1,f_2,\ldots,f_d)\to\Lambda/(T_1,f_2,\ldots,f_d)\oplus \Lambda/(\nu^1,f_2,\ldots,f_d)
\]
is injective.
\end{lem}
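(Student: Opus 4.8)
The plan is to first eliminate the variables $T_2,\dots,T_d$. Put $A:=\Zp[\![T_2,\dots,T_d]\!]/(f_2,\dots,f_d)$ and $R:=\Lambda/(f_2,\dots,f_d)$, so that $R\cong A[\![T_1]\!]$. Because $\omega^1=T_1\nu^1$ lies in both $(T_1)$ and $(\nu^1)$ of $R$, the map $\varphi$ (induced by the two quotient maps) is identified with the natural diagonal homomorphism
\[
R/\omega^1R\to R/T_1R\oplus R/\nu^1R,\qquad \overline{h}\mapsto(\overline{h},\overline{h}),
\]
whose kernel is $(T_1R\cap\nu^1R)/\omega^1R$. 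So it suffices to show $T_1R\cap\nu^1R\subseteq T_1\nu^1R$, the reverse inclusion being clear.

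The only input that is not purely formal is that $A$ is a finitely generated free $\Zp$-module, so that $p^n$ is a non-zero-divisor in $A$. To see this, note that each $\Zp[\![T_i]\!]/(f_i)$ is finite free over $\Zp$: if $f_i=\omega_{p^n}(T_i)$ this is the quotient by a distinguished polynomial of degree $p^n$, and if $f_i=\nu_{p^n}(T_i)=\sum_{k=1}^{p^n}\binom{p^n}{k}T_i^{\,k-1}$ then $f_i$ is a monic polynomial of degree $p^n-1$ in $T_i$. Since each factor is finite over $\Zp$, $A$ is the ordinary tensor product over $\Zp$ of these free modules, hence free over $\Zp$. (Alternatively, one may invoke the fact, recorded after the definition of $\varphi_n$, that $\Lambda_{d-1}/(f_2,\dots,f_d)$ is $\Zp$-free.)

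Next I would prove that multiplication by $T_1$ is injective on $R/\nu^1R$. Since $\nu^1=\nu_{p^n}(T_1)$ is monic of degree $p^n-1$ in $T_1$ with coefficients in $\Zp\subseteq A$, division by $\nu^1$ realizes $R/\nu^1R$ as a free $A$-module with basis $1,T_1,\dots,T_1^{\,p^n-2}$, on which multiplication by $T_1$ acts via the companion matrix of $\nu^1$; its determinant equals $\pm\nu^1(0)=\pm p^n$, a non-zero-divisor in $A$, so the adjugate identity forces this map to be injective. To finish, take $g\in T_1R\cap\nu^1R$ and write $g=T_1y$; then $T_1y\equiv0$ in $R/\nu^1R$ gives $y\equiv0$ there, i.e.\ $y=\nu^1w$ for some $w\in R$, whence $g=T_1\nu^1w=\omega^1w\in\omega^1R$. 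The step most likely to need care is the verification that $A$ is $\Zp$-torsion free, handled above; the remainder is formal commutative algebra.
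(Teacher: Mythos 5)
Your argument is correct, but it is organized differently from the paper's. The paper works element-wise: from $\varphi(a)=0$ it writes $a=a_1T_1+\sum_{i\ge 2}a_if_i=b_1\nu^1+\sum_{i\ge 2}b_if_i$, reduces modulo $T_1$ (where $\nu^1\equiv p^n$) to get $p^n\,\overline{b_1}\in(f_2,\dots,f_d)$ in $\Lambda_{d-1}=\Zp[\![T_2,\dots,T_d]\!]$, and then cancels $p^n$ by evaluating at roots of unity and invoking the injectivity of $\Lambda_{d-1}/(f_2,\dots,f_d)\to\bigoplus_\zeta\Zp[\zeta]$ recorded at the start of the section; in your notation this is exactly the statement that $p^n$ (equivalently $\nu^1$) is a non-zero-divisor on $R/T_1R\cong A$. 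You instead identify $\ker\varphi$ with $(T_1R\cap\nu^1R)/\omega^1R$ and prove that multiplication by $T_1$ is injective on $R/\nu^1R$ via the companion matrix, whose determinant is $\pm\nu^1(0)=\pm p^n$. The two proofs are mirror images of each other across the factorization $\omega^1=T_1\nu^1$, and both bottom out in the same fact that $p^n$ is a non-zero-divisor on $\Lambda_{d-1}/(f_2,\dots,f_d)$; you derive it from the $\Zp$-freeness of $A$ (Weierstrass division), the paper from the torsion-freeness of $\bigoplus_\zeta\Zp[\zeta]$. Your route is slightly more self-contained in that it avoids the evaluation map, at the cost of justifying $R\cong A[\![T_1]\!]$ and the division step. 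One small imprecision: in $A[\![T_1]\!]$ monicity alone does not license division with remainder (for instance $T_1-1$ is a unit there); what you actually need is that $\nu^1$ is a \emph{distinguished} polynomial, i.e.\ that its sub-leading coefficients $\binom{p^n}{k}$ for $k<p^n$ lie in the maximal ideal of $A$ --- which they do, being divisible by $p$ --- so the conclusion stands.
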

Let $a\in\Lambda$ and suppose that $\varphi(a)$=0. We want to show that $a\in(\omega^1,f_2,\ldots,f_d)$. Since $\varphi(a)=0$, we can write
\[
a=a_1T_1+a_2f_2+\cdots+a_df_d=b_1\nu^1+b_2f_2+\cdots+b_df_d.
\]
Hence we have
\[
b_1p^n\equiv (a_2-b_2)f_2+\cdots+(a_d-b_d)f_d\mod T_1.
\]
We regard this as an equation in $\Lambda_{d-1}:=\Zp[\![T_2,\ldots,T_d]\!]$.
For any $p^n$-th roots of unity $\zeta_2,\ldots,\zeta_d$, we have $b_1(\zeta_2,\ldots,\zeta_d)\, p^n=0$, hence $b_1(\zeta_2,\ldots,\zeta_d)=0.$ 
Since $\Lambda_{d-1}/\mathcal{I}_{p^n}\to \bigoplus_\zeta \Zp[\zeta]$ is injective by the argument at the beginning of this section, we have $b_1(0,T_2,\ldots,T_d)\in(f_2,\ldots,f_d)$. This implies $b_1\in (T_1,f_2,\ldots,f_d)$ in $\Lambda$, and so $a\in(\omega^1,f_2,\ldots,f_d)$. This completes the proof.

\begin{lem}\label{avoidlem}
Let $F$ be a nonzero element of $\Lambda$. Consider the map
\[
\overline{\varphi}_n:\Lambda/(F,\omega^1,f_2,\ldots,f_d)\to\Lambda/(F,T_1,f_2,\ldots,f_d)\oplus\Lambda/(F,\nu^1,f_2,\ldots,f_d)
\]
Then we have $\ker\overline{\varphi}_n, \coker\overline{\varphi}_n$ are finite and $e(\ker\overline{\varphi}_n), e(\coker\overline{\varphi}_n)=O(p^{(d-1)n})$.
\end{lem}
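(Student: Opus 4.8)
The plan is to deduce the statement from the short exact sequence underlying \Cref{injectivelem}. Since $\omega^1=T_1\nu^1$ while $T_1\nmid\nu^1$ in the UFD $\Lambda$ (indeed $\nu^1\equiv p^n\not\equiv0\pmod{T_1}$), the ideals $(T_1,f_2,\ldots,f_d)$ and $(\nu^1,f_2,\ldots,f_d)$ intersect exactly in $(\omega^1,f_2,\ldots,f_d)$ --- which is precisely what the computation proving \Cref{injectivelem} establishes --- so $\varphi$ sits in a short exact sequence
\[
0\to\Lambda/(\omega^1,f_2,\ldots,f_d)\xrightarrow{\varphi}\Lambda/(T_1,f_2,\ldots,f_d)\oplus\Lambda/(\nu^1,f_2,\ldots,f_d)\to C\to0,
\]
where $C:=\Lambda/(T_1,\nu^1,f_2,\ldots,f_d)=\Lambda/(T_1,p^n,f_2,\ldots,f_d)$, the last equality because $(T_1,\nu^1)=(T_1,p^n)$. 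Note $C$ is annihilated by $T_1$, hence is a module over $\Lambda_{d-1}:=\Lambda/(T_1)=\Zp[\![T_2,\ldots,T_d]\!]$, namely $C=\Lambda_{d-1}/(p^n,f_2,\ldots,f_d)$; as each $f_i$ is a distinguished polynomial in $T_i$, this is a \emph{finite} group, free over $\Z/p^n\Z$ of rank $\prod_{i=2}^{d}\deg f_i$.

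Next I would tensor the displayed sequence with $\Lambda/(F)$ over $\Lambda$. Since $\Lambda$ is a domain, $F$ is a non-zero-divisor and $\Lambda/(F)$ has the free resolution $0\to\Lambda\xrightarrow{F}\Lambda\to\Lambda/(F)\to0$; tensoring gives an exact sequence
\[
C[F]\to\Lambda/(F,\omega^1,f_2,\ldots,f_d)\xrightarrow{\overline{\varphi}_n}\Lambda/(F,T_1,f_2,\ldots,f_d)\oplus\Lambda/(F,\nu^1,f_2,\ldots,f_d)\to C/FC\to0,
\]
where $C[F]:=\{x\in C\mid Fx=0\}={\rm Tor}_1^{\Lambda}(C,\Lambda/(F))$ and $\overline{\varphi}_n$ is the map induced by $\varphi$. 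Hence $\coker\overline{\varphi}_n\cong C/FC$, and $\ker\overline{\varphi}_n$ is a quotient of the finite group $C[F]$, so both $\ker\overline{\varphi}_n$ and $\coker\overline{\varphi}_n$ are finite. Moreover $F$ acts on $C$ through its image $\overline{F}\in\Lambda_{d-1}$, and finiteness of $C$ with the exact sequence $0\to C[F]\to C\xrightarrow{F}C\to C/FC\to0$ gives $|C[F]|=|C/FC|$, so $e(\ker\overline{\varphi}_n)\le e(C[F])=e(C/FC)$. It therefore remains to bound $e(C/FC)=e\big(\Lambda/(F,T_1,p^n,f_2,\ldots,f_d)\big)$.

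To estimate this, I would assume $\overline{F}\neq0$ in $\Lambda_{d-1}$, i.e.\ $F\notin(T_1)$ --- the case relevant to all the applications --- and write $N:=\Lambda_{d-1}/(\overline{F},f_2,\ldots,f_d)=\Lambda/(F,T_1,f_2,\ldots,f_d)$, so that $C/FC=N/p^nN$. The module $N$ is finitely generated over $\Zp$, say $N\cong\Zp^r\oplus T$ with $T$ finite and $r=r(N)$, so for all large $n$ one has $e(N/p^nN)=n\,r(N)+e(N)$. Since $\overline{F}\neq0$, the module $\Lambda_{d-1}/(\overline{F})$ is torsion over $\Lambda_{d-1}$, and as each $f_i$ divides $\omega_{p^n}(T_i)$, $N$ is a quotient of $\Lambda_{d-1}/(\overline{F},\omega_{p^n}(T_2),\ldots,\omega_{p^n}(T_d))$; applying \Cref{CM1} in $d-1$ variables and using that $\Zp$-rank is non-increasing under quotients gives $r(N)=O(p^{(d-2)n})$, hence $n\,r(N)=O(p^{(d-1)n})$. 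For the torsion part, \Cref{CM6}, applied in $\Lambda$ with the variables relabelled so that $T_1$ is the killed variable and $T_2,\ldots,T_d$ carry the Weierstrass factors $f_2,\ldots,f_d$ (the case $j=d-1$), gives $e(N)=\mu p^{(d-1)n}+O(np^{(d-2)n})=O(p^{(d-1)n})$ for a fixed $\mu\ge0$. Adding the two contributions yields $e(C/FC)=O(p^{(d-1)n})$, whence $e(\coker\overline{\varphi}_n)=O(p^{(d-1)n})$ and $e(\ker\overline{\varphi}_n)\le e(C/FC)=O(p^{(d-1)n})$.

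The step I expect to be the real obstacle is the estimate for $e(C/FC)$. Because of the extra relation $p^n$ (coming from $\nu^1\equiv p^n\bmod T_1$), the module $C$ is honestly finite rather than merely $\Lambda$-torsion, so one cannot simply quote \Cref{CM6}; instead $e(C/FC)$ splits into the free-rank contribution $n\,r(N)$ and the torsion contribution $e(N)$, and keeping the former at $O(p^{(d-1)n})$ rather than the naive $O(np^{(d-1)n})$ hinges on the rank bound $r(N)=O(p^{(d-2)n})$, which is one power smaller precisely because $\Lambda_{d-1}/(\overline{F})$ is torsion, i.e.\ because $F\notin(T_1)$.
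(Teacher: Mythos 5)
Your argument has the same skeleton as the paper's proof: both start from the Mayer--Vietoris short exact sequence $0\to\Lambda/(\omega^1,f_2,\ldots,f_d)\to\Lambda/(T_1,f_2,\ldots,f_d)\oplus\Lambda/(\nu^1,f_2,\ldots,f_d)\to\Lambda/(T_1,\nu^1,f_2,\ldots,f_d)\to0$ furnished by \Cref{injectivelem}, compare it with its image under multiplication by $F$ (you via the Tor sequence of $0\to\Lambda\xrightarrow{F}\Lambda\to\Lambda/(F)\to0$, the paper via the snake lemma applied to the inclusions of the $F$-multiplied subobjects --- the same computation), identify $\coker\overline{\varphi}_n\cong C/FC$ with $C=\Lambda/(T_1,p^n,f_2,\ldots,f_d)$, and bound the kernel by the cokernel using finiteness of $C$. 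Where you genuinely add content is the final estimate: the paper disposes of $e(\coker\gamma)=O(p^{(d-1)n})$ with a bare appeal to \Cref{injectivelem}, which does not yield it, whereas you actually prove it by splitting $e(N/p^nN)\le n\,r(N)+e(N)$ and invoking \Cref{CM1} in $d-1$ variables (torsion case) for the rank and \Cref{CM6} with $j=d-1$ for the torsion; neither citation is circular, since those results are established before \Cref{avoidlem}. Your restriction to $F\notin(T_1)$ is not merely convenient: for $F=T_1$ one gets $\coker\overline{\varphi}_n\cong\Lambda_{d-1}/(p^n,f_2,\ldots,f_d)$, whose $e$ is $\asymp np^{(d-1)n}$, so the lemma as stated actually fails in that case and only the weaker bound $O(np^{(d-1)n})$ survives. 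The one soft spot is your parenthetical that $F\notin(T_1)$ covers ``all the applications'': in \Cref{CM7} the element $F$ is an arbitrary characteristic element and may be divisible by $T_1$, so to use the lemma in the generality in which it is invoked one should either restate it with the $O(np^{(d-1)n})$ bound (which is all \Cref{CM7} needs) or add the explicit, easy computation in the $F\in(T_1)$ case.
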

\begin{proof}
Consider the commutative diagram with exact rows 
\[ 
\begin{small}
\xymatrix{
0\ar[r]&F(\Lambda/(\omega^1,f_2,\ldots,f_d))\ar[r]^{\hspace{0mm}\varphi'_n}\ar[d]^{\alpha}
& 
{\begin{array}{@{}c@{}}F(\Lambda/(T_1,f_2,\ldots,f_d))\ \ \\ \ \oplus\Lambda/(\nu^1,f_2,\ldots,f_d)\end{array}} 
\ar[r]\ar[d]^{\beta}
& F/\left(
{{\begin{array}{@{}c@{}}F\Lambda\cap(T_1,f_2,\ldots,f_d)\\ 
+ F\Lambda\cap(\nu^1,f_2,\ldots,f_d)\end{array}}}\right)
\ar[r]\ar[d]^{\gamma}&0\\
0\ar[r]&(\Lambda/(\omega^1,f_2,\ldots,f_d))\ar[r]^{\hspace{0mm}\varphi_n}
&{\begin{array}{@{}c@{}}\Lambda/(T_1,f_2,\ldots,f_d)\ \ \\ \oplus \Lambda/(\nu^1,f_2,\ldots,f_d)\end{array}}\ar[r]
&\Lambda/(T_1,\nu^1,f_2,\ldots,f_d)\ar[r]&0
}\end{small}
\] 
with 
$\coker\alpha=\Lambda/(F,\omega^1,f_2,\ldots,f_d)$, $\coker\beta=\Lambda/(F,T_1,f_2,\ldots,f_d)\oplus\Lambda/(F,\nu^1,f_2,\ldots,f_d)$. 
Note that we have a natural surjection
\[
A:=\frac{\Lambda}{(T_1,\nu^1,f_2,\ldots,f_d)}
\stackrel{\times F}{\surj}
\frac{F\Lambda}{(F\Lambda\cap(T_1,f_2,\ldots,f_d))+(F\Lambda\cap(\nu^1,f_2,\ldots,f_d))}=:B. 
\]
This implies $|B|\leq|A|$, and so
\[
|\ker\gamma|=\frac{|B||\coker\gamma|}{|A|}\leq\frac{|A||\coker\gamma|}{|A|}=|\coker\gamma|.
\]
By the snake lemma, we have $\ker\overline{\varphi}_n=\ker\gamma$ and $\coker\overline{\varphi}_n=\coker\gamma$. Since $e(\coker\gamma)=O(p^{(d-1)n})$ by \Cref{injectivelem}, we have the assertion.
\end{proof}

\begin{lem}\label{cokerlemma}
Let $R$ be an integral domain and let $\varphi:\mathcal{M}\to \mathcal{N}$ be a homomorphism of $R$-modules. Consider the commutative diagram with exact rows 
\[
\xymatrix{
0\ar[r]&{\rm tor}_R(\ker \varphi)\ar[r]\ar[d]&{\rm tor}_R\mathcal{M}\ar[r]^{\varphi'}\ar[d]&{\rm tor}_R\mathcal{N}\ar[r]\ar[d]&\coker \varphi'\ar[r]\ar[d]&0\\
0\ar[r]&\ker \varphi\ar[r]&\mathcal{M}\ar[r]^{\varphi}&\mathcal{N}\ar[r]&\coker\varphi\ar[r]&0,
}
\]
where $\varphi'$ is the restriction of $\varphi$. Suppose that $\ker\varphi={\rm tor}_R(\ker\varphi)$. Then $\coker\varphi'\to\coker\varphi$ is injective.
\end{lem}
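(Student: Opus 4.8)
The plan is to reduce the statement to a one-line diagram chase, using only that $R$ is a domain and that $\ker\varphi$ is torsion. First I would pin down the map in question: the induced homomorphism $\coker\varphi'\to\coker\varphi$ is the one sending the class of $n\in{\rm tor}_R\mathcal{N}$ modulo $\im\varphi'$ to the class of $n$ modulo $\im\varphi$, which is well defined because $\varphi'$ is the restriction of $\varphi$ and hence $\im\varphi'\subseteq\im\varphi$. Its kernel is therefore $({\rm tor}_R\mathcal{N}\cap\im\varphi)/\im\varphi'$, so proving injectivity amounts to showing the inclusion ${\rm tor}_R\mathcal{N}\cap\im\varphi\subseteq\im\varphi'$.

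To establish that inclusion, I would take $n\in{\rm tor}_R\mathcal{N}$ with $n=\varphi(m)$ for some $m\in\mathcal{M}$ and manufacture a torsion preimage. Choose $0\neq r\in R$ with $rn=0$. Then $\varphi(rm)=rn=0$, so $rm\in\ker\varphi$. By the hypothesis $\ker\varphi={\rm tor}_R(\ker\varphi)$, there is $0\neq s\in R$ with $s(rm)=0$, i.e.\ $(sr)m=0$. Since $R$ is an integral domain, $sr\neq 0$, whence $m\in{\rm tor}_R\mathcal{M}$. Consequently $n=\varphi(m)=\varphi'(m)\in\im\varphi'$, as required.

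I do not expect a genuine obstacle here; the only things to keep straight are that the vertical map on cokernels is precisely the one induced by the inclusions of torsion submodules in the displayed diagram (so that both its well-definedness and the identification of its kernel with $({\rm tor}_R\mathcal{N}\cap\im\varphi)/\im\varphi'$ are immediate), and that the domain hypothesis enters exactly once, to guarantee $sr\neq 0$. An alternative write-up would run the snake lemma on the given commutative diagram with exact rows, but the direct verification above is the shortest route and avoids any bookkeeping of connecting maps.
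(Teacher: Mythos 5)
Your proof is correct, and it takes a genuinely different route from the paper's. The paper splits each four-term exact row into two short exact sequences through $\im\varphi'$ and $\im\varphi$, applies the snake lemma to the first pair to conclude (from the surjectivity of ${\rm tor}_R(\ker\varphi)\to\ker\varphi$, which is the hypothesis) that $\coker\bigl(\im\varphi'\to\im\varphi\bigr)\cong\mathcal{M}/{\rm tor}_R\mathcal{M}$ is torsion-free, and then applies the snake lemma to the second pair to embed $\ker\bigl(\coker\varphi'\to\coker\varphi\bigr)$ into that torsion-free module; since $\coker\varphi'$ is a quotient of ${\rm tor}_R\mathcal{N}$ and hence torsion, the kernel vanishes. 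You instead identify the kernel directly as $({\rm tor}_R\mathcal{N}\cap\im\varphi)/\im\varphi'$ and show by an element chase that any torsion element of $\im\varphi$ already has a torsion preimage: if $\varphi(m)=n$ is killed by $r\neq 0$ then $rm\in\ker\varphi$ is torsion, so $(sr)m=0$ with $sr\neq 0$, forcing $m\in{\rm tor}_R\mathcal{M}$. Both arguments use the domain hypothesis and the torsion hypothesis on $\ker\varphi$ in exactly one place each; yours is shorter, avoids the snake lemma entirely, and makes the role of the hypotheses more transparent, while the paper's is the kind of formal homological argument that generalizes mechanically to other torsion theories. Either write-up is acceptable.
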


\begin{proof}
We divide the given commutative diagrams into two commutative diagrams with exact rows 
\begin{gather*}
\xymatrix{
0\ar[r]&{\rm tor}_R(\ker\varphi)\ar[r]\ar[d]^{\alpha}&{\rm tor}_R \mathcal{M}\ar[r]\ar[d]^{\beta}&\im \varphi'\ar[r]\ar[d]^{\gamma}&0\\
0\ar[r]&\ker\varphi\ar[r]&\mathcal{M}\ar[r]&\im\varphi\ar[r]&0, 
}\\
\xymatrix{
0\ar[r]&\im \varphi'\ar[r]\ar[d]^{\gamma}&{\rm tor}_R \mathcal{N}\ar[r]\ar[d]^{\delta}&\coker\varphi'\ar[r]\ar[d]^{\varepsilon}&0\\
0\ar[r]&\im\varphi\ar[r]&\mathcal{N}\ar[r]&\coker\varphi\ar[r]&0.
}
\end{gather*} 
Applying the snake lemma to the first diagram, by the surjectivity of $\alpha$, we have $\coker\beta=\coker\gamma$, which implies that $\coker \gamma$ is a free $R$-module. Applying the snake lemma to the second diagram, by the injectivity of $\delta$, we may regard $\ker \varepsilon\subset\coker \gamma$. Since $\coker \varphi'$ is torsion over $R$ and $\coker \gamma$ is free, we must have $\ker \varepsilon=0$. Therefore, $\coker\varphi'\to\coker\varphi$ is injective.
\end{proof}

\begin{prop}\label{CM7}
Let $\mathcal{M}$ be a finitely generated $\Lambda$-module. Suppose that $r(\mathcal{M}/\mathcal{J}_{p^n} \mathcal{M})=O(p^{(d-2)n})$. Then we have
\[
|e(\mathcal{M}/\mathcal{I}_{p^n}\mathcal{M})-e(\mathcal{M}/\mathcal{J}_{p^n}\mathcal{M})|=O(np^{(d-1)n})
\]
\end{prop}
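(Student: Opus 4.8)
The plan is to reduce to cyclic modules via the structure theorem and compare the two quotients summand by summand, the only genuinely new ingredient being an analogue of \Cref{pigap} with $\mathcal{I}_{p^n}$ replaced by $\mathcal{J}_{p^n}$. First I would note that the hypothesis forces $\mathcal{M}$ to be $\Lambda$-torsion: were the $\Lambda$-rank $\rho$ of $\mathcal{M}$ positive, comparison of the torsion-free quotient of $\mathcal{M}$ with $\Lambda^{\rho}$ (whose quotient by $\mathcal{J}_{p^n}$ has $\Zp$-rank $\rho(p^n-1)^d$) via \Cref{CM35} would make $r(\mathcal{M}/\mathcal{J}_{p^n}\mathcal{M})$ grow like a positive multiple of $p^{dn}$. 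So we may take $\mathcal{M}$ torsion; then by the structure theorem quoted in \Cref{def.char} and the fact that $\Lambda$ is a UFD (so its height $1$ primes are principal), $\mathcal{M}$ is pseudo-isomorphic to an elementary module $\mathcal{E}=\bigoplus_{i=1}^{s}\Lambda/(F_i)$ with $0\neq F_i\in\Lambda$ and $s$ independent of $n$.

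Next I would control the two pseudo-isomorphism gaps. For $\mathcal{I}_{p^n}$ this is precisely \Cref{pigap}: $|e(\mathcal{M}/\mathcal{I}_{p^n}\mathcal{M})-e(\mathcal{E}/\mathcal{I}_{p^n}\mathcal{E})|=O(np^{(d-1)n})$. For $\mathcal{J}_{p^n}$ I would rerun the proof of \Cref{pigap} with $\mathcal{I}_{p^n}$ replaced throughout by $\mathcal{J}_{p^n}$; its only inputs are that $\tor_{\Zp}(-/\mathcal{J}_{p^n}(-))$ is killed by $p^{dn+c}$ (\Cref{CM34}), that $e(-/\mathcal{J}_{p^n}(-))=O(p^{(d-1)n})$ for pseudo-null modules (\Cref{CM35}\,(2)), and that $r(-/\mathcal{J}_{p^n}(-))=O(p^{(d-2)n})$ for the modules that occur. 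This last input is where the hypothesis is indispensable: for $\mathcal{I}_{p^n}$ the corresponding rank bound holds automatically for torsion modules by \Cref{CM1}, but for $\mathcal{J}_{p^n}$ it is exactly the standing assumption $r(\mathcal{M}/\mathcal{J}_{p^n}\mathcal{M})=O(p^{(d-2)n})$, which by \Cref{CM4} also gives $r(\mathcal{E}/\mathcal{J}_{p^n}\mathcal{E})=O(p^{(d-2)n})$, so that the roles of $\mathcal{M}$ and $\mathcal{E}$ may be exchanged. This yields $|e(\mathcal{M}/\mathcal{J}_{p^n}\mathcal{M})-e(\mathcal{E}/\mathcal{J}_{p^n}\mathcal{E})|=O(np^{(d-1)n})$.

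It then remains to bound $|e(\mathcal{E}/\mathcal{I}_{p^n}\mathcal{E})-e(\mathcal{E}/\mathcal{J}_{p^n}\mathcal{E})|$. Since $\mathcal{E}$ is a finite direct sum, $e(\mathcal{E}/\mathcal{I}_{p^n}\mathcal{E})=\sum_{i} e((\Lambda/F_i)/\mathcal{I}_{p^n}(\Lambda/F_i))$ and likewise for $\mathcal{J}_{p^n}$, so it suffices to handle one cyclic module $\Lambda/F$ with $F=F_i$. By \cite[Theorem 2.5]{CuocoMonsky1981}, $e((\Lambda/F)/\mathcal{I}_{p^n}(\Lambda/F))=(\mu(F)p^n+O(n))p^{(d-1)n}$, and by the unconditional first assertion of \Cref{CM5}, $e((\Lambda/F)/\mathcal{J}_{p^n}(\Lambda/F))=(\mu(F)p^n+O(n))p^{(d-1)n}$; subtracting gives $O(np^{(d-1)n})$. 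Alternatively one may pass from $\mathcal{I}_{p^n}$ to $\mathcal{J}_{p^n}$ one variable at a time, replacing $\omega_{p^n}(T_j)$ by $\nu_{p^n}(T_j)$ with the help of \Cref{injectivelem} and \Cref{avoidlem} and disposing of the intermediate ``$T_j$-terms'' by \Cref{CM6} (\Cref{cokerlemma} serving to control the torsion parts of the cokernels that appear), each of the $d$ steps contributing $O(p^{(d-1)n})$. Summing over the $s$ indices and combining with the two gap estimates completes the proof.

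The hard part will be the $\mathcal{J}_{p^n}$-analogue of \Cref{pigap}. One must verify that every step of the original argument survives the substitution $\mathcal{I}_{p^n}\rightsquigarrow\mathcal{J}_{p^n}$ — in particular that the estimate $e(\im\overline{\iota})\leq e(\mathcal{M}/\mathcal{J}_{p^n}\mathcal{M})+(dn+c)\,r(\mathcal{M}/\mathcal{J}_{p^n}\mathcal{M})$, combined with the hypothesis, still yields an error of size $O(np^{(d-1)n})$, and that nothing is lost when the automatic bound \Cref{CM1} is traded for the imposed one. This is exactly the place where the rank hypothesis of the proposition is used and cannot be dropped.
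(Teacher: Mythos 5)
Your argument is correct, and its skeleton is the same as the paper's: reduce to a cyclic module $\Lambda/F$ by a pseudo-isomorphism argument, then compare the $\mathcal{I}_{p^n}$- and $\mathcal{J}_{p^n}$-quotients of $\Lambda/F$. There are, however, two places where you genuinely diverge, both to your credit. First, the paper's proof reduces to the cyclic case by citing only \Cref{pigap}, which controls the pseudo-isomorphism gap for $e(\cdot/\mathcal{I}_{p^n}\cdot)$; since the quantity to be bounded also involves $e(\cdot/\mathcal{J}_{p^n}\cdot)$, one really does need the $\mathcal{J}_{p^n}$-analogue of \Cref{pigap}, and you supply it explicitly (via \Cref{CM34}, \Cref{CM35}, and the rank bounds). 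You likewise handle the non-torsion case, which the statement of \Cref{CM7} formally allows but the paper's proof silently ignores. Second, for the cyclic case your primary route simply subtracts the two unconditional asymptotics $e((\Lambda/F)/\mathcal{I}_{p^n})=(\mu(F)p^n+O(n))p^{(d-1)n}$ (Cuoco--Monsky) and $e((\Lambda/F)/\mathcal{J}_{p^n})=(\mu(F)p^n+O(n))p^{(d-1)n}$ (\Cref{CM5}), which is shorter and cleaner than the paper's term-by-term decomposition through \Cref{injectivelem}, \Cref{avoidlem}, \Cref{cokerlemma}, and \Cref{CM6} (which you also record as an alternative). One small correction: the rank hypothesis is not ``indispensable'' for the $\mathcal{J}_{p^n}$-analogue of \Cref{pigap} --- \Cref{CM2} already gives $r(\mathcal{M}/\mathcal{J}_{p^n}\mathcal{M})=O(p^{(d-1)n})$ for torsion modules, and $(dn+c)\cdot O(p^{(d-1)n})=O(np^{(d-1)n})$ suffices there; the real work the hypothesis does in your argument is to rule out positive $\Lambda$-rank. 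This does not affect the validity of the proof.
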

\begin{proof}
By \Cref{pigap}, We may assume that there exists a nonzero element $F\in\Lambda$ such that $\mathcal{M}=\Lambda/F\Lambda$. By \Cref{avoidlem} and \Cref{cokerlemma}, We have
\begin{align*}
e(\mathcal{M}/\mathcal{I}_{p^n}\mathcal{M})&
=e\big(\bigoplus_{(f_i)_i \in\prod_i \{T_i, \nu_{p^n}(T_i)\}} \mathcal{M}/(f_1(T_1),\ldots,f_d(T_d))\mathcal{M}\big)+O(np^{(d-1)n})\\
&=e\big(\mathcal{M}/\mathcal{J}_{p^n}\mathcal{M}\oplus\bigoplus_{\rm others} \mathcal{M}/(f_1(T_1),\ldots,f_d(T_d))\mathcal{M}\big)+O(np^{(d-1)n})\\
&= e(\mathcal{M}/\mathcal{J}_{p^n}\mathcal{M})+e\big(\bigoplus_{\rm others} \mathcal{M}/(f_1(T_1),\ldots,f_d(T_d))\mathcal{M}\big)+O(np^{(d-1)n}).
\end{align*}
By \Cref{CM6}, we have
\[
e\big(\bigoplus_{\rm others} \mathcal{M}/(f_1(T_1),\ldots, f_d(T_d))\mathcal{M}\big)=O(p^{(d-1)n}).
\]
This completes the proof. 
\end{proof}
\begin{prop}\label{22510}
Let $\mathcal{M}$ be a finitely generated torsion $\Lambda$-module. Suppose that $r(\mathcal{M}/\mathcal{J}_{p^n}\mathcal{M})=O(p^{(d-2)n})$.
Then there exists $\mu\in\Z_{\geq 0}$ such that
\[
e(\mathcal{M}/\mathcal{I}_{p^n}\mathcal{M})=\mu p^{dn}+O(np^{(d-1)n}).
\]
\end{prop}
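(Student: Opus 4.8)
The plan is to reduce to cyclic modules $\Lambda/(F)$, where \Cref{CM5} already supplies the estimate, bridging the gap between $\mathcal{J}_{p^n}$ and $\mathcal{I}_{p^n}$ by means of \Cref{CM7} and the pseudo-isomorphism invariance \Cref{pigap}. Concretely, I would first invoke the structure theorem for finitely generated torsion $\Lambda$-modules (as recalled in \Cref{def.char}). Since $\Lambda=\Zp[\![T_1,\ldots,T_d]\!]$ is a UFD, every height $1$ prime is principal, so $\mathcal{M}$ is pseudo-isomorphic to an elementary module $\mathcal{E}=\bigoplus_{i=1}^{s}\Lambda/(F_i)$ with $0\neq F_i\in\Lambda$ and ${\rm char}\,\mathcal{M}\doteq\prod_{i=1}^{s}F_i$.

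Next, I would transport the rank hypothesis along this pseudo-isomorphism: by \Cref{CM4}, $r(\mathcal{E}/\mathcal{J}_{p^n}\mathcal{E})=r(\mathcal{M}/\mathcal{J}_{p^n}\mathcal{M})+O(p^{(d-2)n})=O(p^{(d-2)n})$. Now \Cref{pigap} gives $e(\mathcal{M}/\mathcal{I}_{p^n}\mathcal{M})=e(\mathcal{E}/\mathcal{I}_{p^n}\mathcal{E})+O(np^{(d-1)n})$, and \Cref{CM7} applied to $\mathcal{E}$ (legitimate by the rank bound just obtained) gives $e(\mathcal{E}/\mathcal{I}_{p^n}\mathcal{E})=e(\mathcal{E}/\mathcal{J}_{p^n}\mathcal{E})+O(np^{(d-1)n})$. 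Since $\mathcal{J}_{p^n}$ passes to direct sums and $e(-)$ is additive over them, writing $\mathcal{E}_i:=\Lambda/(F_i)$ we have $e(\mathcal{E}/\mathcal{J}_{p^n}\mathcal{E})=\sum_{i=1}^{s}e(\mathcal{E}_i/\mathcal{J}_{p^n}\mathcal{E}_i)$, and the first assertion of \Cref{CM5} evaluates each summand as $\bigl(\mu(F_i)p^n+O(n)\bigr)p^{(d-1)n}=\mu(F_i)p^{dn}+O(np^{(d-1)n})$. Chaining these identities yields $e(\mathcal{M}/\mathcal{I}_{p^n}\mathcal{M})=\bigl(\sum_i\mu(F_i)\bigr)p^{dn}+O(np^{(d-1)n})$. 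Finally, since $p$ is a prime element of the domain $\Lambda$, the order of vanishing at $p$ is additive, so $\sum_i\mu(F_i)=\mu\bigl(\prod_i F_i\bigr)=\mu({\rm char}\,\mathcal{M})$; setting $\mu:=\mu({\rm char}\,\mathcal{M})\in\Z_{\geq0}$ completes the argument and simultaneously identifies $\mu$ as claimed in \Cref{mr.unbr}.

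The argument is essentially an assembly of results already established, so no step is deep; the point that needs genuine care is that the hypothesis is stated with $\mathcal{J}_{p^n}$ rather than $\mathcal{I}_{p^n}$. One must make sure the rank bound descends from $\mathcal{M}$ to the elementary module $\mathcal{E}$ (equivalently, to each cyclic summand $\mathcal{E}_i$) so that \Cref{CM7} is applicable there; this is exactly what \Cref{CM4} — the $\mathcal{J}_{p^n}$-analogue of pseudo-isomorphism invariance of the rank — provides, and it is needed because the pseudo-isomorphism $\mathcal{M}\to\mathcal{E}$ is neither an isomorphism nor a split map. A secondary routine check is that the $O(np^{(d-1)n})$ error dominates all the error terms produced along the way, so that the leading term $\mu p^{dn}$ is unaffected.
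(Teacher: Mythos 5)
Your argument is correct, but it takes a different route from the paper's, which is a two-line affair: the paper applies \Cref{Monsky21}~(1) directly to $\mathcal{M}$ (the rank hypothesis $r(\mathcal{M}/\mathcal{J}_{p^n}\mathcal{M})=O(p^{(d-2)n})$ is exactly what that corollary needs) to get $e(\mathcal{M}/\mathcal{J}_{p^n}\mathcal{M})=\mu p^{dn}+O(np^{(d-1)n})$, and then invokes \Cref{CM7} once, on $\mathcal{M}$ itself, to pass from $\mathcal{J}_{p^n}$ to $\mathcal{I}_{p^n}$. You instead avoid the imported Cuoco--Monsky/Monsky torsion theorem behind \Cref{Monsky21} and rebuild the leading term from the paper's internal lemmas: structure theorem to reduce to $\mathcal{E}=\bigoplus_i\Lambda/(F_i)$, \Cref{CM4} to transport the rank hypothesis to $\mathcal{E}$ (correctly identified as the one step that genuinely needs care, since the hypothesis is stated for $\mathcal{J}_{p^n}$ and must descend along a non-split pseudo-isomorphism), \Cref{pigap} and \Cref{CM7} to shuttle between $\mathcal{M}$, $\mathcal{E}$, $\mathcal{I}_{p^n}$ and $\mathcal{J}_{p^n}$, and the assumption-free first assertion of \Cref{CM5} on each cyclic summand; additivity of $e$ over direct sums and of $\mu$ over products (valid since $p$ is prime in $\Lambda$) then gives $\mu=\mu(\operatorname{char}\mathcal{M})$, matching the identification needed for \Cref{mr.unbr}. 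What each approach buys: yours is more self-contained and makes the provenance of $\mu$ transparent, at the cost of a longer chain of $O(np^{(d-1)n})$ errors; the paper's reliance on \Cref{Monsky21} is what permits the sharper refinement $\mu_1p^{(d-1)n}+O(np^{(d-2)n})$ recorded in \Cref{rem.alpha}, which your coarser chain cannot recover --- though for \Cref{22510} as stated the $O(np^{(d-1)n})$ error is all that is claimed, so your proof is complete.
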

\begin{proof}
By \Cref{Monsky21}, There exist $\mu\in\Z_{\geq 0}$ such that
\[
e(\mathcal{M}/\mathcal{J}_{p^n}\mathcal{M})=\mu p^{dn}+O(np^{(d-1)n}).
\]
By \Cref{CM7}, we obtain the assertion.
\end{proof}

\subsection{Sufficient conditions for the rank assumptions}

In this subsection, we prove a sufficient condition for the rank assumption in \Cref{22510}, which is written in words of characteristic elements.

For $\zeta=(\zeta_1,\cdots,\zeta_d)\in (W\setminus\{1\})^d$, define \glssymbol{zpzeta}$\Zp[\zeta]=\Zp[\zeta_1,\ldots,\zeta_d]$. Let $\mathcal{M}$ be a finitely generated torsion $\Lambda$-module. 
For each $\zeta\in (W\setminus\{1\})^d$, put \glssymbol{mzeta}$\mathcal{M}_{\zeta}=\mathcal{M}\otimes_{\Lambda}\Zp[\zeta]$. Then $\mathcal{M}_{\zeta}$ is a finitely generated $\Zp[\zeta]$-module via the ring homomorphism $\Lambda\ni F\mapsto F(\zeta-1)\in \Zp[\zeta]$. Let \glssymbol{rzeta}$r_{\zeta}(\mathcal{M}_\zeta):=\rank_{\Zp[\zeta]} \mathcal{M}_\zeta$. 

\begin{lem}[{\cite[Lemma 3.5 with $\mathcal{I}_{p^n}$ replaced by $\mathcal{J}_{p^n}$]{CuocoMonsky1981}}]\label{CM201}
We have
\[
r(\mathcal{M}/\mathcal{J}_{p^n}\mathcal{M})=\sum_{\zeta\in({W(n)\setminus\{1\}})^d}r_{\zeta}(\mathcal{M}_{\zeta}).
\]
\end{lem}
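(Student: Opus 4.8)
**Proof proposal for Lemma~\ref{CM201} ($r(\mathcal{M}/\mathcal{J}_{p^n}\mathcal{M})=\sum_{\zeta\in(W(n)\setminus\{1\})^d}r_{\zeta}(\mathcal{M}_{\zeta})$).**

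The plan is to compute the left-hand side after tensoring with $\Q_p$ and to exploit the homomorphism $\Phi_n:\mathcal{M}/\mathcal{J}_{p^n}\mathcal{M}\to\bigoplus_{\zeta}\mathcal{M}_{\zeta}$ introduced just before \Cref{CM31}, whose kernel and cokernel are both annihilated by $p^{dn}$ by that lemma. Since a homomorphism of finitely generated $\Zp$-modules with finite kernel and cokernel becomes an isomorphism after $\otimes_{\Zp}\Q_p$, I would first conclude
\[
(\mathcal{M}/\mathcal{J}_{p^n}\mathcal{M})\otimes_{\Zp}\Q_p\;\cong\;\Big(\bigoplus_{\zeta}\mathcal{M}_{\zeta}\Big)\otimes_{\Zp}\Q_p,
\]
where $\zeta$ ranges over a complete set of representatives of $(W(n)\setminus\{1\})^d$ modulo $\Gal(\ol{\Q}_p/\Qp)$. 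Taking $\Q_p$-dimensions of both sides gives $r(\mathcal{M}/\mathcal{J}_{p^n}\mathcal{M})=\sum_{\zeta\ \mathrm{reps}}\dim_{\Q_p}\mathcal{M}_{\zeta}\otimes_{\Zp}\Q_p$.

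Next I would unwind the right-hand side. For a single representative $\zeta$, the module $\mathcal{M}_{\zeta}=\mathcal{M}\otimes_{\Lambda}\Zp[\zeta]$ is finitely generated over $\Zp[\zeta]$, and $\Zp[\zeta]\otimes_{\Zp}\Q_p=\Q_p(\zeta)$ is a field of degree $r(\Zp[\zeta])=\#\{\text{Galois conjugates of }\zeta\}$ over $\Q_p$; hence $\dim_{\Q_p}\mathcal{M}_{\zeta}\otimes_{\Zp}\Q_p=r(\Zp[\zeta])\cdot\rank_{\Zp[\zeta]}\mathcal{M}_{\zeta}=r(\Zp[\zeta])\,r_{\zeta}(\mathcal{M}_{\zeta})$. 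The factor $r(\Zp[\zeta])$ is exactly the number of elements in the Galois orbit of $\zeta$, and since $r_{\zeta}(\mathcal{M}_{\zeta})$ depends only on the orbit (conjugate $\zeta$'s give isomorphic $\mathcal{M}_{\zeta}$), summing the contribution $r(\Zp[\zeta])\,r_{\zeta}(\mathcal{M}_{\zeta})$ over a set of orbit representatives is the same as summing $r_{\zeta}(\mathcal{M}_{\zeta})$ over all of $(W(n)\setminus\{1\})^d$. This yields the claimed identity.

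The main obstacle is making the bookkeeping around the $\Gal(\ol{\Q}_p/\Qp)$-action fully rigorous: one must check that $\Phi_n$ as set up really does land in the direct sum over orbit representatives (rather than over all $p^n$-th root tuples), that $r(\Zp[\zeta])=[\Q_p(\zeta):\Q_p]$ equals the orbit size, and that $r_{\zeta}(\mathcal{M}_{\zeta})$ is a genuine invariant of the orbit. All three are standard — the first follows from the construction of $\varphi_n$ recalled in the excerpt together with $\Phi_n=\varphi_n\otimes_{\Lambda}\mathcal{M}$, and the latter two from elementary Galois theory of cyclotomic extensions of $\Qp$ applied componentwise — so beyond these verifications the proof is routine. (An alternative, if one prefers to avoid orbit counting, is to base-change everywhere to a fixed finite extension of $\Qp$ containing all $p^n$-th roots of unity, run the same dimension count there, and descend; but the orbit argument is shorter and matches the setup of \Cref{CM31}.)
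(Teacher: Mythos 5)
Your proposal is correct and follows essentially the same route as the paper: the paper also invokes \Cref{CM31} to conclude that $\Phi_n$ has finite kernel and cokernel, hence $r(\mathcal{M}/\mathcal{J}_{p^n}\mathcal{M})=\sum r(\mathcal{M}_{\zeta})$ over orbit representatives, and then performs the same orbit count $r(\mathcal{M}_{\zeta})=r(\Zp[\zeta])\cdot r_{\zeta}(\mathcal{M}_{\zeta})$ to convert the sum over representatives into the sum over all of $(W(n)\setminus\{1\})^d$. The bookkeeping points you flag (orbit size equals $[\Q_p(\zeta):\Q_p]$, and $r_{\zeta}$ is constant on Galois orbits) are exactly the facts the paper uses implicitly.
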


\begin{proof}
By \Cref{CM31}, we have
\[
\sum r(\mathcal{M}_{\zeta})=r(\mathcal{M}/\mathcal{J}_{p^n}\mathcal{M}),
\]
where the sum runs over a set of conjugacy class representatives.
Hence it suffices to show
\[
\sum_{\zeta\in(W\setminus\{1\})^d} r_{\zeta}(\mathcal{M}_{\zeta})=\sum r(\mathcal{M}_{\zeta}).
\]
Let $\zeta\in(W\setminus\{1\})^d$. Consider $\sum r_{\zeta'}(\mathcal{M}_{\zeta'})$, where $\zeta'$ runs over all conjugate elements of $\zeta$. Since the number of conjugate elements of $\zeta$ is $r(\Zp[\zeta])$, we have
\[
\sum r_{\zeta'}(\mathcal{M}_{\zeta'})=r(\Zp[\zeta])\cdot r_{\zeta}(\mathcal{M}_{\zeta}).
\]
By the definition of ranks, we have
\[
r(\Zp[\zeta])\cdot r_{\zeta}(\mathcal{M}_{\zeta})=r(\mathcal{M}_{\zeta}).
\]
Hence we conclude that
\[
\sum_{\zeta\in(W\setminus\{1\})^d} r_{\zeta}(\mathcal{M}_{\zeta})=\sum r(\mathcal{M}_{\zeta}).
\]
This completes the proof.
\end{proof}

Define
\begin{gather*}
\glssymbol{zm}Z(\mathcal{M}):=\{ \zeta\in (W\setminus\{1\})^d \mid r_{\zeta}(\mathcal{M}_{\zeta})\geq 1\}, \\
\glssymbol{znm}Z_n(\mathcal{M}):=Z(\mathcal{M})\cap (W(n)\setminus\{1\})^d.
\end{gather*} 

\begin{lem}[{\cite[Lemma 3.7 with $\mathcal{I}_{p^n}$ replaced by $\mathcal{J}_{p^n}$]{CuocoMonsky1981}}]\label{CM150}
There exists some $s\geq 0$ such that, for every $n\geq 0$, 
\[
|Z_n(\mathcal{M})|\leq r(\mathcal{M}/\mathcal{J}_{p^n} \mathcal{M})\leq s|Z_n(\mathcal{M})|
\]
holds. 
\end{lem}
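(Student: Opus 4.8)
The plan is to deduce the statement directly from \Cref{CM201}, which identifies
\[
r(\mathcal{M}/\mathcal{J}_{p^n}\mathcal{M}) = \sum_{\zeta \in (W(n)\setminus\{1\})^d} r_\zeta(\mathcal{M}_\zeta).
\]
Since $r_\zeta(\mathcal{M}_\zeta)=0$ exactly when $\zeta\notin Z(\mathcal{M})$, only the $\zeta$ lying in $Z_n(\mathcal{M})=Z(\mathcal{M})\cap(W(n)\setminus\{1\})^d$ contribute, so $r(\mathcal{M}/\mathcal{J}_{p^n}\mathcal{M})=\sum_{\zeta\in Z_n(\mathcal{M})} r_\zeta(\mathcal{M}_\zeta)$, and both inequalities reduce to bounds on the individual summands. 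The lower bound is then immediate: by the very definition of $Z(\mathcal{M})$ each $\zeta\in Z_n(\mathcal{M})$ contributes a term $r_\zeta(\mathcal{M}_\zeta)\geq 1$, so $|Z_n(\mathcal{M})|\leq r(\mathcal{M}/\mathcal{J}_{p^n}\mathcal{M})$.

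For the upper bound, the one substantive point is a bound on $r_\zeta(\mathcal{M}_\zeta)$ that is uniform in both $\zeta$ and $n$. I would fix a finite generating set of $\mathcal{M}$ as a $\Lambda$-module, of cardinality $s$. Then for every $\zeta\in(W\setminus\{1\})^d$ the base change $\mathcal{M}_\zeta=\mathcal{M}\otimes_\Lambda\Zp[\zeta]$ is generated over $\Zp[\zeta]$ by the images of those $s$ elements, whence $r_\zeta(\mathcal{M}_\zeta)=\rank_{\Zp[\zeta]}\mathcal{M}_\zeta\leq s$. Summing over $\zeta\in Z_n(\mathcal{M})$ yields $r(\mathcal{M}/\mathcal{J}_{p^n}\mathcal{M})\leq s\,|Z_n(\mathcal{M})|$, and this $s$ is visibly independent of $n$, which completes the proof.

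I do not expect a genuine obstacle here: the replacement of $\mathcal{I}_{p^n}$ by $\mathcal{J}_{p^n}$ has already been absorbed into \Cref{CM201}, and what remains is the elementary fact that base change does not increase the minimal number of generators. The only thing to keep in mind is that finite generation of $\mathcal{M}$ over $\Lambda$ is a standing hypothesis throughout this subsection, which is exactly what licenses choosing $s$ once and for all; the edge case $n=0$, where all the sums are empty and $\mathcal{J}_1=\Lambda$ so that $\mathcal{M}/\mathcal{J}_1\mathcal{M}=0$, is then covered automatically.
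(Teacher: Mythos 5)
Your proof is correct and follows essentially the same route as the paper: both use \Cref{CM201} to reduce the statement to the pointwise bounds $1\leq r_\zeta(\mathcal{M}_\zeta)\leq s$ for $\zeta\in Z_n(\mathcal{M})$, with $s$ a number of $\Lambda$-generators of $\mathcal{M}$ surviving base change to $\Zp[\zeta]$. Nothing is missing.
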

\begin{proof}
Suppose $\mathcal{M}$ is generated by $s$ elements over $\Lambda$. Then, by the definition of $\mathcal{M}_{\zeta}$, $\mathcal{M}_{\zeta}$ is also generated by $s$ elements over $\Zp[\zeta]$. This implies $r_{\zeta}(\mathcal{M}_{\zeta})\leq s$. By \Cref{CM201}, we have
\[
r(\mathcal{M}/\mathcal{J}_{p^n}\mathcal{M})\leq s|Z_n(\mathcal{M})|.
\]
On the other hand, by the definition of $Z_n(\mathcal{M})$, we have
\[
|Z_n(\mathcal{M})|\leq r(\mathcal{M}/\mathcal{J}_{p^n}\mathcal{M}). \qedhere
\] 
\end{proof}
Put \glssymbol{mcaW}$\mca{W}=\{\xi-1\in \ol{\Q}_p\mid \xi\in W\}$.
\begin{prop}\label{2285}
Let $\mathcal{M}$ be a finitely generated torsion $\Lambda$-module and $F$ the characteristic element of $\mathcal{M}$. Suppose that $F(T_1,\ldots,T_d)$ does not vanish on $(\mca{W}\setminus\{0\})^d$. 
Then we have
\[
r(\mathcal{M}/\mathcal{J}_{p^n}\mathcal{M})=O(p^{(d-2)n}).
\]
\end{prop}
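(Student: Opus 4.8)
The plan is to reduce the claim to the case of an elementary $\Lambda$-module via \Cref{CM4}, and then to read off the rank from \Cref{CM201} using the non-vanishing hypothesis on $F$. Recall that $\Lambda=\Zp[\![T_1,\ldots,T_d]\!]$ is a unique factorization domain, so each height $1$ prime of $\Lambda$ is principal; write $\mathfrak{p}_i=(f_i)$ $(1\leq i\leq s)$ for the height $1$ primes occurring in ${\rm Char}\,\mathcal{M}$, with multiplicities $m_i$, so that $F={\rm char}\,\mathcal{M}$ satisfies $F\doteq\prod_i f_i^{m_i}$ and $\mathcal{M}$ is pseudo-isomorphic to the elementary module $\mathcal{E}:=\bigoplus_i \Lambda/(f_i^{m_i})$, which is again a finitely generated torsion $\Lambda$-module. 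By \Cref{CM4},
\[
\bigl| r(\mathcal{M}/\mathcal{J}_{p^n}\mathcal{M}) - r(\mathcal{E}/\mathcal{J}_{p^n}\mathcal{E}) \bigr| = O(p^{(d-2)n}),
\]
so it is enough to show $r(\mathcal{E}/\mathcal{J}_{p^n}\mathcal{E}) = 0$ for every $n$.

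For the second step, fix $\zeta=(\zeta_1,\ldots,\zeta_d)\in (W(n)\setminus\{1\})^d$, so that $\zeta-1\in(\mca{W}\setminus\{0\})^d$ and hence $F(\zeta-1)\neq 0$ by hypothesis. Since $F$ differs from $\prod_i f_i^{m_i}$ by a unit of $\Lambda$, whose constant term lies in $\Zp^\times$, its value at $\zeta-1$ is a unit of the domain $\Zp[\zeta]$ (because $v(\zeta_j-1)>0$); therefore $\prod_i f_i(\zeta-1)^{m_i}\neq 0$, and so $f_i(\zeta-1)\neq 0$ for every $i$. Consequently
\[
\mathcal{E}_\zeta = \mathcal{E}\otimes_\Lambda \Zp[\zeta] = \bigoplus_i \Zp[\zeta]/\bigl(f_i(\zeta-1)^{m_i}\bigr)
\]
is a torsion $\Zp[\zeta]$-module, i.e.\ $r_\zeta(\mathcal{E}_\zeta)=0$. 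As $\zeta$ was arbitrary, \Cref{CM201} applied to the finitely generated torsion $\Lambda$-module $\mathcal{E}$ gives
\[
r(\mathcal{E}/\mathcal{J}_{p^n}\mathcal{E}) = \sum_{\zeta\in(W(n)\setminus\{1\})^d} r_\zeta(\mathcal{E}_\zeta) = 0,
\]
as required. (Equivalently, $Z(\mathcal{E})=\emptyset$, hence $Z_n(\mathcal{E})=\emptyset$, and \Cref{CM150} yields the same conclusion.)

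The computation is routine; the one point requiring care is that the hypothesis constrains only ${\rm char}\,\mathcal{M}$, hence $\mathcal{M}$ merely up to pseudo-isomorphism, and the pseudo-null part of $\mathcal{M}$ could in principle enlarge $Z(\mathcal{M})$ (for instance $\Lambda/\mathfrak{q}_\zeta$ is pseudo-null when $d\geq 2$ yet has $r_\zeta\geq 1$). This is precisely why one must pass from $\mathcal{M}$ to the elementary module $\mathcal{E}$ — absorbing the discrepancy into the $O(p^{(d-2)n})$ error supplied by \Cref{CM4} — rather than working with $\mathcal{M}$ directly; I expect this to be the only real obstacle. The secondary point, the multiplicativity $F(\zeta-1)\doteq\prod_i f_i(\zeta-1)^{m_i}$ together with the non-vanishing of the relevant unit at $\zeta-1$, was handled above.
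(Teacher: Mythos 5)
Your proposal is correct and follows essentially the same route as the paper: both pass to the elementary module $\mathcal{E}$, use the non-vanishing of $F(\zeta-1)$ to conclude $r_\zeta(\mathcal{E}_\zeta)=0$ for all $\zeta\in(W(n)\setminus\{1\})^d$ (the paper via the annihilation $F(\zeta-1)\mathcal{E}_\zeta=0$, you via the explicit decomposition $\mathcal{E}_\zeta=\bigoplus_i\Zp[\zeta]/(f_i(\zeta-1)^{m_i})$ — a cosmetic difference), and then invoke \Cref{CM150}/\Cref{CM201} together with \Cref{CM4} to absorb the pseudo-null discrepancy. Your remark that the pseudo-null part of $\mathcal{M}$ could enlarge $Z(\mathcal{M})$, forcing the reduction to $\mathcal{E}$, correctly identifies the one genuine subtlety.
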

\begin{proof}
Since $\mathcal{M}$ is torsion over $\Lambda$, so is the fundamental $\Lambda$-module $\mathcal{E}$ corresponding to $\mathcal{M}$. By the definition of a characteristic element, we have $F(T_1,\ldots,T_d)\mathcal{E}=0$. By the definition of the action of $\Zp[\zeta]$ on $\mathcal{E}_{\zeta}$, we have $F(\zeta-1)\mathcal{E}=0$. By assumption, we have $F(\zeta-1)\neq 0$ for any $\zeta\in (W\setminus\{1\})^d$. Therefore, $\mathcal{E}_{\zeta}$ is a torsion $\Zp[\zeta]$-module. Hence $r_{\zeta}(\mathcal{E}_{\zeta})=0$. Therefore, we have $Z(\mathcal{E})=\emptyset$, and so is $Z_n(\mathcal{E})$. By \Cref{CM150}, we have
\[
r(\mathcal{E}/\mathcal{J}_{p^n}\mathcal{E})=0.
\]
Thus, by \Cref{CM4}, we obtain the assertion.
\end{proof}

\begin{remark}\label{Alexanderremark}
If there exists a polynomial $f(t_1,\ldots,t_d)\in\Z[t_1,\ldots,t_d]$ such that $f(1+T_1,\ldots,1+T_d)=F(T_1,\ldots,T_d)$, then the assumption can be replaced by ``$f(t_1, \ldots, t_d)$ does not vanish on $(W\setminus\{1\})^d$.''
\end{remark}

\begin{prop}\label{lemmaforbetti}
Let $\mathcal{M}$ be a finitely generated torsion $\Lambda$-module and $F$ the characteristic element of $\mathcal{M}$. Suppose that $F(T_1,\ldots,T_d)$ does not vanish on $\mca{W}^d \setminus\{(0,0,\ldots,0)\}$. Then we have
\[
r(\mathcal{M}/\mathcal{I}_{p^n}\mathcal{M})=O(p^{(d-2)n}).
\]
\end{prop}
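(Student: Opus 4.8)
The plan is to reduce everything to the $\mathcal{J}_{p^n}$-situation already handled in \Cref{2285}, together with lower-variable instances of \Cref{CM2}, by using the factorization $\omega_{p^n}(T_i)=T_i\,\nu_{p^n}(T_i)$ exactly as in the proof of \Cref{CM2}. Since $\mathcal{M}$ is torsion, $F\neq 0$, and I would assume $d\geq 2$ (the range in which the statement is applied). First I would record the decomposition already appearing in the proof of \Cref{CM2}: tensoring with $\Q_p$ and splitting each $\Lambda/\omega_{p^n}(T_i)\Lambda$ via the coprime factors $T_i$ and $\nu_{p^n}(T_i)$ gives
\[
r(\mathcal{M}/\mathcal{I}_{p^n}\mathcal{M})=\sum_{(f_i)_i\in\prod_i\{T_i,\,\nu_{p^n}(T_i)\}} r\big(\mathcal{M}/(f_1(T_1),\ldots,f_d(T_d))\mathcal{M}\big),
\]
a finite sum whose index set is independent of $n$. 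Hence it suffices to bound each summand by $O(p^{(d-2)n})$; for the summand with every $f_i=\nu_{p^n}(T_i)$ this is $r(\mathcal{M}/\mathcal{J}_{p^n}\mathcal{M})=O(p^{(d-2)n})$ by \Cref{2285}, whose hypothesis follows from ours since $(\mca{W}\setminus\{0\})^d\subseteq\mca{W}^d\setminus\{(0,\ldots,0)\}$.

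For a general summand I would put $S:=\{i:f_i=T_i\}$ and $\mathcal{M}_S:=\mathcal{M}/(T_i:i\in S)\mathcal{M}$, a finitely generated module over $\Lambda_S:=\Lambda/(T_i:i\in S)\cong\Zp[\![T_j:j\notin S]\!]$, a power series ring in $d-|S|$ variables; the summand equals $r(\mathcal{M}_S/\mathcal{J}^{(S)}_{p^n}\mathcal{M}_S)$ for the evident ``$\mathcal{J}_{p^n}$''-ideal $\mathcal{J}^{(S)}_{p^n}$ of $\Lambda_S$. If $|S|\geq 2$, the unconditional part of \Cref{CM2} (which holds in any number of variables) applied to $\Lambda_S$ gives $O(p^{(d-|S|)n})=O(p^{(d-2)n})$, and if $|S|=d$ the summand is merely a constant. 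So the only case needing a new input is $|S|=1$, say $S=\{i\}$: there I need $\mathcal{M}/T_i\mathcal{M}$ to be \emph{torsion} over $\Lambda/(T_i)$, for then the torsion part of \Cref{CM2} in $d-1$ variables yields $r(\mathcal{M}_S/\mathcal{J}^{(S)}_{p^n}\mathcal{M}_S)=O(p^{(d-2)n})$.

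Establishing that torsion statement is where the full strength of the hypothesis enters — non-vanishing on all of $\mca{W}^d\setminus\{0\}$, not merely on $(\mca{W}\setminus\{0\})^d$ — and I expect it to be the only genuine obstacle. I would first note $T_i\nmid F$: otherwise $F=T_iG$, and taking $\zeta\in W^d$ with $\zeta_i=1$ and $\zeta_j\neq 1$ for some $j\neq i$ (possible since $d\geq 2$ and $W$ is infinite) gives a point $\zeta-1\in\mca{W}^d\setminus\{0\}$ at which $F$ vanishes, a contradiction. Since $\Lambda$ is a UFD, $F$ generates ${\rm d.h.}\,{\rm Fitt}_\Lambda(\mathcal{M})={\rm Char}\,\mathcal{M}$ by \Cref{11242}, i.e.\ $F$ is (up to a unit) the gcd of a generating set of ${\rm Fitt}_\Lambda(\mathcal{M})$; since $T_i\nmid F$, not all of those generators lie in $(T_i)$, so the image of ${\rm Fitt}_\Lambda(\mathcal{M})$ in $\Lambda/(T_i)$ — which is ${\rm Fitt}_{\Lambda/(T_i)}(\mathcal{M}/T_i\mathcal{M})$ by base change of Fitting ideals — is nonzero. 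As $\Lambda/(T_i)$ is a Noetherian integrally closed domain and the divisorial hull of a nonzero ideal is nonzero, \Cref{11241} then shows $\mathcal{M}/T_i\mathcal{M}$ is torsion over $\Lambda/(T_i)$. Summing the resulting $O(p^{(d-2)n})$ bounds over the finitely many summands completes the argument; the remaining parts are a routine adaptation of the $\mathcal{I}_{p^n}$-versus-$\mathcal{J}_{p^n}$ bookkeeping already carried out for \Cref{CM2} and \Cref{CM7}.
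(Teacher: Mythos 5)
Your proof is correct, but it takes a genuinely different route from the paper's. The paper disposes of this proposition in one line: it runs the argument of \Cref{2285} with $\mathcal{J}_{p^n}$ replaced by $\mathcal{I}_{p^n}$, invoking Cuoco--Monsky's \emph{original} versions of Lemmas 3.5 and 3.7 (where the sum $\sum_\zeta r_\zeta(\mathcal{E}_\zeta)$ runs over all of $W(n)^d$ rather than $(W(n)\setminus\{1\})^d$), observes that the hypothesis forces $Z_n(\mathcal{E})\subseteq\{(1,\ldots,1)\}$ so that $r(\mathcal{E}/\mathcal{I}_{p^n}\mathcal{E})\leq s$, and transfers back to $\mathcal{M}$ by the pseudo-isomorphism rank lemma. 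You instead never touch the $\mathcal{I}_{p^n}$-version of the root-of-unity counting: you split $\mathcal{I}_{p^n}$ into the $2^d$ summands indexed by $f_i\in\{T_i,\nu_{p^n}(T_i)\}$ (as in the proof of \Cref{CM2}), handle the all-$\nu$ piece by \Cref{2285}, the pieces with $|S|\geq 2$ by the unconditional part of \Cref{CM2} in $d-|S|$ variables, and reduce the delicate $|S|=1$ pieces to showing $\mathcal{M}/T_i\mathcal{M}$ is $\Lambda_{d-1}$-torsion, which you correctly extract from $T_i\nmid F$ via \Cref{11242}, base change of Fitting ideals, and \Cref{11241} --- and this is precisely where the full hypothesis on $\mca{W}^d\setminus\{(0,\ldots,0)\}$ (as opposed to $(\mca{W}\setminus\{0\})^d$) is consumed, matching its role in the paper's proof where it kills all of $Z_n(\mathcal{E})$ except possibly the origin. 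What your approach buys is self-containedness: it relies only on statements actually reproved in this paper ($\mathcal{J}_{p^n}$-versions), at the cost of more bookkeeping; the paper's proof is shorter but defers to external $\mathcal{I}_{p^n}$-lemmas of Cuoco--Monsky that are not restated here. Your explicit restriction to $d\geq 2$ is appropriate (and implicitly needed in the paper's proof as well, since a single point of $Z_n(\mathcal{E})$ contributes $O(1)$, not $O(p^{(d-2)n})$, when $d=1$).
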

\begin{proof} The proof is almost the same as that of \Cref{2285} except that $Z_n(\mathcal{E})$ can be $1$ if we use the corresponding original results for $\Lambda$-modules of Cuoco--Monsky \cite{CuocoMonsky1981}.
\end{proof}

\section{The fundamental two exact sequences} \label{sec.twoexact} 
In this section, we prove several group theoretic results that will play crucial roles in the proof of our main results. We establish \emph{the fundamental two exact sequences} and \emph{the estimation formulas} for quotients of $\Lambda_\Z$-modules and $\Lambda$-modules as well. We will utilize them several times in the subsequent sections. 

\subsection{Theorems for groups} 
Suppose that a surjective group homomorphism $G\stackrel{\mathcal{P}}{\rightarrow}\Z^d$ with abelian kernel $N$ is given. Let $\{t_1,\ldots,t_d\}$ be a $\Z$-basis of $\Z^d$ and let $x_1,\ldots, x_d$ be elements in the inverse images $\mathcal{P}^{-1}(t_1),\ldots,\mathcal{P}^{-1}(t_d)$ respectively. Define an action of $t_i$ on $y\in N$ by
\[
t_iy=y^{x_i}=x_iyx_i^{-1}.
\]
This does not depend on the choice of $x_i$ since $N$ is abelian and is a normal subgroup of $G$. In this way, $N$ becomes a module over $\Lambda_{\Z}:=\Z[t_1^{\pm 1},\ldots, t_d^{\pm 1}]$.

\begin{theorem}[The fundamental two exact sequences] \label{twoexact}
Let $\mathcal{Z}$ be a subgroups of $\Z^d$ that is isomorphic to $\Z^{d'}$ with $d'\leq d$ and let $H:=\mathcal{P}^{-1}(\mathcal{Z})$, so that there is a commutative diagram 
\[
\xymatrix{
1\ar[r]&N\ar[r]\ar@{=}[d]&G\ar[r]^{\mathcal{P}}\ar@{}[d]|{\bigcup}&\Z^d\ar[r]\ar@{}[d]|{\bigcup}&1\\
1\ar[r]&N\ar[r]&H\ar[r]&\mathcal{Z}\ar[r]&1
}
\]
of exact sequences. 
Let $\{t'_1,\ldots,t'_{d'}\}\subset \Lambda_\Z$ be a $\Z$-basis of $\mathcal{Z}$ and let $x'_1,\ldots,x'_{d'}$ be elements in the inverse images $\mathcal{P}^{-1}(t'_1),\ldots, \mathcal{P}^{-1}(t'_{d'})$. 
Let $I$ denote the ideal of $\Lambda_{\Z}$ generated by $\{t'_1-1,\ldots,t'_{d'}-1\}$. 
Let $C$ denote the $\Lambda_{\Z}$-submodule of $N$ generated by $\{[x'_i,x'_j]\mid 1\leq i< j\leq d'\}$. 
Then we have two exact sequences of $\Lambda_{\Z}$-modules 
\glssymbol{groupabelianization}\begin{gather*}
0\to N/(C+IN)\to H^{\rm ab}\to \Z^{d'}\to 0,\\
0\to (C+IN)/IN\to N/IN\to N/(C+IN)\to 0.
\end{gather*}
\end{theorem}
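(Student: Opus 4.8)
The plan is to work with the fixed $x_1',\dots,x_{d'}'\in H$ and the abelian kernel $N$, and to analyze $H^{\rm ab}=H/[H,H]$ directly. First I would note that since $N$ is abelian, $[H,H]$ is generated as a group by the commutators $[x_i',x_j']$ (for $i<j$) together with the elements $[x_i',y]=(t_i'-1)y$ for $y\in N$; more precisely, using the identity $[ab,c]=[a,c]^{b}[b,c]$ and $[a,bc]=[a,c][a,b]^{c}$ repeatedly, every element of $H$ is of the form $x_1'^{a_1}\cdots x_{d'}'^{a_{d'}}\cdot y$ with $y\in N$, and one checks that modulo $N$-conjugation (which is trivial since $N$ is abelian) the commutator subgroup $[H,H]$ equals the subgroup of $N$ generated by the $\Lambda_\Z$-orbit of the $[x_i',x_j']$ and by $\{(t_i'-1)y : y\in N,\ 1\le i\le d'\}$. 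That subgroup is exactly $C+IN$. This identification is the technical heart of the argument and the step I expect to be the main obstacle: one must be careful that the $\Lambda_\Z$-action on $N$ is by conjugation, that $[x_i',x_j']$ indeed lies in $N$ (it does, since $\mathcal P([x_i',x_j'])=[t_i',t_j']=1$ in $\Z^d$), and that no further relations are needed — this is where Hiroshi Suzuki's input, acknowledged in the introduction, presumably enters.

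Granting $[H,H]=C+IN$ as a $\Lambda_\Z$-submodule of $N$, the first exact sequence
\[
0\to N/(C+IN)\to H^{\rm ab}\to \Z^{d'}\to 0
\]
is then immediate: the inclusion $N\hookrightarrow H$ induces $N/(N\cap[H,H])=N/(C+IN)\to H^{\rm ab}$, which is injective, and its cokernel is $H/(N\cdot[H,H])=(H/N)^{\rm ab}=\mathcal Z^{\rm ab}\cong\Z^{d'}$ because $\mathcal Z\cong\Z^{d'}$ is already abelian. One must also check this is a sequence of $\Lambda_\Z$-modules: $\mathcal Z$ acts trivially on $\mathcal Z$ (hence on $\Z^{d'}$) and acts on $H^{\rm ab}$ and on $N/(C+IN)$ via conjugation, compatibly, since $H$ acts on itself by conjugation inducing the trivial action on $H^{\rm ab}$ except through the $\mathcal Z$-part — here $IN\subset[H,H]$ guarantees the action factors correctly.

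The second exact sequence
\[
0\to (C+IN)/IN\to N/IN\to N/(C+IN)\to 0
\]
is purely formal: it is the tautological short exact sequence associated to the chain of $\Lambda_\Z$-submodules $IN\subseteq C+IN\subseteq N$, using the third isomorphism theorem $(N/IN)\big/\big((C+IN)/IN\big)\cong N/(C+IN)$. All maps are manifestly $\Lambda_\Z$-linear since $IN$, $C$, and $N$ are all $\Lambda_\Z$-submodules of $N$ (note $C$ is a $\Lambda_\Z$-submodule by definition and $IN$ is an ideal times a module). So once the identification $[H,H]=C+IN$ is established, both displayed sequences follow with only bookkeeping; I would spend the bulk of the write-up on that identification and treat the two exact sequences as short consequences.
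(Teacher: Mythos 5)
Your proposal is correct and follows essentially the same route as the paper: identify $[H,H]$ with $C+IN$ using $(t'_i-1)y=x'_iy{x'}_i^{-1}y^{-1}$ and the commutators $[x'_i,x'_j]$ (noting $[H,H]\subseteq N$ is abelian and hence a $\Lambda_\Z$-submodule), deduce the first sequence from $N/[H,H]\to H^{\rm ab}\to H/N\to 0$, and observe the second is the tautological sequence for $IN\subseteq C+IN\subseteq N$. No substantive differences.
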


Since the first exact sequence splits, we have
\[
{\rm tor}_{\Z}(H^{\rm ab})\cong{\rm tor}_{\Z}(N/(C+IN))
\]
as $\Z$-modules.

\begin{proof} 
By the commutative diagram, we have 
\[
H=\langle x'_1,\ldots,x'_{d'},N\rangle.
\]
Since
\[
(t'_i-1)y=y^{x'_i}y^{-1}=x'_iy{x'}_i^{-1}y^{-1}\mbox{ for each }y\in N,
\]
we obtain
\begin{align*}
H^{\rm ab}&=\langle x'_1,\ldots,x'_{d'}, N\rangle^{\rm ab}\\
&=\langle x'_1,\ldots,x'_{d'}, N\rangle/\langle[x'_i,x'_j],IN(1\leq i<j\leq d')\rangle.
\end{align*}
Let $[H,H]$ denote the commutator subgroup of $H$. Since $H$ is a normal subgroup of $G$, so is $[H,H]=\langle[x'_i,x'_j],IN(1\leq i<j\leq d')\rangle$. Also since $[H,H]\subset\ker\mathcal{P}$ and $N$ is abelian, $[H,H]$ is also an abelian group. This implies that $[H,H]$ is a $\Lambda_{\Z}$-module, and so $[H,H]=C+IN$. Therefore,
\[
0\to N/(C+IN)\to H^{\rm ab}\to \Z^{d'}\to 0
\]
is an exact sequence of $\Lambda_{\Z}$-modules. Since $C+IN$ is a $\Lambda_{\Z}$-module,
\[
0\to (C+IN)/IN\to N/IN\to N/(C+IN)\to 0.
\]
is also an exact sequence of $\Lambda_{\Z}$-modules.
\end{proof}

Consider the case where $\mathcal{Z}=n\Z^d$. Then $\{t_1^n,\ldots, t_d^n\}$ is a $\Z$-basis of $\mathcal{Z}$. For each $n\geq 1$, let 
\glssymbol{in}$C_n, H_n, I_n$ denote the corresponding $C, H, I$ respectively.

\begin{prop}\label{actionprop} 
The action of $t_i$ on $(C_n+I_n N)/I_nN$ is trivial.
\end{prop}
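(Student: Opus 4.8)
The claim is that $t_i$ acts trivially on the quotient $(C_n + I_n N)/I_n N$, where $C_n$ is the $\Lambda_{\Z}$-submodule of $N$ generated by the commutators $[x_i^n, x_j^n]$ for $1 \leq i < j \leq d$. My plan is to work generator by generator: since $(C_n + I_n N)/I_n N$ is generated as a $\Lambda_{\Z}$-module by the images of the $[x_i^n, x_j^n]$, it suffices to show that $(t_k - 1)[x_i^n, x_j^n] \in I_n N$ for every $k$ and every pair $i < j$. Recall that in the module notation, $(t_k - 1)y = x_k y x_k^{-1} y^{-1}$ for $y \in N$, i.e. the action of $t_k - 1$ on $N$ is conjugation-by-$x_k$ composed with inversion. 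So the task reduces to showing $x_k [x_i^n, x_j^n] x_k^{-1} \equiv [x_i^n, x_j^n] \pmod{I_n N}$.

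\textbf{The key computation.} First I would establish an auxiliary fact: for any $y \in N$ and any index $k$, one has $(t_k^n - 1) y \in I_n N$ — indeed $t_k^n - 1 = (t_k - 1)(t_k^{n-1} + \cdots + 1)$ lies in $I_n$ by definition of $I_n = (t_1^n - 1, \ldots, t_d^n - 1)$, and $N$ is a $\Lambda_{\Z}$-module, so $(t_k^n-1)N \subseteq I_n N$. In multiplicative terms this says $x_k^n y x_k^{-n} \equiv y \pmod{I_n N}$ for all $y \in N$. Now I apply this with $y = [x_i^n, x_j^n]$, which indeed lies in $N$ (it lies in $[H_n, H_n] \subseteq \ker \mathcal{P} = N$). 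The point is that conjugation of $[x_i^n, x_j^n]$ by $x_k$ should be comparable to conjugation by $x_k^n$ modulo $I_n N$; more precisely, I want to massage $x_k [x_i^n, x_j^n] x_k^{-1}$ into the form $x_k^n (\text{something}) x_k^{-n}$ up to $I_n N$. The cleanest route is to use that $\mathcal{Z} = n\Z^d$ and that the $x_k^n$ together with $N$ generate $H_n$: the commutator $[x_i^n, x_j^n]$ is, up to the $\Lambda_{\Z}$-action, ``built from'' the generators $x_i^n, x_j^n$ of $H_n/N$, and conjugating by $x_k$ only shifts by an element congruent mod $I_n$ to conjugating by $x_k^n$. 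Concretely: write $x_k = x_k^n \cdot (x_k^{-(n-1)})$... no — better, observe directly that for $y \in C_n$, writing $y$ as a $\Lambda_{\Z}$-combination of the $[x_i^n,x_j^n]$, it is enough to check $(t_k-1)[x_i^n,x_j^n] \in I_nN$, and here I can expand $[x_i^n,x_j^n] = x_i^n x_j^n x_i^{-n} x_j^{-n}$ and repeatedly apply the identity $x_k a x_k^{-1} = \big((t_k-1)\!\cdot\! \bar a\big)\cdot a$ whenever $a \in N$, peeling the conjugation through the word and collecting error terms that each lie in $I_n N$, using at the end that $(t_k^n - 1)N \subseteq I_n N$ to absorb the leftover $x_k^n(\cdots)x_k^{-n}$ discrepancy.

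\textbf{Expected main obstacle.} The delicate point is the bookkeeping in pushing the single conjugation $x_k(-)x_k^{-1}$ through the four-letter word $x_i^n x_j^n x_i^{-n} x_j^{-n}$ when $k$ coincides with $i$ or $j$ versus when it does not, and making sure every commutator-of-commutators error term genuinely lands in $I_n N$ rather than merely in $[H_n, H_n] = C_n + I_n N$. The case $k \notin \{i, j\}$ is easy since $x_k$ commutes with $x_i, x_j$ modulo $N$ and all resulting correction terms are values of $t_k - 1$ on elements of $N$, hence in $I_n N$. The case $k \in \{i,j\}$ requires the identity $(t_k^n - 1)N \subseteq I_n N$ to handle the ``$x_k$ vs $x_k^n$'' gap, and one must verify that the partial conjugates appearing mid-computation already lie in $N$ (they do, since they are commutators landing in $\ker\mathcal P$), so that the $\Lambda_{\Z}$-action is defined on them. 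Once the generators are handled, $\Lambda_{\Z}$-linearity of the action extends triviality to all of $(C_n + I_n N)/I_n N$, completing the proof.
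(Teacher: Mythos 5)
Your overall strategy---reduce to the generators $c_{ij}^{(n)}=[x_i^n,x_j^n]$ of $C_n$ and show $(t_k-1)c_{ij}^{(n)}\in I_nN$ by expanding the conjugation---is workable, but the justification you give for the case you declare ``easy,'' namely $k\notin\{i,j\}$, is wrong, and that is precisely the case that needs a real idea. You claim the correction terms are ``values of $t_k-1$ on elements of $N$, hence in $I_nN$.'' That containment fails: $I_n$ is generated by $t_1^n-1,\ldots,t_d^n-1$, not by the $t_\ell-1$, so $(t_k-1)N\not\subseteq I_nN$ in general; indeed $(t_k-1)c_{ij}^{(n)}$ is itself a value of $t_k-1$ on an element of $N$, and its membership in $I_nN$ is exactly what must be proved. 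If you actually carry out the expansion, writing $x_kx_i^nx_k^{-1}=x_i^nw_1$ and $x_kx_j^nx_k^{-1}=x_j^nw_2$ with $w_1,w_2\in N$ and using the standard commutator identities together with the commutativity of $N$, the error terms come out as $-t_i^n(t_j^n-1)w_1$ and $t_j^n(t_i^n-1)w_2$: they lie in $I_nN$ because they are multiples of $t_i^n-1$ and $t_j^n-1$ (the indices occurring in the commutator, not $k$). So the plan can be repaired, but only after replacing your stated reason by this one; you have also inverted which case is delicate, since for $k\in\{i,j\}$ one of the $w$'s is trivial and the same computation goes through with less effort.

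For comparison, the paper takes a different route. It first proves the identity $c_{ij}^{(n)}=\mathcal{T}_n(t_i)\mathcal{T}_n(t_j)c_{ij}$ with $\mathcal{T}_n(t)=\sum_{k=0}^{n-1}t^k$ and $c_{ij}=[x_i,x_j]$; the case $k\in\{i,j\}$ then follows at once from $(t_i-1)\mathcal{T}_n(t_i)=t_i^n-1$, and the case $k\notin\{i,j\}$ is handled by the Hall--Witt identity, which yields $(t_i-1)c_{jk}+(t_j-1)c_{ki}+(t_k-1)c_{ij}=0$ and hence
\[
(t_k-1)c_{ij}^{(n)}=\mathcal{T}_n(t_i)\mathcal{T}_n(t_j)\bigl(-(t_i-1)c_{jk}-(t_j-1)c_{ki}\bigr)\in I_nN .
\]
Neither the identity $c_{ij}^{(n)}=\mathcal{T}_n(t_i)\mathcal{T}_n(t_j)c_{ij}$ nor the Hall--Witt identity appears in your proposal; the former is reused in \Cref{asymptotic} (it is what makes the transfer maps $\psi_{n,m}$ well defined), so you would eventually need it in any case.
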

\begin{proof}
Put $c_{ij}:=[x_i,x_j]\in N$, $c_{ij}^{(n)}:=[x_i^n, x_j^n]$, and $\mathcal{T}_n(t):=\sum_{k=0}^{n-1}t^k$. Then we have 
\begin{eqnarray}
c_{ij}^{(n)}&=&x_i(x_i^{n-1}x_j^nx_i^{-(n-1)}x_j^{-n})x_i^{-1}(x_ix_j^nx_i^{-1}x_j^{-n})\nonumber\\
&=&[x_i^{n-1}, x_j^n]^{x_i}[x_i,x_j^n]\nonumber\\
&=&([x_i^{n-2}, x_j^n]^{x_i}[x_i,x_j^n])^{x_i}[x_i,x_j^n]\nonumber\\
&=&\ldots\nonumber\\
&=&(t_i^{n-1}+\cdots+t_i+1)[x_i,x_j^n]\nonumber\\
&=&(t_i^{n-1}+\cdots+t_i+1)[x_i,x_j][x_i,x_j^{n-1}]^{x_j}\nonumber\\
&=&\ldots\nonumber\\
&=&(t_i^{n-1}+\cdots+t_i+1)(t_j^{n-1}+\cdots+t_j+1)[x_i,x_j]\nonumber\\
&=&\mathcal{T}_n(t_i)\mathcal{T}_n(t_j)c_{ij}. \label{ceqn}
\end{eqnarray}
Also, we see that $c_{ii}^{(n)}=0$ and $c_{ji}^{(n)}=-c_{ij}^{(n)}$.

Now, we shall show that the action of $t_i$ on $(C_n+I_nN)/I_nN$ is trivial. For $i,j$, we have
\begin{gather*}
(t_i-1)c_{ij}^{(n)}=(t_i^n-1)\mathcal{T}_n(t_j)c_{ij}\in I_n N,\\
(t_j-1)c_{ij}^{(n)}=(t_j^n-1)\mathcal{T}_n(t_i)c_{ij}\in I_nN.
\end{gather*}
On the other hand, for $k\neq i,j$, by the Hall--Witt identity 
\[ [x,[y^{-1},z]]^y\cdot[y,[z^{-1},x]]^z\cdot[z,[x^{-1},y]]^x=1,\] we have
\[
t_j(t_i^{-1}-1)[x_j^{-1},x_k]+t_k(t_j-1)[x_k^{-1},x_i^{-1}]+t_i^{-1}(t_k-1)c_{ij}=0.
\]
By a further calculation, we obtain
\begin{equation} \label{Hall--Witt} 
(t_i-1)c_{jk}+(t_j-1)c_{ki}+(t_k-1)c_{ij}=0.
\end{equation}
By \eqref{ceqn}, we conclude that
\[
(t_k-1)c_{ij}^{(n)}=\mathcal{T}_n(t_i)\mathcal{T}_n(t_j)(t_k-1) c_{ij}= \mathcal{T}_n(t_i)\mathcal{T}_n(t_j)(-(t_i-1)c_{jk}-(t_j-1)c_{ki})\in I_n N.
\]
Therefore, $t_i$ acts trivially on $(C_n+I_nN)/I_nN$. By the definition of $C_n$, we see that $(C_n+I_nN)/I_nN$ is a $\Z$-module generated by at most $\frac{d(d-1)}{2}$ elements.
\end{proof}

\begin{theorem}[The estimation formulas]\label{asymptotic} 
Under the setting as above, if $N$ is finitely generated over $\Lambda_{\Z}$, then we have that 
\begin{itemize}
\item[{\rm i)}] \ $|{\rm tor}_{\Z}(N/I_m N)|\mbox{ divides }A_1m^{\frac{d(d-1)(d-2)}{2}}|{\rm tor}_{\Z}(H_m^{\rm ab})|$,
\item[{\rm ii)}] \ $|{\rm tor}_{\Z}(H_m^{\rm ab})|\mbox{ divides } A_2 m^{d(d-1)}|\tor_{\Z}(N/I_m N)|$,
\end{itemize}
where $A_1$, $A_2$ are constant terms given by 
$A_1:=|{\rm tor}_{\Z}((C_1+I_1N)/I_1N)|$ and $A_2:= |{\rm tor}_{\Z}(H_1^{\rm ab})|$.
In particular, we have
\[
e(H_{p^n}^{\rm ab})=e(N/I_{p^n}N)+O(n)\ \ (n\to\infty).
\]
\end{theorem}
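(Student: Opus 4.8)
The plan is to derive the two divisibility statements i) and ii) from the two exact sequences of \Cref{twoexact} applied with $\mathcal{Z} = m\Z^d$, combined with the control on the "defect" module $(C_m + I_m N)/I_m N$ furnished by \Cref{actionprop}, and then to read off the asymptotic formula by taking $p$-exponents along the tower $m = p^n$. The key structural input from \Cref{twoexact} is that the first exact sequence splits, so $\mathrm{tor}_\Z(H_m^{\rm ab}) \cong \mathrm{tor}_\Z(N/(C_m + I_m N))$; it therefore suffices to compare $\mathrm{tor}_\Z(N/I_m N)$ with $\mathrm{tor}_\Z(N/(C_m + I_m N))$, and the second exact sequence
\[
0 \to (C_m + I_m N)/I_m N \to N/I_m N \to N/(C_m + I_m N) \to 0
\]
does exactly this, with the discrepancy governed entirely by the finite module $(C_m + I_m N)/I_m N$.

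First I would bound the size of the $\Z$-torsion of the defect module $D_m := (C_m + I_m N)/I_m N$. By the end of the proof of \Cref{actionprop}, $D_m$ is a $\Z$-module generated by at most $\tfrac{d(d-1)}{2}$ elements, namely the classes of the $c_{ij}^{(m)} = \mathcal{T}_m(t_i)\mathcal{T}_m(t_j)c_{ij}$. The relations \eqref{ceqn} and the Hall--Witt relation \eqref{Hall--Witt} show that, modulo $I_m N$, each $c_{ij}^{(m)}$ is annihilated by a factor coming from $m$: concretely $(t_k - 1)c_{ij}^{(m)} \in I_m N$ for all $k$, so $D_m$ is a quotient of $\bigl((C_1 + I_1 N)/I_1 N\bigr)$ base-changed along $\Lambda_\Z/I_m$, and the "new" part is killed by expressions of $p$-adic size $O(m^{\,?})$. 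The bookkeeping here produces the explicit powers $m^{d(d-1)(d-2)/2}$ in i) and $m^{d(d-1)}$ in ii), together with the constants $A_1 = |\mathrm{tor}_\Z((C_1+I_1N)/I_1N)|$ and $A_2 = |\mathrm{tor}_\Z(H_1^{\rm ab})|$; the $d=2$ case (where $\tfrac{d(d-1)(d-2)}{2}=0$) is consistent with $C_m$ being generated by the single element $c_{12}^{(m)}$. I would make this precise by noting $\mathcal{T}_m(t_i) \equiv m \pmod{t_i - 1}$, so on the submodule where $t_i$ acts trivially (which is all of $D_m$, by \Cref{actionprop}) multiplication by $\mathcal{T}_m(t_i)$ is multiplication by $m$; iterating over the generators accounts for the stated powers of $m$.

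Then I would assemble the two divisibilities: from the second exact sequence, $|\mathrm{tor}_\Z(N/I_m N)|$ divides $|D_m| \cdot |\mathrm{tor}_\Z(N/(C_m + I_m N))|$ (using that a short exact sequence of finitely generated $\Z$-modules gives such a divisibility on torsion orders once the quotient's torsion is pulled back), which together with the splitting gives i); symmetrically $|\mathrm{tor}_\Z(N/(C_m+I_m N))|$ divides $|D_m|\cdot|\mathrm{tor}_\Z(N/I_m N)|$ gives ii), after replacing $|D_m|$ by its bound in terms of $A_1, A_2$ and the power of $m$. Finally, specializing to $m = p^n$ and taking $e(-) = v_p(|\mathrm{tor}_\Z(-)|)$: the constants contribute $O(1)$, the factor $m^{c} = p^{cn}$ contributes $cn = O(n)$, and $e(H_1^{\rm ab})$ is a constant, so
\[
\bigl| e(H_{p^n}^{\rm ab}) - e(N/I_{p^n}N) \bigr| = O(n),
\]
which is the claimed formula. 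The main obstacle I expect is the second paragraph: carefully tracking how the Hall--Witt identity \eqref{Hall--Witt} and the iterated commutator expansion \eqref{ceqn} combine to show that $D_m$ is not merely finitely generated but has $\Z$-torsion order dividing $A_i$ times an \emph{explicit} power of $m$ — in particular getting the exponents $\tfrac{d(d-1)(d-2)}{2}$ and $d(d-1)$ exactly right rather than just "some polynomial in $m$" requires counting how many times a factor of $m$ (versus a factor of $t_k-1$, which lands in $I_m N$) must be extracted per generator.
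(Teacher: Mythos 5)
Your overall route is the paper's: the two exact sequences of \Cref{twoexact} at level $m$, the splitting identifying ${\rm tor}_{\Z}(H_m^{\rm ab})$ with ${\rm tor}_{\Z}(N/(C_m+I_mN))$, and the observation that on the defect module $D_m=(C_m+I_mN)/I_mN$, where the $t_i$ act trivially by \Cref{actionprop}, multiplication by $\mathcal{T}_m(t_i)$ is multiplication by $m$. For part i) your sketch is essentially the paper's argument, although the phrase ``$D_m$ is a quotient of $D_1$ base-changed'' is not literally correct: the paper instead constructs maps in both directions, $\varphi'_{m,1}:D_m\to D_1$ (induced by ${\rm id}_N$, with image $m^2D_1$) and $\psi_{1,m}:D_1\to D_m$ (multiplication by $\prod_i\mathcal{T}_m(t_i)$, with image $m^{d-2}D_m$), checks that both composites are multiplication by $m^d$, and deduces $\psi_{1,m}({\rm tor}_{\Z}D_1)=m^{d-2}\,{\rm tor}_{\Z}D_m$; the exponent $\tfrac{d(d-1)(d-2)}{2}$ then falls out because $|{\rm tor}_{\Z}D_m/m^{d-2}{\rm tor}_{\Z}D_m|$ divides $(m^{d-2})^{d(d-1)/2}$. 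So i) is recoverable from your plan once you carry out this two-directional comparison.

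The genuine gap is in part ii). The ``symmetric'' divisibility you assert, namely that $|{\rm tor}_{\Z}(N/(C_m+I_mN))|$ divides $|D_m|\cdot|{\rm tor}_{\Z}(N/I_mN)|$, is not a formal consequence of the exact sequence $0\to D_m\to N/I_mN\to N/(C_m+I_mN)\to 0$. The theorem does not assume $D_m$ is finite (it is only a finitely generated $\Z$-module on at most $\tfrac{d(d-1)}{2}$ generators, possibly of positive rank), so $|D_m|$ may be infinite; and replacing $|D_m|$ by $|{\rm tor}_{\Z}D_m|$ makes the claim false: for $P=\Z$, $D=2\Z$, $T=\Z/2\Z$ one has $|T|=2$ while $|{\rm tor}_{\Z}D|\cdot|{\rm tor}_{\Z}P|=1$. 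The torsion of the quotient is governed by the index of the free part of $D_m$ in its saturation, not by ${\rm tor}_{\Z}D_m$, so your bound $A_1m^{d(d-1)(d-2)/2}$ on $|{\rm tor}_{\Z}D_m|$ cannot yield ii). This is precisely why the paper's proof of ii) is not symmetric to i): it compares levels $1$ and $m$ through the diagram \eqref{kakanzushiki}, applies the snake lemma and the multiplicative identity $\frac{|\ker\varphi''|}{|\coker\varphi''|}=\frac{|\ker\varphi|}{|\coker\varphi|}\cdot\frac{|\coker\varphi'|}{|\ker\varphi'|}$ together with \Cref{cokerlemma}, and the exponent $d(d-1)$ arises as a bound on $|\coker\varphi'_{m,1}|=|D_1/m^2D_1|$, i.e.\ a factor $m^2$ per generator of $D_1$. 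Some such inter-level comparison must be added to your plan before ii), and hence the final $O(n)$ formula, is established.
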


\begin{proof}
Let $P_n$ be the inverse image of ${\rm tor}_{\Z}(N/(C_n+I_nN))$ under $N/I_nN\to N/(C_n+I_nN)$. Then, we have ${\rm tor}_{\Z}(P_n)={\rm tor}_{\Z}(N/I_n N)$, and
\[
0\to(C_n+I_nN)/I_nN\to P_n\to{\rm tor}_{\Z}(N/(C_n+I_nN))\to 0
\]
is an exact sequence of $\Lambda_{\Z}$-modules. For an arbitrary pair of positive integers $n\mid m$, we have a commutative diagram of 
$\Lambda_{\Z}$-modules with exact rows 
\begin{equation}\label{kakanzushiki}
\xymatrix{
0\ar[r]&(C_{m}+I_{m}N)/I_{m}N \ar[r]\ar[d]^{\varphi'_{m,n}} & P_{m} \ar[r]\ar[d]^{\varphi_{m,n}} & {\rm tor}_{\Z}(N/(C_{m}+I_{m}N))\ar[r]\ar[d]^{\varphi''_{m,n}} & 0 \\
0\ar[r]&(C_n+I_nN)/I_nN\ar[r] &P_n\ar[r] & {\rm tor}_{\Z}(N/(C_n+I_nN)) \ar[r] & 0,
}
\end{equation}
where $\varphi'_{m,n}, \varphi_{m,n},\varphi''_{m,n}$ are the maps induced by ${\rm id}_N$. 
Define a $\Lambda_{\Z}$-homomorphism $\psi_{n,m}:(C_n+I_nN)/I_nN\to (C_m+I_mN)/I_mN$ by the action of %$\psi_{n,m}=$
$\mathcal{T}_{m/n}(t_1^n)\cdots \mathcal{T}_{m/n}(t_d^n)$. This map is well-defined, since 
\begin{align*}
\mathcal{T}_{m/n}(t_i^n)\mathcal{T}_n(t_i)
&=((t_i^n)^{m/n-1}+\cdots+t_i^n+1)(t_i^{n-1}+\cdots +t_i+1)\\
&=t_i^{m-1}+\cdots+t_i+1\\
&=\mathcal{T}_m(t_i)
\end{align*}
implies that $t_i^n-1$ maps to $t_i^m-1$ via $\times \mathcal{T}_{m/n}(t_i^n)$.
Since $t_i$ acts trivially on $(C_n+I_nN)/I_nN$, the map multiplying by $\prod_{1\leq i\leq d}\big((t_i^n)^{m/n-1}+ (t_i^n)^{m/n-2}+\cdots+t_i^n+1)\big)$ becomes the map multiplying by $\prod_{1\leq i\leq d}(1+1+\cdots+1+1)$ via $\varphi'_{m,n}\circ\psi_{n,m}$ and $\psi_{n,m}\circ\varphi'_{m,n}$. 
This implies $\varphi'_{m,n}\circ \psi_{n,m}=(\frac{m}{n})^d$ and $\psi_{n,m}\circ \varphi'_{m,n}=\left(\frac{m}{n}\right)^d$.
Since we have
\begin{align*}
c_{ij}^{(m)}&=\mathcal{T}_m(t_i)\mathcal{T}_m(t_j)c_{ij}\\
&=\mathcal{T}_{m/n}(t_i)\mathcal{T}_{m/n}(t_j)\mathcal{T}_n(t_i)\mathcal{T}_n(t_j)c_{ij}\\
&=\mathcal{T}_{m/n}(t_i)\mathcal{T}_{m/n}(t_j)c_{ij}^{(n)}
\end{align*}
and $t_i$ acts trivially on $(C_n+I_nN)/I_nN$, one finds that
\[
\im \varphi'_{m,n}=\left(\frac{m}{n}\right)^2((C_n+I_nN)/I_n N).
\]
Likewise, since
\[
\mathcal{T}_{m/n}(t_1)\cdots \mathcal{T}_{m/n}(t_d) c_{d-1,d}^{(n)}
=\mathcal{T}_{m/n}(t_1)\cdots \mathcal{T}_{m/n}(t_{d-2})c_{d-1,d}^{(m)},
\]
we have
\[
\im\psi_{n,m}=\left(\frac{m}{n}\right)^{d-2}((C_{m}+I_{m}N)/I_{m}N).
\]
Since there are $\Z$-homomorphisms onto subgroups of finite indices each other, the finitely generated $\Z$-modules $(C_n+I_nN)/I_nN$ and $(C_{m}+I_{m}N)/I_{m}N$ have the same free $\Z$-rank, and so $\ker\varphi'_{m,n}$ and $\ker\psi_{n,m}$ are finite.
We shall show that
\[
\psi_{n,m}({\rm tor}_{\Z}((C_n+I_nN)/I_nN))=\left(\frac{m}{n}\right)^{d-2}{\rm tor}_{\Z}((C_{m}+I_{m}N)/I_{m}N).
\]
Indeed, we have
\begin{align*}
\im\psi_{n,m}&=\left(\frac{m}{n}\right)^{d-2}((C_{m}+I_{m}N)/I_{m}N)\\
&=\left(\frac{m}{n}\right)^{d-2}(\Z^r\oplus ({\rm tor}_{\Z}((C_{m}+I_{m}N)/I_{m}N))\\
&=\left(\frac{m}{n}\right)^{d-2}\Z^r\oplus \left(\frac{m}{n}\right)^{d-2}{\rm tor}_{\Z}((C_{m}+I_{m}N)/I_{m}N).
\end{align*}
for some $r\geq 0$. Now, $\psi_{n,m}$ induces a map
\[
\psi_{n,m}^{-1}(\left(\frac{m}{n}\right)^{d-2}{\rm tor}_{\Z}((C_{m}+I_{m}N)/I_{m}N))\to \left(\frac{m}{n}\right)^{d-2}{\rm tor}_{\Z}((C_{m}+I_{m}N)/I_{m}N)),
\]
and we have
\[
{\rm tor}_{\Z}((C_n+I_nN)/I_nN)\subset\psi_{n,m}^{-1}(\left(\frac{m}{n}\right)^{d-2}{\rm tor}_{\Z}((C_{m}+I_{m}N)/I_{m}N)).
\]
Since the kernel of this map is finite, we have that both the image and the kernel of this map are $\Z$-torsion. Therefore, we must have
\[
{\rm tor}_{\Z}((C_n+I_n N)/I_n N)=\psi_{n,m}^{-1}(\left(\frac{m}{n}\right)^{d-2}{\rm tor}_{\Z}((C_{m}+I_{m}N)/I_{m}N)),
\]
i.e.,
\[
\psi_{n,m}({\rm tor}_{\Z}((C_n+I_n N)/I_n N))=\left(\frac{m}{n}\right)^{d-2}{\rm tor}_{\Z}((C_{m}+I_{m}N)/I_{m}N).
\]

Since
\[
0\to{\rm tor}_{\Z}((C_n+I_nN)/I_nN)\to{\rm tor}_{\Z}(P_n)\to{\rm tor}_{\Z}(N/(C_n+I_nN))
\]
is an exact sequence of $\Lambda_{\Z}$-modules, by \Cref{twoexact}, we have
\[
\frac{|{\rm tor}_{\Z}(N/I_n N)|}{|{\rm tor}_{\Z}((C_n+I_nN)/I_nN)|}\mbox{ divides }|\tor_{\Z}(H_n^{\rm ab})|. 
\]
Since
\[
\psi_{n,m}({\rm tor}_{\Z}((C_n+I_n N)/I_n N))=\left(\frac{m}{n}\right)^{d-2}{\rm tor}_{\Z}((C_{m}+I_{m}N)/I_{m}N),
\]
we have
\begin{align*}
&|{\rm tor}_{\Z}(( C_n+I_nN)/I_nN)|\\
&=|\ker \psi_{n,m}|\cdot |\left(\frac{m}{n}\right)^{d-2}{\rm tor}_{\Z}((C_{m}+I_{m}N)/I_{m}N)|\\
&={|\ker \psi_{n,m}|\cdot} \frac{|{\rm tor}_{\Z}((C_{m}+I_{m}N)/I_{m}N)|}{|{\rm tor}_{\Z}((C_{m}+I_{m}N)/I_{m}N)/\left(\frac{m}{n}\right)^{d-2}{\rm tor}_{\Z}((C_{m}+I_{m}N)/I_{m}N)|}.
\end{align*}
Since $(C_{m}+I_{m}N)/I_{m}N$ is generated by at most $\frac{d(d-1)}{2}$ elements over $\Z$, we must have
\[
|{\rm tor}_{\Z}((C_{m}+I_{m}N)/I_{m}N)/\left(\frac{m}{n}\right)^{d-2}{\rm tor}_{\Z}((C_{m}+I_{m}N)/I_{m}N)|\mbox{ divides }\left(\frac{m}{n}\right)^{\frac{d(d-1)(d-2)}{2}}.
\]
Therefore, 
\[
|{\rm tor}_{\Z}((C_{m}+I_{m}N)/I_{m}N)|\mbox{ divides }\left(\frac{m}{n}\right)^{\frac{d(d-1)(d-2)}{2}}|{\rm tor}_{\Z}((C_n+I_nN)/I_nN)|.
\]
By putting $A_1:=|{\rm tor}_{\Z}((C_1+I_1N)/I_1N)|$, we obtain that
\[
|{\rm tor}_{\Z}((C_m+I_mN)/I_m N)|\mbox{ divides } A_1 m^{\frac{d(d-1)(d-2)}{2}},
\]
and so
\[
|{\rm tor}_{\Z} (N/I_m N)|\mbox{ divides }A_1 m^{\frac{d(d-1)(d-2)}{2}}|{\rm tor}_{\Z}(H_m^{\rm ab})|.
\]

On the other hand, applying the snake lemma to the commutative diagram \eqref{kakanzushiki}, we have an exact sequence of $\Lambda_{\Z}$-modules
\[
0\to\ker\varphi'_{m,n}\to\ker\varphi_{m,n}\to\ker\varphi''_{m,n}\to\coker\varphi'_{m,n}\to\coker\varphi_{m,n}\to\coker\varphi''_{m,n}\to 0.
\]
Since $\ker\varphi'_{m,n},\ker\varphi''_{m,n}, \coker\varphi'_{m,n}, \coker\varphi''_{m,n}$ are finite, so are $\ker\varphi_{m,n}, \coker\varphi_{m,n}$.
Hence we have
\[
\frac{|\ker\varphi''_{m,n}|}{|\coker\varphi''_{m,n}|}=\frac{|\ker\varphi_{m,n}|}{|\coker\varphi_{m,n}|}\frac{|\coker\varphi'_{m,n}|}{|\ker\varphi'_{m,n}|}.
\]
By the homomorphism theorem, we have
\[
\frac{|\ker\varphi''_{m,n}|}{|\coker\varphi''_{m,n}|}=\frac{|{\rm tor}_{\Z}(H_m^{\rm ab})|}{|{\rm tor}_{\Z}(H_n^{\rm ab})|}.
\]
Since $\ker\varphi_{m,n}$ is finite, we have
\[
\varphi_{m,n}^{-1}({\rm tor}_{\Z}(P_n))={\rm tor}_{\Z}(P_{m}).
\]
Let $\varphi''':{\rm tor}_{\Z}(P_{m})\to{\rm tor}_{\Z}(P_n)$ be the restriction map of $\varphi_{m,n}$. Then we have $\ker\varphi'''=\ker\varphi_{m,n}$. By \Cref{cokerlemma}, we also have $\coker\varphi'''\subset\coker\varphi_{m,n}$. Therefore, we have
\[
\frac{|\ker\varphi'''|}{|\coker\varphi'''|}=\frac{|{\rm tor}_{\Z}(N/I_m N)|}{|{\rm tor}_{\Z}(N/I_n N)|}.
\]
Accordingly, we obtain
\[
\frac{|{\rm tor}_{\Z}(H_m^{\rm ab})|}{|{\rm tor}_{\Z}(H_n^{\rm ab})|}=\frac{|{\rm tor}_{\Z}(N/I_{m}N)|}{|{\rm tor}_{\Z}(N/I_nN)|}\frac{|\coker\varphi'_{m,n}|}{|\coker\varphi_{m,n}/\coker\varphi'''|\cdot|\ker\varphi'_{m,n}|}.
\]
Put $A_2:=|{\rm tor}_{\Z}(\mathcal{P}^{-1}(H_1)^{\rm ab})|$. Since
\[
|\coker\varphi'_{m,n}|=\Big|(C_n+I_nN)/\big(\left(\frac{m}{n}\right)^2(C_n+I_nN)\big)\Big|\mbox{ divides } \left(\frac{m}{n}\right)^{d(d-1)}=\left(\frac{m}{n}\right)^{2\frac{d(d-1)}{2}},
\]
 we obtain that
\[
|{\rm tor}_{\Z}(H_m^{\rm ab})|\mbox{ divides }A_2m^{d(d-1)}|\tor_{\Z}(N/I_m N)|.
\]
This completes the proof.
\end{proof}

\subsection{Theorems for topological groups}
We may replace 
\begin{itemize}
\item
$\Lambda_{\Z}$ by $\Lambda$,
\item
${\Z^d}$ by $\Zp^{\,d}$,
\item
the term ``generated" for groups by ``topologically generated"
\end{itemize}
in the statement and the proof of \Cref{twoexact}: 

\begin{theorem}[The fundamental two exact sequences]\label{twoexact2}
Suppose that a homomorphism $G\stackrel{\mathcal{P}}{\rightarrow}\Zp^{\,d}$ of topological groups with abelian kernel $N$ is given. 
Let $\mathcal{Z}$ be a subgroups of $\Zp^{\,d}$ that is isomorphic to $\Zp^{\,d'}$ with $d'\leq d$ and let $H:=\mathcal{P}^{-1}(\mathcal{Z})$, so that there is a commutative diagram 
\[
\xymatrix{
1\ar[r]&N\ar[r]\ar@{=}[d]&G\ar[r]^{\mathcal{P}}\ar@{}[d]|{\bigcup}&\Z_{p}^{\,d}\ar[r]\ar@{}[d]|{\bigcup}&1\\
1\ar[r]&N\ar[r]&H\ar[r]&\mathcal{Z}\ar[r]&1
}
\]
of exact sequences. 
Let $\{t_1,\ldots,t_d\}$ be a $\Zp$-basis of $\Zp^{\,d}$ and let $x_1,\ldots, x_d$ be elements in the inverse images $\mathcal{P}^{-1}(t_1),\ldots,\mathcal{P}^{-1}(t_d)$ respectively. 
Let $\{t'_1,\ldots,t'_d\}\in\Lambda$ be a $\Zp$-basis of $\Zp^{\,d}$ and let $x'_1,\ldots,x'_{d'}$ be elements in the inverse images $\mathcal{P}^{-1}(t'_1),\ldots, \mathcal{P}^{-1}(t'_{d'})$. 

Let $I$ denote ideal of $\Lambda$ generated by $\{t'_1-1,\ldots,t'_{d'}-1\}$. Let $C$ denote the $\Lambda$-submodule of $N$ generated by $\{[x'_i,x'_j]\mid 1\leq i< j\leq d\}$. Then
we have two exact sequences of $\Lambda$-modules
\begin{gather*}
0\to N/(C+IN)\to H^{\rm ab}\to \Zp^{d'}\to 0,\\
0\to (C+IN)/IN\to N/IN\to N/(C+IN)\to 0.
\end{gather*} 

In particular, we have an isomorphism of $\Zp$-modules 
\[ {\rm tor}_{\Zp}(H^{\rm ab})\cong{\rm tor}_{\Zp}(N/(C+IN)).\]
\end{theorem}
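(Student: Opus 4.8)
The plan is to repeat the proof of \Cref{twoexact} verbatim, keeping track of the three substitutions announced just before the statement ($\Lambda_{\Z}\rightsquigarrow\Lambda$, $\Z^d\rightsquigarrow\Zp^{\,d}$, ``generated''$\rightsquigarrow$``topologically generated''). First I would equip $N$ with its $\Lambda$-module structure: the conjugation action $t_i\cdot y=x_iyx_i^{-1}$ is independent of the chosen lift $x_i$ because $N$ is abelian, it is continuous, and since $N$ is a profinite abelian pro-$p$ group on which $\Zp^{\,d}$ acts continuously, this action extends uniquely to a continuous action of the completed group ring $\Zp[\![\Zp^{\,d}]\!]\cong\Lambda$. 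From now on $H=\ol{\langle x'_1,\ldots,x'_{d'},N\rangle}$, and the symbols $IN$, $C$, and $C+IN$ are to be read as \emph{closed} $\Lambda$-submodules of $N$.

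Next I would identify the closed commutator subgroup $[H,H]$. Exactly as in \Cref{twoexact}, the identity $(t'_i-1)y=x'_iy{x'_i}^{-1}y^{-1}$ for $y\in N$ shows that, after topologically abelianizing, the relations coming from $[x'_i,N]$ are precisely $IN$; hence $[H,H]$ is topologically generated by $\{[x'_i,x'_j]\mid 1\le i<j\le d'\}$ together with $IN$. Since $H$ is normal in $G$, the subgroup $[H,H]$ is $\mathcal{P}$-stable, and being contained in the abelian group $N$ it is a closed $\Lambda$-submodule; therefore $[H,H]=C+IN$. Consequently $\mathcal{P}$ induces a continuous surjection $H^{\rm ab}=H/[H,H]\surj\mathcal{Z}\cong\Zp^{\,d'}$ with kernel $N/(C+IN)$, giving the first exact sequence
\[ 0\to N/(C+IN)\to H^{\rm ab}\to\Zp^{\,d'}\to 0 \]
of topological $\Lambda$-modules ($\Lambda$ acting trivially on $\Zp^{\,d'}$), while the evident inclusion and projection give the second one,
\[ 0\to (C+IN)/IN\to N/IN\to N/(C+IN)\to 0. \]

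Finally, for the torsion statement: $\Zp^{\,d'}$ is a free, hence projective, $\Zp$-module, so the first sequence splits as a sequence of $\Zp$-modules, yielding $H^{\rm ab}\cong N/(C+IN)\oplus\Zp^{\,d'}$; since $\Zp^{\,d'}$ is torsion-free, applying ${\rm tor}_{\Zp}(-)$ gives ${\rm tor}_{\Zp}(H^{\rm ab})\cong{\rm tor}_{\Zp}(N/(C+IN))$. The only genuinely new work compared with \Cref{twoexact}, and hence the point to be careful about, is the topology: that conjugation extends to a continuous $\Lambda$-action on $N$, that $[H,H]$ is closed and equals the closed submodule $C+IN$, and that all arrows in the two sequences are continuous with closed image, so that the sequences are honestly exact in the category of topological $\Lambda$-modules. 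Once these points are in place, the remaining manipulations are identical to the discrete case.
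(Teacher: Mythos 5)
Your proposal is correct and coincides with the paper's own treatment: the paper proves \Cref{twoexact2} precisely by declaring that the statement and proof of \Cref{twoexact} carry over under the substitutions $\Lambda_{\Z}\rightsquigarrow\Lambda$, $\Z^d\rightsquigarrow\Zp^{\,d}$, and ``generated''$\rightsquigarrow$``topologically generated'', with the torsion isomorphism following from the splitting of the first sequence over the free $\Zp$-module $\Zp^{\,d'}$, exactly as you argue. Your explicit attention to the topological points (continuity of the $\Lambda$-action, closedness of $[H,H]=C+IN$) fills in details the paper leaves implicit but introduces no new ideas beyond its route.
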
 

We have a version of \Cref{asymptotic} for $\Lambda$ as well. 
Consider the case where $\mathcal{Z}=p^n\Zp^d$. Then $\{t_1^{p^n}, \ldots, t_d^{p^n}\}$ is a $\Zp$-basis of $\mathcal{Z}$. For each $n\geq 1$, let $C_{p^n}, H_{p^n}, \mathcal{I}_{p^n}$ denote the corresponding $C, H, I$ respectively.

\begin{theorem}[The estimation formula] \label{asymptotic2}
Under the setting as above, if $N$ is finitely generated over $\Lambda_{\Z}$, then we have
\[
e(H_{p^n}^{\rm ab})=e(N/\mathcal{I}_{p^n}N)+O(n)\ \ (n\to\infty).
\]
\end{theorem}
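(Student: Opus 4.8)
The plan is to transcribe the proof of \Cref{asymptotic} into the pro-$p$ setting via the dictionary $\Lambda_{\Z}\rightsquigarrow\Lambda$, $\Z^d\rightsquigarrow\Zp^{\,d}$, ``generated'' $\rightsquigarrow$ ``topologically generated'', with the comparison of levels $n\mid m$ replaced by prime-power levels $p^n\mid p^m$. The one extra point to record is the analogue of \Cref{actionprop}: for each $n\geq 1$, each $t_i$ acts trivially on $(C_{p^n}+\mathcal{I}_{p^n}N)/\mathcal{I}_{p^n}N$, so that this $\Zp$-module is generated by at most $d(d-1)/2$ elements. This is proved by the very same computation: with $\mathcal{T}_k(t):=1+t+\cdots+t^{k-1}$ one has $c_{ij}^{(p^n)}=\mathcal{T}_{p^n}(t_i)\mathcal{T}_{p^n}(t_j)c_{ij}$, hence $(t_i-1)c_{ij}^{(p^n)}=(t_i^{p^n}-1)\mathcal{T}_{p^n}(t_j)c_{ij}\in\mathcal{I}_{p^n}N$, and the Hall--Witt identity handles the remaining $(t_k-1)c_{ij}^{(p^n)}$; these are identities of $\Lambda$-modules, valid since $t_i=1+T_i$ and $\omega_{p^n}(T_i)=t_i^{p^n}-1$.

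First I would apply \Cref{twoexact2} with $\mathcal{Z}=p^n\Zp^{\,d}$, obtaining for each $n$ the split exact sequence $0\to N/(C_{p^n}+\mathcal{I}_{p^n}N)\to H_{p^n}^{\rm ab}\to\Zp^{\,d}\to 0$, whence $\tor_{\Zp}(H_{p^n}^{\rm ab})\cong\tor_{\Zp}(N/(C_{p^n}+\mathcal{I}_{p^n}N))$, and the exact sequence $0\to(C_{p^n}+\mathcal{I}_{p^n}N)/\mathcal{I}_{p^n}N\to N/\mathcal{I}_{p^n}N\to N/(C_{p^n}+\mathcal{I}_{p^n}N)\to 0$. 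Since $N$ is finitely generated over $\Lambda$, every module here is a finitely generated $\Zp$-module, so $e(-)=e(\tor_{\Zp}(-))$. Letting $P_{p^n}\subset N/\mathcal{I}_{p^n}N$ be the preimage of $\tor_{\Zp}(N/(C_{p^n}+\mathcal{I}_{p^n}N))$, one gets $\tor_{\Zp}(P_{p^n})=\tor_{\Zp}(N/\mathcal{I}_{p^n}N)$ and an exact sequence $0\to(C_{p^n}+\mathcal{I}_{p^n}N)/\mathcal{I}_{p^n}N\to P_{p^n}\to\tor_{\Zp}(N/(C_{p^n}+\mathcal{I}_{p^n}N))\to 0$ whose last term is finite.

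Next, for $n\leq m$ I would run the ladder of these exact sequences at levels $p^n$ and $p^m$ with vertical maps $\varphi',\varphi,\varphi''$ induced by $\mathrm{id}_N$, together with the $\Lambda$-homomorphism $\psi_{p^n,p^m}\colon(C_{p^n}+\mathcal{I}_{p^n}N)/\mathcal{I}_{p^n}N\to(C_{p^m}+\mathcal{I}_{p^m}N)/\mathcal{I}_{p^m}N$ given by multiplication by $\prod_{i=1}^{d}\mathcal{T}_{p^{m-n}}(t_i^{p^n})$; the identity $\mathcal{T}_{p^{m-n}}(t_i^{p^n})\mathcal{T}_{p^n}(t_i)=\mathcal{T}_{p^m}(t_i)$ shows $\psi_{p^n,p^m}$ is well defined, and the trivial action of the $t_i$ gives $\varphi'\circ\psi_{p^n,p^m}=\psi_{p^n,p^m}\circ\varphi'=p^{d(m-n)}$, $\operatorname{im}\varphi'=p^{2(m-n)}\bigl((C_{p^n}+\mathcal{I}_{p^n}N)/\mathcal{I}_{p^n}N\bigr)$ and $\operatorname{im}\psi_{p^n,p^m}=p^{(d-2)(m-n)}\bigl((C_{p^m}+\mathcal{I}_{p^m}N)/\mathcal{I}_{p^m}N\bigr)$. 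Carrying out the same bookkeeping as in the proof of \Cref{asymptotic} — using the $d(d-1)/2$ bound on the number of $\Zp$-generators of $(C_{p^m}+\mathcal{I}_{p^m}N)/\mathcal{I}_{p^m}N$, and the snake lemma applied to the ladder — yields the analogues of the two divisibility assertions there, namely that $|\tor_{\Zp}(N/\mathcal{I}_{p^n}N)|$ divides $A_1\,p^{\,n\frac{d(d-1)(d-2)}{2}}\,|\tor_{\Zp}(H_{p^n}^{\rm ab})|$ and $|\tor_{\Zp}(H_{p^n}^{\rm ab})|$ divides $A_2\,p^{\,nd(d-1)}\,|\tor_{\Zp}(N/\mathcal{I}_{p^n}N)|$ for suitable constants $A_1,A_2$. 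Taking $p$-exponents gives $|e(H_{p^n}^{\rm ab})-e(N/\mathcal{I}_{p^n}N)|=O(n)$, as claimed.

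I expect the only genuinely delicate point to be the same one as in \Cref{asymptotic}: separating the $\Zp$-free and $\Zp$-torsion parts compatibly along the transition maps $\psi_{p^n,p^m}$ and $\varphi'$, so that the behaviour of $|\tor_{\Zp}((C_{p^n}+\mathcal{I}_{p^n}N)/\mathcal{I}_{p^n}N)|$ across successive levels can be read off explicitly. This rests on the bound of $d(d-1)/2$ on the number of $\Zp$-generators coming from the trivial-action analogue of \Cref{actionprop}, and on the finite generation of $N$ over $\Lambda$ to guarantee that all relevant $\Zp$-ranks are finite and stabilize; everything else is a verbatim translation of the earlier argument.
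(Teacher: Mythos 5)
Your proposal is correct and is exactly the approach the paper takes: Theorem \ref{asymptotic2} is stated there without a separate proof precisely because it is the verbatim transcription of the proof of \Cref{asymptotic} under the dictionary $\Lambda_{\Z}\rightsquigarrow\Lambda$, $\Z^d\rightsquigarrow\Zp^{\,d}$, $n\mid m\rightsquigarrow p^n\mid p^m$, which is what you carry out. The details you supply (the trivial-action analogue of \Cref{actionprop}, the maps $\varphi'$ and $\psi_{p^n,p^m}$ with $\varphi'\circ\psi_{p^n,p^m}=p^{d(m-n)}$, and the two divisibility bounds with exponents $n\frac{d(d-1)(d-2)}{2}$ and $nd(d-1)$) all match the intended argument.
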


\section{Alexander polynomials} 
In this section, we define the Alexander polynomials of $\Z^d$-covers and the characteristic elements of $\Zp^{\,d}$-covers, and point out their relationship. We also prove a result on the reduced Alexander polynomials and establish its $p$-adic variant for the characteristic elements for the Iwasawa modules of general $\Zp^{\,d}$-covers defined by $\mca{H}:=\varprojlim H_1(X_{p^n};\Zp)$. 

\subsection{Alexander polynomials} 

\begin{definition} \label{def.Alex} 
(1) Let $d\in \Z_{>0}$. Let $X$ be a compact connected orientable 3-manifold with a surjective homomorphism $\tau:\pi_1(X)\surj \Z^d$ and let $X_\infty\to X$ the corresponding $\Z^d$-cover. Let $t_1,\ldots, t_d$ be a basis of $\Z^d$. 
Then $H_1(X_{\infty})$ is a finitely generated $\Lambda_{\Z,d}$-module and its characteristic element is called \emph{the Alexander polynomial of $X_\infty\to X$} and is denoted by \glssymbol{alexandertau}$\Delta_\tau(t_1,\ldots,t_d)$. 

(2) If $L=\sqcup_i K_i$ is a $c$-component link in a $\Q$HS$^3$ and $X=M\setminus {\rm Int}\,V_L$, then we consider the maximal free abelian cover $X_\infty\to X$ corresponding to the free abelianization \glssymbol{fab}${\rm fab}:\pi_1(X)\surj \Z^c$ with a basis $t_1,\ldots, t_c$ of $\Z^c$ corresponding to $K_i$'s, and define \emph{the Alexander polynomial of} $L$ by \glssymbol{alexander}$\Delta_L(t_1,\ldots,t_c):=\Delta_{\rm fab}(t_1,\ldots,t_c)$ in $\Lambda_{\Z,c}$. 
\end{definition} 

For more details of the theory of Alexander polynomials, we refer to \cite{Massuyeau2008AP} and \cite[Chapters 3\&4] 
{Hillman2}. 

\begin{prop} \label{prop.hatH1Xinfty} 
Let the setting be as in \Cref{def.Alex} {\rm (1)}. 
Then the characteristic element of the $\Lambda_d$-module $\wh{H}_1(X_\infty)=H_1(X_\infty)\otimes_{\Lambda_\Z}\Lambda=
\varprojlim_n H_1(X_\infty)/I_{p^n}H_1(X_\infty)$ is given by 
\[F_\tau(T_1,\ldots, T_d)=\Delta_\tau(1+T_1,\ldots, 1+T_d).\] 
\end{prop}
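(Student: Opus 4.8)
The plan is to identify both characteristic ideals via \Cref{1124} (or rather its specialization discussed in the remark after it), applied to the ring $R=\Lambda_\Z=\Z[t_1^{\pm1},\ldots,t_d^{\pm1}]$ with the maximal ideal $\mf{m}=(p,t_1-1,\ldots,t_d-1)$. First I would recall that $R$ is a Noetherian integrally closed domain whose localization $R_\mf{m}$ is a UFD, so \Cref{1124} applies to any finitely generated torsion $R$-module: $\wh{R}\,{\rm Char}\,\mca{M}={\rm Char}\,\wh{\mca{M}}$, where $\wh{\mca{M}}=\varprojlim_n \mca{M}\otimes R/\mf{m}^n$. Here $\mca{M}=H_1(X_\infty)$, which is finitely generated over $\Lambda_\Z$ (standard; it is the Alexander module) and torsion over $\Lambda_\Z$ (by \Cref{11241}, since its Fitting ideal, the Alexander ideal, has nonzero divisorial hull generated by $\Delta_\tau$ in the UFD $\Lambda_\Z$; equivalently $\Delta_\tau\neq 0$ because $X$ is a compact 3-manifold with $b_1$ of the correct sort — this is the point where one uses that $\tau$ is a surjection onto $\Z^d$ from a 3-manifold group, so the Alexander module has lower rank than the number of variables). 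By \Cref{def.char} in the UFD $\Lambda_\Z$, ${\rm Char}\,H_1(X_\infty)$ is the principal ideal generated by $\Delta_\tau(t_1,\ldots,t_d)$.

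Next I would identify the completion $\wh{\mca{M}}$ with the $\Lambda$-module $\wh{H}_1(X_\infty)$. By \Cref{prop.hathat}, the $\mf{m}$-adic completion of any finitely generated $\Lambda_\Z$-module $\mca{M}$ is naturally $\mca{M}\otimes_{\Lambda_\Z}\Lambda=\varprojlim_n \mca{M}/\mca{I}_{p^n}\mca{M}\otimes\Zp$, where $\mca{I}_{p^n}=(t_1^{p^n}-1,\ldots,t_d^{p^n}-1)$; this is exactly the definition of $\wh{H}_1(X_\infty)$ given in the statement (one checks $\varprojlim_n H_1(X_\infty)/I_{p^n}H_1(X_\infty)$ agrees with this after tensoring with $\Zp$, the subtlety being only the passage from $\Z$ to $\Zp$ coefficients, handled by the identification $\Lambda=\varprojlim_n\Lambda_\Z/\mf{m}^n$ of \cite[Lemma 2.3.8]{Sharifi.IT2023OCT} as in \Cref{prop.hathat}). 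Under the Iwasawa--Serre substitution $t_i\mapsto 1+T_i$, \Cref{1124} then gives
\[
\Lambda\cdot\big(\Delta_\tau(1+T_1,\ldots,1+T_d)\big)=\wh{R}\,{\rm Char}\,H_1(X_\infty)={\rm Char}\,\wh{H}_1(X_\infty).
\]
Since $\Lambda$ is a UFD, the characteristic element of $\wh{H}_1(X_\infty)$ is, up to a unit, a generator of this ideal, namely $F_\tau(T_1,\ldots,T_d)=\Delta_\tau(1+T_1,\ldots,1+T_d)$, which is the claim.

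The main obstacle I anticipate is not the ring-theoretic bookkeeping — that is essentially packaged into Propositions \ref{1124} and \ref{prop.hathat} — but rather verifying cleanly that $H_1(X_\infty)$ is $\Lambda_\Z$-torsion, i.e.\ that $\Delta_\tau\neq 0$, and that the completion appearing in the statement really is the $\mf{m}$-adic completion and not merely the $\mca{I}_{p^n}$-adic one with $\Z$ (as opposed to $\Zp$) coefficients. For the torsion claim one should point out that $H_1(X_\infty)\otimes_{\Lambda_\Z}\mathrm{Frac}(\Lambda_\Z)$ has dimension equal to $b_1(X)-d$ minus a correction, which vanishes here because $\tau\colon\pi_1(X)\surj\Z^d$ is surjective and $X$ is a connected compact $3$-manifold, so by Poincaré--Lefschetz duality the relevant twisted homology is torsion (this is the folklore fact alluded to around \Cref{reduced}); alternatively, if $\Delta_\tau$ were $0$ the whole statement is vacuous once one adopts the convention ${\rm Char}=0$, so strictly one only needs: $\Delta_\tau=0\iff F_\tau=0$, which is immediate since $t_i\mapsto 1+T_i$ is an injective ring homomorphism $\Lambda_\Z\inj\Lambda$. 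I would therefore organize the proof to first dispatch the vacuous case, then run the completion argument above in the torsion case.
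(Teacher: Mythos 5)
Your proposal is correct and follows essentially the same route as the paper: the paper's own proof is precisely the one-line deduction from \Cref{1124} (compatibility of characteristic ideals with $\mathfrak{m}$-completion for $\mathfrak{m}=(p,t_1-1,\ldots,t_d-1)$) combined with \Cref{prop.hathat} (identifying the completion with $H_1(X_\infty)\otimes_{\Lambda_\Z}\Lambda$). Your additional care about the torsion/vacuous case and the $\Z$-versus-$\Zp$ coefficient issue is reasonable elaboration of the same argument, not a different method.
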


\begin{proof} The assertion follows from Propositions \ref{1124} and \ref{prop.hathat}.  
\end{proof}

\subsection{Reduced Alexander polynomials} 
By using \Cref{twoexact}, we derive the following results. 
At least the assertion (1) for the cases $d=1,c$ is a well-known fact and may be proved in various ways. 
We will use this result in the proof of \Cref{thm.integral.Delta}  
to compare the $\mu, \lambda$'s of branched $\Zp^{\,d}$-covers over $\Z$HS$^3$ and those of the reduced Alexander polynomials. 

\begin{theorem} 
\label{reduced}
{\rm (1)} Suppose $d\geq 2$. 
Let $X$ be the exterior of a $c$-component link $L=\sqcup_i K_i$ 
in a $\Z$HS$\,^3$ and let $\alpha_i\in \pi_1(X)$ be a meridian of $K_i$ for each $i=1,\ldots, c$. 

{\rm (i)} 
Let $X_{\infty}\to X$ be the $\Z^d$-cover corresponding to a surjective homomorphism $\tau:\pi_1(X)\surj\Z^d$ and write $\tau(\alpha_i)=(v_{i1},\ldots,v_{id})\in\Z^d$. Then we have 
\[
\Delta_{\tau}(t_1,\ldots,t_d) \doteq \Delta_L(t_1^{v_{11}}\cdots t_d^{v_{1d}},\ldots,t_1^{v_{c1}}\cdots t_d^{v_{cd}}) \ \ \text{in}\ \  \Lambda_{\Z,d}. 
\]

{\rm (ii)} Let $(X_{p^n}\to X)_n$ be the $\Zp^{\,d}$-cover corresponding to a group homomorphism $\tau:\pi_1(X)\to\Zp^d$ that induces a surjective homomorphism $\wh{\tau}:\wh{\pi}_1(X)\surj\Zp^d$, 
and write $\tau(\alpha_i)=(v_{i1},\ldots,v_{id})\in\Zp^d$. Then we have
\[
F_{\tau}(T_1,\ldots,T_d) \doteq \Delta_L((1+T_1)^{v_{11}}\cdots (1+T_d)^{v_{1d}},\ldots,(1+T_1)^{v_{c1}}\cdots (1+T_d)^{v_{cd}}) \ \ \text{in}\ \ \Lambda_d,
\]
where $F_{\tau}$ denotes the characteristic element of the $\Lambda_d$-module $\mathcal{H}=\varprojlim H_1(X_{p^n}, \Zp)$. 

{\rm (2)} The space $X$ in above may be replaced by any compact orientable 3-manifold with tori boundary with a chosen basis $t_1,\ldots, t_c$ of $H_1(X)_{\rm free}\cong \Z^c$, where \glssymbol{c}$c$ is the number of the components of $\partial X$. 
\end{theorem}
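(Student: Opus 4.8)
## Proof Proposal for Theorem~\ref{reduced}

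The plan is to reduce everything to the fundamental two exact sequences of \Cref{twoexact} (and its $p$-adic analogue \Cref{twoexact2}) applied to the group extension coming from the commutator subgroup of $\pi_1(X)$. First I would set up the universal free abelian cover. Let $G=\pi_1(X)$ and let $N=\ker({\rm fab}\colon G\surj \Z^c)$, so that $N^{\rm ab}=H_1(X_\infty)$ carries its standard $\Lambda_{\Z,c}$-module structure with characteristic element $\Delta_L$. The key observation is that the $\Z^d$-cover $X_\infty\to X$ attached to $\tau$ factors: since $X$ is the exterior of a link in a $\Z$HS$^3$, the abelianization $H_1(X)\cong\Z^c$ is \emph{free}, generated by the meridians $\alpha_1,\ldots,\alpha_c$, and $\tau\colon G\surj \Z^d$ necessarily factors through ${\rm fab}$ via the matrix $V=(v_{ij})$. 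Thus $\ker\tau \supseteq N$, and $\ker\tau/N$ is the subgroup of $\Z^c$ killed by $V$; more precisely $H_1(X_{\tau,\infty})=(\ker\tau)^{\rm ab}$, and one has an exact sequence relating it to $H_1(X_\infty)=N^{\rm ab}$ and the kernel/cokernel of $V$ acting on $\Z^c$. I would extract from this the identity of characteristic elements over $\Lambda_{\Z,d}$: base-changing $H_1(X_\infty)$ along the ring map $\Lambda_{\Z,c}\to\Lambda_{\Z,d}$, $t_i\mapsto t_1^{v_{i1}}\cdots t_d^{v_{id}}$, computes $H_1(X_{\tau,\infty})$ up to pseudo-null modules (the discrepancy being supported on the locus where $V$ degenerates, which is codimension $\geq 2$ when $d\geq 2$ and $\tau$ is surjective). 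Since characteristic elements are insensitive to pseudo-isomorphism, this gives (1)(i).

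For (1)(ii), I would run the identical argument with \Cref{twoexact2} in place of \Cref{twoexact}: the module $\mca{H}=\varprojlim_n H_1(X_{p^n};\Zp)$ is $\wh{\pi}_1(X)^{\rm ab}$ with its $\Lambda$-action, and by \Cref{prop.hatH1Xinfty} the characteristic element of $\wh{H}_1(X_\infty)=H_1(X_\infty)\otimes_{\Lambda_\Z}\Lambda$ is $\Delta_\tau(1+T_1,\ldots,1+T_d)$. The pro-$p$ completion is exact on the relevant finitely generated modules, so base-changing the free abelian cover along $\Lambda\to\Lambda$, $1+T_i\mapsto (1+T_1)^{v_{i1}}\cdots(1+T_d)^{v_{id}}$, yields $F_\tau$ up to pseudo-isomorphism of $\Lambda$-modules, hence the displayed formula $\doteq$ in $\Lambda_d$. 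Here I would invoke \Cref{1124} and \Cref{prop.hathat} to pass freely between the $\Lambda_\Z$-picture and the $\Lambda$-picture, exactly as in the proof of \Cref{prop.hatH1Xinfty}.

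For part (2), the point is that nothing in the argument used that $X$ is literally a link exterior in a $\Z$HS$^3$ beyond two facts: that $H_1(X)$ has free part $\Z^c$ with a distinguished basis coming from the $c$ boundary tori, and that the relevant $\tau$ factors through ${\rm fab}\colon\pi_1(X)\surj\Z^c=H_1(X)_{\rm free}$. Both persist for an arbitrary compact orientable $3$-manifold with tori boundary once we fix the basis $t_1,\ldots,t_c$ of $H_1(X)_{\rm free}$; the meridians $\alpha_i$ of the statement are replaced by the classes of the chosen generators, and the matrix $V$ records how $\tau$ expresses these in terms of the $t_j$. So the same diagram chase applies verbatim.

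The main obstacle I anticipate is controlling the \emph{discrepancy} between the naive base change $H_1(X_\infty)\otimes_{\Lambda_{\Z,c}}\Lambda_{\Z,d}$ and the actual first homology $H_1(X_{\tau,\infty})$ of the $d$-variable cover. These differ in general by torsion coming from $H_0$ of the intermediate cover (equivalently, from the kernel of the surjection $\Z^c\to\Z^d$ induced by $V$, together with $\Z/p$-factors in the Wang/transfer sequence). The claim is that this difference is pseudo-null over $\Lambda_{\Z,d}$ — its support lies in the vanishing locus of the maximal minors of $V$, which has codimension $\geq 2$ as soon as $d\geq 2$ and $\tau$ is onto — so it does not affect the characteristic element. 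Making this precise, rather than hand-waving it, is the technical heart of the proof; I would handle it by a careful localization argument at height-one primes, checking that at each such prime either the base-change map is already an isomorphism or the extra term localizes to zero, and then applying \Cref{11242} (${\rm Char}={\rm d.h.\,Fitt}$) together with the behaviour of Fitting ideals under the flat-up-to-pseudo-null base change $\Lambda_{\Z,c}\to\Lambda_{\Z,d}$.
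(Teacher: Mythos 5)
Your overall strategy coincides with the paper's: both routes pass through the maximal (free) abelian cover, apply the fundamental two exact sequences (\Cref{twoexact}, \Cref{twoexact2}) to the tower $X_{\rm ab}\to X_{\tau,\infty}\to X$ with $\mathcal{Z}=\ker(\Z^c\surj\Z^d)$, identify the base change $H_1(X_{\rm ab})\otimes_{\Lambda_{\Z,c}}\Lambda_{\Z,d}=H_1(X_{\rm ab})/IH_1(X_{\rm ab})$ with $H_1(X_{\tau,\infty})$ up to pseudo-isomorphism, and finish with Fitting ideals and divisorial hulls. However, the step you yourself flag as the technical heart is justified by the wrong mechanism. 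The error term is the module $(C+IN)/IN$ of \Cref{twoexact}, generated by the commutators $[x_i',x_j']$ of lifts of a basis of $\ker(\Z^c\to\Z^d)$; its support has nothing to do with a degeneracy locus of the matrix $V$ (surjectivity of $\tau$ forces the maximal minors of $V$ to generate the unit ideal, so that locus is empty, and the module is generally nonzero anyway). The actual reason it is pseudo-null is group-theoretic: the Hall--Witt identity (equation \eqref{Hall--Witt}, as in \Cref{actionprop}) shows that $\wt{t}_1,\ldots,\wt{t}_d$ all act trivially on $(C+IN)/IN$, so it is a finitely generated $\Z$-module supported on the closed set cut out by the augmentation ideal $(t_1-1,\ldots,t_d-1)$, which has codimension $d\geq 2$. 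A localization argument hunting for primes where the base change degenerates would not locate this support, so as written your plan for closing the gap does not go through.

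A second, smaller gap is in the final step. Knowing only that ${\rm Char}\,H_1(X_{\rm ab})=(\Delta_L)$ is not enough, because ${\rm d.h.}$ does not commute with the quotient by $I$: base change preserves Fitting ideals (right-exactness of presentations), not characteristic ideals. The paper therefore inputs McMullen's theorem, ${\rm Fitt}\,H_1(X_{\rm ab})=(s_1-1,\ldots,s_c-1)\Delta_L$, observes that the image of the augmentation ideal in $\Lambda_{\Z,d}$ is again the augmentation ideal, whose divisorial hull is trivial precisely because $c\geq d\geq 2$, and only then concludes ${\rm d.h.}{\rm Fitt}$ of the base change is $(\Delta_L)\bmod I$. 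Your closing sentence gestures at "the behaviour of Fitting ideals under base change" but without identifying ${\rm Fitt}\,H_1(X_{\rm ab})$ explicitly the computation cannot be completed. The same two repairs are needed, mutatis mutandis, in your treatment of (1)(ii) and (2).
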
 

\begin{proof} (1) (i) 
Recall that $\{t_1,\ldots, t_d\}$ is a $\Z$-basis of the the target $\Z^d$ of $\tau$. 
Let ${\rm ab}:\pi_1(X)\to H_1(X)\cong \Z^c$ be the abelianization map and consider the induced map $\ul{\tau}:\Z^c\surj \Z^d$. 
Let $\{s_1,\ldots, s_c\}$ be a $\Z$-basis of $\Z^c$ defined by the meridians of $L$ and  
let $\wt{t}_i$ be an element of $\ul{\tau}^{-1}(t_i)$ for each $1\leq i\leq d$.
Then, by the theory of invariant factors over $\Z$, we may find a basis of the form $\{\wt{t}_1,\ldots,\wt{t}_c\}$ of $\Z^c$ so that $\{\wt{t}_{d+1},\ldots, \wt{t}_c\}$ becomes a basis of $\Z^{c-d}={\rm Ker}(\ul{\tau})$. 
Note that $\Lambda_{\Z,c}=\Z[\Z^c]=\Z[s_1^{\pm 1},\ldots,s_c^{\pm 1}]=\Z[\wt{t}_1^{\pm 1},\ldots,\wt{t}_c^{\pm 1}]$ 
and $\Lambda_{\Z,d}=\Z[\Z^d]=\Z[t_1^{\pm 1},\ldots, t_d^{\pm 1}]=\Lambda_{\Z,c}/I$. 

Let $X_{\rm ab}$ denote the maximal abelian cover of $X$, i.e., the covering space of $X$ corresponding to ${\rm ab}$. Consider the commutative diagram with exact rows  
\[
\xymatrix{
0\ar[r]&H_1(X_{\rm ab})\ar[r]\ar@{=}[d]&\pi_1(X)/[\pi_1(X_{\rm ab}),\pi_1(X_{\rm ab})]\ar[r]^{\hspace{20mm}\rm ab}\ar@{}[d]|{\bigcup}&\Z^c\ar[r]\ar@{}[d]|{\bigcup}&0\\
0\ar[r]&H_1(X_{\rm ab})\ar[r]&\pi_1(X_{\infty})/[\pi_1(X_{\rm ab}),\pi_1(X_{\rm ab})]\ar[r]&\ker(\ul{\tau}:\Z^c\surj\Z^d)\ar[r]&0.
}
\]
By \Cref{twoexact} with
\begin{itemize}
\item
$G:=\pi_1(X)/[\pi_1(X_{\rm ab}),\pi_1(X_{\rm ab})]$,
\item
$\mathcal{P}:G\to \pi_1(X)/\pi_1(X_{\rm ab})\cong \Z^c$, 
so $N=\pi_1(X_{\rm ab})^{\rm ab}=H_1(X_{\ab})$,
\item $\mathcal{Z}:=\ker(\ul{\tau}:\Z^c\to\Z^d)$ ($\cong\Z^{c-d}$), so 
$H^{\rm ab}=({\rm Ker}\tau)^{\rm ab} =H_1(X_{\infty})$,
\item $\{t_1',\ldots, t'_{d'}\}=\{\wt{t}_{d+1},\ldots,\wt{t}_c\}$, 
\end{itemize}
we have two exact sequences of $\Lambda_{\Z,d}$-modules
\begin{gather*} 
0\to H_1(X_{\rm ab})/(C+IH_1(X_{\rm ab}))\to H_1(X_{\infty})\to \Z^{c-d}\to 0,\\ 
0\to (C+IH_1(X_{\rm ab}))/IH_1(X_{\rm ab})\to H_1(X_{\rm ab})/IH_1(X_{\rm ab})\to H_1(X_{\rm ab})/(C+IH_1(X_{\rm ab}))\to 0, 
\end{gather*} 
where $I=(\wt{t}_{d+1}-1,\ldots, \wt{t}_c-1)$ is an ideal of $\Lambda_{\Z,c}$ and $C$ is the corresponding $\Lambda_{\Z,c}$-submodule of $H_1(X_{\rm ab})$ defined in \Cref{twoexact}.
By using equation \eqref{Hall--Witt} derived from the Hall--Witt identity, 
for each $1\leq i\leq d$ and $d+1\leq j,k\leq c$, we have $(\wt{t}_i-1)c_{jk}=-(\wt{t}_j-1)c_{ki}-(\wt{t}_k-1)c_{ij} \in IH_1(X_{\rm ab})$, so 
the actions of $\wt{t}_{1},\ldots, \wt{t}_d$ on $(C+IH_1(X_{\rm ab}))/IH_1(X_{\rm ab})$ are trivial. 
Hence $(C+IH_1(X_{\rm ab}))/IH_1(X_{\rm ab})$ is a $\Z$-module generated by at most $\frac{(c-d)(c-d-1)}{2}$ elements. 
Since $d\geq 2$, we obtain a pseudo-isomorphism of $\Lambda_{\Z,d}$-modules
\[H_1(X_{\rm ab})/IH_1(X_{\rm ab})\to H_1(X_{\infty}),\]
hence the equality of their d.h.Fitt's. 

Consider an exact sequence of $\Lambda_{\Z,c}$-modules
\[ 
\Lambda_{\Z,c}^r\to\Lambda_{\Z,c}^s\to H_1(X_{\rm ab})\to 0. 
\] 
Since $X$ is a compact orientable 3-manifold 
whose boundary is a union of tori and the free $\Z$-rank satisfies ${\rm rank}_{\Z} H_1(X)\geq 2$, 
McMullen's theorem \cite[Theorem 5.1]{McMullen2002ASENS} yields that 
\[
{\rm Fitt}(H_1(X_{\rm ab}))=(s_1-1,\ldots,s_c-1)\Delta_L.
\] 
By taking $\otimes\Lambda_{\Z,d}$, we have an exact sequence of $\Lambda_{\Z,d}$-modules
\[
\Lambda_{\Z,d}^r\to\Lambda_{\Z,d}^s\to H_1(X_{\rm ab})/IH_1(X_{\rm ab})\to 0.
\]
Note that the image of the augmentation ideal $(s_1-1,\ldots,s_c-1) \subset \Lambda_{\Z,c}$ via 
$\Lambda_{\Z,c}\surj \Lambda_{\Z,d}=\Lambda_{\Z,c}/I$ 
is again the augmentation ideal $(t_1-1,\ldots, t_d-1)\subset \Lambda_{\Z,d}$. 
Since $c\geq d\geq 2$, the divisorial hulls of the augmentation ideals are trivial. 

Therefore, we obtain 
\[
(\Delta_{\tau})
={\rm d.h.}{\rm Fitt}H_1(X_\infty)
={\rm d.h.}{\rm Fitt}H_1(X_{\rm ab})/IH_1(X_{\rm ab})
=(\Delta_L)\ {\rm mod}\ I 
\]
in $\Lambda_{\Z,d}$,  
where $\Delta_{\tau}$ is the Alexander polynomial of $X_{\infty}\to X$. This completes the proof of (i).\\ 

(ii) 
We apply a similar argument to arbitrary $\Zp^{\,d}$-covers. 
Let  ${\rm ab}:\wh{\pi}_1(X)\surj \wh{H}_1(X)\cong \Zp^c$ be the abelianization map 
and consider the induced map $\ul{\wh{\tau}}:\Zp^c\surj \Zp^{\,d}$. 
Replacing 
\begin{itemize}
\item $\pi_1(X_\infty)$ by $\ker \wh{\tau}$ 
\item $H_1(X_\infty)=(\ker \tau)^{\rm ab}$ by $\mca{H}=\varprojlim H_1(X_{p^n};\Zp)=(\ker \wh{\tau})^{\rm ab}$
\item $\pi_1$ and $H_1$ by $\wh{\pi}_1$ and $\wh{H}_1$ in other places 
\item $\Lambda_{\Z,\ast}$ by $\Lambda_{\ast}$, 
\item $\Z$ by $\Zp$ 
\item $\tau$ by $\wh{\tau}$ 
\item \Cref{twoexact} by \Cref{twoexact2}
\end{itemize}
in the proof of (1), we obtain a pseudo-isomorphism of $\Lambda_d$-modules 
\[\wh{H}_1(X_{\rm ab})/I\wh{H}_1(X_{\rm ab}) \to \mca{H}.\] 
Indeed, consider the commutative diagram with exact rows  
\[
\xymatrix{
0\ar[r]&\wh{H}_1(X_{\rm ab}) \ar[r]\ar@{=}[d] & \widehat{\pi}_1(X)/[\widehat{\pi}_1(X_{\rm ab}),\widehat{\pi}_1(X_{\rm ab})]\ar[r]^{\hspace{20mm} \rm ab}\ar@{}[d]|{\bigcup} & \Zp^c \ar[r] \ar@{}[d]|{\bigcup} & 0 \\
0\ar[r]& \wh{H}_1(X_{\rm ab})\ar[r] & \ker\widehat{\tau}/[\widehat{\pi}_1(X_{\rm ab}), \widehat{\pi}_1(X_{\rm ab})]\ar[r] & \ker (\ul{\wh{\tau}}:\Zp^c\surj \Zp^{\,d}) \ar[r] & 0.
}
\]
By \Cref{twoexact2} with
\begin{itemize}
\item $G:=\widehat{\pi}_1(X)/[\widehat{\pi}_1(X_{\rm ab}), \widehat{\pi}_1(X_{\rm ab})]$,
\item $\mathcal{P}:G\to \wh{\pi}_1(X)/\widehat{\pi}(X_{\rm ab})\cong \Zp^c$, so 
$N=\wh{\pi}_1(X_{\rm ab})^{\rm ab}=\wh{H}_1(X_{\rm ab}).$ 
\item $\mathcal{Z}:=\ker(\Zp^c\to\Zp^d)$ ($\cong\Zp^{c-d}$),  
so $H^{\rm ab}=(\ker\widehat{\tau})^{\rm ab}=\mathcal{H}$,
\end{itemize}
we have two exact sequences of $\Lambda_d$-modules
\begin{gather*}
0\to \wh{H}_1(X_{\rm ab})/(C+I\wh{H}_1(X_{\rm ab}))\to\mathcal{H}\to \Zp^{c-d}\to 0,\\ 
0\to(C+I\wh{H}_1(X_{\rm ab}))/I\wh{H}_1(X_{\rm ab})\to \wh{H}_1(X_{\rm ab})/I\wh{H}_1(X_{\rm ab})\to \wh{H}_1(X_{\rm ab})/(C+I\wh{H}_1(X_{\rm ab}))\to 0,
\end{gather*} 
where $I$ is the ideal of $\Lambda_c$ and $C$ is the $\Lambda_c$-submodule of $\wh{H}_1(X_{\rm ab})$ defined in \Cref{asymptotic2}.
Likewise, since $d \geq 2$, we obtain the desired pseudo-isomorphism.

Consider an exact sequence of $\Lambda_c$-modules
\[
\Lambda_c^r\to\Lambda_c^s\to \wh{H}_1(X_{\rm ab})\to 0.
\]
Let $\Delta_L$ denote the Alexander polynomial of $L$. Then, as was discussed above, we have
\[
{\rm Fitt}(H_1(X_{\rm ab}))=(s_1-1,\ldots,s_c-1)\Delta_L(s_1,\ldots,s_c).
\]
By the arguments in Section $4$, we have
\[
{\rm Fitt}(\wh{H}_1(X_{\rm ab}))=(S_1,\ldots,S_c)\Delta_L(1+S_1,\ldots,1+S_c).
\]
By taking $\otimes\Lambda_d$, we have an exact sequence of $\Lambda_d$-modules
\[
\Lambda_d^r\to\Lambda_d^s\to \wh{H}_1(X_{\rm ab})/I\wh{H}_1(X_{\rm ab})\to 0.
\]
Note that the image of $(S_1,\ldots,S_c)\subset \Lambda_c$ via $\Lambda_c\surj \Lambda_d=\Lambda_c/I$ is $(T_1,\ldots, T_d)\subset \Lambda_d$. 
Since $c\geq d\geq 2$, their divisorial hulls are trivial. 
Therefore, we obtain 
\[
(F_\tau)
={\rm d.h.Fitt}\mca{H}
={\rm d.h.Fitt}\wh{H}_1(X_{\rm ab})/I\wh{H}_1(X_{\rm ab})
=(\Delta_L){\rm \ mod\ }I
\]
in $\Lambda_d$, where $F_{\tau}$ is the characteristic element of $\mathcal{H}$. Thus we obtain the assertion (ii). 

(2) Since \cite[Theorem 5.1]{McMullen2002ASENS} is applicable to any compact orientable 3-manifold with tori boundary, if we replace $X_{\rm ab}$ by the maximal free abelian cover $X_{\rm fab}$ in the argument above, then we obtain the assertion. 
\end{proof} 

\begin{example} \label{eg.Z52}
Let $L=K_1\sqcup K_2 \sqcup K_3$ be a 3-component link in $S^3$ and consider the $\Z_5^{\,2}$-cover of  the exterior $X=S^3\setminus {\rm Int}\,V_L$ corresponding to the group homomorphism defined by 
$\tau:\pi_1(X)\to \Z_5^{\,2}$ by $\alpha_1\mapsto (1,0)$, $\alpha_2\mapsto (0,1)$, and $\alpha_3\mapsto (\sqrt{-1},0)$. 
Then the characteristic element of the Iwasawa module $\mca{H}=\varprojlim H_1(X_{p^n};\Zp)$ is given by 
\[
\Delta_L(t_1,t_2,t_1^{\sqrt{-1}})
=F_\tau(T_1,T_2)
=\Delta_L(1+T_1,1+T_2,(1+T_1)^{\sqrt{-1}})
\] in $\Lambda_2=\Zp[\![t_1^{\Zp},t_2^{\Zp}]\!]\cong \Zp[\![T_1,T_2]\!];\, t_i\mapsto 1+T_i$. 
We remark that this $\Z_5^{\,2}$-cover is not isomorphic to the one derived from any $\Z^2$-cover. 
Indeed, if so, then $\tau$ must factor through $\Z^2$, but the image ${\rm im}\tau$ is a free $\Z$-module generated by $(1,0)$, $(0,1)$, $(\sqrt{-1},0)$ and is isomorphic to $\Z^3$. 
\end{example}

\begin{remark} A similar assertion to \Cref{reduced} holds for $\wh{\Z}=\varprojlim \Z/n\Z$-cover. 
\end{remark}

\section{Estimates in $\Zp^{\,d}$-covers} 
In this section, based on our group-theoretic results obtained in \Cref{sec.twoexact}, 
we will establish several versions of estimation formulas for the torsion/rank growth in unbranched $\Zp^{\,d}$-covers over compact 3-manifolds. 

\subsection{Unbranched $\Z^d$-covers} 
\begin{theorem} \label{examplezdcover} 
Let $X_\infty\to X$ be a $\Z^d$-cover over a compact connected orientable 3-manifold $X$. Then we have 
\[e(H_1(X_{p^n}))=e(H_1(X_\infty)/I_{p^n}H_1(X_\infty))+O(n)\ (n\to \infty).\]
\end{theorem}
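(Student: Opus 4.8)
The plan is to reduce \Cref{examplezdcover} to the estimation formula \Cref{asymptotic} for $\Lambda_\Z$-modules, applied to the module $N = H_1(X_\infty)$ equipped with its natural $\Lambda_\Z$-structure. First I would set up the group-theoretic framework: let $G := \pi_1(X)/[\pi_1(X_\infty),\pi_1(X_\infty)]$, and let $\mathcal{P}\colon G \surj \Z^d$ be the map induced by $\tau\colon \pi_1(X)\surj \Z^d$, so that $N := \ker\mathcal{P} = \pi_1(X_\infty)^{\rm ab} = H_1(X_\infty)$ is abelian and acquires the $\Lambda_\Z = \Z[t_1^{\pm 1},\ldots,t_d^{\pm 1}]$-module structure described in \Cref{sec.twoexact}. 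For each $n$, taking $\mathcal{Z} = p^n\Z^d$ in \Cref{twoexact}, we get $H_{p^n} := \mathcal{P}^{-1}(p^n\Z^d) = \pi_1(X_{p^n})/[\pi_1(X_\infty),\pi_1(X_\infty)]$, whose abelianization is $H_{p^n}^{\rm ab} = \pi_1(X_{p^n})^{\rm ab} = H_1(X_{p^n})$. So $e(H_1(X_{p^n})) = e(H_{p^n}^{\rm ab})$.

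Next I would check the finite-generation hypothesis needed for \Cref{asymptotic}, namely that $N = H_1(X_\infty)$ is finitely generated over $\Lambda_\Z$. This is standard: $X$ is a compact 3-manifold, hence homotopy equivalent to a finite CW complex, so $\pi_1(X)$ is finitely presented; the Alexander module $H_1(X_\infty)$ is then a finitely generated (indeed finitely presented) $\Lambda_\Z$-module, e.g. via the chain complex of the maximal abelian cover or via Fox calculus on a finite presentation. With this verified, \Cref{asymptotic} applies directly and yields
\[
e(H_{p^n}^{\rm ab}) = e(N/I_{p^n}N) + O(n) \qquad (n\to\infty),
\]
where $I_{p^n} = (t_1^{p^n}-1,\ldots,t_d^{p^n}-1)$. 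Since $N/I_{p^n}N = H_1(X_\infty)/I_{p^n}H_1(X_\infty)$ by construction, combining the two displayed equalities gives exactly
\[
e(H_1(X_{p^n})) = e(H_1(X_\infty)/I_{p^n}H_1(X_\infty)) + O(n),
\]
which is the claim.

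The only genuinely non-routine point is making sure the identifications of covering spaces with quotients of $\pi_1(X)$ are correct — in particular that $H_{p^n}^{\rm ab}$ is really $H_1(X_{p^n})$ and not merely a quotient or extension of it — but this is immediate from the fact that $[\pi_1(X_\infty),\pi_1(X_\infty)] \subset [\pi_1(X_{p^n}),\pi_1(X_{p^n})]$ together with $\pi_1(X_{p^n})/[\pi_1(X_\infty),\pi_1(X_\infty)]$ being precisely $H_{p^n}$ in the notation of \Cref{twoexact}; taking abelianization commutes with this. So I do not expect a real obstacle here: the theorem is essentially the topological specialization of \Cref{asymptotic}, and the proof is a short two- or three-line argument invoking that result after the bookkeeping of fundamental groups. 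One could remark, as the authors presumably do, that the same argument with \Cref{asymptotic2} in place of \Cref{asymptotic} handles genuine $\Zp^{\,d}$-covers and gives \Cref{thm.unbr-asymptotic}.
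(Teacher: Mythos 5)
Your proposal is correct and follows essentially the same route as the paper: the authors likewise set $G=\pi_1(X)/[\pi_1(X_\infty),\pi_1(X_\infty)]$ with $\mathcal{P}$ induced by $\tau$, identify $N=\ker\mathcal{P}\cong H_1(X_\infty)$ via Hurewicz, note $H_{p^n}^{\rm ab}=H_1(X_{p^n})$, and invoke \Cref{asymptotic} (asserting finite generation of $H_1(X_\infty)$ over $\Lambda_\Z$ just as briefly as you do). No substantive difference.
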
 

\begin{proof}
Let $\tau:\pi_1(X)\twoheadrightarrow\Z^d$ be a surjective homomorphism corresponding to the given $\Z^d$-cover. 
Since $\pi_1(X)/\pi_1(X_{\infty})\cong \Z^d$, we may choose a basis $\{t_1,\ldots,t_d\}$ of $\pi_1(X)/\pi_1(X_{\infty})$. Let $\Lambda_{\Z}:=\Z[t^{\pm 1}_1,\ldots,t^{\pm 1}_d]$. Then $H_1(X_{\infty})$ becomes a finitely generated $\Lambda_{\Z}$-module in a natural way. To be precise, $H_1(X_{\infty})$ becomes a module over a group ring of the covering transformation group $\pi_1(X)/\pi_1(X_{\infty})$ over $\Z$. 
Group theoretically, this action coincides with the conjugation action on the kernel of the natural surjective group homomorphism 
\[\mathcal{P}:\pi_1(X)/[\pi_1(X_{\infty}),\pi_1(X_{\infty})]\to\pi_1(X)/\pi_1(X_{\infty})\] 
via the Hurwicz isomorphism
\[N=\ker\mathcal{P}=\pi_1(X_\infty)/[\pi_1(X_{\infty}),\pi_1(X_{\infty})]\cong H_1(X_{\infty}).\]  
Since $\{t_1,\ldots,t_d\}$ is a $\Z$-basis of $\pi_1(X)/\pi_1(X_{\infty})$, we may regard $H_1(X_{\infty})$ as a $\Lambda_{\Z}$-module. Consider the commutative diagram with exact rows 
\[
\xymatrix{
0\ar[r]&H_1(X_{\infty})\ar[r]\ar@{=}[d]&\pi_1(X)/[\pi_1(X_{\infty}), \pi_1(X_{\infty})]\ar[r]^{\hspace{20mm} \mathcal{P}}\ar@{}[d]|{\bigcup}&\Z^d\ar[r]\ar@{}[d]|{\bigcup}&0\\
0\ar[r]&H_1(X_{\infty})\ar[r]&\pi_1(X_n)/[\pi_1(X_{\infty}), \pi_1(X_{\infty})]\ar[r]&n\Z^d\ar[r]&0.
}
\]
Since $H_n^{\rm ab}=H_1(X_n)$, we obtain 
\[
e(H_1(X_{p^n}))=e(H_1(X_{\infty})/I_{p^n}H_1(X_{\infty}))+O(n)\ \ (n\to\infty). \qedhere
\]
\end{proof} 

\subsection{Unbranched $\Zp^{\,d}$-covers} 
\begin{theorem} \label{thm.unbr-asymptotic} 
Let $(X_{p^n}\to X)_n$ be an (unbranched) $\Zp^{\,d}$-cover of a compact connected orientable 3-manifold and put $\mathcal{H}:=\varprojlim H_1(X_{p^n},\Zp)$. Then 
\begin{gather} 
e(\mathcal{H}/\mathcal{I}_{p^n}\mathcal{H})=e(H_1(X_{p^n}))+O(n) \label{examplezpdcover},\\ 
r(H_1(X_{p^n},\Zp))=r(\mathcal{H}/\mathcal{I}_{p^n}\mathcal{H})+O(1) \label{rankequation}. 
\end{gather}
\end{theorem}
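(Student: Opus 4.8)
The plan is to reduce both formulas to the topological fundamental exact sequences \Cref{twoexact2}, the triviality of the conjugation action (the $\Lambda$-version of \Cref{actionprop}), and the estimation formula \Cref{asymptotic2}, after recasting the $\Zp^{\,d}$-cover in the group-theoretic language of \Cref{sec.twoexact}.

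First I would fix the model. Let $\wh{\tau}\colon\wh{\pi}_1(X)\surj\Zp^{\,d}$ be the surjection defining the cover, put $K:=\ker\wh{\tau}$, and set $G:=\wh{\pi}_1(X)/\overline{[K,K]}$, with $\mathcal{P}\colon G\surj\Zp^{\,d}$ induced by $\wh{\tau}$ and abelian kernel $N:=K^{\rm ab}$. Since $X$ is compact, $\wh{\pi}_1(X)$ is topologically finitely generated, hence so is $K$ as a normal subgroup with finitely generated quotient, and therefore $N$ is a finitely generated $\Lambda$-module. Taking $\mathcal{Z}=p^n\Zp^{\,d}$ and $H_{p^n}:=\mathcal{P}^{-1}(p^n\Zp^{\,d})$, the (profinite) Hurewicz isomorphism together with the density of $\pi_1(X_{p^n})=\tau^{-1}(p^n\Zp^{\,d})$ in $H_{p^n}$ (using also that $\overline{[K,K]}$ already dies in $H_{p^n}^{\rm ab}$ because $K\subseteq\overline{\pi_1(X_{p^n})}$) yields $\Lambda$-equivariant identifications $N\cong\mathcal{H}=\varprojlim_n H_1(X_{p^n};\Zp)$ and $H_{p^n}^{\rm ab}\cong H_1(X_{p^n};\Zp)$; a bounded-in-$n$ discrepancy here, should it occur, is harmless for what follows. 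Finally, flatness of $\Zp$ over $\Z$ gives $e(H_1(X_{p^n}))=e(H_1(X_{p^n};\Zp))$.

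With this model, \eqref{examplezpdcover} is immediate: \Cref{asymptotic2} applied to $G,\mathcal{P},N$ with $\mathcal{Z}=p^n\Zp^{\,d}$ gives $e(H_{p^n}^{\rm ab})=e(N/\mathcal{I}_{p^n}N)+O(n)$, that is, $e(H_1(X_{p^n}))=e(\mathcal{H}/\mathcal{I}_{p^n}\mathcal{H})+O(n)$. For \eqref{rankequation}, I would instead use the two exact sequences of \Cref{twoexact2} for this $\mathcal{Z}$, which read
\[
0\to\mathcal{H}/(C_{p^n}+\mathcal{I}_{p^n}\mathcal{H})\to H_1(X_{p^n};\Zp)\to\Zp^{\,d}\to 0
\]
and
\[
0\to (C_{p^n}+\mathcal{I}_{p^n}\mathcal{H})/\mathcal{I}_{p^n}\mathcal{H}\to\mathcal{H}/\mathcal{I}_{p^n}\mathcal{H}\to\mathcal{H}/(C_{p^n}+\mathcal{I}_{p^n}\mathcal{H})\to 0.
\]
Taking free $\Zp$-ranks (tensoring with $\Qp$ over $\Zp$ is exact), the first gives $r(H_1(X_{p^n};\Zp))=r(\mathcal{H}/(C_{p^n}+\mathcal{I}_{p^n}\mathcal{H}))+d$ and the second gives $r(\mathcal{H}/\mathcal{I}_{p^n}\mathcal{H})=r(\mathcal{H}/(C_{p^n}+\mathcal{I}_{p^n}\mathcal{H}))+r\big((C_{p^n}+\mathcal{I}_{p^n}\mathcal{H})/\mathcal{I}_{p^n}\mathcal{H}\big)$. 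By the $\Lambda$-version of \Cref{actionprop}, every $t_i$ acts trivially on $(C_{p^n}+\mathcal{I}_{p^n}\mathcal{H})/\mathcal{I}_{p^n}\mathcal{H}$, so this $\Zp$-module is generated by the at most $\tfrac{d(d-1)}{2}$ classes of the commutators $[x_i^{p^n},x_j^{p^n}]$ and has $\Zp$-rank bounded independently of $n$. Subtracting the two identities yields $r(H_1(X_{p^n};\Zp))-r(\mathcal{H}/\mathcal{I}_{p^n}\mathcal{H})=d-r\big((C_{p^n}+\mathcal{I}_{p^n}\mathcal{H})/\mathcal{I}_{p^n}\mathcal{H}\big)=O(1)$, which is \eqref{rankequation}.

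The step I expect to be the main obstacle is the first one: making the passage between the topology (the homology of the finite covers $X_{p^n}$ and their inverse limit $\mathcal{H}$) and the profinite group model ($H_{p^n}^{\rm ab}$ and $N=K^{\rm ab}$) precise and compatible with the $\Lambda$-actions, and confirming finite generation of $\mathcal{H}$ over $\Lambda$. Once that dictionary is secured, the two formulas are a one-line application of \Cref{asymptotic2} and a short rank count through the exact sequences of \Cref{twoexact2}.
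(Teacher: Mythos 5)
Your proposal is correct and follows essentially the same route as the paper: the paper also recasts the cover via $\wh{\tau}:\wh{\pi}_1(X)\surj\Zp^{\,d}$, identifies $(\ker\wh{\tau})^{\rm ab}$ with $\mathcal{H}$ using $(\varprojlim_i G_i)^{\rm ab}=\varprojlim_i(G_i^{\rm ab})$, invokes \Cref{twoexact2} to get the two exact sequences, and uses the bound $\tfrac{d(d-1)}{2}$ on the generators of $(C_{p^n}+\mathcal{I}_{p^n}\mathcal{H})/\mathcal{I}_{p^n}\mathcal{H}$ for both the torsion and rank estimates. The only cosmetic difference is that the paper secures finite generation of $\mathcal{H}$ over $\Lambda$ via the topological Nakayama lemma (compactness of $\mathcal{H}$ plus finite generation of $\mathcal{H}/\mathcal{I}_{p^n}\mathcal{H}$ over $\Zp$) rather than directly from topological finite generation of $\wh{\pi}_1(X)$, but both arguments are standard and lead to the same conclusion.
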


\begin{proof}
Let $\tau:\pi_1(X)\to \Zp^{\,d}$ be a homomorphism corresponding to the give $\Zp^{\,d}$-cover, so that it induces a surjective homomorphism $\widehat{\tau}:\widehat{\pi}_1(X)\surj\Zp^d$. 
Let $\widehat{\tau}_n:\widehat{\pi}_1(X)\surj\Z/p^n\Z$ denote the composition of 
$\widehat{\tau}$ and the natural surjection $\Zp^d\surj(\Z/p^n\Z)^d$. Consider the case $\mathcal{P}=\widehat{\tau}:\widehat{\pi}_1(X)/[\ker\tau, \ker\tau]\surj \Zp^d$ and $N=\ker\widehat{\tau}$ in \Cref{twoexact2}.
We have a commutative diagram with exact rows  
\[
\xymatrix{
0\ar[r]&(\ker\widehat{\tau})^{\rm ab}\ar[r]\ar@{=}[d]&\widehat{\pi}_1(X)/[\ker\widehat{\tau}, \ker\widehat{\tau}]\ar[r]^{\hspace{15mm}\mathcal{P}}\ar@{}[d]|{\bigcup}&\Zp^d\ar[r]\ar@{}[d]|{\bigcup}&0\\
0\ar[r]&(\ker\widehat{\tau})^{\rm ab}\ar[r]&\ker\widehat{\tau}_n/[\ker\widehat{\tau},\ker\widehat{\tau}]\ar[r]&p^n\Zp^d\ar[r]&0.
}
\]
Note that, since $(\varprojlim_i G_i)^{{\rm ab}}=\varprojlim_i(G_i^{{\rm ab}})$ for an inverse system $(G_i)_i$ of profinite groups (cf.~\cite[Chapter 4, Section 2, Exercise 6]{Neukirch}), we have
\[
(\ker{\widehat{\tau}})^{\rm ab}=(\varprojlim_n \ker \widehat{\tau}_n)^{\rm ab}
=\varprojlim_n ((\ker \widehat{\tau}_n)^{\rm ab})
\ = \varprojlim_n ((\widehat{\ker \tau_n})^{\rm ab}) 
=\varprojlim_n H_1(X_{p^n},\Zp)=\mathcal{H}.
\]
Hence we have two exact sequences of $\Lambda$-modules
\begin{gather} 
\label{ex1.unbr}
0\to \mathcal{H}/(C_{p^n}+\mathcal{I}_{p^n} \mathcal{H})\to H_1(X_{p^n},\Zp)\to (p^n\Zp)^d \to 0,\\ 
\label{ex2.unbr}
0\to(C_{p^n}+\mathcal{I}_{p^n}\mathcal{H})/\mathcal{I}_{p^n} \mathcal{H}\to \mathcal{H}/\mathcal{I}_{p^n}\mathcal{H}\to \mathcal{H}/(C_{p^n}+\mathcal{I}_{p^n}\mathcal{H})\to 0.
\end{gather}
We have that $\mathcal{H}/\mathcal{I}_{p^n}\mathcal{H}$ is finitely generated over $\Zp$ since other modules are finitely generated over $\Zp$. Since $H_1(X_{p^n},\Zp)$ are profinite, $\mathcal{H}=\varprojlim_nH_1(X_{p^n},\Zp)$ is a compact Hausdorff $\Lambda$-module. By Nakayama's lemma \cite{BalisterHowson1997}, $\mathcal{H}$ is a finitely generated $\Lambda$-module. Hence we obtain
\[
e(\mathcal{H}/\mathcal{I}_{p^n}\mathcal{H})=e(H_1(X_{p^n}))+O(n).
\]
Furthermore, since $(C_{p^n}+\mathcal{I}_{p^n}\mathcal{H})/\mathcal{I}_{p^n} \mathcal{H}$ is generated by at most $\frac{d(d-1)}{2}$ over $\Zp$, by comparing the free $\Zp$-ranks of these two exact sequences, we also have
\[
r(H_1(X_{p^n},\Zp))=r(\mathcal{H}/\mathcal{I}_{p^n}\mathcal{H})+O(1). \qedhere 
\]
\end{proof}

\subsection{$\Zp^{\,d}$-covers derived from $\Z^d$-covers} 
In this subsection, we prove the following.
\begin{theorem}\label{zcovertheorem} 
Let $X_\infty \to X$ be a $\Z^d$-cover over a compact connected orientable 3-manifold and 
let $(X_{p^n}\to X)_n$ denote the derived $\Zp^{\, d}$-cover. Then, 
the $\Lambda$-module $\mathcal{H}=\varprojlim_n H_1(X_{p^n};\Zp)$ is pseudo-isomorphic to 
$\wh{H}_1(X_{\infty}):=H_1(X_\infty)\otimes_{\Lambda_\Z}\Lambda=\varprojlim_n H_1(X_\infty;\Zp)/I_{p^n}H_1(X_\infty;\Zp)$. 
In particular, their characteristic elements in $\Lambda$ coincide with each other.
\end{theorem}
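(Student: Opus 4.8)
The plan is to pass to group theory and run the tower of covers through the fundamental two exact sequences of \Cref{twoexact}. Write $G=\pi_1(X)$, let $\tau\colon G\twoheadrightarrow\Z^d$ be the surjection defining $X_\infty$, set $N=\ker\tau=\pi_1(X_\infty)$, and for each $n$ put $N_{p^n}=\tau^{-1}(p^n\Z^d)=\pi_1(X_{p^n})$, so that $N\le N_{p^n}\le G$, $N_{p^n}/N\cong p^n\Z^d$, and $\bigcap_n N_{p^n}=N$. Since $H_1(X_\infty)=N^{\rm ab}$ and $H_1(X_{p^n})=N_{p^n}^{\rm ab}$, applying \Cref{twoexact} to $\mathcal P\colon G/[N,N]\twoheadrightarrow\Z^d$ (with kernel $N^{\rm ab}=H_1(X_\infty)$) and $\mathcal Z=p^n\Z^d$ produces, for every $n$, a split exact sequence
\[
0\to H_1(X_\infty)/(C_{p^n}+I_{p^n}H_1(X_\infty))\to H_1(X_{p^n})\to\Z^d\to0
\]
together with the second fundamental exact sequence exhibiting $H_1(X_\infty)/(C_{p^n}+I_{p^n}H_1(X_\infty))$ as the quotient of $H_1(X_\infty)/I_{p^n}H_1(X_\infty)$ by $E_n':=(C_{p^n}+I_{p^n}H_1(X_\infty))/I_{p^n}H_1(X_\infty)$.

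Next I would tensor with $\Zp$ (flat over $\Z$) and pass to $\varprojlim_n$. One first checks that as $n$ grows these assemble into a morphism of inverse systems compatible with the transition maps $H_1(X_{p^{n+1}};\Zp)\to H_1(X_{p^n};\Zp)$ defining $\mathcal H$: indeed $I_{p^{n+1}}\subseteq I_{p^n}$, and $C_{p^{n+1}}\subseteq C_{p^n}$ because $C_{p^{n+1}}$ is generated over $\Lambda_{\Z}$ by elements of $C_{p^n}$ (the commutator computation in the proof of \Cref{actionprop}), while on the right-hand $\Z^d$ the transition is the inclusion $p^{n+1}\Z^d\hookrightarrow p^n\Z^d$, i.e.\ multiplication by $p$ after identifying each with $\Z^d$. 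Hence $\varprojlim_n(\Zp^d,\times p)=\varprojlim^1_n(\Zp^d,\times p)=0$, so the inverse limit of the first sequence gives an isomorphism $\varprojlim_n B_n\congto\mathcal H$, where $B_n:=H_1(X_\infty;\Zp)/(C_{p^n}+I_{p^n}H_1(X_\infty;\Zp))$; and by \Cref{prop.hathat}, $\varprojlim_n H_1(X_\infty;\Zp)/I_{p^n}H_1(X_\infty;\Zp)=\wh{H}_1(X_\infty)$. Taking $\varprojlim_n$ of the surjections $H_1(X_\infty;\Zp)/I_{p^n}H_1(X_\infty;\Zp)\twoheadrightarrow B_n$ then yields a $\Lambda$-homomorphism $f\colon\wh{H}_1(X_\infty)\to\varprojlim_n B_n=\mathcal H$ with $\ker f=\varprojlim_n E_n$ and $\coker f\hookrightarrow\varprojlim^1_n E_n$, where $E_n:=E_n'\otimes\Zp$.

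To finish, I would invoke \Cref{actionprop}: each $t_i$ acts trivially on $E_n$, so the ideal $(T_1,\dots,T_d)$ annihilates every $E_n$, hence also $\varprojlim_n E_n$ and $\varprojlim^1_n E_n$, and therefore both $\ker f$ and $\coker f$. If $d=1$ then $C_{p^n}=0$, so $E_n=0$ and $f$ is an isomorphism. If $d\ge2$, then $(T_1,\dots,T_d)$ is a prime of height $d\ge2$ in $\Lambda$, hence lies in no height-$1$ prime $\mathfrak p$, so $(\ker f)_{\mathfrak p}=(\coker f)_{\mathfrak p}=0$, i.e.\ $f_{\mathfrak p}$ is an isomorphism, for every height-$1$ prime $\mathfrak p$. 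In both cases $f$ is a pseudo-isomorphism of finitely generated $\Lambda$-modules — $\mathcal H$ is finitely generated over $\Lambda$ by the argument in the proof of \Cref{thm.unbr-asymptotic}, and $\wh{H}_1(X_\infty)=H_1(X_\infty)\otimes_{\Lambda_{\Z}}\Lambda$ is finitely generated since $H_1(X_\infty)$ is finitely generated over $\Lambda_{\Z}$. By \Cref{def.char} this forces $\mathcal H$ and $\wh{H}_1(X_\infty)$ to be pseudo-isomorphic to the same elementary $\Lambda$-module (or to be both non-torsion), so their characteristic ideals agree, and since $\Lambda$ is a UFD so do their characteristic elements (which, by \Cref{prop.hatH1Xinfty}, equal $\Delta_\tau(1+T_1,\dots,1+T_d)$). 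The step I expect to be the main obstacle is verifying carefully that the covering-induced maps on $H_1(X_{p^n};\Zp)$ restrict on the subquotients $B_n$ and $E_n$ to the natural quotient maps, so that the two fundamental exact sequences genuinely form an inverse system of exact sequences; granting that compatibility and the routine $\varprojlim/\varprojlim^1$ computations, the rest is formal.
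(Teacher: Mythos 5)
Your proposal is correct and follows essentially the same route as the paper: both pass through the fundamental two exact sequences of \Cref{twoexact}, use the trivial $t_i$-action on $(C_{p^n}+I_{p^n}N)/I_{p^n}N$ from \Cref{actionprop}, take inverse limits after tensoring with $\Zp$, and conclude that the error term is pseudo-null when $d\ge 2$ (and zero when $d=1$). The only cosmetic differences are that the paper invokes exactness of inverse limits of compact Hausdorff groups (\Cref{Neukirchprop}) where you use the $\varprojlim/\varprojlim^1$ sequence, and it deduces pseudo-nullity from the limit module being finitely generated over $\Zp$ rather than from annihilation by the height-$d$ ideal $(T_1,\ldots,T_d)$.
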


By \Cref{1124} and \Cref{zcovertheorem}, we obtain
\begin{cor}
\label{vanishmentremark} The following equation on ideals of $\Lambda$ holds: 
\[
{\rm Char}\,\mathcal{H}={\rm Char}\,\wh{H}_1(X_{\infty})=\Lambda{\rm Char}\,H_1(X_{\infty}).
\]
\end{cor}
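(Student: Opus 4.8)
The strategy is to produce a natural surjection of $\Lambda$-modules $\wh{H}_1(X_\infty)\surj\mathcal{H}$ whose kernel is pseudo-null; this exhibits it as a pseudo-isomorphism, and the equality of characteristic elements then follows from the invariance of characteristic ideals under pseudo-isomorphism (\Cref{def.char}).

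First I would feed the honest $\Z^d$-cover $X_\infty\to X$ into the fundamental two exact sequences of \Cref{twoexact}: take $G=\pi_1(X)/[\pi_1(X_\infty),\pi_1(X_\infty)]$, let $\mathcal{P}:G\to\pi_1(X)/\pi_1(X_\infty)\cong\Z^d$ be the natural surjection (so $N=\pi_1(X_\infty)^{\rm ab}=H_1(X_\infty)$), and take $\mathcal{Z}=p^n\Z^d$ with the $\Z$-basis $\{t_i^{p^n}\}$ (so $H^{\rm ab}=H_1(X_{p^n})$, $I=I_{p^n}$, $C=C_{p^n}$). This yields, for each $n$, the two exact sequences
\[0\to H_1(X_\infty)/(C_{p^n}+I_{p^n}H_1(X_\infty))\to H_1(X_{p^n})\to\Z^d\to 0\]
and
\[0\to E_{p^n}\to H_1(X_\infty)/I_{p^n}H_1(X_\infty)\to H_1(X_\infty)/(C_{p^n}+I_{p^n}H_1(X_\infty))\to 0,\]
where $E_{p^n}:=(C_{p^n}+I_{p^n}H_1(X_\infty))/I_{p^n}H_1(X_\infty)$. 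From $\mathcal{T}_{p^{n+1}}(t)=\mathcal{T}_p(t^{p^n})\,\mathcal{T}_{p^n}(t)$ and the identity $c_{ij}^{(p^n)}=\mathcal{T}_{p^n}(t_i)\mathcal{T}_{p^n}(t_j)c_{ij}$ established in the proof of \Cref{actionprop}, one sees $C_{p^{n+1}}\subset C_{p^n}$, and $I_{p^{n+1}}\subset I_{p^n}$ is clear, so the maps induced by $\mathrm{id}_{H_1(X_\infty)}$ and by the covering projections $X_{p^{n+1}}\to X_{p^n}$ make these into inverse systems of short exact sequences in $n$; here I would need to spell out the naturality of the construction of \Cref{twoexact} in the subgroup $\mathcal{Z}$.

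Next, applying the exact functor $-\otimes_{\Z}\Zp$ and passing to $\varprojlim_n$: every term is a finitely generated $\Zp$-module, hence compact, so all relevant $\varprojlim^1$ vanish and $\varprojlim_n$ is exact on these systems. On the cokernels $\Z^d$ of the first sequence the transition maps are multiplication by $p$ (since $t_i^{p^{n+1}}=(t_i^{p^n})^p$), so $\varprojlim_n(\Z^d\otimes\Zp)=\varprojlim_n(\Zp^{\,d},\,\times p)=0$, which yields
\[\mathcal{H}=\varprojlim_n H_1(X_{p^n};\Zp)\ \cong\ \varprojlim_n\big(H_1(X_\infty)/(C_{p^n}+I_{p^n}H_1(X_\infty))\otimes\Zp\big).\]
Feeding this into $\varprojlim_n$ of the second sequence and identifying $\varprojlim_n\big(H_1(X_\infty)/I_{p^n}H_1(X_\infty)\otimes\Zp\big)=\wh{H}_1(X_\infty)$ by \Cref{prop.hathat}, I obtain a short exact sequence of $\Lambda$-modules
\[0\to\mathcal{C}\to\wh{H}_1(X_\infty)\to\mathcal{H}\to 0,\qquad \mathcal{C}:=\varprojlim_n(E_{p^n}\otimes\Zp).\]

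Finally I would show $\mathcal{C}$ is pseudo-null. By \Cref{actionprop} every $t_i$ acts trivially on $E_{p^n}$, so $T_i=t_i-1$ annihilates $E_{p^n}$, hence annihilates $\mathcal{C}$; thus $(T_1,\ldots,T_d)\,\mathcal{C}=0$. For $d\geq 2$ the ideal $(T_1,\ldots,T_d)$ has height $d\geq 2$ and therefore lies in no height-one prime $\mathfrak{p}$ of $\Lambda$, so $\mathcal{C}_{\mathfrak{p}}=0$ for every such $\mathfrak{p}$, i.e.\ $\mathcal{C}$ is pseudo-null; for $d=1$ there are no commutators, so $C_{p^n}=0$, $\mathcal{C}=0$, and $\wh{H}_1(X_\infty)\cong\mathcal{H}$ outright. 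Hence $\wh{H}_1(X_\infty)\to\mathcal{H}$ is a pseudo-isomorphism of finitely generated $\Lambda$-modules; in particular one is $\Lambda$-torsion iff the other is and they share the same characteristic ideal, so their characteristic elements coincide, and combining with \Cref{1124} and \Cref{prop.hathat} also gives \Cref{vanishmentremark}. The main obstacle is the bookkeeping of the middle two paragraphs --- verifying that the exact sequences of \Cref{twoexact} assemble functorially in $\mathcal{Z}=p^n\Z^d$ into inverse systems, and justifying the exactness of $\varprojlim_n$ via compactness; once that is in place, the pseudo-nullity of the kernel is immediate.
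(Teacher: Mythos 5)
Your proposal is correct and follows essentially the same route as the paper's proof of \Cref{zcovertheorem}: the same two exact sequences from \Cref{twoexact} with $\mathcal{Z}=p^n\Z^d$, tensoring with $\Zp$, passing to inverse limits via exactness for compact systems (the paper cites \Cref{Neukirchprop} where you invoke compactness directly), killing the $\varprojlim_n(\Zp^{\,d},\times p)$ term, and concluding pseudo-nullity of the kernel from the trivial $t_i$-action together with $d\geq 2$, with the $d=1$ case handled by $C_{p^n}=0$. The only cosmetic difference is that you argue pseudo-nullity by localizing at height-one primes while the paper notes the kernel is finitely generated over $\Zp$; both are the same observation.
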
 

To prove the theorem, we will utilize the following. 
\begin{lem}[{\cite[Chapter 4, Proposition 2.7]{Neukirch}}]\label{Neukirchprop}
Let $\alpha:\{G'_i,g'_{ij}\}\to\{G_i,g_{ij}\}$ and $\beta:\{G_i,g_{ij}\}\to\{G''_i, g''_{ij}\}$ be morphisms between inverse systems of compact Hausdorff topological groups such that the sequence
\[
G'_i\stackrel{\alpha_i}{\to} G_i\stackrel{\beta_i}{\to}G''_i
\]
is exact for every $i\in I$. Then
\[
\varprojlim_i G'_i\stackrel{\alpha}{\to}\varprojlim_i G_i\stackrel{\beta}{\to}\varprojlim_i G''_i
\]
is again an exact sequence of compact Hausdorff topological groups.
\end{lem}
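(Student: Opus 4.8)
The plan is to realize each inverse limit as a closed subgroup of the corresponding product, so that by Tychonoff's theorem $\varprojlim_i G'_i$, $\varprojlim_i G_i$, $\varprojlim_i G''_i$ are compact Hausdorff topological groups and $\alpha,\beta$ are the continuous homomorphisms induced by $\prod_i\alpha_i$ and $\prod_i\beta_i$; I adopt the convention that for $i\le j$ the transition maps are $g'_{ij}:G'_j\to G'_i$, $g_{ij}:G_j\to G_i$, $g''_{ij}:G''_j\to G''_i$, that a morphism of systems satisfies $\alpha_i\circ g'_{ij}=g_{ij}\circ\alpha_j$, and that $(x_j)_j\in\varprojlim_i G_i$ means $g_{ij}(x_j)=x_i$ for all $i\le j$. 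The exactness to be proved is exactness at the middle term, so it suffices to verify the two inclusions $\im(\varprojlim_i\alpha)\subseteq\ker(\varprojlim_i\beta)$ and $\ker(\varprojlim_i\beta)\subseteq\im(\varprojlim_i\alpha)$. The first is formal: exactness of $G'_i\to G_i\to G''_i$ at $G_i$ gives $\beta_i\circ\alpha_i=1$ for every $i$, hence $\beta\circ\alpha=1$ on the limit.

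For the substantial inclusion, I would take $(x_i)_i\in\ker(\varprojlim_i\beta)$, so that $\beta_i(x_i)=1$ and therefore $x_i\in\ker\beta_i=\im\alpha_i$ for each $i$, and seek a preimage in $\varprojlim_i G'_i$. The idea is to assemble such a preimage out of the fibers $F_i:=\alpha_i^{-1}(x_i)\subseteq G'_i$. Each $F_i$ is nonempty because $x_i\in\im\alpha_i$, and closed because $\{x_i\}$ is closed in the Hausdorff group $G_i$ and $\alpha_i$ is continuous; being closed in the compact space $G'_i$, it is compact. Crucially, $\{F_i\}_i$ is an inverse subsystem: for $i\le j$ one has $\alpha_i(g'_{ij}(y))=g_{ij}(\alpha_j(y))=g_{ij}(x_j)=x_i$ for every $y\in F_j$, using that $\alpha$ is a morphism of systems and that $(x_i)_i$ is compatible, so $g'_{ij}(F_j)\subseteq F_i$ and the transition maps restrict to $F_j\to F_i$. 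A point of $\varprojlim_i F_i$ is then exactly an element $(y_i)_i\in\varprojlim_i G'_i$ with $\alpha((y_i)_i)=(x_i)_i$.

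The hard part will be the nonemptiness of $\varprojlim_i F_i$, which is precisely the step requiring compactness: without it the analogous middle-exactness of inverse limits fails, the correct remedy being a Mittag--Leffler condition. I would establish it by the finite-intersection-property argument inside the compact space $\prod_i F_i$. For $i\le j$ put $C_{ij}:=\{(y_k)_k\in\prod_k F_k\mid g'_{ij}(y_j)=y_i\}$, a closed subset, and note $\varprojlim_i F_i=\bigcap_{i\le j}C_{ij}$. Given finitely many of these conditions, directedness of $I$ supplies an index $N$ dominating all indices occurring; choosing any $y_N\in F_N$, setting $y_j:=g'_{jN}(y_N)$ for $j\le N$, and filling in the remaining coordinates by arbitrary elements of the respective fibers produces a point satisfying every chosen condition, since $g'_{ij}\circ g'_{jN}=g'_{iN}$. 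Hence the $C_{ij}$ enjoy the finite-intersection property, and compactness of $\prod_i F_i$ forces $\bigcap_{i\le j}C_{ij}=\varprojlim_i F_i\neq\emptyset$. This yields the desired preimage and hence exactness at the middle; since all maps involved are continuous homomorphisms of compact Hausdorff groups, the resulting sequence is one of compact Hausdorff topological groups, completing the proof.
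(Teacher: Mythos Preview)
Your proof is correct and follows the standard compactness argument via the finite intersection property. Note, however, that the paper does not actually give a proof of this lemma: it is simply quoted from \cite[Chapter 4, Proposition 2.7]{Neukirch} and used as a black box. Your write-up therefore supplies what the paper omits, and the approach you take is precisely the classical one found in Neukirch and elsewhere.
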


Note that $H_1(X_{\infty})$ is a finitely generated $\Lambda_{\Z}$-module. 
\begin{lem} \label{22718}
For each $n\in \Z_{>0}$, we have 
\[
\wh{H}_1(X_{\infty})/\mca{I}_{p^n}\wh{H}_1(X_{\infty})\cong H_1(X_\infty,\Zp)/I_{p^n}H_1(X_\infty,\Zp), 
\]
as $\Lambda$-modules. In particular, by \Cref{asymptotic}, we have
\[
e(\wh{H}_1(X_{\infty})/\mca{I}_{p^n}\wh{H}_1(X_{\infty}))=e(H_1(X_{\infty})/I_{p^n}H_1(X_{\infty}))=e(H_1(X_{p^n}))+O(n)\ \ (n\to\infty). 
\]
\end{lem}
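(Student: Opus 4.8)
\textbf{Proof proposal for \Cref{22718}.} The plan is to unwind the definition of the completed module $\wh{H}_1(X_\infty) = H_1(X_\infty)\otimes_{\Lambda_\Z}\Lambda$ and commute the two quotient operations. First I would observe that, by \Cref{prop.hathat}, we have the identification
\[
\wh{H}_1(X_\infty)=H_1(X_\infty)\otimes_{\Lambda_\Z}\Lambda=\varprojlim_m H_1(X_\infty)/I_{p^m}H_1(X_\infty)\otimes\Zp,
\]
and that $\mca{I}_{p^n}\subset\Lambda$ is generated by $\omega_{p^n}(T_j)=(1+T_j)^{p^n}-1$, i.e.\ it is the image of the ideal $I_{p^n}=(t_j^{p^n}-1)_j\subset\Lambda_\Z$ under $\Lambda_\Z\hookrightarrow\Lambda$. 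Since tensoring is right exact, applying $\otimes_{\Lambda_\Z}\Lambda$ to the presentation of $H_1(X_\infty)/I_{p^n}H_1(X_\infty)$ shows that $\wh{H}_1(X_\infty)/\mca{I}_{p^n}\wh{H}_1(X_\infty)=(H_1(X_\infty)/I_{p^n}H_1(X_\infty))\otimes_{\Lambda_\Z}\Lambda$. So the content is to check that for the $\Lambda_\Z$-module $A:=H_1(X_\infty)/I_{p^n}H_1(X_\infty)$, the natural map $A\otimes\Zp=A\otimes_{\Lambda_\Z}\Lambda\to A\otimes_{\Lambda_\Z}\Lambda$ realizing $A\otimes_{\Lambda_\Z}\Lambda\cong A\otimes_\Z\Zp$ is an isomorphism; this is because $A$ is already a module over $\Z[t_j^{\Z/p^n\Z}]_j$ (as $t_j^{p^n}$ acts as $1$), hence over $\Zp[t_j^{\Z/p^n\Z}]_j$ after $\otimes\Zp$, which is the quotient of $\Lambda$ through which the $\Lambda$-action factors, and the $\mf m$-adic completion of a finitely generated module that is annihilated by a power of the augmentation-type ideal agrees with its $p$-adic completion. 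Concretely, $H_1(X_\infty)$ is a finitely generated $\Lambda_\Z$-module (McMullen, or Noetherianity), so $A$ is a finitely generated abelian group, and $A\otimes_{\Lambda_\Z}\Lambda=\varprojlim_m A/I_{p^m}A\otimes\Zp=\varprojlim_m A\otimes\Zp=A\otimes\Zp=H_1(X_\infty;\Zp)/I_{p^n}H_1(X_\infty;\Zp)$, where the middle equality holds because $I_{p^m}A=0$ for $m\geq n$. This gives the claimed $\Lambda$-isomorphism $\wh{H}_1(X_\infty)/\mca{I}_{p^n}\wh{H}_1(X_\infty)\cong H_1(X_\infty;\Zp)/I_{p^n}H_1(X_\infty;\Zp)$.

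For the "In particular" clause, I would then combine this isomorphism with \Cref{examplezdcover}: that theorem gives $e(H_1(X_{p^n}))=e(H_1(X_\infty)/I_{p^n}H_1(X_\infty))+O(n)$, and since $e$ only sees the $p$-primary torsion, $e(H_1(X_\infty)/I_{p^n}H_1(X_\infty))=e(H_1(X_\infty;\Zp)/I_{p^n}H_1(X_\infty;\Zp))=e(\wh{H}_1(X_\infty)/\mca{I}_{p^n}\wh{H}_1(X_\infty))$ by the isomorphism just established. Chaining these equalities yields
\[
e(\wh{H}_1(X_\infty)/\mca{I}_{p^n}\wh{H}_1(X_\infty))=e(H_1(X_\infty)/I_{p^n}H_1(X_\infty))=e(H_1(X_{p^n}))+O(n).
\]
(The reference to \Cref{asymptotic} in the statement is presumably to supply the $O(n)$ comparison; I would cite whichever of \Cref{examplezdcover}/\Cref{asymptotic} is the precise source for $e(H_1(X_{p^n}))=e(H_1(X_\infty)/I_{p^n}H_1(X_\infty))+O(n)$.)

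The only mildly subtle point—and the one I would be most careful about—is the interchange of $\otimes_{\Lambda_\Z}\Lambda$ with the quotient by $I_{p^n}$ versus $\mca{I}_{p^n}$: one must confirm that the ideal of $\Lambda$ generated by the image of $I_{p^n}$ really is $\mca{I}_{p^n}$, which is immediate from $t_j\mapsto 1+T_j$ sending $t_j^{p^n}-1\mapsto(1+T_j)^{p^n}-1=\omega_{p^n}(T_j)$, and that no completion issue obstructs $A\otimes_{\Lambda_\Z}\Lambda\cong A\otimes\Zp$ when $A$ is finite-rank and $I_{p^n}$-torsion. Everything else is bookkeeping with right-exactness of tensor products and the fact that $e(-)$ is insensitive to $\otimes\Zp$ on finitely generated abelian groups. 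I expect the proof to be short; the main (very minor) obstacle is just stating the module-theoretic identification cleanly enough to invoke \Cref{prop.hathat} and \Cref{examplezdcover} without circularity.
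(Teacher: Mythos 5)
Your argument is correct, but it takes a genuinely different route from the paper's. The paper proves the isomorphism by hand: it applies the snake lemma to the inclusion $I_{p^{n+N}}H_1(X_\infty)\subset I_{p^n}H_1(X_\infty)$ to produce, for each $N$, an exact sequence $0\to I_{p^n}\mathsf{H}_{p^{n+N}}\to\mathsf{H}_{p^{n+N}}\to\mathsf{H}_{p^n}\to 0$, tensors with the flat ring $\Zp$, and then passes to the inverse limit over $N$ using the exactness of inverse limits of compact Hausdorff groups (\Cref{Neukirchprop}) to obtain $0\to\mca{I}_{p^n}\wh{H}_1(X_\infty)\to\wh{H}_1(X_\infty)\to\mathsf{H}_{p^n}\otimes\Zp\to 0$. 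You instead work entirely on the module-theoretic side: right-exactness of $\otimes_{\Lambda_\Z}\Lambda$ identifies $\wh{H}_1(X_\infty)/\mca{I}_{p^n}\wh{H}_1(X_\infty)$ with $(H_1(X_\infty)/I_{p^n}H_1(X_\infty))\otimes_{\Lambda_\Z}\Lambda$, and then the key observation is that for a finitely generated $\Lambda_\Z$-module $A$ killed by $I_{p^n}$ the tower $A/I_{p^m}A$ stabilizes, so \Cref{prop.hathat} collapses to $A\otimes_{\Lambda_\Z}\Lambda\cong A\otimes\Zp$. Both mechanisms are sound; yours is shorter and avoids the topological input, essentially because it delegates the completion-versus-base-change issue to \Cref{prop.hathat}, whereas the paper's construction makes the exact sequence $0\to\mca{I}_{p^n}\wh{H}_1(X_\infty)\to\wh{H}_1(X_\infty)\to\mathsf{H}_{p^n}\otimes\Zp\to 0$ explicit without relying on that identification. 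Your handling of the ``in particular'' clause is exactly the paper's: the $O(n)$ comparison comes from \Cref{examplezdcover} (itself an instance of \Cref{asymptotic}), combined with the fact that $e(-)$ is unchanged by $\otimes\Zp$ on finitely generated abelian groups. The one place to be slightly more careful in a written version is the loose remark about $\mathfrak{m}$-adic versus $p$-adic completion; the stabilization argument you give in the same sentence ($I_{p^m}A=0$ for $m\geq n$) already suffices and is the cleaner justification.
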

\begin{proof} We put \glssymbol{sfHpn}${\mathsf H}_{p^n}:=H_1(X_{\infty})/I_{p^n} H_1(X_{\infty})$ for each $n\in \Z_{>0}$. 
Let $n,N\in \Z_{>0}$ and consider the commutative diagram 
\[ \xymatrix{
0 \ar[r]&I_{p^{n+N}}H_1(X_{\infty})\ar[r]\ar[d] &H_1(X_{\infty})\ar[r] \ar@{=}[d]
&{\mathsf H}_{p^{n+N}}\ar[r]\ar[d]&0\\
0 \ar[r]&I_{p^n}H_1(X_{\infty})\ar[r]&H_1(X_{\infty})\ar[r]&{\mathsf H}_{p^n}\ar[r]&0
} \]
consisting of two exact sequences. By the snake lemma, we have
\[ \ker({\mathsf H}_{p^{n+N}}\to {\mathsf H}_{p^n})\cong \coker(I_{p^n+N} H_1(X_{\infty})\to I_{p^n}H_1(X_{\infty})). \]
Hence we have an exact sequence
\[ 0\to I_{p^n}(H_1(X_{\infty})/I_{p^{n+N}}H_1(X_{\infty}))\to {\mathsf H}_{p^{n+N}}\to {\mathsf H}_{p^n}\to 0. \]
By \Cref{asymptotic}, we have 
\[ H_1(X_{\infty})/I_{p^{n+N}}H_1(X_{\infty})\cong {\mathsf H}_{p^{n+N}},\]
and hence the exact sequence
\[ 0\to I_{p^n}{\mathsf H}_{p^{n+N}}\to {\mathsf H}_{p^{n+N}}\to {\mathsf H}_{p^n}\to 0. \]
This induces the exact sequence of $\Zp[t^{\Z/p^{n+N}\Z}]$-modules
\[ 0\to I_{p^n}{\mathsf H}_{p^{n+N}}\otimes \Zp\to {\mathsf H}_{p^{n+N}}\otimes \Zp \to {\mathsf H}_{p^n}\otimes \Zp \to 0. \]
If we take the inverse limit with respect to $N$, then \Cref{Neukirchprop} yields the exact sequence of $\Lambda$-modules
\[ 0\to \mca{I}_{p^n}\wh{H}_1(X_{\infty})\to\wh{H}_1(X_{\infty})\to {\mathsf H}_{p^n}\otimes \Zp \to 0. \]
This completes the proof.
\end{proof}

\begin{proof}[Proof of \Cref{zcovertheorem}]
We have two exact sequences 
\begin{gather*}
0\to H_1(X_{\infty})/(C_{p^n}+I_{p^n}H_1(X_{\infty}))\to H_1(X_{p^n})\to p^n\Z\to 0,\\
\hspace{-20mm} 
0\to (C_{p^n}+I_{p^n}H_1(X_{\infty}))/I_{p^n}H_1(X_{\infty})\to H_1(X_{\infty})/I_{p^n}H_1(X_{\infty})\\ 
\hspace{60mm} 
\to H_1(X_{\infty})/(C_{p^n}+I_{p^n}H_1(X_{\infty}))\to 0.
\end{gather*} 
Since these terms are finitely generated $\Z$-modules, by taking $\otimes \Zp$, we obtain similar exact sequences of finitely generated $\Zp$-modules. 
By Eq.\eqref{ceqn} in the proof of \Cref{actionprop}, $C_{p^n}$ is generated by 
$c_{ij}^{(p^n)}=\mathcal{T}_{p^n}(t_i)\mathcal{T}_{p^n}(t_j)c_{ij}$, $\mathcal{T}_{p^n}(t):=\sum_{k=0}^{{p^n}-1}t^k$, 
so the family 
\[((C_{p^n}+I_{p^n}H_1(X_{\infty})/I_{p^n}H_1(X_{\infty}))\otimes\Zp)_n\] 
forms an inverse system in a natural way.  
Hence \Cref{Neukirchprop} yields two exact sequences of finitely generated $\Lambda$-modules
\begin{gather*}
0\to \varprojlim_n H_1(X_{\infty})/(C_{p^n}+I_{p^n}H_1(X_{\infty}))\otimes\Zp\to \mathcal{H} \to 0 \to 0,\\
\hspace{-20mm} 0\to\varprojlim_n (C_{p^n}+I_{p^n}H_1(X_{\infty})/I_{p^n}H_1(X_{\infty}))\otimes\Zp\to \wh{H}_1(X_{\infty}) \\ 
\hspace{40mm}
\to\varprojlim_n(H_1(X_{\infty})/(C_{p^n}+I_{p^n}H_1(X_{\infty})))\otimes\Zp \to 0.
\end{gather*} 
If $d=1$, then $C_{p^n}=0$ yields the assertion. Suppose $d\geq 2$. 
Since the finitely generated $\Lambda$-module 
$\mca{C}:=\varprojlim_n (C_{p^n}+I_{p^n}H_1(X_{\infty}))/I_{p^n}H_1(X_{\infty})\otimes\Zp$ 
admits trivial action of $t_1=1+T_1,$ $\ldots$, $t_d=1+T_d$, we find that 
$\mca{C}$ is finitely generated over $\Zp$. Hence, by $d\geq 2$, $\mca{C}$ is a pseudo-null $\Lambda$-module, and $\wh{H}_1(X_{\infty})$ is pseudo-isomorphic to $\mathcal{H}$.
\end{proof}

\section{Comparing branched and unbranched $(\Z/p^n\Z)^d$-covers} \label{sec.br/unbr}
Here, by using Hartley--Murasugi's result \cite[Lemma 2.8]{HartleyMurasugi1978}, we prove the following theorem that compares branched and unbranched $(\Z/p^n\Z)^d$-covers over a link in a $\Q$HS$^3$. 
\begin{theorem}\label{gaptheorem} 
Let $M$ be a $\Q$HS\,$^3$, $L=\sqcup_i K_i$ a $c$-component link in $M$, and put $X=M\setminus {\rm Int}\,V_L$. 
Let $(X_{p^n}\to X)_n$ be a compatible system of $(\Z/p^n\Z)^d$-covers and 
let $(M_{p^n}\to M)_n$ denote that of the branched $(\Z/p^n\Z)^d$-covers obtained by the Fox completions.  
Suppose that the link $L$ does not decompose in $(M_{p^n}\to M)_n$.
Then we have
\[e(H_1(M_{p^n}))=e(H_1(X_{p^n}))+O(n).\]
\end{theorem}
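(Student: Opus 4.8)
The plan is to present $M_{p^n}$ as a Dehn filling of $X_{p^n}$ and to control the resulting change on $H_1$ by Hartley--Murasugi's estimate for covering linkage invariants. First I would fix the topological picture. Write $\tilde L_{p^n}\subset M_{p^n}$ for the branch locus. The non-decomposition hypothesis says precisely that $\tilde L_{p^n}$ has exactly $c$ components, one over each $K_i$; hence $X_{p^n}$ has exactly $c$ torus boundary components $T_1,\dots,T_c$ (a number independent of $n$), and $M_{p^n}=X_{p^n}\cup_{T_1\sqcup\cdots\sqcup T_c}(V_1\sqcup\cdots\sqcup V_c)$, where $V_i$ is a solid torus glued so that its meridian is the preimage-meridian class $\tilde\mu_i$. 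Running the Mayer--Vietoris sequence of this splitting and using that $X_{p^n}$, each $V_i$ and each $T_i$ is connected, the longitudinal classes let one cancel the $c$ copies of $\Z$ coming from the cores of the $V_i$, and one obtains a natural surjection $H_1(X_{p^n})\surj H_1(M_{p^n})$ whose kernel is the subgroup $N_n$ generated by the $c$ classes $\tilde\mu_1,\dots,\tilde\mu_c$. Since $e(G)$ depends only on the $p$-primary torsion of $G$, I would tensor the short exact sequence $0\to N_n\to H_1(X_{p^n})\to H_1(M_{p^n})\to 0$ with $\Zp$ and argue throughout with finitely generated $\Zp$-modules (noting that the statement does not require $M_{p^n}$ to be a $\mathbb{Q}$HS$^3$).

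Next I would split the asserted equality into the two inequalities $|e(H_1(M_{p^n}))-e(H_1(X_{p^n}))|=O(n)$. A diagram chase on the short exact sequence shows that the discrepancy in either direction is governed by two quantities: (i) the $p$-divisibilities of the covering meridians $\tilde\mu_i$ inside $H_1(X_{p^n})\otimes\Zp$, and (ii) the $p$-valuation of the index of $N_n$ in its saturation inside the free part of $H_1(X_{p^n})\otimes\Zp$ — that is, the gcd of the maximal minors of the matrix of the $\tilde\mu_i$. Concretely, $\tor_{\Zp}H_1(M_{p^n})$ is an extension of a finite subquotient controlled by (ii) by a quotient of $\tor_{\Zp}H_1(X_{p^n})$, and symmetrically $\tor_{\Zp}H_1(X_{p^n})$ is controlled by $\tor_{\Zp}N_n$ and by (ii). The crucial point is that neither (i) nor (ii) is bounded merely by the fact that $N_n$ has at most $c$ generators: a subgroup generated by $c$ elements can carry arbitrarily large torsion, and a rank-$\rho$ sublattice of a rank-$\rho$ lattice can have index of arbitrarily large $p$-valuation even when spanned by primitive vectors. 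This is exactly where Hartley--Murasugi's Lemma~2.8 \cite{HartleyMurasugi1978} is indispensable: under the non-decomposition hypothesis it identifies these discrepancies with the covering linkage invariants of the tower $(M_{p^n}\to M)_n$, which it expresses through the ramification indices of the $K_i$ and through linking numbers computed in $M_{p^n}$.

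Finally I would bound those covering linkage invariants. The ramification index $e_i$ of $K_i$ in $M_{p^n}\to M$ equals the order of the image of a meridian $\alpha_i$ under $\widehat{\tau}:\widehat{\pi}_1(X)\surj\Zp^{\,d}$ followed by $\Zp^{\,d}\surj(\Z/p^n\Z)^d$, so $e_i\mid p^n$; the relevant covering linking numbers are likewise orders of cyclic subquotients of the deck group $(\Z/p^n\Z)^d$, hence of $p$-exponent at most $n$. Feeding these into Hartley--Murasugi's formula, quantity (i) is $O(n)$ and quantity (ii) — a gcd of minors of a $c\times c$ matrix whose entries have $p$-exponent $O(n)$ — also has $p$-valuation $O(n)$; assembling both inequalities from the previous paragraph then yields $e(H_1(M_{p^n}))=e(H_1(X_{p^n}))+O(n)$. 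The main obstacle I expect is precisely this middle step: invoking \cite{HartleyMurasugi1978} with its hypotheses correctly checked (the non-decomposability of $L$ being exactly what it demands) and extracting from it the bookkeeping that every correction term has $p$-exponent $O(n)$; the Mayer--Vietoris reduction and the $\Zp$-module estimates around it are comparatively routine.
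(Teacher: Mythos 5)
Your first paragraph is sound and matches the paper's starting point: the Mayer--Vietoris/Dehn-filling presentation $H_1(M_{p^n})\cong H_1(X_{p^n})/N_n$ with $N_n$ generated by the $c$ lifted meridians, and the reduction of the discrepancy $|e(H_1(M_{p^n}))-e(H_1(X_{p^n}))|$ to (i) the torsion carried by $N_n$ and (ii) the index of $N_n$ in its saturation. The gap is in the middle step, where you bound these quantities. First, the cited Lemma~2.8 of Hartley--Murasugi (Lemma~\ref{lem.HM} in the paper) is only the equivalence ``$\sum_i a_iK_i=0$ in $H_1(M)$ $\iff$ there is $\varphi:H_1(X)\to\Z$ with $\varphi(\alpha_i)=a_i$''; it contains no formula expressing your quantities (i)--(ii) through ramification indices and covering linking numbers, so there is nothing to ``feed'' the bounds into. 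Second, covering linking numbers are \emph{not} orders of cyclic subquotients of the deck group $(\Z/p^n\Z)^d$; they are genuinely geometric and not controlled by the deck group alone. Third, even granting that the matrix of the $\tilde\mu_i$ has entries of $p$-exponent $O(n)$, the $p$-valuation of a gcd of maximal minors is not bounded by the valuations of the entries (e.g.\ $\det\begin{pmatrix}1&1\\ 1&1+p^{N}\end{pmatrix}=p^{N}$ with unit entries), so your final arithmetic step fails. Thus the key estimate is not actually established.

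What the paper does instead is to use Lemma~\ref{lem.HM} twice, in opposite directions and on different spaces. Since $M$ is a $\Q$HS$^3$, each $a_iK_i=0$ in $H_1(M)$ for some fixed $a_i>0$, so the lemma produces functionals $\varphi_i:H_1(X)\to\Z$ with $\varphi_i(\alpha_j)=a_i\delta_{ij}$. Composing with the map $\psi_{p^n}:H_1(X_{p^n})\to H_1(X)$ induced by the covering projection, which sends the lifted meridian to $e_i\alpha_i$ ($e_i$ the branching index, $e_i\mid p^{dn}$), and applying the lemma again \emph{upstairs} in the direction (b)$\Rightarrow$(a), one concludes that each branch curve $\tilde K$ satisfies $e_ia_i\tilde K=0$, hence $p^{dn}a\,\tilde K=0$, in $H_1(M_{p^n})$. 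It is this uniform $p^{O(n)}$ torsion bound on the branch curves, combined with the bound $r(H_1(X_{p^n});\Zp)=O(1)$ coming from non-decomposition, that controls your quantities (i) and (ii): the relevant finite subquotients are generated by $O(1)$ elements of exponent dividing $p^{O(n)}a$. Your proposal is missing exactly this mechanism, and without it the asserted $O(n)$ bounds do not follow.
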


\begin{lem}\label{MVlemma} 
Let $M, L, X$ be as in above. For each $i$, let $V_i$ denote the connected component of a tubular neighborhood $V_L$ of $L$ containing $K_i$. 
Then we have a natural exact sequence
\[
0\to A\to H_1(X)\to H_1(M)\to 0
\]
of abelian groups, 
where $A$ is the subgroup of $H_1(X)$ generated by the meridians.
\end{lem}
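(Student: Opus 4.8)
The plan is to apply the Mayer--Vietoris sequence to the decomposition $M = X \cup V_L$, where $X = M \setminus \mathrm{Int}\,V_L$ and $V_L = \bigsqcup_i V_i$ is a disjoint union of solid tori, glued along $X \cap V_L = \partial V_L = \bigsqcup_i \partial V_i$, a disjoint union of tori. First I would write down the relevant portion of the Mayer--Vietoris sequence in integral homology:
\[
H_2(M) \to H_1(\partial V_L) \to H_1(X) \oplus H_1(V_L) \to H_1(M) \to H_0(\partial V_L) \to H_0(X)\oplus H_0(V_L).
\]
Since $M$ is a $\Q$HS$^3$, $H_2(M)$ is a finitely generated abelian group with $H_2(M;\Q)=0$, hence $H_2(M)$ is finite; in fact for an orientable closed $3$-manifold $H_2(M)\cong H^1(M)\cong \Hom(H_1(M),\Z)=0$ because $H_1(M)$ is finite, so $H_2(M)=0$. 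The rightmost map is injective on the relevant piece because each component of $\partial V_i$ maps to the same path component as $V_i$ and as a neighborhood in $X$, so the connecting map $H_1(M)\to H_0(\partial V_L)$ is zero. This trims the sequence to a short exact sequence
\[
0 \to \mathrm{coker}\bigl(H_1(\partial V_L)\to H_1(X)\oplus H_1(V_L)\bigr)\to H_1(M)\to 0,
\]
i.e. $H_1(X)\oplus H_1(V_L) \to H_1(M)$ is surjective with kernel the image of $H_1(\partial V_L)$.

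Next I would identify the pieces concretely. Each $V_i$ is a solid torus, so $H_1(V_i)\cong\Z$ generated by the core, equivalently by the longitude $\lambda_i$ on $\partial V_i$; and $H_1(\partial V_i)\cong\Z^2$ generated by a meridian $\mu_i$ and a longitude $\lambda_i$. Under $H_1(\partial V_i)\to H_1(V_i)$ the meridian $\mu_i$ dies and $\lambda_i\mapsto\lambda_i$. Under $H_1(\partial V_i)\to H_1(X)$ the class $\mu_i$ maps to the meridian class $m_i\in H_1(X)$ and $\lambda_i$ to the longitude class. Therefore, computing the cokernel of $H_1(\partial V_L)\to H_1(X)\oplus H_1(V_L)$: the longitude generators of the $H_1(V_L)$ summand get absorbed (they are hit, matched with the $\lambda_i$ in $H_1(X)$ with opposite sign), so the cokernel is $H_1(X)/\langle m_i : i\rangle = H_1(X)/A$, where $A$ is the subgroup of $H_1(X)$ generated by the meridians $m_1,\dots,m_c$. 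This yields the desired exact sequence
\[
0\to A\to H_1(X)\to H_1(M)\to 0,
\]
once one checks that $A\to H_1(X)$ is the inclusion, i.e. that no extra relations are introduced — which follows because the map $H_1(\partial V_L)\to H_1(V_L)$ is already surjective onto the longitude directions, so the only surviving contribution to the kernel of $H_1(X)\to H_1(M)$ is exactly $A$. A clean way to package this: note $H_1(V_L)\to H_1(M)$ factors through $H_1(X)\to H_1(M)$ (via the longitudes), so in the Mayer--Vietoris sequence one may replace the pair by $(X, \text{collar})$ and deduce $H_1(X)\twoheadrightarrow H_1(M)$ with kernel generated by the images of $\partial(\text{meridian disks})$, i.e. by $A$.

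The main obstacle — really the only subtlety — is bookkeeping the longitude classes correctly so as to see that the cokernel is precisely $H_1(X)/A$ and not some proper quotient or extension thereof: one must verify that the image of $H_1(\partial V_L)\to H_1(X)\oplus H_1(V_L)$ contributes nothing beyond the meridians once projected to $H_1(X)$, equivalently that the longitude relations are "used up" matching the core classes of the solid tori. This is straightforward from the explicit description of the gluing maps on $H_1$ of a solid torus, but it is the step that requires care. Everything else — vanishing of $H_2(M)$, connectivity of the $H_0$ terms — is immediate from $M$ being a closed connected orientable $\Q$HS$^3$.
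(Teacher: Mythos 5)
Your proposal is correct and follows essentially the same route as the paper: a Mayer--Vietoris argument for $M=X\cup\bigcup_i V_i$, using $H_2(M)=0$ (from $M$ being a $\Q$HS$^3$) to truncate the sequence, and then identifying the cokernel of $H_1(\partial V_L)\to H_1(X)\oplus H_1(V_L)$ with $H_1(X)/A$ via the meridian/longitude bookkeeping. The paper's proof is just a terser version of the same computation, leaving implicit the bookkeeping you spell out.
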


\begin{proof}
Since $H_2(M)\simeq H_1(M)_{\rm free}=0$ and $M=X\cup\bigcup_{i=1}^c V_i$, we have the Mayer--Vietoris exact sequence
\[
0\to\bigoplus_{i=1}^cH_1(\partial V_i)\to H_1(X)\oplus\bigoplus_{i=1}^c H_1(V_i)\to H_1(M)\to 0.
\]
Since $H_1(V_i)$ are generated by the longitudes and $H_1(\partial V_i)$ are generated by both meridians and longitudes, this induces our desired exact sequence.
\end{proof}

\begin{lem}[Hartley--Murasugi, {\cite[Lemma 2.8 (ii) $\Leftrightarrow$ (iii)]{HartleyMurasugi1978}}] \label{lem.HM}
Let $M, L, X$ be as in above and let $\alpha_i\in H_1(X)$ denotes the meridian of $K_i$ for each $1\leq i\leq c$. For $a_1,\ldots,a_c\in \Z$, the following conditions are equivalent. 

{\rm (a)} $\sum_i a_i K_i=0$ holds in $H_1(M)$.

{\rm (b)} There exists a group homomorphism $\varphi:H_1(X)\to \Z$ satisfying $(\varphi(\alpha_i))_i=(a_i)_i$. 
\end{lem}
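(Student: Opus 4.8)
\textbf{Proof plan for Lemma~\ref{lem.HM} (Hartley--Murasugi).}
The plan is to deduce the equivalence from the Mayer--Vietoris exact sequence of \Cref{MVlemma}. Recall that \Cref{MVlemma} gives a natural short exact sequence $0\to A\to H_1(X)\to H_1(M)\to 0$, where $A$ is the subgroup of $H_1(X)$ generated by the meridians $\alpha_1,\ldots,\alpha_c$; inspection of the Mayer--Vietoris argument shows that, under the projection $H_1(X)\twoheadrightarrow H_1(M)$, the class $\alpha_i$ maps to $0$ and a chosen longitude class of $K_i$ maps to the class $[K_i]$ of $K_i$ in $H_1(M)$. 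First I would make this identification precise: the image of $H_1(\partial V_i)$ in $H_1(X)$ is spanned by $\alpha_i$ and a lift $\ell_i$ of the longitude, and $A=\langle \alpha_1,\ldots,\alpha_c\rangle$ is exactly the kernel of $H_1(X)\to H_1(M)$, with $\ell_i\mapsto [K_i]$.

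For the implication (b)$\Rightarrow$(a): given a homomorphism $\varphi\colon H_1(X)\to\Z$ with $\varphi(\alpha_i)=a_i$, the element $\sum_i a_i\alpha_i$ lies in $A=\ker(H_1(X)\to H_1(M))$; but also $\sum_i a_i\alpha_i=\sum_i \varphi(\alpha_i)\alpha_i$. The cleaner route is: consider $z:=\sum_i a_i\ell_i\in H_1(X)$, which maps to $\sum_i a_i[K_i]$ in $H_1(M)$. I would show $z\in A$ by using $\varphi$ to detect it — more directly, since $H_1(M)\cong H_1(X)/A$ is finite (as $M$ is a $\Q$HS$^3$), torsion considerations and the homomorphism $\varphi$ force $\sum_i a_i[K_i]=0$. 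Actually the most transparent argument is the reverse direction, so I would organize the proof around (a)$\Rightarrow$(b) and note (b)$\Rightarrow$(a) follows by reading the same diagram backwards.

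For (a)$\Rightarrow$(b): suppose $\sum_i a_i[K_i]=0$ in $H_1(M)$. Under the identification above this says $\sum_i a_i\ell_i\in A$, i.e. $\sum_i a_i\ell_i=\sum_i b_i\alpha_i$ in $H_1(X)$ for some $b_i\in\Z$ (using that $A$ is generated by the $\alpha_i$). Now I need a homomorphism $\varphi\colon H_1(X)\to\Z$ with $\varphi(\alpha_i)=a_i$ for all $i$. The key point is a duality/linking-pairing observation: the meridians $\alpha_i$ in $H_1(X)$ are, up to the linking form, dual to the longitudes, and more precisely the assignment $\alpha_i\mapsto a_i$ extends to a homomorphism on $H_1(X)$ precisely when the relations among the $\alpha_i$ in $H_1(X)$ are respected. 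The relations among the meridians in $H_1(X)$ are generated (by \Cref{MVlemma} together with $H_1(M)$ finite and Poincaré--Lefschetz duality for the compact $3$-manifold $X$ with torus boundary) by expressions of the form $\sum_i \mathrm{lk}(K_j,K_i)\,\alpha_i + (\text{multiple of }\alpha_j)\cdot(\text{something})$ — equivalently, the quotient $A$ of $H_1(X)$ by the subgroup generated by longitudes has presentation matrix the linking matrix of $L$ in $M$. One then checks that the condition $\sum_i a_i[K_i]=0$ in $H_1(M)$ is exactly the condition that the functional $\alpha_i\mapsto a_i$ kills all these relations, so it extends to $\varphi\colon H_1(X)\to\Z$; the extension to all of $H_1(X)$ (beyond $A$) is possible since $H_1(X)/A$ is finite, so any homomorphism $A\to\Z$ extends (after noting $\Z$ is torsion-free, a homomorphism from a finite-index subgroup extends up to bounded denominators, and here it extends on the nose because $A$ is a direct summand modulo torsion).

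\textbf{Main obstacle.} The delicate step is the precise description of the subgroup of relations among the meridians $\alpha_i$ in $H_1(X)$ and its identification with the linking matrix — in other words, verifying that ``$\sum a_i[K_i]=0$ in $H_1(M)$'' is the exact obstruction to extending $\alpha_i\mapsto a_i$. I expect this is where one must invoke Poincaré--Lefschetz duality for $X$ (a compact orientable $3$-manifold with torus boundary components) to pair $H_1(X)$ against $H_1(X,\partial X)$ and thereby realize the longitudes as the dual basis to the meridians modulo $A$. Since this lemma is quoted verbatim from \cite[Lemma 2.8]{HartleyMurasugi1978}, in the paper I would simply cite it; the sketch above indicates the mechanism for the reader.
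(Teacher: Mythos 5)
The paper gives no proof of this lemma at all: it is imported verbatim from Hartley--Murasugi and used as a black box, so your closing decision to ``simply cite it'' is exactly what the authors do, and on that level there is nothing to compare. But the sketch you offer as ``the mechanism for the reader'' has a genuine gap at its crux, so it should not be presented as an indication of why the lemma is true.

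The problem is the final extension step in (a)$\Rightarrow$(b). You define the functional $\alpha_i\mapsto a_i$ on the subgroup $A=\langle\alpha_1,\dots,\alpha_c\rangle$ and then assert that it extends to $H_1(X)\to\Z$ ``on the nose because $A$ is a direct summand modulo torsion.'' That claim is false in general, and its failure is precisely where the content of condition (a) lives. Take $M=L(p,q)$ and $K$ the core of one solid torus of the genus-one Heegaard splitting, so that $X$ is the complementary solid torus: then $H_1(X)\cong\Z$, the meridian $\alpha_1$ generates the index-$p$ subgroup $A=p\Z$ (not a direct summand modulo torsion), the assignment $\alpha_1\mapsto a_1$ is automatically well defined on $A$ because there are no relations to kill, and it extends to $H_1(X)$ if and only if $p\mid a_1$ --- which is exactly condition (a), since $[K]$ generates $H_1(M)\cong\Z/p$. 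So the obstruction is not located in the relations among the meridians inside $A$, as your sketch assumes, but in passing from $A$ to $H_1(X)$, and your argument erases it. The direction (b)$\Rightarrow$(a) is likewise not actually argued: ``torsion considerations force $\sum_i a_i[K_i]=0$'' proves nothing when $H_1(M)$ consists entirely of torsion, and ``reading the diagram backwards'' presupposes an equivalence you have not established. The honest route, which is what Hartley--Murasugi do in substance, is Lefschetz duality in one step: $\Hom(H_1(X),\Z)\cong H^1(X;\Z)/\mathrm{tors}\cong H_2(X,\partial X;\Z)/\mathrm{tors}$, a relative $2$-cycle $S$ evaluates on $\alpha_i$ as the longitude coefficient of $\partial S$ on $\partial V_i$, and capping $\partial S$ off inside the solid tori $V_i$ shows that such an $S$ with coefficients $(a_i)_i$ exists if and only if $\sum_i a_iK_i$ bounds a $2$-chain in $M$. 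If you want to include a sketch alongside the citation, it should be that one.
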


\begin{proof}[Proof of \Cref{gaptheorem}]
By \Cref{MVlemma}, we have a natural exact sequence
\[
0\to A_{p^n} \to H_1(X_{p^n})\to H_1(M_{p^n})\to 0,
\]
where $A_{p^n}$ are $\Z$-submodule of $H_1(X_{p^n})$ that are generated by elements represented by the meridians of the components of the inverse images of $L$. 
Let $\alpha_1,\ldots,\alpha_c\in H_1(X)$ be the meridians of $K_1,\ldots,K_c$ respectively.
Consider the exact sequence
\[
0\to A\to H_1(X)\to H_1(M)\to 0,
\]
where $A$ is the subgroup of $H_1(X)$ generated by $\alpha_1,\ldots,\alpha_c$.
Since $H_1(M)$ is a $\Q$HS$^3$, for each $1\leq i\leq c$,
 there exists $a_i\geq 0$ such that $a_iK_i=0$ in $H_1(M)$. 
 Hence, by Hartley--Murasugi's lemma (\Cref{lem.HM}), 
there exist group homomorphisms $\varphi_i:H_1(X)\to\Z$ such that
\[
\varphi_i(\alpha_j)=
\begin{cases}
a_i&j=i\\
0&j\neq i.
\end{cases}
\]
Consider the commutative diagram with exact rows 
\[
\xymatrix{
0\ar[r]&A_{p^n}\ar[r]\ar[d]&H_1(X_{p^n})\ar[r]\ar[d]^{\psi_{p^n}}&H_1(M_{p^n})\ar[r]\ar[d]&0\\
0\ar[r]&A\ar[r]&H_1(X)\ar[r]&H_1(M)\ar[r]&0.
}
\]
Let $K$ be a knot in $M_{p^n}$ lying above some $K_i$ and let $\alpha$ be a meridian of $K$. Then $(\varphi_i\circ\psi_{p^n}):H_1(X_{p^n})\to \Z$ is a group homomorphism such that 
\[
(\varphi_i\circ\psi_{p^n})(\alpha)=\varphi_i(e_i\alpha_j)=e_i\varphi_i(\alpha_j)=
\begin{cases}
e_ia_i&j=i\\
0&j\neq i,
\end{cases}
\]
where $e_i$ is the branch index of $K_i$ for $M_{p^n}\to M$. 
Again by \Cref{lem.HM}, we have that $e_ia_iK=0$ in $H_1(M_{p^n})$. 
Since $(\pi_1(X):\pi_1(X_{p^n}))=p^{dn}$, by the Hilbert ramification theory for knots, if we put $a={\rm max}\{a_1,\ldots,a_c\}$, then we must have $p^{dn}aK=0$ in $H_1(M_{p^n})$. 
Since $L$ does not decompose in $M_{p^n}\to M$, then $r(H_1(X_{p^n}),\Zp)=O(1)$. 
Therefore, we have
\[
e(H_1(M_{p^n}))=e(H_1(X_{p^n}))+O(n). \qedhere
\] 
\end{proof}

\begin{remark} In the situation above, we may define a $\Lambda$-module \glssymbol{HM}$\mca{H}_M:=\varprojlim H_1(M_{p^n};\Zp)$. 
If $L$ is truly branched in the $\Zp^{\,d}$-cover, then we find that $\mca{H}=\mca{H}_M$. 
In this paper, we derive the Iwasawa-type formula for branched covers from that for unbranched covers. 
We leave a direct estimation of $H_1(M_{p^n};\Zp)$ by $\mca{H}_M$ in a general setting as a further problem. 
\end{remark}

\section{Iwasawa-type formulas} 
In this section, we prove several versions of the Iwasawa-type formulas for $\Zp^{\,d}$-covers, 
which are analogues of Cuoco--Monsky's formulas. 

\subsection{Iwasawa-type formulas for unbranched $\Zp^{\,d}$-covers} 
In this subsection, we prove Iwasawa-type formulas for unbranched $\Zp^{\,d}$-covers.

\begin{theorem}\label{2252} 
Let $(X_{p^n}\to X)_n$ be a $\Zp^{\,d}$-cover of a compact connected orientable $3$-manifold.

{\rm (1)} Suppose that $r(\mca{H}/\mathcal{J}_{p^n}\mca{H})=O(p^{(d-2)n})$. Then we have
\[
e(H_1(X_{p^n}))=\mu(F) p^{dn}+O(np^{(d-1)n}).
\]

{\rm (2)} Suppose that $r(\mca{H}/\mathcal{I}_{p^n}\mca{H})=O(p^{(d-2)n})$. Then we have
\[
e(H_1(X_{p^n}))=\left(\mu(F) p^{n}+\lambda(F)n+O(1)\right)p^{(d-1)n}.
\]
\end{theorem}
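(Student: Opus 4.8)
The plan is to reduce \Cref{2252} to the module-theoretic results already assembled in Sections 5, 6 and 8. The key bridge is \Cref{thm.unbr-asymptotic}, which tells us that $e(H_1(X_{p^n}))=e(\mca{H}/\mathcal{I}_{p^n}\mca{H})+O(n)$, where $\mca{H}=\varprojlim_n H_1(X_{p^n};\Zp)$ is a finitely generated $\Lambda$-module (finite generation coming from the Nakayama-type argument for compact $\Lambda$-modules cited there). Since the error term $O(n)$ is negligible against $p^{(d-1)n}$, it suffices to estimate $e(\mca{H}/\mathcal{I}_{p^n}\mca{H})$ under each of the two rank hypotheses. I would also record at the outset that $\mca{H}$ is a torsion $\Lambda$-module: this follows from the reduced Alexander polynomial computation (\Cref{reduced}) together with \Cref{11241}, or more directly from the fact that $\mca{H}/\mathcal{I}_{p^n}\mca{H}$ is finitely generated over $\Zp$ for every $n$, so that $\mca{H}$ cannot have positive $\Lambda$-rank. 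Once $\mca{H}$ is known to be torsion, its characteristic element $F={\rm char}\,\mca{H}\in\Lambda$ is defined, and $\mu(F),\lambda(F)$ make sense.

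For part (1), the hypothesis $r(\mca{H}/\mathcal{J}_{p^n}\mca{H})=O(p^{(d-2)n})$ is exactly the hypothesis of \Cref{22510}, which yields $e(\mca{H}/\mathcal{I}_{p^n}\mca{H})=\mu(F)p^{dn}+O(np^{(d-1)n})$; combining with the bridge above gives $e(H_1(X_{p^n}))=\mu(F)p^{dn}+O(np^{(d-1)n})$, and we can set $\mu:=\mu(F)$. For part (2), the hypothesis $r(\mca{H}/\mathcal{I}_{p^n}\mca{H})=O(p^{(d-2)n})$ is precisely the hypothesis of \Cref{Monsky21}(2) (the Cuoco--Monsky/Monsky theorem with $S=\emptyset$), giving
\[
e(\mca{H}/\mathcal{I}_{p^n}\mca{H})=\mu(F)p^{dn}+\lambda(F)np^{(d-1)n}+\mu_1p^{(d-1)n}+O(np^{(d-2)n})
\]
for some $\mu_1\in\R$. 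Absorbing $\mu_1p^{(d-1)n}+O(np^{(d-2)n})$ into $O(p^{(d-1)n})$ and rewriting $\mu(F)p^{dn}+\lambda(F)np^{(d-1)n}$ as $(\mu(F)p^n+\lambda(F)n)p^{(d-1)n}$, and adding the $O(n)=O(p^{(d-1)n})$ error from the bridge, we obtain $e(H_1(X_{p^n}))=(\mu(F)p^n+\lambda(F)n+O(1))p^{(d-1)n}$, with $\mu:=\mu(F)$, $\lambda:=\lambda(F)$. That $\mu,\lambda$ depend only on the cover and not on the chosen coordinates $T_1,\ldots,T_d$ follows from the fact that $\mca{H}$, hence ${\rm char}\,\mca{H}$ up to the linear automorphisms of $\Lambda$, is intrinsic, together with the invariance of $\mu(F),\lambda(F)$ under linear automorphisms noted after \Cref{def.mulambda}.

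The only genuine subtlety — and the step I would present most carefully — is checking that $\mca{H}$ is a finitely generated \emph{torsion} $\Lambda$-module so that the machinery of Sections 3 and 5 applies, and confirming that the hypotheses of \Cref{2252} match verbatim the hypotheses of \Cref{22510} and \Cref{Monsky21}(2) respectively; both are essentially bookkeeping once \Cref{thm.unbr-asymptotic} is in hand. There is no serious analytic obstacle here: the heavy lifting was done in establishing \Cref{thm.unbr-asymptotic} (via the fundamental two exact sequences and the trivial-action Proposition \ref{actionprop}) and in the $\Lambda$-module estimates \Cref{22510}, \Cref{Monsky21}. Thus the proof of \Cref{2252} is a short assembly: invoke the bridge, note $\mca{H}$ is finitely generated torsion, apply the relevant $\Lambda$-module estimate, and collect error terms.
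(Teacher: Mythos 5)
Your proof is correct and is essentially the paper's own: the bridge $e(H_1(X_{p^n}))=e(\mathcal{H}/\mathcal{I}_{p^n}\mathcal{H})+O(n)$ from \Cref{thm.unbr-asymptotic}, then \Cref{22510} for part (1) and \Cref{Monsky21}~(2) for part (2). Your pairing of hypotheses with lemmas is the right one; note that the paper's two-line proof lists the two citations in the transposed order, which is evidently a slip, since \Cref{22510} is the statement with the $\mathcal{J}_{p^n}$-rank hypothesis and the $\mu$-only conclusion, while \Cref{Monsky21}~(2) carries the $\mathcal{I}_{p^n}$-rank hypothesis and the $\lambda$-term.

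One flaw in a side remark: your ``more direct'' argument that $\mathcal{H}$ is $\Lambda$-torsion does not work. Finite generation of $\mathcal{H}/\mathcal{I}_{p^n}\mathcal{H}$ over $\Zp$ for every $n$ holds for \emph{any} finitely generated $\Lambda$-module, since $\Lambda/\mathcal{I}_{p^n}\Lambda$ is itself $\Zp$-free of rank $p^{dn}$, so it cannot rule out positive $\Lambda$-rank; and the alternative route through \Cref{reduced} only covers link exteriors (or manifolds with tori boundary), not an arbitrary compact connected orientable $3$-manifold. The fact you need is instead forced by the rank hypotheses themselves: each specialization $\mathcal{H}_\zeta$ has $\Zp[\zeta]$-rank at least $\rank_{\Lambda}\mathcal{H}$, so by \Cref{CM201} (and its $\mathcal{I}_{p^n}$-analogue) one gets $r(\mathcal{H}/\mathcal{J}_{p^n}\mathcal{H})\geq(\rank_{\Lambda}\mathcal{H})(p^n-1)^d$, and either hypothesis $O(p^{(d-2)n})$ then forces $\rank_{\Lambda}\mathcal{H}=0$. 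With that substitution your assembly is complete.
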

\begin{proof} 
The assertion (1) follows from \Cref{Monsky21} (2). The assertion (2) follows from \Cref{22510} 
and Eq.\eqref{examplezpdcover} in \Cref{thm.unbr-asymptotic}. 
\end{proof} 

Recall that $W$ denotes the set of all roots of unity of $p$-power order in $\ol{\Q}_p$ and that $\mca{W}=\{\xi-1\in \ol{\Q}_p\mid \xi\in W\}$.  
If ${\rm char}\,\mathcal{H}$ does not vanish on $\mathcal{W}^d\setminus\{(0,0,\ldots,0)\}$, then, by \Cref{lemmaforbetti}, we have $r(\mca{H}/\mathcal{I}_{p^n}\mca{H})=O(p^{(d-2)n})$. 
If instead ${\rm char}\,\mathcal{H}$ does not vanish on ($\mathcal{W}\setminus\{0\})^d$, then, by \Cref{2285}, we have $r(\mca{H}/\mathcal{J}_{p^n}\mca{H})=O(p^{(d-2)n})$.  

When we consider $\Zp^{\,d}$-covers derived from $\Z^d$-covers, by Remarks \ref{Alexanderremark} and \ref{vanishmentremark}, 
the condition ``${\rm char}\,\mathcal{H}$ does not vanish on $\mca{W}^d\setminus\{(0,0,\ldots,0)\}$ (resp.~$(\mca{W}\setminus\{0\})^d$)'' 
is equivalent to that 
``the Alexander polynomial $\Delta$ does not vanish on $W^d\setminus\{(1,1,\ldots,1)\}$ (resp.~$(W\setminus\{1\})^d$).''

\subsection{Sufficient conditions for the rank assumptions}
In this subsection, we further study sufficient conditions for the rank assumptions of \Cref{2252} in cases of link exteriors. 
As in \Cref{sec.br/unbr}, let $M$ be a $\Q$HS$^3$, let $L=\sqcup_i K_i$ be a $c$-component link, and put $X=M\setminus {\rm Int}\,V_L$. 
Let $(X_{p^n}\to X)_n$ be a $\Zp^{\,d}$-cover and let $(M_{p^n}\to M)_n$ denote the branched $\Zp^{\,d}$-cover obtained by the Fox completions. 

\begin{prop}\label{decompositionprop} 
Suppose that every $M_{p^n}$ is a $\Q$HS\,$^3$ and that $L$ does not decompose in $(M_{p^n}\to M)_n$. 
Then $r(\mathcal{H}/\mathcal{I}_{p^n}\mathcal{H})=O(1)$.
\end{prop}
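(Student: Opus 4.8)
The plan is to relate the rank growth $r(\mathcal{H}/\mathcal{I}_{p^n}\mathcal{H})$ to the decomposition behaviour of $L$ in the tower, using the exact sequences already established. First I would invoke \Cref{thm.unbr-asymptotic}, which gives
\[
r(H_1(X_{p^n};\Zp))=r(\mathcal{H}/\mathcal{I}_{p^n}\mathcal{H})+O(1),
\]
so it suffices to show $r(H_1(X_{p^n};\Zp))=O(1)$; in fact I would aim to show this $\Zp$-rank is eventually constant. Next, recall the Mayer--Vietoris sequence of \Cref{MVlemma} applied to the branched cover: there is a short exact sequence
\[
0\to A_{p^n}\to H_1(X_{p^n})\to H_1(M_{p^n})\to 0,
\]
where $A_{p^n}$ is generated by the meridians of the components of the preimage $L_{p^n}$ of $L$. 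Since $M_{p^n}$ is assumed to be a $\Q$HS$^3$, $H_1(M_{p^n})$ is finite, hence $r(H_1(X_{p^n};\Zp))=r(A_{p^n}\otimes\Zp)$, so the problem reduces to bounding the free $\Zp$-rank of the meridian subgroup.

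The free rank of $A_{p^n}$ is governed by the linear relations among the meridians inside $H_1(X_{p^n})$, equivalently — via \Cref{lem.HM} (Hartley--Murasugi) — by the relations $\sum_j a_j \widetilde{K}_j=0$ holding in $H_1(M_{p^n})$ among the components $\widetilde{K}_j$ of $L_{p^n}$. Precisely, $r(A_{p^n}\otimes\Zp)$ equals the number of components of $L_{p^n}$ minus the rank of the relation lattice coming from such homology relations. The hypothesis that $L$ does not decompose in $(M_{p^n}\to M)_n$ means that no component $K_i$ splits into several components when passing up the tower (the branch index and the number of components over $K_i$ are controlled), so the number of components of $L_{p^n}$ stays bounded — indeed bounded by $c$ if "does not decompose" is taken in the strong sense, or at worst by a constant independent of $n$. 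Combined with the previous paragraph this already gives $r(A_{p^n}\otimes\Zp)=O(1)$, and hence the claim.

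The step I expect to be the main obstacle is pinning down exactly what "$L$ does not decompose in $(M_{p^n}\to M)_n$" buys us at the level of the number of components of $L_{p^n}$ and of the meridian subgroup. One has to use the Hilbert ramification theory for knots (the decomposition/inertia formalism): over each $K_i$, the preimage splits into $g_i$ components each appearing with some ramification index $e_i$ and residue degree $f_i$ with $e_if_ig_i = p^{dn}$, and non-decomposability forces $g_i=1$ (or at least $g_i$ bounded), so $L_{p^n}$ has a bounded number of components. A secondary technical point is the compatibility of the Mayer--Vietoris sequences and the $\Lambda$-module structure as $n$ varies, so that "$O(1)$" is genuinely uniform; but this is routine given \Cref{twoexact2} and \Cref{thm.unbr-asymptotic}. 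Once the component count and relation lattice are controlled, the conclusion $r(\mathcal{H}/\mathcal{I}_{p^n}\mathcal{H})=O(1)$ follows immediately, and in particular the hypothesis $r(\mathcal{H}/\mathcal{I}_{p^n}\mathcal{H})=O(p^{(d-2)n})$ of \Cref{2252}(2) is satisfied whenever $d\geq 1$.
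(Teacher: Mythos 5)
Your proof follows essentially the same route as the paper's: reduce via \Cref{thm.unbr-asymptotic} (equivalently the fundamental two exact sequences) to bounding $r(H_1(X_{p^n};\Zp))$, then use the Mayer--Vietoris sequence of \Cref{MVlemma}, the finiteness of $H_1(M_{p^n})$, and the non-decomposition hypothesis to bound the rank of the meridian subgroup $A_{p^n}$. The only slip is in your closing aside: $O(1)=O(p^{(d-2)n})$ holds only for $d\geq 2$, not for all $d\geq 1$.
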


\begin{proof}
By \Cref{MVlemma}, we have an exact sequence 
\[ 0\to A_{p^n}\otimes\Zp\to H_1(X_{p^n},\Zp)\to H_1(M_{p^n},\Zp)\to 0. \] 
Since $r(H_1(M,\Zp))=0$, we have $r(H_1(X_{p^n},\Zp))=r(A_{p^n}\otimes\Zp)$. Since $L$ does not decompose in $(M_{p^n}\to M)_n$, we have $r(H_1(X_{p^n},\Zp))=O(1)$. By the fundamental two exact sequences
\eqref{ex1.unbr}, \eqref{ex2.unbr} in the proof of \Cref{thm.unbr-asymptotic} together with the estimation by $\frac{d(d-1)}{2}$, 
we obtain $r(\mathcal{H}/\mathcal{I}_{p^n}\mathcal{H})=O(1)$.
\end{proof}

\begin{lem}[Torres condition, \cite{Torres1953, Cimasoni2004BSMM}]\label{Torres}
Suppose $M$ is a $\Z$HS\,$^3$. Then we have
\[
\Delta_L(t_1,\ldots,t_{c-1},1)=
\begin{cases}
\frac{t_1^{l_1}-1}{t_1-1}\Delta_{L'}(t_1)&c=2\\
(t_1^{l_1}\cdots t_{c-1}^{l_{c-1}}-1)\Delta_{L'}(t_1,\ldots,t_{c-1})&c>2,
\end{cases}
\]
where $L'=K_1\sqcup\cdots\sqcup K_{c-1}$ and \glssymbol{lk}$l_i:={\rm lk}(K_i,K_c)$.
\end{lem}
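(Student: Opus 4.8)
The statement to prove is the Torres condition (\Cref{Torres}) for the Alexander polynomial of a $c$-component link $L = K_1 \sqcup \cdots \sqcup K_c$ in a $\Z$HS$^3$, which relates $\Delta_L(t_1,\ldots,t_{c-1},1)$ to the Alexander polynomial of the sublink $L' = K_1 \sqcup \cdots \sqcup K_{c-1}$ obtained by deleting the last component, with a correction factor determined by the linking numbers $l_i = {\rm lk}(K_i, K_c)$. My plan is to deduce this from the classical Torres formula over $S^3$ by a surgery / homology argument, or alternatively to give a direct proof via the Fox free differential calculus applied to a Wirtinger-type presentation of $\pi_1(X)$ where $X = M \setminus {\rm Int}\, V_L$.

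\textbf{First approach (reduction to the classical case).} Since $M$ is a $\Z$HS$^3$, it is obtained by surgery on a framed link in $S^3$, and one can also realize $L$ together with the surgery link as a link in $S^3$; because $H_1(M) = 0$, the relevant Alexander modules over $\Lambda_{\Z,c} = \Z[t_1^{\pm 1},\ldots,t_c^{\pm 1}]$ are controlled exactly as in $S^3$. Concretely, I would invoke McMullen's theorem \cite[Theorem 5.1]{McMullen2002ASENS} (already used in the proof of \Cref{reduced}) to identify ${\rm Fitt}(H_1(X_{\rm ab}))$ with $(t_1-1,\ldots,t_c-1)\Delta_L$, and then analyze the effect of setting $t_c = 1$, which geometrically corresponds to passing to the cover where $K_c$ is no longer used as a deck-transformation generator, i.e.\ filling in $K_c$. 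The key homological input is the Mayer--Vietoris / Wang sequence comparing the infinite cyclic-type covers of $X$ and of $X' = M \setminus {\rm Int}\, V_{L'}$: attaching a solid torus along $\partial V_{K_c}$ introduces the factor $(t_1^{l_1}\cdots t_{c-1}^{l_{c-1}} - 1)$ (for $c > 2$) coming from the fact that the meridian of $K_c$ maps to $t_1^{l_1}\cdots t_{c-1}^{l_{c-1}}$ in the abelianization, while the longitude of $K_c$ dies; the case $c = 2$ is special because then $L'$ is a knot and the extra normalization $\Delta_{L'}(t_1)/(t_1 - 1)$ versus $\widetilde{\Delta}$-type conventions produces the $(t_1^{l_1}-1)/(t_1-1)$ factor instead.

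\textbf{Key steps, in order.} (1) Present $\pi_1(X)$ and write down a presentation matrix for $H_1(X_{\rm ab})$ over $\Lambda_{\Z,c}$ via Fox calculus, or invoke McMullen to pin down the Fitting ideal. (2) Set $t_c = 1$: this is the ring map $\Lambda_{\Z,c} \to \Lambda_{\Z,c-1}$, and I must track what happens to the presentation matrix and hence to ${\rm d.h.}{\rm Fitt}$. (3) Identify the resulting module with a presentation of $H_1(X'_{\rm ab})$ up to a controlled discrepancy supported on the meridian--longitude torus $\partial V_{K_c}$; the discrepancy contributes exactly the linking-number factor because the image of $\mu_{K_c}$ under $\pi_1(X) \to \Z^{c-1}$ (killing $t_c$) is $t_1^{l_1}\cdots t_{c-1}^{l_{c-1}}$ by definition of linking number. (4) Separate the $c = 2$ and $c > 2$ cases according to whether $L'$ is a knot or a genuine multi-component link, since the normalization of the Alexander polynomial (divisibility by $t-1$, Torres's first condition) differs. (5) Conclude by comparing divisorial hulls, exactly as in the proof of \Cref{reduced}.

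\textbf{Main obstacle.} The delicate point will be step (3): keeping careful track of the normalization conventions so that the correction factor comes out as stated rather than off by a unit or by a factor of $t_i - 1$. In particular, the asymmetry between $c = 2$ and $c > 2$ is entirely a normalization artifact (Torres's first condition says $\Delta$ of a $\geq 2$-component link is already divisible by the relevant augmentation-type element, whereas a knot's Alexander polynomial is not), and making this precise requires being scrupulous about whether one works with the Alexander polynomial, the Alexander ideal, or the order of the Alexander module, and about the role of the exact sequence $0 \to A \to H_1(X) \to H_1(M) \to 0$ from \Cref{MVlemma}. Since the statement is classical over $S^3$ \cite{Torres1953, Cimasoni2004BSMM} and $M$ being a $\Z$HS$^3$ makes the homological bookkeeping identical to the $S^3$ case, I expect the cleanest write-up is simply to cite the classical Torres formula together with the observation that all Alexander-module computations depend only on $H_1(X_{\rm ab})$ as a $\Lambda_{\Z,c}$-module, which is insensitive to replacing $S^3$ by any $\Z$HS$^3$; the remaining work is then purely the verification that the linking numbers in $M$ agree with those computed in the relevant cover, which is immediate.
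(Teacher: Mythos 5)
The paper does not prove this lemma at all: it is quoted directly from the literature (Torres for links in $S^3$, and Cimasoni's treatment of the Conway potential function for the homology-sphere version), so your closing remark that the cleanest write-up is simply a citation is exactly what the authors do. Judged as a proof sketch, your outline follows the standard route (Fox calculus or torsion, plus a Mayer--Vietoris comparison of the abelian covers of $X$ and $X'=M\setminus{\rm Int}\,V_{L'}$), but it contains a concrete error, repeated twice: you assert that the meridian of $K_c$ maps to $t_1^{l_1}\cdots t_{c-1}^{l_{c-1}}$ under $\pi_1(X)\to\Z^{c-1}$ while the longitude dies. It is the other way around. The meridian $\mu_c$ abelianizes to $t_c$ and hence dies when $t_c$ is set to $1$, whereas the longitude $\lambda_c$ is homologous in $X$ to $\sum_{i<c} l_i\mu_i$ (this is how linking numbers are read off in a $\Z$HS$^3$) and therefore maps to $t_1^{l_1}\cdots t_{c-1}^{l_{c-1}}$. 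The correction factor in the Torres formula is governed by the image of the longitude, not the meridian, and with your assignment the computation of the discrepancy term would come out wrong.

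A second, more structural gap: in steps (2) and (5) you propose to specialize ${\rm Fitt}(H_1(X_{\rm ab}))=(t_1-1,\ldots,t_c-1)\Delta_L$ at $t_c=1$ and then compare divisorial hulls. Fitting ideals do commute with the base change $\Lambda_{\Z,c}\to\Lambda_{\Z,c-1}$, but the divisorial hull (equivalently, the gcd of the generators) does not: the gcd of the specialized generators can be strictly larger than the specialization of the gcd. This failure of commutation is precisely why the Torres formula carries a non-trivial correction factor in the first place, so it cannot be waved through; one must compute the discrepancy explicitly via the Wang or Mayer--Vietoris sequence for the preimage of $\partial V_{K_c}$ in the $\Z^{c-1}$-cover, which is exactly where the longitude class enters. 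Your ``main obstacle'' paragraph correctly flags that the normalization is delicate, but the sketch as written does not contain the argument that resolves it; as a self-contained proof it is incomplete, while as a reduction to the literature it coincides with what the paper does.
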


\begin{prop} \label{prop.notdecompose}
Suppose that $M$ is a $\Z$HS\,$^3$. If $\Delta_L(t_1,\ldots,t_c)$ does not vanish on $W^c\setminus \{(1,1,\ldots,1)\}$, then $L$ does not decompose in $(M_{p^n}\to M)_n$.
\end{prop}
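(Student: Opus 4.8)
The goal is to show that if the multivariable Alexander polynomial $\Delta_L(t_1,\ldots,t_c)$ does not vanish on $W^c\setminus\{(1,\ldots,1)\}$, then $L$ does not decompose in the tower $(M_{p^n}\to M)_n$. Recall that ``$L$ decomposes'' should mean that for some $n$ the branch link (the preimage of $L$ in $M_{p^n}$) becomes null-homologous in $H_1(M_{p^n};\Q)$, equivalently $r(H_1(X_{p^n};\Zp))$ is unbounded; by \Cref{MVlemma} and the fact that every $M_{p^n}$ is a $\Q$HS$^3$ (which holds here — see below, or is part of the hypothesis chain via \Cref{2260}), non-decomposition is equivalent to $r(H_1(X_{p^n};\Zp))=O(1)$. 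So the plan is to translate this into a statement about the characteristic element $F_\tau$ of $\mca{H}=\varprojlim H_1(X_{p^n};\Zp)$ and invoke the machinery of Section 5.

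\textbf{Key steps.} First I would reduce to a statement about ranks: by \Cref{rankequation} in \Cref{thm.unbr-asymptotic}, $r(H_1(X_{p^n};\Zp))=r(\mca{H}/\mca{I}_{p^n}\mca{H})+O(1)$, so it suffices to bound $r(\mca{H}/\mca{I}_{p^n}\mca{H})$. Second, I would identify the characteristic element of $\mca{H}$: since the $\Zp^d$-cover is the Fox completion of the one attached to $\tau:\pi_1(X)\to\Zp^d$, \Cref{reduced}(1)(ii) gives $F_\tau(T_1,\ldots,T_d)\doteq \Delta_L((1+T_1)^{v_{11}}\cdots,\ldots)$ where $v_{ij}$ are the components of $\tau(\alpha_i)$ — i.e. $F_\tau$ is obtained from $\Delta_L$ by the monomial substitution dictated by $\tau$. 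Third, I would check the vanishing hypothesis transfers: if $\zeta=(\zeta_1,\ldots,\zeta_d)\in W^d$ with not all $\zeta_j=1$, then $((\zeta_1)^{v_{i1}}\cdots(\zeta_d)^{v_{id}})_i$ lies in $W^c$; I must verify it is not the all-ones tuple, which follows because $\wh\tau$ is surjective, so the meridian images $\tau(\alpha_i)$ generate $\Zp^d$ and hence some product $\prod_j \zeta_j^{v_{ij}}\neq 1$ whenever some $\zeta_j\neq 1$. Thus $F_\tau$ does not vanish on $\mca{W}^d\setminus\{(0,\ldots,0)\}$ (using \Cref{Alexanderremark}). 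Fourth, I would apply \Cref{lemmaforbetti}: this gives $r(\mca{H}/\mca{I}_{p^n}\mca{H})=O(p^{(d-2)n})$ when $d\geq 2$, and for $d=1$ the one-variable theory gives $O(1)$ directly; combined with step one this would in fact give $r(H_1(X_{p^n};\Zp))=O(p^{(d-2)n})$, so in particular $L$ does not decompose. One must also handle the case $c>d$: here $\Delta_L$ is a $c$-variable polynomial and $F_\tau$ is its $d$-variable specialization, and the Torres condition (\Cref{Torres}) is the tool to control what happens when some variables are specialized — in particular to see that non-vanishing on $W^c\setminus\{\mathbf 1\}$ is genuinely what is needed rather than a stronger statement, via an induction peeling off components one at a time.

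\textbf{Main obstacle.} The subtle point is the precise definition of ``$L$ decomposes in $(M_{p^n}\to M)_n$'' and its equivalence with a rank/homological statement, together with the correct bookkeeping of branch indices $e_i$ when the meridians of $L$ lift to meridians in $M_{p^n}$ with multiplicities. I expect the genuinely delicate step to be showing that non-vanishing of $\Delta_L$ on the \emph{full} torus $W^c\setminus\{\mathbf 1\}$ — not merely on $(W\setminus\{1\})^c$ — is what controls decomposition, which is where the Torres formula enters: specializing $t_c\to 1$ introduces the factor $t_1^{l_1}\cdots t_{c-1}^{l_{c-1}}-1$ (or $\frac{t_1^{l_1}-1}{t_1-1}$), and one needs that this factor, evaluated along the cyclotomic points coming from $\tau$, does not create spurious zeros unless $L$ genuinely has a homologically trivial sublink upstairs. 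The rest is a fairly routine assembly of \Cref{reduced}, \Cref{rankequation}, and \Cref{lemmaforbetti}.
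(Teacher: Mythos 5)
Your plan does not prove the statement, and the gap is in the very first step: the translation of ``$L$ does not decompose'' into an asymptotic rank bound. Decomposition is the analogue of a prime splitting: $K_i$ decomposes when its preimage in some $M_{p^n}$ becomes \emph{disconnected}, i.e.\ when the image of $\pi_1(\partial V_i)$ (generated by the classes of the meridian and the longitude) fails to be all of $\Zp^{\,d}$. This is not the same as the preimage becoming null-homologous, and it is not equivalent to $r(H_1(X_{p^n};\Zp))=O(1)$: non-decomposition implies the rank is constantly $c$, but a bounded rank is perfectly compatible with a component splitting into a fixed finite number of pieces (the situation the paper calls ``finitely decomposes''), so the implication you actually need --- from a rank bound back to non-decomposition --- fails. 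On top of that, the bound your chain of lemmas (\Cref{reduced}, \Cref{rankequation}, \Cref{lemmaforbetti}) delivers is $r(\mathcal{H}/\mathcal{I}_{p^n}\mathcal{H})=O(p^{(d-2)n})$, which is unbounded for $d\geq 3$, so even the weaker conclusion ``finitely decomposes'' would not follow from it. The asymptotic $\Lambda$-module machinery simply cannot see the difference between ``connected preimage'' and ``boundedly many components,'' which is exactly the distinction the proposition is about.

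The paper's proof is a two-line contrapositive that your plan never touches. If $L$ decomposes, the Hilbert ramification theory for links forces $p\mid {\rm lk}(K_i,K_j)$ for some pair $i<j$ (the decomposition group of $K_i$ is generated by the images of the meridian $\alpha_i$ and the longitude $\lambda_i\sim\sum_{j\neq i}{\rm lk}(K_i,K_j)\alpha_j$, and properness of this subgroup is detected by $p$-divisibility of a linking number). Then the Torres condition (\Cref{Torres}) specialized at $t_j=1$ for $j\neq 1$ exhibits the factor $\frac{t_1^{l_1}-1}{t_1-1}$ (or $t_1^{l_1}\cdots t_{c-1}^{l_{c-1}}-1$), which vanishes at a primitive $p^m$-th root of unity $\xi_{p^m}$ once $p^m\mid l_1$; hence $\Delta_L(\xi_{p^m},1,\ldots,1)=0$ at a point of $W^c\setminus\{(1,\ldots,1)\}$ with all but one coordinate equal to $1$. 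This is precisely why the hypothesis must be non-vanishing on all of $W^c\setminus\{(1,\ldots,1)\}$ rather than on $(W\setminus\{1\})^c$: the relevant zeros live on the boundary strata. You correctly sensed that the Torres condition is the key tool, but it enters to convert linking-number divisibility into a zero of $\Delta_L$, not to control the monomial specialization $F_\tau$ of $\Delta_L$. To repair your argument you would have to import the decomposition-group/linking-number dictionary anyway, at which point the rank estimates become superfluous.
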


\begin{proof}
We prove the statement by contraposition. Suppose that $L$ decomposes. Then, by the Hilbert ramification theory of links, there exist $1\leq i<j\leq c$ such that $p\mid{\rm lk}(K_i,K_j)$ (cf.~\cite[Section 5.1]{Morishita2012}, \cite{Ueki1}). We may assume $i=1$ and $j=c$ without loss of generality. Hence $p\mid l_1$. Let $m\in\Z_{\geq 1}$ such that $p^m\mid l_1$ and $p^{m+1}\nmid l_1$. Let $\xi_{p^m}$ be a $p^m$-th primitive root of unity. Then $\xi_{p^m}^{l_1}=1$, i.e., $\xi_{p^m}^{l_1}-1=0$. By the Torres condition (\Cref{Torres}), this implies
\[
\Delta_L(\xi_{p^m},1,\ldots,1)=0.
\]
Hence $\Delta_L$ vanish on $W^c\setminus \{(1,1,\ldots,1)\}$. This completes the proof.
\end{proof}

\subsection{Iwasawa-type formula for branched $\Zp^{\,d}$-covers}
\begin{theorem}\label{thm.branched} 
Let $(M_{p^n}\to M)_n$ be a branched $\Zp^{\,d}$-cover over $(M,L)$, that is, an inverse system of branched $(\Z/p^n\Z)^d$-covers over $M$ branched along $L$. Suppose that every $M_{p^n}$ is a $\Q$HS\,$^3$ and $L$ does not decompose in ($M_{p^n}\to M)_n$. Then there exist invariants $\mu,\lambda\in\Z_{\geq0}$, depending only on $p$ and $(M_{p^n}\to M)_n$, such that
\begin{equation*}
e(H_1(M_{p^n}))=(\mu p^n+\lambda n+O(1))p^{(d-1)n}.
\end{equation*}
Here, $\mu=\mu(F)$ and $\lambda=\lambda(F)$ for $F={\rm char}\,\mathcal{H}={\rm char}\,\varprojlim H_1(X_{p^n},\Zp)$. If $(M_{p^n}\to M)_n$ is derived from a $\Z^d$-cover, then $\mu=\mu(\Delta(T_1,\ldots,T_d))$ and $\lambda=\lambda(\Delta(T_1,\ldots,T_d))$.
\end{theorem}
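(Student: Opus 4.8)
The plan is to combine the two pillars already assembled in the paper: the gap estimate between branched and unbranched covers (\Cref{gaptheorem}), and the Iwasawa-type formula for unbranched $\Zp^{\,d}$-covers (\Cref{2252}(2)). By hypothesis every $M_{p^n}$ is a $\Q$HS$^3$ and $L$ does not decompose in $(M_{p^n}\to M)_n$, so \Cref{gaptheorem} applies directly and gives
\[
e(H_1(M_{p^n}))=e(H_1(X_{p^n}))+O(n),
\]
where $X=M\setminus {\rm Int}\,V_L$ and $(X_{p^n}\to X)_n$ is the restriction of $(M_{p^n}\to M)_n$ to the exteriors. Thus the problem reduces to estimating $e(H_1(X_{p^n}))$, i.e.\ to applying \Cref{2252}(2) to $\mca{H}=\varprojlim H_1(X_{p^n};\Zp)$.

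\textbf{Verifying the rank hypothesis.} The second step is to check that the rank assumption $r(\mca{H}/\mathcal{I}_{p^n}\mca{H})=O(p^{(d-2)n})$ of \Cref{2252}(2) holds in this setting. This is exactly the content of \Cref{decompositionprop}: since every $M_{p^n}$ is a $\Q$HS$^3$ and $L$ does not decompose, one has the even stronger bound $r(\mca{H}/\mathcal{I}_{p^n}\mca{H})=O(1)$, which is certainly $O(p^{(d-2)n})$ (for $d\geq 2$; the case $d=1$ is covered by \Cref{thm.d=1} and the $d\geq 2$ bound $O(1)$ degenerates harmlessly). Feeding this into \Cref{2252}(2) yields $\mu,\lambda\in\Z_{\geq 0}$, depending only on $(X_{p^n}\to X)_n$ — hence only on $p$ and $(M_{p^n}\to M)_n$ via the Fox completion correspondence — with
\[
e(H_1(X_{p^n}))=\bigl(\mu(F)p^n+\lambda(F)n+O(1)\bigr)p^{(d-1)n},
\]
where $F={\rm char}\,\mca{H}$. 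Adding the $O(n)$ error from \Cref{gaptheorem} is absorbed into the $O(1)p^{(d-1)n}=O(p^{(d-1)n})$ term, giving the stated formula $e(H_1(M_{p^n}))=(\mu p^n+\lambda n+O(1))p^{(d-1)n}$.

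\textbf{Identification of the invariants in the $\Z^d$-cover case.} For the last sentence, suppose $(M_{p^n}\to M)_n$ is derived from a $\Z^d$-cover $X_\infty\to X$. Then \Cref{zcovertheorem} gives a pseudo-isomorphism of $\Lambda$-modules $\mca{H}\sim \wh{H}_1(X_\infty)=H_1(X_\infty)\otimes_{\Lambda_\Z}\Lambda$, so ${\rm char}\,\mca{H}={\rm char}\,\wh{H}_1(X_\infty)$. By \Cref{prop.hatH1Xinfty}, the latter characteristic element is $\Delta_\tau(1+T_1,\ldots,1+T_d)$, the Alexander polynomial of $X_\infty\to X$ evaluated at $t_i=1+T_i$. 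Hence $\mu=\mu(\Delta(T_1,\ldots,T_d))$ and $\lambda=\lambda(\Delta(T_1,\ldots,T_d))$ in the notation of \Cref{def.mulambda}, which is the claim.

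\textbf{Main obstacle.} All the heavy lifting — the fundamental two exact sequences, the Hartley--Murasugi gap estimate, and the $\mathcal{J}_{p^n}$-variant of Cuoco--Monsky's torsion formula together with the weakening of the rank hypothesis in \Cref{22510} — has already been carried out in Sections 5--9. So the proof of this theorem itself is essentially a matter of correctly chaining these inputs and checking that the error terms compose as claimed. The only point requiring genuine care is confirming that the rank bound supplied by \Cref{decompositionprop} is indeed strong enough to invoke \Cref{2252}(2) rather than merely \Cref{2252}(1), and that the dependence of $\mu,\lambda$ is on the cover only (not on auxiliary choices such as the basis $(t_1,\ldots,t_d)$), which follows because ${\rm char}\,\mca{H}$ is well-defined up to a unit and $\mu,\lambda$ are invariant under linear automorphisms of $\Lambda$ by \Cref{def.mulambda}.
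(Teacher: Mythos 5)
Your proposal is correct and follows essentially the same route as the paper: the paper's (very terse) proof likewise reduces to \Cref{2252}(2) via the rank bound of \Cref{decompositionprop}, with \Cref{gaptheorem} supplying the passage from $e(H_1(X_{p^n}))$ to $e(H_1(M_{p^n}))$ and \Cref{zcovertheorem} together with \Cref{prop.hatH1Xinfty} identifying the invariants in the $\Z^d$-derived case. Your explicit attention to the absorption of the $O(n)$ gap error and to the well-definedness of $\mu,\lambda$ under linear automorphisms is a welcome elaboration of steps the paper leaves implicit.
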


\begin{proof}
This follows from \Cref{2252} and \Cref{decompositionprop}.
\end{proof}

\begin{remark} 
In the cases with $d=1$ (cf.~\Cref{thm.d=1}), every $M_{p^n}$ is a $\Q$HS$^3$ if and only if the characteristic element does not vanish outside zero \cite[Theorem 4.17]{Ueki2}. 
In the cases with $d\geq 2$, this condition is related to the rank assumption, as we have discussed so far. 

When $d=1$, if we assume that $L$ does not contain any unbranched component, then the Hilbert ramification theory yields that $L$ finitely decomposes. In the case with $d\geq 2$ stated above, the assumption ``$L$ does not decompose'' may be replaced by that ``$L$ finitely decomposes'' by replacing the base space $M$ by $M_{p^{n_0}}$ for a sufficiently large $n_0$. See also \Cref{rem.L412}. 
\end{remark} 

By \Cref{Monsky21}, \Cref{thm.unbr-asymptotic} \eqref{examplezpdcover}, and \Cref{gaptheorem}, we further have the following, 
as was the case for $\Zp^{\,d}$-extension \cite{Monsky1989}. 

\begin{theorem} \label{rem.alpha}
In \Cref{2252} (2) and in \Cref{thm.branched}, the $O(1)p^{(d-1)n}=O(p^{(d-1)n})$ part may be refined to $\mu_1p^{(d-1)n}+O(np^{(d-2)n})$ for some $\mu_1\in\R$. If $d=2$, then $\mu_1\in\Q$. 
\end{theorem}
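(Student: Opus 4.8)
The plan is to reduce Theorem~\ref{rem.alpha} to the already-established refinement for $\Lambda$-modules in \Cref{Monsky21}, using the comparison results obtained in Sections~\ref{sec.twoexact}--\ref{sec.br/unbr}. Recall that \Cref{Monsky21}~(2) gives, for a finitely generated torsion $\Lambda$-module $\mathcal{M}$ with $r(\mathcal{M}/\mathcal{I}_{p^n}\mathcal{M})=O(p^{(d-2)n})$ and $F={\rm char}\,\mathcal{M}$,
\[
e(\mathcal{M}/\mathcal{I}_{p^n}\mathcal{M})=\mu(F)p^{dn}+\lambda(F)np^{(d-1)n}+\mu_1p^{(d-1)n}+O(np^{(d-2)n})
\]
for some $\mu_1\in\R$, with $\mu_1\in\Q$ when $d=2$. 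So the task is purely to transport this asymptotic expansion along the two ``$O(n)$-type'' comparisons, checking that the error terms introduced are absorbed into $O(np^{(d-2)n})$ after the rank hypotheses are in force. I would treat the unbranched case of \Cref{2252}~(2) first and then the branched case of \Cref{thm.branched} by composing with \Cref{gaptheorem}.

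First I would set $\mathcal{M}=\mathcal{H}=\varprojlim_n H_1(X_{p^n};\Zp)$, which is a finitely generated torsion $\Lambda$-module by \Cref{thm.unbr-asymptotic} and the discussion in the introduction, with $F={\rm char}\,\mathcal{H}$. Under the hypothesis of \Cref{2252}~(2), namely $r(\mathcal{H}/\mathcal{I}_{p^n}\mathcal{H})=O(p^{(d-2)n})$, I would first apply \Cref{CM7} to pass from $\mathcal{I}_{p^n}$ to $\mathcal{J}_{p^n}$: it gives $|e(\mathcal{H}/\mathcal{I}_{p^n}\mathcal{H})-e(\mathcal{H}/\mathcal{J}_{p^n}\mathcal{H})|=O(np^{(d-1)n})$ --- but this is too coarse, so instead I would invoke \Cref{Monsky21}~(2) directly on $\mathcal{I}_{p^n}$, which is already formulated with $\mathcal{I}_{p^n}$ (it is the specialization of \cite[Theorem 3.12]{Monsky1989} with $S=\emptyset$) and requires exactly the rank hypothesis in hand. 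This yields the desired expansion for $e(\mathcal{H}/\mathcal{I}_{p^n}\mathcal{H})$. Then the key step is Eq.~\eqref{examplezpdcover} of \Cref{thm.unbr-asymptotic}, $e(\mathcal{H}/\mathcal{I}_{p^n}\mathcal{H})=e(H_1(X_{p^n}))+O(n)$: since $O(n)$ is swallowed by $O(np^{(d-2)n})$ for $d\geq2$ (and is $O(n)=O(np^{0})$ when $d=2$, still within the stated error), we obtain
\[
e(H_1(X_{p^n}))=\mu(F)p^{dn}+\lambda(F)np^{(d-1)n}+\mu_1p^{(d-1)n}+O(np^{(d-2)n}),
\]
i.e. the $O(p^{(d-1)n})$ in \Cref{2252}~(2) is refined to $\mu_1p^{(d-1)n}+O(np^{(d-2)n})$, with $\mu_1\in\Q$ when $d=2$.

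For the branched case of \Cref{thm.branched}, I would invoke \Cref{decompositionprop}, which under the standing hypotheses (every $M_{p^n}$ a $\Q$HS$^3$, $L$ non-decomposing) gives $r(\mathcal{H}/\mathcal{I}_{p^n}\mathcal{H})=O(1)=O(p^{(d-2)n})$ for $d\geq 2$, so the unbranched refinement above applies to $e(H_1(X_{p^n}))$. Then \Cref{gaptheorem} gives $e(H_1(M_{p^n}))=e(H_1(X_{p^n}))+O(n)$, and again $O(n)$ is absorbed into $O(np^{(d-2)n})$. Composing the two estimates yields the refinement for $e(H_1(M_{p^n}))$ with $\mu=\mu(F)$, $\lambda=\lambda(F)$, $\mu_1\in\R$ (and $\mu_1\in\Q$ if $d=2$), exactly as claimed.

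The only genuinely delicate point, and the one I would be most careful about, is the bookkeeping of error terms when $d=2$: there $p^{(d-2)n}=1$, so $O(np^{(d-2)n})=O(n)$, and one must make sure the various $O(n)$ contributions from \Cref{thm.unbr-asymptotic} and \Cref{gaptheorem} do not secretly hide a larger term and that $\mu_1$ genuinely lands in $\Q$ rather than merely $\R$ --- this rationality is precisely the content of the ``moreover'' clause in \cite[Theorem 3.13]{Monsky1989}/\Cref{Monsky21}, so it suffices to cite it, but I would state explicitly that the additive $O(n)$ perturbations do not affect $\mu_1$ since they are of strictly lower order than $p^{(d-1)n}=p^{n}$. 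Everything else is a routine chain of substitutions into already-proven statements, so I would keep the write-up to a short paragraph citing \Cref{Monsky21}, \Cref{thm.unbr-asymptotic}~\eqref{examplezpdcover}, \Cref{decompositionprop}, and \Cref{gaptheorem} in that order.
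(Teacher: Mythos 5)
Your proposal is correct and follows essentially the same route as the paper, whose proof is precisely the chain you describe: apply \Cref{Monsky21}~(2) to $\mathcal{H}$ under the rank hypothesis (supplied by \Cref{decompositionprop} in the branched case), then transfer the refined expansion through the $O(n)$ comparisons of \Cref{thm.unbr-asymptotic}~\eqref{examplezpdcover} and \Cref{gaptheorem}, noting these errors are absorbed into $O(np^{(d-2)n})$. Your explicit attention to the $d=2$ bookkeeping and the rationality of $\mu_1$ is a welcome addition to what the paper leaves implicit.
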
 

\subsection{A refined Iwasawa-type formula over $\Z$HS$^3$}
In this subsection, we establish a refined version of the Iwasawa-type formula inspired by Greenberg's conjecture for a $\Zp^d$-cover over a $\Z$HS$^3$, based on the results of Mayberry--Murasugi and Porti. 

Let $M$ be a $\Z$HS$^3$, let $L=\sqcup_i K_i$ be a $c$-component link in $M$, and put $X:=M\setminus {\rm Int}\,V_L$. 
Let $G$ be a finite abelian group and $\pi:\pi_1(X)\twoheadrightarrow G$ a surjective group homomorphism. Let $M_{\pi}$ denote the covering of $M$ branched along $L$ corresponding to $\ker\pi$. Let
\[
\widehat{G}:=\{\xi:G\to \mathbb{C}^*\mid \xi\mbox{ is a group homomorphism}\}
\]
denote the Pontryagin dual of $G$. Fix meridians $\alpha_1,\ldots, \alpha_d\in H_1(X)$. For arbitrary $\xi\in\widehat{G}$, let $L_{\xi}:=\bigcup_{\xi(\pi(\alpha_i))\neq 1}K_i$ be a sublink of $L$ and $\Delta_{L_{\xi}}(t_{i_1},\ldots,t_{i_k})$ denote the Alexander polynomial of $L_\xi$. For the trivial representation $G\to\mathbb{C}^*$, one has $L_1=\emptyset$. We put $\Delta_{L_1}:=1$ and 
\[
\widehat{G}^{(1)}:=\{\xi\in\widehat{G}\mid L_{\xi}=K_i\mbox{ for some }1\leq i\leq d\}.
\]
For arbitrary $\xi\in \widehat{G}^{(1)}$, let $i(\xi)$ denote the corresponding $i$. Put
\[
|H_1(M_\pi)|:=
\begin{cases}
\#{H_1(M_\pi})&\mbox{if finite}\\
0&\mbox{if infinite}.
\end{cases}
\]

\begin{prop}[{\cite[Theorem 10.1]{MayberryMurasugi1982}, \cite[Theorem 1.1]{Porti2004}}]\label{2260}
We have
\[
|H_1(M_\pi;\Z)|=\pm\frac{|G|}{\prod_{\xi'\in\widehat{G}^{(1)}}(1-\xi'(\pi(\alpha_{i(\xi')})))}\prod_{\xi\in\widehat{G}}\Delta_{L_\xi}(\xi(\pi(\alpha_{i_1})),\ldots,\xi(\pi(\alpha_{i_k}))).
\]
\end{prop}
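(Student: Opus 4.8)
This is due to Mayberry--Murasugi \cite{MayberryMurasugi1982}, with the sign and the precise normalization pinned down by Porti \cite{Porti2004}; the plan is to recall their Reidemeister-torsion argument. When $H_1(M_\pi;\Z)$ is finite, I would compute its order by a $G$-equivariant chain-level analysis: fix a CW structure on the pair $(M,L)$, lift it to $M_\pi$, and note that the cellular chain complex $C_\bullet(M_\pi)$ is a complex of free $\Z[G]$-modules. Extending scalars to $\C$, it decomposes along the Pontryagin dual as $\bigoplus_{\xi\in\widehat G} C_\bullet(M_\pi)\otimes_{\C[G]}\C_\xi$, so that $|H_1(M_\pi;\Z)|$ equals, up to sign, the product over $\xi\in\widehat G$ of the contribution of the $\xi$-isotypic block; by Reidemeister--Franz torsion theory this contribution is the twisted torsion $\tau(M_\pi;\xi\circ\pi)$ of $M_\pi$ with the $1$-dimensional character $\xi\circ\pi$, together with the boundary data recording the branch locus. (Alternatively one can run Mayberry--Murasugi's original computation, extracting $\det$ of a Fox-calculus presentation matrix of $H_1(M_\pi)$ and block-diagonalizing it over $\C[G]$; the torsion formulation is cleaner to describe.)

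Next I would identify each twisted torsion with an Alexander polynomial. Milnor's theorem relates the Reidemeister torsion of a link exterior twisted by a character $\rho$ to the multivariable Alexander polynomial, roughly $\tau(X_L;\rho)\doteq\Delta_L(\rho(t_1),\ldots,\rho(t_c))\big/\prod_i(1-\rho(t_i))$ when all $\rho(t_i)\neq1$, with a modified expression when some $\rho(t_i)=1$. For $\rho=\xi\circ\pi$, the meridians $\alpha_i$ with $\xi(\pi(\alpha_i))=1$ contribute trivially, so the relevant exterior is that of the sublink $L_\xi=\bigcup_{\xi(\pi(\alpha_i))\neq1}K_i$, and the $\xi$-block contributes $\Delta_{L_\xi}$ evaluated at the roots of unity $\xi(\pi(\alpha_{i_j}))$ up to a power of $1-\xi(\pi(\alpha))$. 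Passing from the unbranched cover --- where the meridian tori contribute a free summand --- to the branched cover $M_\pi$ kills that summand, and this replacement is precisely where the factor $|G|$ in the numerator and the denominator $\prod_{\xi'\in\widehat G^{(1)}}(1-\xi'(\pi(\alpha_{i(\xi')})))$ both come from; the characters $\xi'\in\widehat G^{(1)}$ are exactly those for which $L_{\xi'}$ is a single knot, where the one-variable Alexander polynomial carries one fewer $(t-1)$ factor than the bookkeeping of the empty and multicomponent cases. Assembling the blocks over all $\xi\in\widehat G$, using $\Delta_{L_1}=1$ for the trivial character, and invoking the $\Z$HS$^3$ hypothesis --- which forces $H_1(M)=0$ and thereby controls the $H_0$-terms and the global sign --- one recovers the stated identity.

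The main obstacle will be the bookkeeping at the degenerate characters: determining for each $\xi$ the exact power of $1-\xi(\pi(\alpha))$ that appears, carefully separating the three cases $L_\xi=\emptyset$, $L_\xi$ a knot, and $L_\xi$ a link of at least two components, and verifying that these local contributions combine into the single clean factor $|G|\big/\prod_{\xi'\in\widehat G^{(1)}}(1-\xi'(\pi(\alpha_{i(\xi')})))$. A secondary difficulty --- essentially the part of \cite{Porti2004} that goes beyond \cite{MayberryMurasugi1982} --- will be upgrading the chain of torsion relations $\doteq$ to a literal equality and fixing the sign $\pm$, which requires care with the Reidemeister-torsion sign conventions and with the choice of Euler structure on $M_\pi$; for the applications in this paper one may in fact work up to sign and units throughout, so this last point is inessential here.
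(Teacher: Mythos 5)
The paper does not prove this proposition at all: it is imported verbatim from Mayberry--Murasugi and Porti, and the label carries only the citations. There is therefore no in-paper argument to compare yours against. Your outline is a faithful reconstruction of how the cited sources actually proceed --- Porti's proof is exactly the Reidemeister-torsion argument you describe (decomposition of the $\C[G]$-chain complex of the cover over $\widehat G$, Milnor's identification of each $\xi$-block with $\Delta_{L_\xi}$ evaluated at $\xi(\pi(\alpha_i))$ divided by factors of $1-\xi(\pi(\alpha_i))$, and the branched/unbranched correction producing the $|G|$ and the $\widehat G^{(1)}$-denominator), while Mayberry--Murasugi's original route is the Fox-calculus determinant computation you mention parenthetically. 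Two caveats: first, what you have written is a plan, and the parts you yourself flag as the main obstacles (the exact power of $1-\xi(\pi(\alpha))$ at degenerate characters, and the case split $L_\xi=\emptyset$, a knot, or a multicomponent link) are precisely where the real work in those papers lies, so this would not stand as a self-contained proof; second, you should also account for the convention $|H_1(M_\pi)|:=0$ when the group is infinite, which corresponds to the vanishing of the right-hand side and is part of the statement as used later (e.g.\ in \Cref{eg.L612}). Given that the proposition is an external input to this paper, treating it as a citation with your sketch as supporting context is entirely appropriate.
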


\begin{theorem}\label{thm.integral} 
Let $M$ be a $\Z$HS\,$^3$, let $L=\sqcup_iK_i$ be a $c$-component link in $M$ with $c\geq d$, and 
suppose that the Alexander polynomial $\Delta_L(t_1,\ldots,t_c)$ does not vanish on $(W\setminus \{1\})^c$. 
Let $(M_{p^n}\to M)_n$ be a branched $\Zp^{\,d}$-cover over $(M,L)$. 
Then, every $M_{p^n}$ is a $\Q$HS\,$^3$, and there exists $f(U,V)\in\Q[U,V]$ with $\deg_{V}f\leq1$ and total degree $\deg f\leq d$ such that,  for every sufficiently large $n$, 
\[
e(H_1(M_{p^n}))=f(p^n, n) 
\] 
holds, and the coefficients of $U^d$ and $U^{d-1}V$ are in $\Z_{\geq 0}$. 
In other words, there exist some $\mu,\lambda\in\Z_{\geq 0}$ and 
$\mu_1,\ldots,\mu_{d-1},\lambda_1,\ldots,\lambda_{d-1},\nu\in \Q$ 
such that, for every sufficiently large $n$, 
\[e(H_1(M_{p^n}))= 
\mu p^{dn}+\lambda np^{(d-1)n}+\mu_1p^{(d-1)n}+\lambda_1 np^{(d-2)n}+\ldots +\mu_{d-1} p^n+\lambda_{d-1} n+\nu
\]
holds. 
\end{theorem}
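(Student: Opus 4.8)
The plan is to reduce the branched $\Z$HS$^3$ statement to the unbranched Iwasawa-type formula (\Cref{thm.branched}) plus Monsky's semi-algebraic polynomiality result (\Cref{22521}), using the Mayberry--Murasugi--Porti product formula (\Cref{2260}) to express $|H_1(M_{p^n})|$ explicitly. First I would verify that every $M_{p^n}$ is a $\Q$HS$^3$: by \Cref{prop.notdecompose} the non-vanishing of $\Delta_L$ on $W^c\setminus\{(1,\ldots,1)\}$ (which follows from the stronger hypothesis of non-vanishing on $(W\setminus\{1\})^c$ together with the Torres condition \Cref{Torres}, since a vanishing on a smaller stratum would force vanishing on $(W\setminus\{1\})^c$ by setting the remaining coordinates to roots of unity --- this needs a careful but routine check) gives that $L$ does not decompose, and then \Cref{2260} shows each $|H_1(M_{p^n})|$ is a nonzero product of norms, hence finite. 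So the hypotheses of \Cref{thm.branched} are met, giving the leading $\mu p^{dn}+\lambda n p^{(d-1)n}$ terms with $\mu,\lambda\in\Z_{\geq 0}$ already identified as $\mu(F),\lambda(F)$ for $F={\rm char}\,\mca{H}$ (here invoking \Cref{decompositionprop} for the rank assumption and, when $c=d$ or $\Delta_L$ doesn't vanish on $W^c\setminus\{\mathbf 1\}$, \Cref{reduced} and \Cref{zcovertheorem} to identify $F$ with the reduced Alexander polynomial).

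The core of the argument is the exact polynomiality. Taking $G=(\Z/p^n\Z)^d$ in \Cref{2260}, we get
\[
e(H_1(M_{p^n}))=dn-\sum_{\xi'\in\widehat{G}^{(1)}} v\bigl(1-\xi'(\pi(\alpha_{i(\xi')}))\bigr)+\sum_{\xi\in\widehat{G}} v\bigl(\Delta_{L_\xi}(\xi(\pi(\alpha)))\bigr),
\]
and the task is to show each piece is eventually polynomial in $p^n$ and $n$. Characters $\xi$ of $(\Z/p^n\Z)^d$ correspond to tuples $\zeta\in W(n)^d$ via a fixed embedding; grouping the characters according to which sublink $L_\xi$ appears (a condition on which coordinates $\zeta_i$ equal $1$), each group is indexed by $\zeta$ ranging over $(W(n)\setminus\{1\})^k\times\{1\}^{d-k}$-type strata, which are semi-algebraic subsets of $W^d$ in Monsky's sense. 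For a fixed sublink $L'=L_\xi$, the quantity $v(\Delta_{L'}(\xi(\pi(\alpha))))$ equals $v(G'(\zeta-1))$ for a suitable $G'\in\Lambda$ obtained by substituting monomials in the $t_j$'s (coming from $\pi(\alpha_i)\in\Zp^d$) into $\Delta_{L'}$; then \Cref{22521} applied to this $G'$ and the semi-algebraic set gives a polynomial $f_{L'}(p^n,n)$ with $\deg_V\le 1$ and total degree $\le k\le d$. The correction term $\sum_{\xi'\in\widehat{G}^{(1)}}v(1-\xi'(\pi(\alpha_{i(\xi')})))$ is likewise a sum over a semi-algebraic family of $v$ of a single linear-monomial expression, again polynomial of degree $\le 1$ in each variable by \Cref{22521} (or directly by \Cref{2255}-type computations). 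Summing finitely many such $f_{L'}$ and the $dn$ term yields the desired $f(U,V)\in\Q[U,V]$ with $\deg_V f\le 1$, $\deg f\le d$; uniqueness of $f$ follows from the uniqueness clause in \Cref{22521}. The coefficient of $U^d$ is $\mu(F)\ge 0$ and of $U^{d-1}V$ is $\lambda(F)\ge 0$, matching \Cref{thm.branched} and \Cref{rem.alpha}.

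The main obstacle I anticipate is the careful bookkeeping in applying \Cref{2260}: one must correctly identify, for each character $\xi$ of $(\Z/p^n\Z)^d$, the sublink $L_\xi$ and the polynomial $G_\xi\in\Lambda$ whose $v$-value appears, check that as $\xi$ varies the relevant index set is genuinely semi-algebraic (the conditions $\xi(\pi(\alpha_i))=1$ versus $\neq 1$ are exactly conditions of types (i) and (ii) in the definition, so this should go through), and confirm that the finitely many polynomials $G_\xi$ arising — one per sublink of $L$ — are each nonzero elements of $\Lambda$ so that \Cref{22521} applies with a genuine $f$. Nonzeroness of the relevant $G_\xi$ is where the hypothesis that $\Delta_L$ does not vanish on $(W\setminus\{1\})^c$ is used in full strength: it guarantees that after the monomial substitution no $\Delta_{L_\xi}$ degenerates to a polynomial vanishing on the whole stratum (one checks this via the Torres condition, relating $\Delta_{L_\xi}$ to $\Delta_L$ with some variables set to $1$). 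The final step — reconciling the "for sufficiently large $n$" exact formula with the asymptotic $O$-formula of \Cref{thm.branched}, and reading off that the top two coefficients are the nonnegative integers $\mu(F),\lambda(F)$ — is then immediate by comparing leading terms.
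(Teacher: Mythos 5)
Your main line of argument is exactly the paper's proof: apply the Mayberry--Murasugi--Porti formula (\Cref{2260}) with $G=(\Z/p^n\Z)^d$, observe that the strata of characters indexed by which coordinates $\xi(\pi(\alpha_i))$ equal $1$ are semi-algebraic, and feed each resulting sum of valuations into Monsky's polynomiality theorem (\Cref{22521}); the non-vanishing hypothesis, propagated to sublinks by Torres, guarantees every factor in Porti's product is nonzero, hence each $M_{p^n}$ is a $\Q$HS$^3$. That part is correct and is the paper's route.

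One step of your preamble is wrong, however. You claim that non-vanishing of $\Delta_L$ on $(W\setminus\{1\})^c$ implies non-vanishing on $W^c\setminus\{(1,\ldots,1)\}$ ``by setting the remaining coordinates to roots of unity.'' The implication goes the wrong way: $(W\setminus\{1\})^c$ is the \emph{smaller} set, and the Torres condition shows that on a boundary stratum $\Delta_L$ acquires a factor $t_1^{l_1}\cdots t_{c-1}^{l_{c-1}}-1$ which can vanish at $p$-power roots of unity whenever $p$ divides the relevant linking numbers. The twisted Whitehead link $W_{2p^\kappa}$ with $\Delta(x,y)=p^\kappa(x-1)(y-1)$ is a concrete counterexample (the paper notes in the introduction that \Cref{11257} satisfies the hypothesis of this theorem but not that of \Cref{mainresult}). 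Consequently \Cref{prop.notdecompose} and the detour through \Cref{thm.branched} are not available under the stated hypothesis, and the identification of the top coefficients with $\mu({\rm char}\,\mca{H})$ and $\lambda({\rm char}\,\mca{H})$ does not follow in general --- that identification is precisely the content of \Cref{thm.integral.Delta}, which requires either $c=d$ or the genuinely stronger non-vanishing on $W^c\setminus\{(1,\ldots,1)\}$. None of this affects the existence of $f(U,V)$, which your Porti--Monsky argument establishes on its own, but you should drop the claimed reduction to \Cref{thm.branched} and justify $\mu,\lambda\in\Z_{\geq 0}$ directly (e.g.\ from the leading-term comparison with $\Sigma_n$ as in \Cref{CM32}).
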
 

\begin{proof} 
Since $\Delta_L$ does not vanish on $(W\setminus\{1\})^c$, neither does $\Delta_{L'}$ on $(W\setminus\{1\})^{c(L')}$ for any $L'\subset L$ by the Torres condition (\Cref{Torres}). 
Let $\tau:\Z^c\to\Zp^d$ be a homomorphism that induces a surjective homomorphism 
$\Zp^{\,c} \surj \Zp^{\,d}$ corresponding to the given $\Zp^{\,d}$-cover. 
Put $\bm{v}_i:=\tau(\alpha_i)$. Then $\bm{v}_1,\ldots,\bm{v}_c$ generate $\Zp^{\,d}$. 
For each $n\geq 1$, let $\tau_n$ denote the composite map of $\tau$ and a canonical surjection $\Zp^d\surj(\Z/p^n\Z)^d$. If $\bm{v}_i=(v_{i1},\ldots,v_{id})$, put
\[
\zeta^{(v_{i1},\dots,v_{id})}:=\zeta_1^{v_{i1}}\cdots\zeta_d^{v_{id}}.
\]
By \Cref{2260}, we have
\begin{align*}
|H_1(M_{p^n})|&=\pm\frac{|G|}{\prod_{\xi'\in\widehat{G}^{(1)}}(1-\xi'(\tau_n(\alpha_{i(\xi')})))}\prod_{\xi\in\widehat{G}}\Delta_{L_\xi}(\xi(\tau_n(\alpha_{i_1})),\ldots,\xi(\tau_n(\alpha_{i_k})))\\
&=\pm\frac{p^{dn}}{\prod_{i=1}^c \prod_{\substack{\zeta\in W(n)^d\\ \zeta^{\bm{v}_i}\neq 1\\ \zeta^{\bm{v}_j}=1 (j\neq i)}}(1-\zeta^{\bm{v}_i})}\prod_{L'\subset L}\prod_{\substack{\zeta\in W(n)^d\\ \zeta^{\bm{v}_j}\neq 1\, (j \in \{i_1,\ldots i_{c(L')}\})\\ \zeta^{\bm{v}_j}= 1\, (j \notin \{i_1,\ldots i_{c(L')}\}) }}\Delta_{L'}({\zeta}^{\bm{v}_1},\ldots,{\zeta}^{\bm{v}_c})
\end{align*}
Therefore, we have
\begin{align*} 
e(H_1(M_{p^n}))&=v\Big(
\pm\frac{p^{dn}}{\prod_{i=1}^c \prod_{\substack{\zeta\in W(n)^d\\\zeta^{\bm{v}_i}\neq 1\\ \zeta^{\bm{v}_j}=1\, (j\neq i)}}(1-\zeta^{\bm{v}_i})}\prod_{L'\subset L}\hspace{-4mm}\prod_{\substack{\zeta\in W(n)^d\\ \zeta^{\bm{v}_j}\neq 1\, (j \in \{i_1,\ldots i_{c(L')}\})\\ \zeta^{\bm{v}_j}=1\, (j \notin \{i_1,\ldots i_{c(L')}\})}}\hspace{-8mm}\Delta_{L'}({\zeta}^{\bm{v}_{i_1}},\ldots,{\zeta}^{\bm{v}_{i_{c(L')}}})
\Big)\\
&=v(p^{dn})-v\Big({\prod_{i=1}^c \prod_{\substack{\zeta\in W(n)^d\\\zeta^{\bm{v}_i}\neq 1\\ \zeta^{\bm{v}_j}=1\, (j\neq i)}}(1-\zeta^{\bm{v}_i})}\Big) +\sum_{L'\subset L}\hspace{-4mm}\sum_{\substack{\zeta\in W(n)^d\\ \zeta^{\bm{v}_j}\neq 1\, (j \in \{i_1,\ldots i_{c(L')}\}) \\ \zeta^{\bm{v}_j}=1\, (j \notin \{i_1,\ldots i_{c(L')}\})}}\hspace{-8mm}v\Big(\Delta_{L'}({\zeta}^{\bm{v}_{i_1}},\ldots,{\zeta}^{\bm{v}_{i_{c(L')}}})\Big).
\end{align*} 
Since the map $W^d\to W; (\zeta_1,\ldots,\zeta_d)\mapsto \zeta^{\bm{v}_i}$ is an element of ${\rm Hom}(W^d,W)$ for every $\bm{v}_i$, by the definition of semi-algebraic sets, for each $1\leq i\leq c$, we have that 
$\{\zeta\in W^d\mid \zeta^{\bm{v}_i}\neq 1, \zeta^{\bm{v}_j}=1\ (j\neq i)\}$ and $\{\zeta\in W^d\mid \zeta^{\bm{v}_j}\neq 1\, (j \in \{i_1,\ldots i_{c(L')}\}), \zeta^{\bm{v}_j}=1\ (j\notin\{i_1,\ldots, i_{c(L')}\})\}$ are semi-algebraic. 
Hence, by applying \Cref{22521} to the second and the third terms, we obtain 
$g(U,V), h(U,V)\in\Q(U,V)$ with $\deg_Vg,\deg_Vh\leq 1$ and $\deg g, \deg h\leq d$ such that
\[
e(H_1(M_{p^n}))=dn-g(p^n,n)+h(p^n,n)
\]
holds. By putting $f(U,V)=dV-g(U,V)+h(U,V)$, we obtain the assertion. 
\end{proof}

In the proof above, by \cite[Theorem 4.2]{MayberryMurasugi1982}, we find that $0\leq dn-g(p^n,n)\leq dn$, 
so the contribution of $h(p^n,n)$ derived from the term of $\Delta_{L'}$ is essential. 
We may further say the following. 

\begin{theorem} \label{thm.integral.Delta} 
Let the setting be as in \Cref{thm.integral}. 

{\rm (1)} If $c=d$, 
so that $X_\infty \to X$ is the maximal abelian cover, 
then we have $\mu=\mu(\Delta_L)$ and $\lambda=\lambda(\Delta_L)$. 

{\rm (2)} If $\Delta_L(t_1,\ldots,t_c)$ does not vanish on $W^c\setminus\{(1,1,\ldots,1)\}$, then neither does the reduced Alexander polynomial $\Delta$ {\rm (}resp. characteristic element ${\rm char}\,\mathcal{H}${\rm )} on $W^d\setminus\{(1,1,\ldots,1)\}$ {\rm (}resp. $\mca{W}^d\setminus\{(0,0,\ldots,0)\}${\rm )},
and the $\mu$ and $\lambda$ coincide with those of $\Delta$. 
\end{theorem}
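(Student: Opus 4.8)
The plan is to extract the two claims from the structural results already established, via a careful bookkeeping of which term in the Mayberry--Murasugi--Porti formula (\Cref{2260}) carries the $\mu p^{dn}$ and $\lambda np^{(d-1)n}$ contributions. First I would treat (1). When $c=d$ the cover $X_\infty\to X$ corresponding to ${\rm fab}:\pi_1(X)\surj\Z^d$ is precisely the maximal free abelian cover, so $\Delta_\tau=\Delta_L$ by definition (\Cref{def.Alex}), and by \Cref{prop.hatH1Xinfty} together with \Cref{zcovertheorem} we have ${\rm char}\,\mca{H}\doteq\Delta_L(1+T_1,\ldots,1+T_d)$, hence $\mu({\rm char}\,\mca{H})=\mu(\Delta_L)$ and $\lambda({\rm char}\,\mca{H})=\lambda(\Delta_L)$ in the sense of \Cref{def.mulambda}. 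Combined with \Cref{thm.branched} (whose hypotheses hold: every $M_{p^n}$ is a $\Q$HS$^3$ and $L$ does not decompose, by \Cref{thm.integral} and \Cref{prop.notdecompose} — note the hypothesis of \Cref{thm.integral} already gives non-vanishing on $(W\setminus\{1\})^c$, while here we are in case (1) where non-decomposability follows since $r(H_1(X_{p^n}),\Zp)=O(1)$ is forced by $\Delta_L$ not vanishing on $(W\setminus\{1\})^d$ via \Cref{lemmaforbetti}, \Cref{decompositionprop}), this yields $\mu=\mu(\Delta_L)$, $\lambda=\lambda(\Delta_L)$.

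For (2), the first sentence is a Torres-condition argument: if $\Delta_L$ does not vanish on all of $W^c\setminus\{(1,\ldots,1)\}$, then in particular it does not vanish on $(W\setminus\{1\})^c$, so by \Cref{reduced} the reduced Alexander polynomial $\Delta(t_1,\ldots,t_d)=\Delta_L(t_1,\ldots,t_d)$ (or more generally the substitution $\Delta_L((1+T)^{v_{i1}}\cdots)$, which by \Cref{reduced}(1)(ii) agrees with ${\rm char}\,\mca{H}$ up to a unit) does not vanish on $W^d\setminus\{(1,\ldots,1)\}$; reinterpreting via $\xi\mapsto\xi-1$ gives the statement about $\mca{W}^d\setminus\{(0,\ldots,0)\}$ for ${\rm char}\,\mca{H}$. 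One must check that the vanishing of $\Delta_L((1+T)^{\bm v_1},\ldots)$ at a point of $W^d$ forces vanishing of $\Delta_L$ at the corresponding point $(\zeta^{\bm v_1},\ldots,\zeta^{\bm v_c})\in W^c$, and that this point avoids $(1,\ldots,1)$ when $\zeta\neq(1,\ldots,1)$ — this uses the Torres condition (\Cref{Torres}) exactly as in \Cref{prop.notdecompose} to handle the sublinks and the case where some but not all coordinates $\zeta^{\bm v_i}$ equal $1$. Then, by \Cref{lemmaforbetti}, $r(\mca{H}/\mca{I}_{p^n}\mca{H})=O(p^{(d-2)n})$, so \Cref{2252}(2) applies and gives $e(H_1(X_{p^n}))=(\mu({\rm char}\,\mca{H})p^n+\lambda({\rm char}\,\mca{H})n+O(1))p^{(d-1)n}$; combined with \Cref{gaptheorem} this identifies the $\mu,\lambda$ of \Cref{thm.integral} with $\mu({\rm char}\,\mca{H}),\lambda({\rm char}\,\mca{H})$.

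The final step is to reconcile the ${\rm char}\,\mca{H}$-invariants with the reduced-Alexander-polynomial invariants, i.e.\ to show $\mu({\rm char}\,\mca{H})=\mu(\Delta)$ and $\lambda({\rm char}\,\mca{H})=\lambda(\Delta)$ where $\Delta=\widetilde{\Delta}_L$ is the reduced Alexander polynomial $\Delta_L(t,\ldots,t)$ — but one should be careful: in case (2) the cover need not be the maximal abelian one, so ${\rm char}\,\mca{H}$ is $\Delta_L$ evaluated along the substitution determined by the $\bm v_i$'s, not literally $\widetilde\Delta_L$. I expect the main obstacle to be precisely this matching: one needs \Cref{reduced}(1)(ii), which expresses ${\rm char}\,\mca{H}\doteq\Delta_L((1+T_1)^{v_{11}}\cdots(1+T_d)^{v_{1d}},\ldots)$, and then the invariance statements of \Cref{2256} (multiplicativity and $\overline{F}$-dependence of $\Sigma_n$) together with \Cref{2255}(3) (the computation $\Sigma_n((1+T_1)^{e_1}\cdots(1+T_d)^{e_d}-1)=\Sigma_n(T_1)$) to conclude that the $\mu$ and $\lambda$ of this substituted polynomial agree with those obtained from the diagonal substitution $t_i\mapsto t$ that defines $\widetilde\Delta_L$; here the hypothesis that $L$ does not decompose (so that all relevant $\bm v_i$ are $p$-primitive in the appropriate sense) is what makes the substitution behave well on $\mu,\lambda$. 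Assembling these pieces gives the claim.
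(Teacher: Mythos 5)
Your part (2) follows essentially the paper's route (non-vanishing via \Cref{reduced}, then \Cref{prop.notdecompose}, \Cref{decompositionprop}, and \Cref{thm.branched}), and is fine. The problem is part (1), where you have a genuine gap.

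You route the identification $\mu=\mu(\Delta_L)$, $\lambda=\lambda(\Delta_L)$ through \Cref{thm.branched}, which requires that $L$ does not decompose in $(M_{p^n}\to M)_n$. That hypothesis is \emph{not} available in case (1): the standing assumption of \Cref{thm.integral} is only that $\Delta_L\neq 0$ on $(W\setminus\{1\})^c$, whereas non-decomposability is guaranteed by \Cref{prop.notdecompose} only under the strictly stronger condition $\Delta_L\neq 0$ on $W^c\setminus\{(1,\ldots,1)\}$ --- which is precisely the extra hypothesis of case (2). Your attempted repair is backwards on both counts: \Cref{lemmaforbetti} needs non-vanishing on $\mca{W}^d\setminus\{(0,\ldots,0)\}$, not on $(\mca{W}\setminus\{0\})^d$, and \Cref{decompositionprop} deduces the rank bound \emph{from} non-decomposability, not conversely. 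The failure is not hypothetical: for the maximal abelian cover with $d\geq 3$ the link always decomposes (each component splits into roughly $p^{(d-2)n}$ pieces), and for $d=2$ it decomposes whenever $p\mid {\rm lk}(K_1,K_2)$ --- e.g.\ $W_{2p^\kappa}$ in \Cref{11257}, which the paper explicitly notes satisfies the hypotheses of \Cref{thm.integral} but not those of \Cref{thm.branched} (\Cref{remark.MR1and5}). Yet part (1) holds for exactly these covers, so any proof passing through \Cref{thm.branched} cannot work.

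The paper's proof of (1) bypasses \Cref{thm.branched} entirely. Since $c=d$ and $G=(\Z/p^n\Z)^c$, the prefactor $|G|/\bigl(\prod_{\xi\in W(n)\setminus\{1\}}(1-\xi)\bigr)^d$ in Porti's formula (\Cref{2260}) equals $1$, so $e(H_1(M_{p^n}))=\sum_{L'}\sum_{\zeta\in(W(n)\setminus\{1\})^{c(L')}}v(\Delta_{L'}(\zeta))$ over all sublinks $L'$. Applying \Cref{CM32} termwise, the full link $L'=L$ contributes $(\mu(\Delta_L)p^n+\lambda(\Delta_L)n+O(1))p^{(d-1)n}$, and each proper sublink contributes only $O(p^{(d-1)n})$ since its index set has dimension at most $d-1$. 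This gives the claim directly, with no decomposability input; you should replace your argument for (1) by this computation.
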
 

\begin{proof} 
(1) 
We are considering the case when $G=(\Z/p^n\Z)^c$. Note that the equation 
\[
\prod_{\xi\in W(n)\setminus\{1\}}(x-\xi)=x^{p^n-1}+x^{p^n-2}+\cdots+x^2+x+1
\]
of polynomials implies
\[
\prod_{\xi\in W(n)\setminus\{1\}}(1-\xi)=p^n.
\]
Therefore, we have
\[
\frac{|G|}{(\prod_{\xi\in W(n)\setminus\{1\}}(1-\xi))^d}=1.
\]
Hence, by \Cref{2260}, we obtain
\[
|H_1(M_{\pi})|=\pm \prod_{L'} \prod_ {\zeta\in (W(n)\setminus\{1\})^{c(L')}}\Delta_{L'}(\zeta),
\]
where $L'$ runs over the sublinks of $L$ and $c(L')$ is the number of components of $L'$. Hence we obtain
\begin{align*}
e(H_1(M_{p^n}))&=v\big(\pm \prod_{L'} \prod_{\zeta\in (W(n)\setminus\{1\})^{c(L')}}\Delta_{L'}(\zeta)\big)\\
&=\sum_{L'}\sum_{\zeta\in (W(n)\setminus\{1\})^{c(L')}} v(\Delta_{L'}(\zeta)).
\end{align*}
By \Cref{CM32}, we obtain the assertion. 

(2)  The vanishing property follows from \Cref{reduced}. 
By Theorems \ref{thm.branched} and \ref{thm.integral} together with \Cref{decompositionprop}, we obtain the assertion. 
\end{proof} 

\begin{remark} 
Greenberg's conjecture is proved in the affirmative for function fields by Wan \cite{DWan2019JNT}, 
for graphs by DuBose--Valli${\rm \grave{e}}$res \cite{DuboseVallieres2023AC} and Kleine--M\"{u}ller \cite{KleineMuller.2023Jacobian-arXiv}, and for number fields satisfying certain assumptions in the case $d=2$ by Kleine \cite[Theorem 5.4]{Kleine2021ForumMath}. 
The case for $\Zp^{\,d}$-covers over a $\Q$HS$^3$ is still mysterious; It would be interesting if we could construct a non-Greenberg example. 
We should continue comparing these four objects: number fields, function fields, links, and graphs.
\end{remark} 

\begin{remark} \label{remark.MR1and5} 
In the case of the branched $\Zp^{\,c}$-cover of a $c$-component link in a $\Z$HS$^3$ with explicit Alexander polynomial, 
Propositions \ref{prop.notdecompose} and \ref{2260} give a sufficient condition for the assumptions of \Cref{mainresult} (Theorems \ref{thm.branched}, \ref{rem.alpha}), 
which is stronger than the assumption of \Cref{mr.ZHS} (Theorems \ref{thm.integral}, \ref{thm.integral.Delta}). 
In a general setting, there is no implication between the assumptions of Main Results \ref{mainresult} and \ref{mr.ZHS}, 
though we do not have an example that only \Cref{mainresult} applies at this moment. 
The theory of Alexander polynomials of the covers over a general $\Q$HS$^3$ is yet to be developed and will be addressed in our future study.  
\end{remark} 

\section{Polynomial periodicity for Betti numbers}
The relationship between the torsion sizes of the 1st homology group and the 1st Betti numbers of 3-manifolds are like siblings. 
Here, we recall the notion of polynomial periodicity and prove a result on the Betti numbers in $\Zp^{\,d}$-covers. 

\begin{definition} 
A sequence $\{\beta(n)\}_{n\geq1}$ of integers is said to be \emph{polynomial periodic} if and only if there exist some $N\geq1$ and polynomials $f_0(U),\ldots,f_{N-1}(U)\in\Q[U]$ such that, if $n\equiv i\mod N$ with $0\leq i\leq N-1$, then $\beta(n)=f_i(n)$.
\end{definition}

\begin{prop} [{\cite[Corollary 1.4]{AdamsSarnak1994}}, {\cite[Theorem 7.5]{Sakuma1995Canada}}]\label{22511}
Let $M$ be a $\Q$HS\,$^3$ and $L$ a $d$-component link in $M$. 
Let $(M_n\to M)_n$ denote the compatible system of branched $(\Z/n\Z)^d$-cover over $(M,L)$ obtained from the unique $\Z^d$-cover of the exterior $X=M\setminus {\rm Int}\,V_L$. 
Then the Betti number $\beta_1(M_n)$ is polynomial periodic.
\end{prop}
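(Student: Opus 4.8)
The plan is to reduce the polynomial periodicity of $\beta_1(M_n)$ to a torsion point count on the torus $(\C^\ast)^d$ and then invoke the arithmetic inputs of \cite{AdamsSarnak1994} and \cite{Sakuma1995Canada}. Write $X=M\setminus{\rm Int}\,V_L$ and let $X_n\to X$ be the unbranched $(\Z/n\Z)^d$-cover underlying $M_n\to M$. The deck group acts on $H_1(X_n;\C)$, and the isotypic decomposition gives $H_1(X_n;\C)\cong\bigoplus_\chi H_1(X;\C_\chi)$, where $\chi$ ranges over the characters of $(\Z/n\Z)^d$, identified with the set of $d$-tuples $\zeta=(\zeta_1,\ldots,\zeta_d)$ of $n$-th roots of unity, and $\C_\chi$ is the associated rank-one local system. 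Thus $\beta_1(X_n)=\sum_{\chi}\dim_\C H_1(X;\C_\chi)$.

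First I would stratify the torus according to the Alexander module. Since $X$ is a compact $3$-manifold, $H_1(X_\infty)$ is finitely generated over $\Lambda_{\Z,d}$, hence over the Noetherian ring $\Lambda_{\Z,d}\otimes\Q$; applying generic freeness and upper semicontinuity of fibre dimension to the vanishing loci of the divisorial hulls of its successive Fitting ideals (cut out by $\Delta_L$ and the higher Alexander polynomials, which have integer coefficients) one obtains finitely many locally closed subsets $S_0,S_1,\ldots\subset(\C^\ast)^d$, each defined over $\Q$ and together covering the torus, and integers $m_0,m_1,\ldots$, such that $\dim_\C H_1(X;\C_\chi)=m_j$ whenever $\zeta\in S_j$. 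Hence
\[
\beta_1(X_n)\;=\;\sum_{j}m_j\cdot\#\bigl\{\zeta\in S_j \ :\ \zeta_i^{\,n}=1\ \text{ for all }1\le i\le d\bigr\}.
\]

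Next I would count. For a subvariety $V\subset(\C^\ast)^d$ defined over $\Q$, the number of $n$-torsion points of $(\C^\ast)^d$ lying on $V$ is a polynomial periodic function of $n$: the Zariski closure of the torsion points of $V$ is a finite union of torsion translates of subtori (Laurent, Mann), and the torsion point count on each such translate is manifestly polynomial periodic, so the same holds for the locally closed pieces $S_j$ obtained by subtraction. This is the substance of \cite[Corollary~1.4]{AdamsSarnak1994}; summing the finitely many contributions in the displayed formula shows $\beta_1(X_n)$ is polynomial periodic. To pass to the branched cover $M_n$, a Mayer--Vietoris argument over $\Q$ in the spirit of \Cref{MVlemma}, applied to $M_n=X_n\cup\bigcup(\text{branch tori})$, expresses the difference $\beta_1(M_n)-\beta_1(X_n)$ through the combinatorics of the preimage of $L$ in $M_n$ (the number of its components and the relations among their meridians and longitudes), which is determined by the linking numbers ${\rm lk}(K_i,K_j)$ and the reduction of the covering homomorphism modulo $n$; \cite[Theorem~7.5]{Sakuma1995Canada} shows this correction is itself polynomial periodic. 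A finite sum of polynomial periodic sequences is polynomial periodic, which gives the assertion.

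The main obstacle is the stratification in the first step: one must verify that $\dim_\C H_1(X;\C_\chi)$ is genuinely constant along a decomposition of $(\C^\ast)^d$ into finitely many $\Q$-subvarieties and is uniformly bounded, which requires combining generic freeness for the Alexander module with semicontinuity to control the jumps as $\zeta$ degenerates onto the smaller strata, and checking that the stratifying loci descend to $\Q$ so that the Adams--Sarnak count applies verbatim. Once this is in place, the remaining work is the assembly of the two cited inputs together with the (routine but coefficient-sensitive) Mayer--Vietoris bookkeeping in the branched case.
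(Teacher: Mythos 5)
The paper offers no proof of this proposition: it is quoted directly from Adams--Sarnak and Sakuma, so there is no internal argument to measure yours against. What you have written is, in outline, the proof of the cited results themselves rather than an alternative route: the isotypic decomposition $\beta_1(X_n)=\sum_\chi\dim_\C H_1(X;\C_\chi)$, the stratification of $(\C^*)^d$ into $\Q$-constructible jumping loci of twisted homology, Laurent's theorem that the torsion points of a $\Q$-subvariety of the torus lie on finitely many torsion cosets of subtori (which is exactly what makes each stratum's $n$-torsion count polynomial periodic, and is the substance of Adams--Sarnak's Corollary 1.4), and finally a branched-versus-unbranched correction. As a reconstruction of the cited arguments the outline is sound.

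Two points deserve more care. First, the strata are most cleanly obtained not from the divisorial hulls of the Fitting ideals of $H_1(X_\infty)$ but from the corank loci of a finite free $\Lambda_{\Z,d}$-resolution of the equivariant chain complex of $X_\infty$: the comparison between $\dim_\C H_1(X;\C_\chi)$ and the specialized Alexander module involves ${\rm Tor}$ correction terms (harmless for $\chi\neq 1$, not at the trivial character), so ``generic freeness plus semicontinuity applied to the Fitting loci'' as stated is slightly off, though the conclusion --- constructibility over $\Q$ of the dimension function, with a uniform bound --- is correct. Second, your treatment of the branched step is circular as written: you invoke Sakuma's Theorem 7.5 to show that the correction $\beta_1(M_n)-\beta_1(X_n)$ is polynomial periodic, but that theorem is essentially the statement being proved; if you intend to give an independent proof you must carry out the Mayer--Vietoris bookkeeping yourself (noting that $H_2(M_n;\Q)$ also enters the sequence, so the correction is not governed solely by the component count of the preimage of $L$), whereas if you are permitted to cite Sakuma there is nothing left to prove at all.
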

Since $p^n$ mod $N$ is periodic in $n$, in the setting of \Cref{22511}, 
there exists some $N'\in \Z_{>0}$ and $\{g_0,\ldots,g_{N'-1}\}\subset \{f_1,\ldots ,f_{N-1}\}$ such that $\beta_1(M_{p^n})=g_i(p^n)$ if $n\equiv i$ mod $N'$. 
Put \glssymbol{dml}
\[
D(M,L):=\max\{\deg g_i\mid 0\leq i\leq N'-1\}.
\]
\begin{theorem}\label{2284} Let the setting be as above with $d\geq 2$. Suppose that $\Delta_L(t_1,\ldots,t_d)$ does not vanish on $W^d\setminus\{(1,1,\ldots,1)\}$. 
Then $D(M,L)\leq d-2$. 
In particular, if $d=2$, then $\beta_1(M_{p^n})$ is periodic. 
\end{theorem}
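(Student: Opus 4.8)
The plan is to bound the growth of $\beta_1(M_{p^n})$ in $n$ and to play this bound off against the polynomial periodicity supplied by \Cref{22511}. Since $H_1$ of a compact $3$-manifold is finitely generated, $H_1(M_{p^n};\Zp)\cong H_1(M_{p^n})\otimes_\Z\Zp$, so $\beta_1(M_{p^n})=r(H_1(M_{p^n};\Zp))$, and likewise for $X_{p^n}$. Applying \Cref{MVlemma} to $(M_{p^n},L,X_{p^n})$ and tensoring the resulting sequence with the flat $\Z$-module $\Zp$ gives $0\to A_{p^n}\otimes\Zp\to H_1(X_{p^n};\Zp)\to H_1(M_{p^n};\Zp)\to 0$, hence $\beta_1(M_{p^n})\leq r(H_1(X_{p^n};\Zp))$. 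By Eq.\eqref{rankequation} of \Cref{thm.unbr-asymptotic}, $r(H_1(X_{p^n};\Zp))=r(\mca{H}/\mca{I}_{p^n}\mca{H})+O(1)$, where $\mca{H}=\varprojlim H_1(X_{p^n};\Zp)$ is the Iwasawa module of the $\Zp^{\,d}$-cover derived from the given $\Z^d$-cover.

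It remains to estimate $r(\mca{H}/\mca{I}_{p^n}\mca{H})$, for which I would invoke \Cref{lemmaforbetti}. Because $L$ has $d$ components and $X_\infty\to X$ is the maximal free abelian cover, $\Delta_\tau=\Delta_L$; by \Cref{zcovertheorem} and \Cref{prop.hatH1Xinfty} the module $\mca{H}$ is pseudo-isomorphic to $\wh{H}_1(X_\infty)$ and has characteristic element $\Delta_L(1+T_1,\ldots,1+T_d)$. The hypothesis in particular forces $\Delta_L\neq0$, so $\mca{H}$ is a finitely generated torsion $\Lambda$-module; and by \Cref{Alexanderremark} together with \Cref{vanishmentremark}, the assumption that $\Delta_L(t_1,\ldots,t_d)$ does not vanish on $W^d\setminus\{(1,\ldots,1)\}$ is equivalent to ${\rm char}\,\mca{H}$ not vanishing on $\mca{W}^d\setminus\{(0,\ldots,0)\}$. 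Thus \Cref{lemmaforbetti} applies and gives $r(\mca{H}/\mca{I}_{p^n}\mca{H})=O(p^{(d-2)n})$; combined with the previous paragraph and $d\geq2$, this yields $\beta_1(M_{p^n})=O(p^{(d-2)n})$.

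To conclude, I would feed this into the polynomial periodicity. By \Cref{22511} and the observation following it, there exist $N'\geq1$ and polynomials $g_0,\ldots,g_{N'-1}\in\Q[U]$ with $\beta_1(M_{p^n})=g_i(p^n)$ whenever $n\equiv i\bmod N'$. Fix $i$ and put $e=\deg g_i$. If $e\geq1$ then $g_i(p^n)$ grows exactly like a positive multiple of $p^{en}$, since $\beta_1(M_{p^n})\geq0$ for all $n$ forces the leading coefficient to be positive, so the bound $g_i(p^n)=O(p^{(d-2)n})$ forces $e\leq d-2$; if $e=0$ this is immediate. Hence $D(M,L)=\max_i\deg g_i\leq d-2$. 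When $d=2$ every $g_i$ is then constant, so $\beta_1(M_{p^n})$ depends only on $n\bmod N'$, i.e.\ is periodic.

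The heavy machinery is already set up in the earlier sections, so this argument is mostly assembly; the step requiring the most care is the middle paragraph, namely verifying that $\mca{H}$ is torsion and that its characteristic element is precisely the Alexander--Iwasawa polynomial $\Delta_L(1+T_1,\ldots,1+T_d)$, so that the non-vanishing hypothesis transfers to the exact form needed for \Cref{lemmaforbetti} rather than only \Cref{2285}.
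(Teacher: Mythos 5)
Your argument is correct and follows essentially the same route as the paper: the non-vanishing hypothesis feeds into \Cref{lemmaforbetti} to give $r(\mathcal{H}/\mathcal{I}_{p^n}\mathcal{H})=O(p^{(d-2)n})$, Eq.~\eqref{rankequation} transfers this to $\beta_1(X_{p^n})$, the surjection $H_1(X_{p^n})\twoheadrightarrow H_1(M_{p^n})$ passes it to $\beta_1(M_{p^n})$, and the growth bound forces $\deg g_i\leq d-2$. The only cosmetic point is that you invoke the full exact sequence of \Cref{MVlemma} for $M_{p^n}$ (whose hypothesis $H_2(M_{p^n})=0$ is not assumed here), whereas all you need, and all the paper uses, is the surjectivity of $H_1(X_{p^n})\to H_1(M_{p^n})$, which holds unconditionally.
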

\begin{proof}
Since $\Delta_L(t_1,\ldots,t_d)$ does not vanish on $W^d\setminus\{(1,1,\ldots,1)\}$, by \Cref{lemmaforbetti}, we have
\[
\rank_{\Zp}(\wh{H}_1(X_{\infty})/I_{p^n}\wh{H}_1(X_{\infty}))=O(p^{(d-2)n}).
\]
By \eqref{rankequation}, we have 
\[\rank_{\Zp}(\wh{H}_1(X_{\infty})/I_{p^n}\wh{H}_1(X_{\infty}))
=\rank_{\Zp}(H_1(X_{p^n},\Zp))
=\rank_{\Z} H_1(X_{p^n}).\]
Since there is a surjective homomorphism $H_1(X_{p^n})\surj H_1(M_{p^n})$, we obtain \[
\rank_{\Z}(H_1(M_{p^n}))=O(p^{(d-2)n}),
\]
which is equivalent to $D(M,L)\leq d-2$.
\end{proof} 

\begin{remark}
In the case of $\Zp^{\,c}$-cover of a $c$-component link in a $\Z$HS$^3$, 
by Porti's result \Cref{2260}, $\Delta\neq 0$ on $(W\setminus \{1\})^c$ implies that $\beta_1(M_{p^n})=0$ for all $n$, which is one of the assumptions of \Cref{mainresult}. 
\Cref{2284} would play a role when we deal with a general case. 
\end{remark}

\section{Twisted Whitehead links}
In this section, as a preparation to investigate examples, 
we recall the definition of twisted Whitehead links $W_{\kappa}$ ($\kappa\in \Z$) and calculate the Alexander polynomial. 
\begin{definition} 
For each $\kappa\in \Z_{\geq 0}$, the $\kappa$-twisted Whitehead link \glssymbol{whiteheadlink}$W_{\kappa}$ in $S^3$ is defined by the following diagram.
\[\xygraph{
!{0;/r2.0pc/:}
!{\hcap[-3]}
[d]!{\xcapv-}
[uu]!{\xunderh=>}
[ldd]!{\xunderh}
[uu]!{\xcaph@(0)}
[ld]!{\xoverh}
[ldd]!{\xcaph@(0)}
}
\xymatrix@R=2.4pt{
\!\! \overline{\hspace{30mm}{\phantom{.}}} \!\!
\\ \ \\
\cdots\\
\kappa\mbox{ crossings}\\
\cdots \\
\!\! \overline{\hspace{30mm}\phantom{.}} \!\!
}
\xygraph{
!{0;/r2.0pc/:}
!{\xcaph@(0)}
[ld]!{\xoverh}
[ldd]!{\xcaph@(0)}
[uuu]!{\xoverh}
[ldd]!{\xoverh}
[u]!{\xcapv}
[uu]!{\hcap[3]}
}
\] 
\end{definition} 

\begin{prop}\label{prop.Delta_W} 
{\rm (1)} \cite[Section 3]{Kidwell1978Illinois}
If $m\geq0$, then
\[
\Delta_{W_{2m+1}}(x,y)=1+m-mx-my+(1+m)xy.
\]

{\rm (2)} \cite[Chapter 7, I, Exercise 10]{Rolfsen1990}
If $m\geq1$, then
\[
\Delta_{W_{2m}}(x,y)=m(1+xy-x-y).
\]
\end{prop}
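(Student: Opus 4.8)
\textbf{Proof proposal for \Cref{prop.Delta_W}.}

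The plan is to compute the Alexander polynomials directly from Wirtinger presentations of the link groups read off the given diagrams, or more efficiently via a Mayer--Vietoris/surgery description. First I would set up the standard picture: the $\kappa$-twisted Whitehead link $W_\kappa$ is obtained from the Whitehead link by inserting $\kappa$ full (or half) twists in a band, and it bounds an obvious genus-one Seifert-type spanning surface on one component so that the link exterior fibers in a controlled way over the torus $T^2$ coming from the free abelianization. Concretely, I would use the fact that $W_\kappa$ has a presentation as a two-bridge-like link and compute a $2\times 2$ Alexander matrix from Fox calculus: pick generators $a,b$ (one meridian of each component), express the single relation $w a w^{-1} = b$ (or the two-component analogue) coming from the clasp and the $\kappa$ crossings, apply the Fox free derivatives $\partial/\partial a$, $\partial/\partial b$, abelianize via $a\mapsto x$, $b\mapsto y$, and take the appropriate minor; dividing out by the unit ambiguity $\pm x^i y^j$ and normalizing symmetrically gives $\Delta_{W_\kappa}(x,y)$. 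The parity of $\kappa$ governs whether the two strands of the twist region belong to the same component or to different components, which is exactly why the even and odd cases produce the two different shapes in (1) and (2).

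The cleanest route I would actually carry out is the skein/Conway-potential approach together with the Torres conditions as a sanity check. Since $W_\kappa$ is built from a rational tangle with $\kappa$ crossings plugged into a fixed clasp tangle, resolving one crossing of the twist region by the Conway skein relation $\nabla_{L_+}-\nabla_{L_-}=(t^{1/2}-t^{-1/2})\nabla_{L_0}$ (in the multivariable Conway potential $\nabla$ defined in the notation) gives a recursion in $\kappa$ with constant coefficients. Solving that linear recursion with the two initial terms (e.g. $W_0$ = two-component unlink or Hopf-type, $W_1$ = Whitehead link, whose Alexander polynomials are classical) yields closed forms linear in $m$, matching $1+m-mx-my+(1+m)xy$ for $\kappa = 2m+1$ and $m(1+xy-x-y)$ for $\kappa = 2m$. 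I would then verify the Torres condition $\Delta_{W_\kappa}(x,1) \doteq \tfrac{x^{l}-1}{x-1}\Delta_{K}(x)$ with linking number $l = \mathrm{lk}$ between the two components (which is $0$ for the even twists and $\pm 1$ for the odd ones, consistent with the two formulas degenerating to $1$ and to $0$ respectively at $y=1$) and the symmetry $\Delta(x,y)\doteq\Delta(x^{-1},y^{-1})$, both of which the stated answers satisfy; this pins down the normalization and rules out spurious unit factors.

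For part (1), the reference \cite[Section 3]{Kidwell1978Illinois} already records the computation, so I would simply reproduce Kidwell's Alexander matrix for the odd-twist case and extract the stated polynomial; for part (2), \cite[Chapter 7, I, Exercise 10]{Rolfsen1990} gives the even case, and the $m(1+xy-x-y)=m(x-1)(y-1)$ form is transparent once one notices that the two components of $W_{2m}$ cobound an annulus with $m$ twists, forcing the Alexander polynomial to be $m$ times that of the Hopf link pattern $(x-1)(y-1)$. The main obstacle is purely bookkeeping: getting the Fox-derivative signs and the overall unit normalization right so that the answer comes out in the exact symmetric form displayed (rather than off by $\pm x^i y^j$), and correctly tracking which diagram convention ($\kappa$ crossings vs.\ $\kappa$ half-twists, and the sign of the clasp) is in force so that the parity split lands as in the statement. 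Since this is a known classical calculation, I would present it concisely, citing Kidwell and Rolfsen for the two cases and including the Torres-condition check as the verification that the normalizations agree with ours in \Cref{def.Alex}.
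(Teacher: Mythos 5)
Your main route (a Conway-potential skein recursion in the twist parameter $\kappa$, solved with the base cases $W_1$, $W_2$ and normalized via the symmetry/Torres checks) is exactly what the paper does, except that it uses Hartley's two-crossing replacement relations $\nabla_{++}+\nabla_{--}=(t_at_b^{\pm 1}+t_a^{-1}t_b^{\mp 1})\nabla_{00}$ rather than the single-crossing skein relation, which sidesteps the issue that smoothing a crossing between distinct components changes the number of components. The proposal is correct in substance and essentially the same approach.
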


In what follows, we present a proof of \Cref{prop.Delta_W} for the convenience of the readers. 
We invoke the notion of \emph{the Conway potential functions} of links, 
which was introduced by Conway \cite{Conway1970CPAA} and 
whose existence was explicitly proved by Hartley \cite[Section 2]{Hartley1983CMH}. 
The Conway potential function \glssymbol{conway}$\nabla\in\Lambda_{\Z}$ of a link $L$ in $S^3$ is defined by 
certain five axioms containing the Skein relations (cf.~\cite{Massuyeau2008AP}) and conveys information the same as the Alexander polynomial. We make use of the following properties.

\begin{lem}[{\cite[(5.5)]{Hartley1983CMH}}]\label{2271}
We have
\[
\nabla(t_1,\ldots,t_d)=(-1)^d\nabla(t_1^{-1},\ldots,t_d^{-1}).
\]
\end{lem}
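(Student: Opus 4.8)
The plan is to read this symmetry off from the axiomatic characterization of the Conway potential function. By Hartley \cite{Hartley1983CMH} (cf.\ \cite{Massuyeau2008AP}), $\nabla_L$ is the \emph{unique} assignment, defined on all oriented links in $S^3$, satisfying the five axioms: invariance, the normalization $\nabla_{\mathrm{unknot}}(t)=1/(t-t^{-1})$, the vanishing $\nabla_L=0$ on split links, and the two skein relations — one governing a crossing change together with the oriented smoothing within a single component, and one governing a crossing between two distinct components (phrased via a splicing operation). For a $d$-component link $L$, I would set $\wt{\nabla}_L(t_1,\ldots,t_d):=(-1)^{d}\,\nabla_L(t_1^{-1},\ldots,t_d^{-1})$ and verify that $L\mapsto\wt{\nabla}_L$ again satisfies all five axioms; uniqueness then forces $\wt{\nabla}_L=\nabla_L$, which is exactly the asserted identity.

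The normalization axioms are painless: $\nabla_{\mathrm{unknot}}(t)=1/(t-t^{-1})$ is odd under $t\mapsto t^{-1}$, so the sign $(-1)^{1}$ restores it, and $\wt{\nabla}$ vanishes on split links because $\nabla$ does. The real content is the invariance of the skein relations, where the only subtlety is bookkeeping of the number of components: the oriented smoothing of a self-crossing of component $i$ produces a link $L_0$ with $d+1$ components (the two resulting strands inheriting the slot $t_i$), whereas resolving a crossing between two distinct components produces a link with $d-1$ components. Each relation has the schematic form $\nabla_{L_+}-\nabla_{L_-}=\pm(t_i-t_i^{-1})\,\nabla_{L_0}$ (resp.\ an analogous identity between a link and its splice); performing $t_k\mapsto t_k^{-1}$ reverses the sign of the factor $t_i-t_i^{-1}$, and this reversal is cancelled exactly by the parity change $(-1)^{d}=-(-1)^{d\pm1}$ coming from $L_0$ having one more, respectively one fewer, component than $L_{\pm}$. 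Running this computation through each of Hartley's relations shows $\wt{\nabla}$ obeys them all.

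The main obstacle is organizational rather than conceptual: one must reproduce Hartley's precise list of axioms — in particular the normalization on the unknot (which \emph{must} be odd under inversion for the argument to close, so the ``$\nabla\in\Lambda_{\Z}$'' should really be read in the fraction field when $d=1$, though this is immaterial for the $d=2$ links treated here) and the exact form of the inter-component relation, which is stated via a splice rather than a naive smoothing — and then track the component-parity cancellation in every case without sign slips. A more conceptual but heavier alternative would be to identify $\nabla_L$ with the sign-refined Reidemeister--Milnor torsion of the link exterior $X_L$ and to invoke Milnor's duality theorem $\tau(X_L)(t_1^{-1},\ldots,t_d^{-1})\doteq\tau(X_L)(t_1,\ldots,t_d)$, checking that the normalization built into $\nabla$ rigidifies the torsion's indeterminacy precisely to the factor $(-1)^{d}$; I would resort to this route only if the axiomatic verification became unwieldy.
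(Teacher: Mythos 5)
The paper does not prove this lemma at all: it is quoted verbatim from Hartley \cite[(5.5)]{Hartley1983CMH}, so there is no in-paper argument to compare yours against. Judged on its own terms, your primary route has one genuine gap. You deduce $\wt{\nabla}=\nabla$ from the assertion that Hartley's five axioms \emph{uniquely} characterize the potential function of a multi-component link, but that uniqueness is precisely the delicate point: Conway's local relations are properties of $\nabla$, and for $d\geq 2$ it is not established (in Hartley's paper or in this one) that they pin $\nabla$ down on all links — finding a complete set of local relations that determines the multivariable invariant is a separate, later development and requires more than the relations quoted here. Note in particular that the two replacement relations actually used in this paper, $\nabla_{++}+\nabla_{--}=(t_at_b+t_a^{-1}t_b^{-1})\nabla_{00}$ and its variant, have coefficients invariant under $t_k\mapsto t_k^{-1}$ and involve no change in the number of components, so they impose no constraint tying the symmetry to the sign $(-1)^d$; the entire burden falls on axioms you did not write down, together with the unproved uniqueness.

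Moreover, the logic in Hartley runs in the opposite direction: as the paper's own \Cref{AlexanderConway} records, $\nabla$ is \emph{defined} as $\Delta(t_1^2,\ldots,t_d^2)\,t_1^{m_1}\cdots t_d^{m_d}$ with the sign and the exponents $m_i$ chosen precisely so that \Cref{2271} holds, and the substance of (5.5) is that such a choice exists and is consistent --- which is exactly the Torres--Milnor symmetry $\Delta(t_1^{-1},\ldots,t_d^{-1})\doteq (-1)^{d}\,t_1^{n_1}\cdots t_d^{n_d}\,\Delta(t_1,\ldots,t_d)$, i.e.\ duality for the Reidemeister--Milnor torsion. So attempting to derive (5.5) from the remaining axioms inverts Hartley's construction. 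The ``heavier alternative'' you mention in your last paragraph is therefore not a fallback but the actual proof; you should promote it to the main argument (or cite Hartley directly, as the paper does), and only keep the uniqueness-based argument if you can supply a reference establishing that the axioms characterize $\nabla$ uniquely for links of arbitrarily many components.
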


\begin{lem} [{\cite[(1.1)]{Hartley1983CMH}}]\label{AlexanderConway}
We have
\[
\nabla(t_1,\ldots,t_d)=\Delta(t_1^2,\ldots,t_d^2)t_1^{m_1}\cdots t_d^{m_d},
\]
where $\Delta$ is the Alexander polynomial properly chosen with correct sign and $m_i$ are integers that are uniquely determined by the requirement that $\nabla$ satisfies the \Cref{2271}.
\end{lem}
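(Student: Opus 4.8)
The plan is to prove \Cref{AlexanderConway} in two stages: first show that \emph{if} a normalization of the form $\nabla = \varepsilon\,\Delta(t_1^2,\ldots,t_d^2)\,t_1^{m_1}\cdots t_d^{m_d}$ with $\varepsilon=\pm1$ exists, then $\varepsilon$ and the exponents $m_i$ are forced by the symmetry \Cref{2271}, so uniqueness is automatic; then exhibit such a normalization as an actual solution of Conway's five axioms and appeal to the uniqueness of $\nabla$ to conclude. For the uniqueness step I would invoke the Torres duality for the multivariable Alexander polynomial, namely $\Delta_L(t_1^{-1},\ldots,t_d^{-1})\doteq\Delta_L(t_1,\ldots,t_d)$ in $\Lambda_{\Z,d}$ (with $\doteq$ allowing a factor $\pm t_1^{a_1}\cdots t_d^{a_d}$), which comes from the self-duality of the Alexander module of the link exterior, a compact orientable $3$-manifold with torus boundary, via Poincar\'e--Lefschetz duality or the Blanchfield pairing. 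Substituting $t_i\mapsto t_i^2$ turns the ambiguity into $\pm t_1^{2a_1}\cdots t_d^{2a_d}$, and then comparing $\nabla(t_1^{-1},\ldots,t_d^{-1})$ with $(-1)^d\nabla(t_1,\ldots,t_d)$ gives, for each $i$, one linear equation pinning $m_i$ down together with a single sign condition fixing $\varepsilon$. That is the short linear-algebra argument behind uniqueness.

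For existence I would set $f(t_1,\ldots,t_d):=\varepsilon\,\Delta(t_1^2,\ldots,t_d^2)\,t_1^{m_1}\cdots t_d^{m_d}$ with $\varepsilon,m_i$ chosen as above, and verify Conway's axioms one by one. Ambient-isotopy invariance is inherited from $\Delta$; the value on the unknot and the vanishing on split links (for $d\ge2$) are the corresponding elementary computations of $\Delta$, with the $d=1$ case using the standard knot normalization; and the axiom relating $f$ of $L$ to $f$ of a sublink obtained by removing a component is matched by the Torres formula, which in this paper appears as \Cref{Torres}. The substantive axiom is the Skein relation: given a diagram with a distinguished crossing between strands on components $K_i,K_j$, I would build a common presentation of the Alexander modules of $L_+$, $L_-$, $L_0$ from a Wirtinger-type presentation in which only the two Fox-derivative rows at the crossing change, and then show that the resulting values of $f$ obey $f(L_+)-f(L_-)=(t_i-t_i^{-1})f(L_0)$ in the self-crossing case $i=j$, and the corresponding merging identity when $i\ne j$. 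The preferred monomials for $L_+$, $L_-$, $L_0$ differ, and the three Wirtinger presentations have different numbers of generators, so the bookkeeping of signs and monomial shifts through the three local modifications is the technical heart of the proof, and I expect it to be the main obstacle; everything else is either formal or a one-line special case.

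As an alternative route that largely removes this bookkeeping, one can realize $\nabla$ through Reidemeister--Milnor torsion: for a link exterior $X$ with $b_1(X)\ge2$ one has $\tau(X;\Z[H_1(X)])\doteq\Delta_L$ by Milnor's theorem, Turaev's Euler-structure refinement upgrades $\doteq$ to an honest equality once a sign and monomial are chosen, the duality theorem for torsion supplies precisely the symmetry of \Cref{2271}, and multiplicativity of torsion under the gluings realizing the Skein triple $(L_+,L_-,L_0)$ yields the Skein relation without an explicit matrix manipulation. One is then left only with reconciling the torsion normalization with Conway's, which is again a monomial-tracking exercise but a considerably cleaner one. Either way, the final step is the same: having produced a function obeying all of Conway's axioms, uniqueness of $\nabla$ identifies it with $f$, giving the asserted formula.
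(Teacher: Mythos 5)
The paper offers no proof of this lemma: it is quoted directly from Hartley's paper (item (1.1) there), and the existence statement it presupposes is exactly what Hartley's Section 2 establishes. So the only question is whether your blind reconstruction would actually close. Its architecture is right --- normalize $\Delta(t_1^2,\ldots,t_d^2)$ by a monomial dictated by the symmetry of \Cref{2271}, using the Torres symmetry $\Delta(t_1^{-1},\ldots,t_d^{-1})=(-1)^d t_1^{a_1}\cdots t_d^{a_d}\Delta(t_1,\ldots,t_d)$, then check Conway's axioms and invoke uniqueness of $\nabla$ --- and the torsion-theoretic variant you mention is indeed the cleanest modern route. But as written there are genuine gaps.

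The main one is that you explicitly defer the only step with real content: the verification of the skein/replacement relations for the normalized determinant. You call it ``the technical heart'' and ``the main obstacle'' and stop there; that is the bulk of Hartley's argument, and without it nothing identifies your candidate with $\nabla$. Second, your uniqueness step overclaims: writing $\nabla=\varepsilon\,t_1^{m_1}\cdots t_d^{m_d}\,\Delta(t_1^2,\ldots,t_d^2)$, the condition $\nabla(t_1^{-1},\ldots,t_d^{-1})=(-1)^d\nabla(t_1,\ldots,t_d)$ does force $m_i=a_i$ and does require the Torres sign to equal $(-1)^d$, but $\varepsilon$ occurs on both sides of that equation and cancels, so the symmetry does \emph{not} fix the overall sign; that must come from the remaining axioms (e.g.\ the value on the positive Hopf link), which is why the statement says ``properly chosen with correct sign'' rather than crediting \Cref{2271} for it. Finally, the displayed identity fails for $d=1$, where $\nabla(t)=\Delta(t^2)/(t-t^{-1})$ is not a monomial multiple of $\Delta(t^2)$; your check of the unknot axiom cannot go through in the form you describe, so you should either restrict to $d\ge 2$ (the paper only ever applies the lemma with $d=2$) or treat the one-variable case separately.
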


\begin{lem} [Replacement relations, {\cite[(5.1),(5.2)]{Hartley1983CMH}}]

{\rm (1)} 
Let $L_{00}$ be a link that contains a configuration
\[
\xygraph{
[u]!{\xunoverh=>|{a}>{b}}}
\]
where $a,b$ are segments from distinct knots. Let $L_{++},L_{--}$ be the links obtained by replacing the configuration by
\[
\xygraph{[u]!{\htwist}!{\htwist=>>{a}|{b}}}\qquad\mbox{and} \qquad\xygraph{[u]!{\htwistneg}!{\htwistneg=>|{a}>{b}}}.
\]
Let $\nabla_{++}$, $\nabla_{--}$, and $\nabla_{00}$ denote the Conway potential function of $L_{++}$, $L_{--}$, and $L_{00}$ respectively. Then we have
\[
\nabla_{++}+\nabla_{--}=(t_at_b+t_a^{-1}t_b^{-1})\nabla_{00}.
\]

{\rm (2)} 
Consider the case where one of the arcs of $(1)$ is oppositely oriented. Then we have
\[
\nabla_{++}+\nabla_{--}=(t_at_b^{-1}+t_a^{-1}t_b)\nabla_{00}.
\]
\end{lem}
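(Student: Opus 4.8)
The plan is to derive both relations from the defining \emph{Skein axiom} of the Conway potential function by resolving the two crossings that make up a full twist and then applying the Skein relation once more to the resulting single-crossing diagrams. Throughout I work inside the twist region only; outside it the three diagrams $L_{++}$, $L_{--}$, $L_{00}$ agree, and since a full twist is a pure-braid configuration it preserves the local strand--component correspondence. Consequently every diagram that occurs below has the same components, and $\nabla$ is a Laurent expression in the same variables $t_a,t_b$ in each case.

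First I set up notation. Write $T_k$ for the configuration obtained from $L_{00}$ by inserting $k$ positive half-twists between the strands $a$ and $b$ (with $k<0$ meaning $|k|$ negative half-twists), so that $L_{00}=T_0$ is the trivial (identity) tangle, $L_{++}=T_2$, and $L_{--}=T_{-2}$; let $T_{\pm 1}$ denote the single half-twist tangles. For coherently oriented strands the oriented (Seifert) smoothing of a crossing between $a$ and $b$ is the identity tangle, and a positive half-twist stacked on a negative one is isotopic to $T_0$ by a Reidemeister II move. Denoting by $z$ the Skein variable attached to such a crossing, I apply $\nabla_{L_+}-\nabla_{L_-}=z\,\nabla_{L_0}$ at the top crossing of $T_2$ and of $T_{-2}$. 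Identifying the crossing-switched diagrams with $T_0$ (via Reidemeister II) and the smoothed diagrams with $T_{\pm 1}$ yields
\[
\nabla(T_2)-\nabla(T_0)=z\,\nabla(T_1),\qquad \nabla(T_0)-\nabla(T_{-2})=z\,\nabla(T_{-1}).
\]
Adding these and substituting the first-order relation $\nabla(T_1)-\nabla(T_{-1})=z\,\nabla(T_0)$, obtained by applying the very same Skein axiom to a single crossing, gives
\[
\nabla(T_2)+\nabla(T_{-2})=2\,\nabla(T_0)+z\bigl(\nabla(T_1)-\nabla(T_{-1})\bigr)=(2+z^2)\,\nabla(T_0).
\]

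It then remains to identify the scalar $2+z^2$ with the factor in the statement. For the coherently oriented crossing of (1), the Skein variable recorded by Hartley's axioms is $z=(t_at_b)^{1/2}-(t_at_b)^{-1/2}$, whence $2+z^2=t_at_b+t_a^{-1}t_b^{-1}$ and the first relation follows. For (2), reversing the orientation of one of the two arcs replaces $t_b$ by $t_b^{-1}$ in the crossing variable, so $z=(t_at_b^{-1})^{1/2}-(t_at_b^{-1})^{-1/2}$ and $2+z^2=t_at_b^{-1}+t_a^{-1}t_b$; the argument is otherwise verbatim the same, since the oriented smoothing of the now anti-parallel crossing is again the identity tangle and Reidemeister II still cancels a positive half-twist against a negative one.

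The main obstacle I anticipate is the orientation and smoothing bookkeeping rather than any algebra. One must verify carefully, for each of the two orientation patterns, that the Seifert smoothing of a single crossing is indeed the identity tangle $T_0$ (so that the smoothed diagrams really are single half-twists and the smoothing of the lone crossing really is $T_0$), and that switching a crossing produces a diagram cancelling to $T_0$ by Reidemeister II. Equally delicate is reading off from Hartley's axioms the exact crossing variable $z$, with its half-integer exponents and its relative-orientation sign, so that $2+z^2$ matches the stated monomial; this is precisely the step where the distinction between (1) and (2) enters, and where a sign or orientation slip would silently produce the wrong factor.
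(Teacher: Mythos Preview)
The paper does not supply its own proof of this lemma; it is quoted directly from Hartley as equations (5.1) and (5.2), so there is no argument in the paper to compare against and your derivation must stand on its own.

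It does not. The single-crossing skein relation you invoke, $\nabla_{L_+}-\nabla_{L_-}=z\,\nabla_{L_0}$ with $z=(t_at_b)^{1/2}-(t_at_b)^{-1/2}$ for a crossing between the distinct components $a$ and $b$, is not among Hartley's axioms, and $\nabla$ takes values in a Laurent ring with integer exponents only. The underlying obstruction is that your half-twist tangles $T_{\pm1}$ swap the two boundary strands, so when inserted into the fixed exterior they merge $a$ and $b$ into a single component; hence $\nabla(T_{\pm1})$ is a polynomial in a different variable set from $\nabla(T_0),\nabla(T_{\pm2})$, and your assertion that ``every diagram that occurs below has the same components'' fails precisely for the intermediate links you need. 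The full-twist identities you are trying to prove are in fact Hartley's \emph{replacement} for a between-component skein axiom, not a consequence of one. For part (2) there is a further geometric error: when the two arcs are anti-parallel, the oriented (Seifert) smoothing of a crossing is the cap--cup tangle, not the identity tangle, and a single half-twist of anti-parallel strands is not even an oriented tangle with the prescribed boundary orientations; so the objects $T_{\pm1}$ are unavailable in that case and the recursion cannot be set up at all.
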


\begin{proof}[Proof of \Cref{prop.Delta_W}]
We prove 
\begin{gather*}
\nabla_{W_{2m+1}}=-(m+1)(t_at_b+t_a^{-1}t_b^{-1})+m(t_at_b^{-1}+t_a^{-1}t_b),\\ 
\nabla_{W_{2m}}=m(t_at_b-t_at_b^{-1}-t_a^{-1}t_b+t_a^{-1}t_b^{-1})
\end{gather*} 
by induction on $m$. It is known that
\begin{enumerate}
\item[i)]
If $L$ is a split link, then $\nabla_L=0$.
\item[ii)]
If $L:=\hspace{-3mm} \xygraph{
!{0;/r1.0pc/:}
[u]!{\hcap[-2]<}
!{\vover}
!{\vover-}
[uur]!{\hcap[2]<}
}\hspace{-3mm} 
$, then $\nabla_L=1$.
\item[iii)]
If $L:=\hspace{-3mm}\xygraph{
!{0;/r1.0pc/:}
[u]!{\hcap[-2]<}
!{\vover}
!{\vover-}
[uur]!{\hcap[2]>}
}\hspace{-3mm}$, then $\nabla_L=-1$.
\end{enumerate}
Let
\[
L_{++}:=\hspace{-5mm} 
\xygraph{
!{0;/r1.0pc/:}
[u]!{\hcap[-3]}
[d]!{\xcapv-}
[uu]!{\xunderh}
[ldd]!{\xunderh}
[uu]!{\xcaph@(0)}
[ld]!{\xcaph@(0)}
[ld]!{\xcaph@(0)}
[ld]!{\xcaph@(0)}
[uuu]!{\xoverh=>}
[ldd]!{\xoverh=<}
[u]!{\xcapv}
[uu]!{\hcap[3]}
}\hspace{-5mm},\ \ 
L_{--}:=\hspace{-5mm} 
\xygraph{
!{0;/r1.0pc/:}
[u]!{\hcap[-3]}
[d]!{\xcapv-}
[uu]!{\xunderh}
[ldd]!{\xunderh}
[uu]!{\xcaph@(0)}
[ld]!{\xcaph@(0)}
[ld]!{\xcaph@(0)}
[ld]!{\xcaph@(0)}
[uuu]!{\xunderh=>}
[ldd]!{\xunderh=<}
[u]!{\xcapv}
[uu]!{\hcap[3]}
}\hspace{-5mm},\ \  
%\]
%
%\[
L_{00}:=\hspace{-5mm} 
\xygraph{
!{0;/r1.0pc/:}
[u]!{\hcap[-3]}
[d]!{\xcapv-}
[uu]!{\xunderh}
[ldd]!{\xunderh}
[uu]!{\xcaph@(0)}
[ld]!{\xcaph@(0)}
[ld]!{\xcaph@(0)}
[ld]!{\xcaph@(0)}
[uuu]!{\xcaph@(0)=<}
[ld]!{\xcaph@(0)=<}
[ld]!{\xcaph@(0)=>}
[ld]!{\xcaph@(0)=>}
[uu]!{\xcapv}
[uu]!{\hcap[3]}
}\hspace{-5mm} .
\]
Then, by the replacement relation $(1)$, we have
\[
\nabla_{++}+\nabla_{--}=(t_at_b+t_a^{-1}t_b^{-1})\nabla_{00}.
\]
Since $L_{--}=W_1$ and $L_{00}=\xygraph{
!{0;/r1.0pc/:}
[u]!{\hcap[-2]<}
!{\vover}
!{\vover-}
[uur]!{\hcap[2]>}
}$, we obtain
\[
\nabla_{W_{1}}=-(t_at_b+t_a^{-1}t_b^{-1}).
\]
Let
\[
L_{++}:=\hspace{-5mm}
\xygraph{
!{0;/r1.0pc/:}
[u]!{\hcap[-3]}
[d]!{\xcapv-}
[uu]!{\xunderh}
[ldd]!{\xunderh}
[uu]!{\xcaph@(0)}
[ld]!{\xoverh}
[ldd]!{\xcaph@(0)}
[uuu]!{\xunderv=>}
[udd]!{\xunderv=>}
[ruu]!{\xcapv}
[uu]!{\hcap[3]}
}\hspace{-5mm}, \ 
L_{--}:=\hspace{-5mm}
\xygraph{
!{0;/r1.0pc/:}
[u]!{\hcap[-3]}
[d]!{\xcapv-}
[uu]!{\xunderh}
[ldd]!{\xunderh}
[uu]!{\xcaph@(0)}
[ld]!{\xoverh}
[ldd]!{\xcaph@(0)}
[uuu]!{\xoverv=>}
[udd]!{\xoverv=>}
[ruu]!{\xcapv}
[uu]!{\hcap[3]}
}\hspace{-5mm}, \ 
%\]
%
%\[
L_{00}:=\hspace{-5mm}
\xygraph{
!{0;/r1.pc/:}
[u]!{\hcap[-3]}
[d]!{\xcapv-}
[uu]!{\xunderh}
[ldd]!{\xunderh}
[uu]!{\xcaph@(0)}
[ld]!{\xoverh}
[ldd]!{\xcaph@(0)}
[uuu]!{\xcaph@(0)=<}
[ld]!{\xcaph@(0)=>}
[ld]!{\xcaph@(0)=<}
[ld]!{\xcaph@(0)=>}
[uu]!{\xcapv}
[uu]!{\hcap[3]}
}\hspace{-5mm}. 
\]
Then, by the replacement relation $(2)$, we have
\[
\nabla_{++}+\nabla_{--}=(t_at_b^{-1}+t_a^{-1}t_b)\nabla_{00}.
\]
Since $L_{++}=W_1,$ $L_{--}=W_2,$ $L_{00}=\hspace{-3mm}\xygraph{
!{0;/r1.0pc/:}
[u]!{\hcap[-2]<}
!{\vover}
!{\vover-}
[uur]!{\hcap[2]>}
}\hspace{-3mm}$, we have
\begin{gather*}
\nabla_{W_1}+\nabla_{W_2}=-(t_at_b^{-1}+{t_a}^{-1}t_b), \text{\ i.e.,}\\ 
\nabla_{W_2}=t_at_b-t_at_b^{-1}-t_a^{-1}t_b+t_a^{-1}t_b^{-1}.
\end{gather*} 
Iterating these arguments, we obtain
\begin{gather*}
\nabla_{W_{2m+1}}=-\nabla_{W_{2m}}-(t_at_b+t_a^{-1}t_b^{-1}),\\
\nabla_{W_{2m}}=-\nabla_{W_{2m-1}}-(t_at_b^{-1}+t_a^{-1}t_b).
\end{gather*} 
By the induction hypotheses, we have
\begin{align*}
\nabla_{W_{2m+1}}&=-m(t_at_b-t_at_b^{-1}-t_a^{-1}t_b+t_a^{-1}t_b^{-1})-(t_at_b+t_a^{-1}t_b^{-1})\\
&=-(m+1)(t_at_b+t_a^{-1}t_b^{-1})+m(t_at_b^{-1}+t_a^{-1}t_b)),\\ 
\nabla_{W_{2m}}&= m(t_at_b+t_a^{-1}t_b^{-1})-(m-1)(t_at_b^{-1}+t_a^{-1}t_b))-(t_at_b^{-1}+t_a^{-1}t_b)\\
&=m(t_at_b-t_at_b^{-1}-t_a^{-1}t_b+t_a^{-1}t_b^{-1}).
\end{align*} 
By \Cref{AlexanderConway}, we must have
\begin{gather*}
\Delta_{W_{2m+1}}(x,y)=\Delta_{W_{2m+1}}(t_a^2, t_b^2)=(m+1)(t_a^2t_b^2+1)-m(t_a^2+t_b^2),\\
\Delta_{W_{2m}}(x,y)=\Delta_{W_{2m}}(t_a^2, t_b^2)=m(t_a^2t_b^2-t_a^2-t_b^2+1). \qedhere
\end{gather*} 
\end{proof}

\section{Examples}
In this section, we exhibit several concrete examples to reinforce our result. 
In \Cref{ss.Zp2}, by using Porti's result (\Cref{2260}), we obtain explicit formulas of $p$-torsion growth in the $\Zp^{\,2}$-covers of the twisted Whitehead link $W_{2p^\kappa}$ with $\kappa \in \Z_{\geq 0}$ and Magen David $L=6^2_1$. 
In \Cref{ss.TLN}, by using Mizusawa--Kadokami's result \cite{KadokamiMizusawa2008}, we obtain an explicit formula for the TLN-$\Zp$-cover of $W_{2p^\kappa}$, and give a remark on prescribed $\mu$ and $\nu$. 
In \Cref{ss.plim}, we calculate the $p$-adic limit of the torsion size in the $\Zp^{\,2}$-cover of $L=6^2_1$, in a view of \cite{UekiYoshizaki-plimits}. 
In \Cref{ss.Rolfsen}, we demonstrate some calculations of nontrivial $\lambda$'s and place the table of $\mu$ and $\lambda$ of all links in the Rolfsen table. 

\subsection{$\Zp^{\,2}$-covers} \label{ss.Zp2} 
Here, we exhibit two examples of \Cref{thm.integral} with $c=d=2$. 
\begin{example}[Twisted Whitehead links] \label{11257} 
Let $p$ be any prime number and ${\kappa}\in \Z_{\geq 0}$. 
Then branched $\Zp^{\,2}$-cover $(M_{p^n}\to S^3)_n$ over $(S^3,W_{2p^{\kappa}})$ satisfies the following: 
\begin{align*}
e(H_1(M_{p^n}))=\left({\kappa}p^n+2n-2{\kappa}\right)p^n-2n+{\kappa}. 
\end{align*}
Hence we have $\mu=\kappa$, $\lambda=2$, $\mu_1=-2\kappa$, $\lambda_1=-2$, $\nu=\kappa$. 
In particular, for arbitrary ${\kappa}\in \Z_{\geq0}$, there exists a 2-component link $L$ such that 
the $\Zp^{\,2}$-cover satisfies $\mu_L=\kappa$.
\end{example}

\begin{proof} Since $\Delta_{W_{2p^{\kappa}}}(x,y)=p^{\kappa}(1+xy-x-y)$, we have 
\begin{align*}
|H_1(M_{p^n})|&=\prod_{1\neq\zeta_2\in W(n)}\prod_{1\neq\zeta_1\in W(n)}\Delta(\zeta_1,\zeta_2)\\
&=\prod_{\zeta_2}\prod_{\zeta_1} p^{\kappa}(1-\zeta_1)(1-\zeta_2)\\
&=\prod_{\zeta_2}(p^{\kappa})^{p^n-1}(1-\zeta_2)^{p^n-1}(1-\zeta_1)(1-\zeta_1^2)\cdots(1-\zeta_1^{p^n-1})\\
&=\prod_{\zeta_2}(p^{\kappa})^{p^n-1}(1- \zeta_2)^{p^n-1}p^n\\
&=(p^{\kappa})^{(p^n-1)(p^n-1)}(p^n)^{p^n-1}(p^n)^{p^n-1}\\
&=p^{{\kappa}(p^{2n}-2p^n+1)+2np^n-2n}\\
&=p^{({\kappa}p^n+2n-2{\kappa})p^n-2n+{\kappa}}. \qedhere 
\end{align*}
\end{proof} 

\begin{example}[``Magen David'' $L=6_1^2$] \label{eg.L612} 
The link $L=6_1^2$ is defined by the diagram 
\[
\xygraph{
!{0;/r0.8pc/:}
!{\zbendv@(0)}
!{\xcaph@(0)}
!{\xunderh}
!{\xcaph@(0)}
[ld]!{\xcaph@(0)}
[u]!{\xunderh}
!{\xcaph@(0)}
!{\zbendh@(0)}
[lllllll]!{\xcapv@(0)}
[r]!{\zbendv@(0)}
[rrr]!{\zbendh@(0)}
[ur]!{\xcapv@(0)}
[lllllll]!{\xunderv}
[urrrrrr]!{\xunderv}
[llllll]!{\xcapv@(0)}
[ur]!{\zbendh@(0)}
[rrr]!{\zbendv@(0)}
[r]!{\xcapv@(0)}
[lllllll]!{\zbendh@(0)}
!{\xcaph@(0)}
[u]!{\xunderh}
!{\xcaph@(0)}
[ld]!{\xcaph@(0)}
[u]!{\xunderh}
[d]!{\xcaph@(0)}
!{\zbendv@(0)}
}
\] 

\noindent 
and its Alexander polynomial is $\Delta_L(x,y)=x^2y^2+xy+1$ (cf.~\cite{Rolfsen1990}). 
If $p\neq 3$, then the branched $\Zp^{\,2}$-cover over $(S^3,L)$ satisfies \[|H_1(M_{p^n})|=3^{p^n-1},\]
and its Iwasawa invariants are zero. If $p=3$, then $H_1(M_{p^n})$'s are infinite groups for $n>1$.  
\end{example}

For each $n \in \Z_{\geq 1}$, let $\bm{\Phi}_n(x)\in \Z[x]$ denote the $n$-th cyclotomic polynomial, i.e., \glssymbol{cyclopoly}
\[
\bm{\Phi}_n(x)=\prod_{1\leq \kappa\leq n,\ {\rm gcd}(\kappa,n)=1}(x-e^{2\pi i\frac{\kappa}{n}}) .
\]
We utilize the following result. 

\begin{lem}[Apostol, {\cite[Theorem 1]{Apostol1970pams}}]\label{11252}
If $m>n>1$ and $(m,n)>1$, then we have
\[
\Res(\bm{\Phi}_m(x), \bm{\Phi}_n(x))=
\begin{cases}
l^{\bm{\varphi}(n)}&\frac{m}{n}=l^e\\
1& \mbox{otherwise},
\end{cases}
\]
where $l$ is a prime number, \glssymbol{resultant}${\rm Res}(f,g)$ denotes the resultant of two polynomials $f,g\in \Z[x]$, and \glssymbol{euler}$\bm{\varphi}(n)$ is Euler's totient function.
\end{lem}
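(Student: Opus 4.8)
The plan is to establish Apostol's formula directly, starting from the elementary description of the resultant of two monic polynomials: since $\bm{\Phi}_m$ is monic, $\Res(\bm{\Phi}_m,\bm{\Phi}_n)=\prod_{\zeta}\bm{\Phi}_n(\zeta)$, the product running over the primitive $m$-th roots of unity $\zeta\in\C$. First I would record that this is a nonzero integer which is in fact positive: $m>n$ forces $\bm{\Phi}_m$ and $\bm{\Phi}_n$ to have no common root, and, since $m\geq 3$ in our range, the primitive $m$-th roots come in complex-conjugate pairs, so $\prod_{\zeta}\bm{\Phi}_n(\zeta)=\prod_{\zeta}|\bm{\Phi}_n(\zeta)|^{2}>0$. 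Hence $\Res(\bm{\Phi}_m,\bm{\Phi}_n)\in\Z_{\geq1}$, and it suffices to compute $v_q(\Res(\bm{\Phi}_m,\bm{\Phi}_n))$ for every prime $q$.

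For the ``generic'' case I would work modulo $q$. As $\bm{\Phi}_m,\bm{\Phi}_n$ are monic, $q\mid\Res(\bm{\Phi}_m,\bm{\Phi}_n)$ iff the reductions $\ol{\bm{\Phi}}_m,\ol{\bm{\Phi}}_n\in\F_q[x]$ share a root in $\ol{\F}_q$. Writing $m=q^{a}m'$ with $q\nmid m'$ and using the standard congruence $\bm{\Phi}_m(x)\equiv\bm{\Phi}_{m'}(x)^{\bm{\varphi}(q^{a})}\pmod q$, the roots of $\ol{\bm{\Phi}}_m$ in $\ol{\F}_q$ are exactly the primitive $m'$-th roots of unity; likewise for $n=q^{b}n'$. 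Since the multiplicative order of an element of $\ol{\F}_q^{\times}$ is well defined, these two root sets meet iff $m'=n'$, i.e.\ iff $m$ and $n$ have the same $q$-free part, i.e.\ (as $m>n$) iff $m/n$ is a positive power of $q$. Consequently, if $m/n$ is not a prime power --- in particular whenever $n\nmid m$, which can still occur under $(m,n)>1$ --- no prime divides $\Res(\bm{\Phi}_m,\bm{\Phi}_n)$, so it equals $1$; this settles the case $\frac{m}{n}\neq l^{e}$.

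It remains to treat $m/n=l^{e}$ with $l$ prime and $e\geq1$, where the previous step already gives $\Res(\bm{\Phi}_m,\bm{\Phi}_n)=l^{v_l}$, so the goal is $v_l=\bm{\varphi}(n)$. Here I would invoke the Möbius identity $\bm{\Phi}_n(x)=\prod_{d\mid n}(x^{d}-1)^{\mu(n/d)}$ together with multiplicativity of $\Res(\bm{\Phi}_m,-)$ to obtain $\Res(\bm{\Phi}_m,\bm{\Phi}_n)=\prod_{d\mid n}\Res(\bm{\Phi}_m,x^{d}-1)^{\mu(n/d)}$, each factor nonzero because $m\nmid d$ for $d\mid n$. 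Since $d\mid n\mid m$ (so $\gcd(d,m)=d$), the map $\zeta\mapsto\zeta^{d}$ carries the primitive $m$-th roots of unity $\bm{\varphi}(m)/\bm{\varphi}(m/d)$-to-one onto the primitive $(m/d)$-th roots, whence $\Res(\bm{\Phi}_m,x^{d}-1)=\bigl((-1)^{\bm{\varphi}(m/d)}\bm{\Phi}_{m/d}(1)\bigr)^{\bm{\varphi}(m)/\bm{\varphi}(m/d)}=(-1)^{\bm{\varphi}(m)}\bm{\Phi}_{m/d}(1)^{\bm{\varphi}(m)/\bm{\varphi}(m/d)}$; the sign is independent of $d$, so it contributes $(-1)^{\bm{\varphi}(m)\sum_{d\mid n}\mu(n/d)}=1$ because $\sum_{d\mid n}\mu(n/d)=0$ for $n>1$. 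Finally $\bm{\Phi}_{m/d}(1)=l$ exactly when $n/d$ is a power of $l$ and is $1$ otherwise, so only $d=n$ and (when $l\mid n$) $d=n/l$ survive, giving $v_l=\frac{\bm{\varphi}(m)}{\bm{\varphi}(l^{e})}-[\,l\mid n\,]\frac{\bm{\varphi}(m)}{\bm{\varphi}(l^{e+1})}$; a short case split on whether $l\mid n$, using $\bm{\varphi}(m)=\bm{\varphi}(n l^{e})$, reduces this to $\bm{\varphi}(n)$, completing the proof.

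I do not expect a deep obstacle here: the mod-$q$ reduction and the congruence $\bm{\Phi}_{q^{a}k}\equiv\bm{\Phi}_{k}^{\bm{\varphi}(q^{a})}\pmod q$ are textbook facts (e.g.\ Washington, \emph{Introduction to Cyclotomic Fields}), and the uniform-covering count rests only on the identity $\gcd(d,m)=d$ valid because $d\mid n\mid m$. The fussiest point is purely arithmetical bookkeeping --- tracking the $\bm{\varphi}$-ratios and the Möbius sum in the last paragraph --- together with the minor care needed to verify positivity of the resultant so that the $q$-adic valuation data pins it down exactly.
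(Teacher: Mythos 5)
The paper does not prove this lemma at all: it is quoted verbatim from Apostol's 1970 paper and used as a black box in \Cref{eg.L612}, so there is no in-paper argument to compare yours against. Judged on its own, your proof is correct and complete. The two halves are sound: (i) for the ``otherwise'' case, the reduction mod $q$ together with the congruence $\bm{\Phi}_{q^a k}\equiv\bm{\Phi}_k^{\bm{\varphi}(q^a)}\pmod q$ correctly shows that $q\mid\Res(\bm{\Phi}_m,\bm{\Phi}_n)$ forces $m$ and $n$ to have equal $q$-free parts, hence $m/n=q^c$, and your positivity observation (conjugate pairing of primitive $m$-th roots for $m\geq3$, no common root since $m\neq n$) pins the unit down to $+1$; (ii) for $m/n=l^e$ the M\"obius expansion $\Res(\bm{\Phi}_m,\bm{\Phi}_n)=\prod_{d\mid n}\Res(\bm{\Phi}_m,x^d-1)^{\mu(n/d)}$ is legitimate because each factor is nonzero ($m\nmid d$ for $d\mid n<m$), the $\bm{\varphi}(m)/\bm{\varphi}(m/d)$-to-one covering count is justified by Galois equivariance since $d\mid n\mid m$, the sign telescopes to $1$ via $\sum_{d\mid n}\mu(n/d)=0$ for $n>1$, and the final bookkeeping $v_l=\bm{\varphi}(m)/\bm{\varphi}(l^e)-[\,l\mid n\,]\bm{\varphi}(m)/\bm{\varphi}(l^{e+1})=\bm{\varphi}(n)$ checks out in both the $l\nmid n$ and $l\mid n$ cases. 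This is essentially the standard (Apostol-style) argument via $\bm{\Phi}_k(1)$ and M\"obius inversion, supplemented by a clean mod-$q$ treatment of the trivial case; nothing in it conflicts with how the lemma is applied in the paper.
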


\begin{proof}[Proof of \Cref{eg.L612}] 
Note that \Cref{2260} yields the equivalences 
\begin{align*}
p\neq 3 & \iff  \Delta_L(x,y)\neq 0 \text{\ on\ }(W\setminus \{1\})^2\\
& \iff H_1(M_{p^n})<\infty \text{\ for\ }n>1. 
\end{align*}  
Put $r_{p^n}(x):=\Res(y^{p^n}-1,\Delta_L(x,y))$. Then we have
\begin{center}
$\hspace{-20mm} \ds r_{p^n}(x)=\prod_{\xi\in W(n)}(x^2\xi^2+x\xi+1)
=\prod (x\xi-\omega)(x\xi-\omega^2)$\\ 
\hspace{20mm} $\ds =\prod (x-\frac{\omega}{\xi})(x-\frac{\omega}{\xi})
=\prod (x-\xi\omega)(x-\xi\omega^2)
=\prod_{\kappa=0}^{n} \bm{\Phi}_{3p^{\kappa}}(x),$ 
\end{center} 
where $\omega$ is a primitive cube root of unity. By \Cref{11252}, we have
\begin{center}
$\ds \Res(x^{p^n}-1, r_{p^n}(x))
=\prod_{l, \kappa}\Res(\bm{\Phi}_{p^l}(x),\bm{\Phi}_{3\cdot p^{\kappa}}(x))
=\prod_{l=\kappa}\Res(\bm{\Phi}_{p^l}(x),\bm{\Phi}_{3\cdot p^l}(x))$\\ 
$\ds 
=\prod_{l=0}^n 3^{\varphi(p^l)}=3^{1+\sum_{l=1}^n p^{l-1}(p-1)}
=3^{p^n}.$
\end{center}
On the other hand, we have $\Delta_L(1,1)=3$.  
By \Cref{11252}, we also have
\begin{gather*}
\Res(y^{p^n}-1,\Delta_L(1,y))=\Res(y^{p^n}-1,\bm{\Phi}_3(y))=\Res(\bm{\Phi}_1(y),\bm{\Phi}_3(y))=3,\\ 
\Res(x^{p^n}-1,\Delta_L(x,1))=3.
\end{gather*}
Therefore, we have
\[
|H_1(M_{p^n})|=\prod_{1\neq\zeta_1, \zeta_2\in W(n)}\Delta_L(\zeta_1,\zeta_2)=3^{p^n-1}. \qedhere 
\]
\end{proof} 

\begin{remark} \label{rem.L412}
For instance, the $\Zp^{\,2}$-cover of ``Solomon's knot'' $L=4_1^2$ with $\Delta_L(x,y)=xy-1$ does not satisfy the rank assumptions of neither Theorems \ref{2252},  \ref{thm.branched}, nor \ref{thm.integral}. 
In order to obtain the Iwasawa-type formula for the unramified case, we need a finer investigation of the ranks of Iwasawa modules. 
In order to obtain the formula for the branched case, we also need some essential development in the techniques comparing branched and unbranched covers.
\end{remark} 

\subsection{The TLN-covers and prescribed Iwasawa invariants} \label{ss.TLN} 
Here, we observe an example with $(c,d)=(2,1)$ and give a remark on prescribed Iwasawa invariants. 

Let $M$ be a $\Q$HS$^3$ and let $L=\sqcup_i K_i$ be a link in $M$ consisting of null-homologous components. Put $X:=M\setminus {\rm Int}V_L$. 
\emph{The total linking number cover} (TLN-cover) over $(M,L)$ is the $\Z$-cover that corresponds to the surjective homomorphism $\tau:\pi_1(X)\surj \Z$ sending the positive meridian $\alpha_i$ of every $L_i$ to 1. 
Consider the reduced Alexander polynomial \glssymbol{Alexandertilde}$\widetilde{\Delta}_L(t)=\Delta_L(t,\ldots, t)$ and a polynomial $A_{X_\infty}(t)=(t-1)\widetilde {\Delta}_L(t)$. 

\begin{prop}[Kadokami--Mizusawa, {\cite[Theorem 3.3]{KadokamiMizusawa2008}}] \label{prop.KM} 
Let the setting be as above, and let $(M_{p^n}\to M)_n$ denote the branched $\Zp$-cover derived from the TLN-cover. Then  
\[
\frac{|H_1(M_{p^n})|}{\# H_1(M)}=\Big|\prod_{1\neq \xi\in W(n)}A_{X_\infty}(\xi)\Big|.
\]
\end{prop}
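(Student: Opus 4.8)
\textbf{Proof proposal for Proposition \ref{prop.KM} (the TLN-cover formula).}

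The plan is to reduce the branched formula to an unbranched computation via the fundamental two exact sequences and then evaluate the resulting characteristic element using the reduced Alexander polynomial. First I would observe that the TLN-cover corresponds to a surjection $\tau\colon\pi_1(X)\surj\Z$, so the restriction of the branched $\Zp$-cover to the link exterior is a $\Zp$-cover $(X_{p^n}\to X)_n$ with associated Iwasawa module $\mca{H}=\varprojlim H_1(X_{p^n};\Zp)$. Since all components of $L$ are null-homologous, every meridian $\alpha_i$ maps to a torsion element in $H_1(M)$; in fact $\alpha_i=0$ in $H_1(M;\Q)$, so after tensoring with $\Q$ one has an isomorphism $H_1(X;\Q)\cong H_1(M;\Q)\oplus\Q^c$ from the Mayer--Vietoris sequence (\Cref{MVlemma}), which forces $L$ to decompose only finitely and guarantees $r(H_1(X_{p^n};\Zp))=O(1)$. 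Thus \Cref{gaptheorem} applies and gives $e(H_1(M_{p^n}))=e(H_1(X_{p^n}))+O(n)$; but actually for $d=1$ one wants the \emph{exact} equality, so instead I would run the argument of \Cref{gaptheorem} directly with the explicit branch indices, tracking the kernel $A_{p^n}$ of $H_1(X_{p^n})\surj H_1(M_{p^n})$ precisely rather than asymptotically, and combine it with $\#H_1(M)$ appearing as the base contribution.

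The core computation is the identification of $\mca{H}$. By \Cref{zcovertheorem} (or directly, since here $X_\infty\to X$ is a genuine $\Z$-cover), $\mca{H}$ is pseudo-isomorphic to $\wh{H}_1(X_\infty)=H_1(X_\infty)\otimes_{\Lambda_\Z}\Lambda$, and by \Cref{prop.hatH1Xinfty} its characteristic element in $\Lambda=\Zp[\![T]\!]$ is $\Delta_\tau(1+T)$ where $\Delta_\tau(t)$ is the Alexander polynomial of the $\Z$-cover $X_\infty\to X$. The next step is the classical identity $\Delta_\tau(t)\doteq A_{X_\infty}(t)=(t-1)\widetilde\Delta_L(t)$ for the total-linking-number cover: this is exactly \Cref{reduced}(1)(i) specialized to the homomorphism $\tau$ sending every meridian to $1\in\Z$, i.e. all $v_{i1}=1$, so $\Delta_\tau(t)\doteq\Delta_L(t,t,\ldots,t)=\widetilde\Delta_L(t)$, with the extra factor $(t-1)$ coming from the rank-one kernel contribution (the $\Z^{c-1}$ appearing in the first exact sequence of \Cref{reduced}) when $c\geq2$; for $c=1$ one uses the Milnor exact sequence / the well-known relation between the Alexander module of a knot and of its infinite cyclic cover. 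I would state this carefully, since the precise power of $(t-1)$ is what makes the formula exact rather than up to $O(n)$.

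Finally, having $F(T):=\mathrm{char}\,\mca{H}\doteq A_{X_\infty}(1+T)$ in $\Lambda$, I would compute $e(H_1(X_{p^n}))$ by the standard one-variable evaluation: $H_1(X_\infty)/I_{p^n}H_1(X_\infty)$ has torsion size governed by $\prod_{\xi\in W(n),\,\xi\neq1} F(\xi-1)=\prod_{1\neq\xi\in W(n)}A_{X_\infty}(\xi)$, using that $\varphi_n\colon\Lambda/\nu_{p^n}(T)\Lambda\hookrightarrow\bigoplus_\xi\Zp[\xi]$ has cokernel a $p$-group of controlled size and that for the TLN-cover the values $A_{X_\infty}(\xi)$ are nonzero whenever $\xi\neq1$ precisely because of the $(t-1)$ factor being stripped — this is where \Cref{examplezdcover}/\Cref{asymptotic} enters to pass between $e(H_1(X_{p^n}))$ and $e(H_1(X_\infty)/I_{p^n}H_1(X_\infty))$. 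Combining this with the snake-lemma bookkeeping of the branched-versus-unbranched gap and the factor $\#H_1(M)$ from the base, one obtains $|H_1(M_{p^n})|/\#H_1(M)=\big|\prod_{1\neq\xi\in W(n)}A_{X_\infty}(\xi)\big|$. The main obstacle I anticipate is making the gap between $|H_1(M_{p^n})|$ and the unbranched torsion \emph{exact}: the general \Cref{gaptheorem} only controls it up to $O(n)$, so for this sharp statement one must use that in the $d=1$ TLN situation each $K_i$ is null-homologous with linking numbers all equal, so the branch indices and the subgroup $A_{p^n}$ can be computed on the nose from the Mayer--Vietoris sequence and the Hilbert ramification theory for knots, and the spurious factors cancel; verifying that cancellation is the delicate point, and it is essentially the content of the Kadokami--Mizusawa argument that I would follow.
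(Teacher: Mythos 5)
First, note that the paper does not prove \Cref{prop.KM} at all: it is imported verbatim from Kadokami--Mizusawa, so there is no internal argument to compare yours against, and your proposal must be judged on its own terms. Judged that way, it has a genuine gap --- one you have in fact named yourself. Every tool you invoke from this paper (\Cref{gaptheorem}, the estimation formulas \Cref{asymptotic} and \Cref{examplezdcover}, the pseudo-isomorphism of \Cref{zcovertheorem}, the bound on $\coker\varphi_n$) controls quantities only up to $O(n)$, up to $O(1)$, or up to a finite error annihilated by a power of $p$; none of it can produce the asserted \emph{exact} equality of orders, and deferring the ``delicate cancellation'' to ``the Kadokami--Mizusawa argument that I would follow'' is circular, since that argument is precisely what is to be proved. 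Two further defects: the statement concerns the full integral order $|H_1(M_{p^n})|$, including its prime-to-$p$ part, whereas the $\Lambda=\Zp[\![T]\!]$-module apparatus and the invariant $e(\cdot)$ see only the $p$-primary part; and your accounting for the single factor $(t-1)$ in $A_{X_\infty}(t)=(t-1)\widetilde\Delta_L(t)$ is unjustified --- \Cref{reduced} is stated only for $d\geq 2$, and the kernel of $\Z^c\surj\Z$ is $\Z^{c-1}$, which would naively contribute $(t-1)^{c-1}$ rather than a single $(t-1)$ independent of $c$.

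The route that actually yields the exact formula, and the one consistent with how this paper uses such identities elsewhere (\Cref{thm.integral}, \Cref{thm.integral.Delta}, Examples \ref{11257} and \ref{eg.L612}), is the Mayberry--Murasugi/Porti formula \Cref{2260}, which is an exact integral statement with no asymptotic losses. Specialize it to $G=\Z/p^n\Z$ with $\pi(\alpha_i)=1$ for every $i$: for $c\geq 2$ every nontrivial character $\xi$ satisfies $\xi(\pi(\alpha_i))\neq 1$ for all $i$, so $L_\xi=L$ and the corresponding factor is $\widetilde\Delta_L(\xi)$, while $\widehat{G}^{(1)}=\emptyset$. Since $\prod_{1\neq\xi\in W(n)}(1-\xi)=p^n=|G|$, the prefactor $|G|$ is exactly absorbed into $\prod_{1\neq\xi}(\xi-1)$, giving $|H_1(M_{p^n})|=\pm\prod_{1\neq\xi\in W(n)}(\xi-1)\,\widetilde\Delta_L(\xi)=\pm\prod_{1\neq\xi\in W(n)}A_{X_\infty}(\xi)$ over $S^3$, with the general $\Q$HS$^3$ base contributing the normalizing factor $\#H_1(M)$. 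This is where the single $(t-1)$ really comes from --- a bookkeeping identity between $|G|$ and the cyclotomic product --- not from the kernel rank in the fundamental exact sequences. If you want to salvage your $p$-adic approach, it can only ever recover the $p$-part of the formula up to an $O(n)$ ambiguity, which is strictly weaker than the proposition.
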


\begin{example} \label{11255}
Let $L=W_{2p^{\kappa}}$ in $S^3$ and let $(M_{p^n}\to S^3)_n$ denote the branched $\Zp$-cover derived from the TLN-cover. 
Then we have \glssymbol{AXinf}
\begin{gather*}
A_{X_{\infty}}(t)=(t-1)\widetilde{\Delta}_L(t)
=(t-1)\Delta_L(t,t)
=p^{\kappa}(t-1)^3,\\ 
e(H_1(M_{p^n}))={\kappa}p^n+3n-{\kappa}.
\end{gather*}
In particular, for an arbitrary ${\kappa}\in \Z_{\geq 0}$, there exists a 2-component link $L$ in $S^3$ such that the standard branched $\Zp$-cover satisfies $\mu=\kappa$.
\end{example} 

\begin{proof} 
By \Cref{prop.KM}, we have 
\begin{center}
$\hspace{-10mm} \ds |H_1(M_{p^n})|=\frac{|H_1(M_{p^n})|}{\# H_1(M)}
=\Big|\prod_{1\neq\xi\in W(n)}A_{X_\infty}(\xi)\Big|$\\ 
\hspace{40mm} 
$\ds =\prod_{\xi\neq 1} p^{\kappa}(1-\xi)^3
=(p^{\kappa})^{p^n-1}(p^n)^3
=p^{{\kappa}p^n+3n-{\kappa}}.$ \qedhere
\end{center} \end{proof}

\begin{remark} \label{rem.nu} 
On the number theory side, Iwasawa gave a construction of $\Zp$-extension with arbitrarily large $\mu$ \cite[Theorem 1]{Iwasawa1973}, and Ozaki gave that with arbitrary prescribed $\mu$ and with $\lambda=0$ \cite[Theorem 2]{Ozaki2004JMSJ}. 
On the topology side, the second author pointed out that an analogue of Iwasawa's result may be proved in a parallel way to the original argument \cite[Theorem 5.2]{Ueki3}, while Kadokami--Mizusawa \cite[Proposition 4.15]{KadokamiMizusawa2008} gave another construction of $\Zp$-covers with arbitrarily prescribed $(\mu,\lambda)$ and with $\nu=0$. In addition, we may construct a $\Zp$-cover with arbitrarily large $\nu$ over a base space with small torsion \cite[Proposition 6.16]{UekiYoshizaki-plimits}. 

By \Cref{11257} and \Cref{11255}, both when $d=1,2$, for arbitrary ${\kappa}\in \Z_{\geq 0}$, there exists a 2-component link $L$ in $S^3$ such that the $\Zp^{\,d}$-cover satisfies $\mu_L=\kappa$.

\Cref{11255} indicates that the $\nu$ of a $\Zp$-cover of a link in $S^3$ can be an arbitrary negative integer. 
Also, \Cref{11257} indicates that the $\nu$ of a $\Zp^{\,2}$-cover of a link in $S^3$ can be an arbitrary positive integer. 
\end{remark}

\subsection{$p$-adic torsions} \label{ss.plim} 
Here we recall the context of $p$-adic torsions and present an example in a $\Zp^{\,2}$-cover.
Sinnott, Han, and Kisilevsky proved that in a $\Zp$-extension of a global field, the class numbers $p$-adically converge (\cite[Theorem 4]{Han1991}, \cite[Corollary 1]{Kisilevsky1997PJM}), 
and Ozaki extended their result to a general pro-$p$ extension \cite{Ozaki-padiclimit}. 
Yoshizaki and the second author proved an analogous result for $\Zp$-covers of 3-manifolds: 
\begin{prop}[Ueki--Yoshizaki, a part of {\cite[Theorem 3.1]{UekiYoshizaki-plimits}}] 
Let $(X_{p^n}\to X)_n$ be a compatible system of $\Z/p^n\Z$-covers of a compact $3$-manifold $X$. 
Then the sizes of the non-$p$ torsion subgroups $H_1(X_{p^n})_{\text{non-}p}$ converges in $\Zp$. 
\end{prop}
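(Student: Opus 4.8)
The plan is to reduce the statement, by an isotypic decomposition of $H_1(X_{p^n})$ away from $p$, to a $p$-adic convergence of norms of Alexander--polynomial values, and then to settle the latter by the arithmetic estimates already used for class numbers by Sinnott--Han, Kisilevsky, and Ozaki.

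\emph{Step 1: reduction to a product over levels.} Fix a prime $\ell\neq p$. Since $p^n$ is invertible in $\Z_\ell$, the group ring $\Z_\ell[\Z/p^n\Z]$ is an \'etale $\Z_\ell$-algebra, hence a finite product of DVRs indexed by the $p$-power cyclotomic factors $\Phi_{p^k}$ ($0\le k\le n$); accordingly $H_1(X_{p^n};\Z_\ell)=\bigoplus_{k=0}^n B^{(\ell)}_k$, where $B^{(\ell)}_k$ is the $\Phi_{p^k}$-isotypic part. A transfer argument for the cover $X_{p^n}\to X_{p^k}$ (whose degree $p^{n-k}$ is a unit mod $\ell$) identifies $B^{(\ell)}_k$ with the $\Phi_{p^k}$-part of $H_1(X_{p^k};\Z_\ell)$, so $B^{(\ell)}_k$ does not depend on $n$ for $n\ge k$. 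Only finitely many $\ell$ contribute torsion at each level, so $b_k:=\prod_{\ell\neq p}\#{\rm tor}_{\Z_\ell}B^{(\ell)}_k$ is a well-defined positive integer coprime to $p$, depending only on the tower, and $\#H_1(X_{p^n})_{\text{non-}p}=\prod_{k=0}^n b_k$. Therefore the assertion is equivalent to $b_k\to 1$ in $\Zp$, i.e.\ $v_p(b_k-1)\to\infty$.

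\emph{Step 2: identifying $b_k$ with a norm of an Alexander value.} Assembling the $B^{(\ell)}_k$ into the $\Z[1/p][\zeta_{p^k}]$-module $B_k:=e_k\,H_1(X_{p^k};\Z[1/p])$, one has $b_k=\#{\rm tor}_{\Z[1/p]}B_k$, which equals the prime-to-$p$ part of $N_{\Q(\zeta_{p^k})/\Q}$ of the first elementary ideal of $B_k$. A Fox-calculus computation, together with McMullen's theorem applied as in the proof of \Cref{reduced} after embedding the given $\Zp$-cover inside the $\Z^s$-cover of $X$ through which $\tau$ factors, shows that this elementary ideal is generated by the multivariable Alexander polynomial $\Delta$ of $X$ evaluated, through the order-$p^k$ character attached to $\tau$, at a tuple of $p^k$-th roots of unity; in the model cases of a link exterior with the $\Zp$-cover derived from a $\Z$-cover this is exactly the Kadokami--Mizusawa / Mayberry--Murasugi formulas (\Cref{prop.KM}, \Cref{2260}). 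Consequently $b_k$ is the non-$p$-part of $N_{\Q(\zeta_{p^k})/\Q}(A(\zeta_{p^k}))$ for a fixed $A\in\Z[t]$ (a $\Z_p$-factor of the one-variable reduction of $\Delta$), and $\prod_{k\le n}b_k$ is, up to a fixed constant and a bounded power of $p$, the non-$p$-part of ${\rm Res}(t^{p^n}-1,\,A)=\prod_{\xi^{p^n}=1}A(\xi)$.

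\emph{Step 3: the $p$-adic estimate, and the main obstacle.} Split $A$ over $\Z_p$ by the reduction of its (algebraic-integer) roots $\rho$. Roots with $v_p(\rho)>0$ give $\rho^{p^k}\to 0$, so those factors of $A(\zeta_{p^k})$ tend to $\pm1$; roots that are units reducing to $1$ contribute only powers of $p$ (this is the source of the $\lambda$-growth and is invisible in the non-$p$-part); the remaining unit factor $A_u$ satisfies $A_u(1)\in\Zp^\times$ and $A_u(\zeta_{p^k})=A_u(1)\bigl(1+g_k\bigr)$ with $g_k$ in the maximal ideal of $\Z_p[\zeta_{p^k}]$. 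Hence $N_{\Qp(\zeta_{p^k})/\Qp}\bigl(A_u(\zeta_{p^k})\bigr)=A_u(1)^{\varphi(p^k)}\cdot N(1+g_k)$, where the first factor tends to $1$ in $\Zp$ because $A_u(1)^{p-1}\in 1+p\Zp$ and $\varphi(p^k)\to 0$ in $\Zp$, while the second tends to $1$ because $\Phi_{p^k}(1+x)$ is Eisenstein at $p$, so that the elementary symmetric functions of the conjugates of $g_k$ lie in deep powers of $p$ and the trace estimate furnished by the different of $\Q(\zeta_{p^k})/\Q$ yields $v_p\bigl(N(1+g_k)-1\bigr)\to\infty$. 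This last estimate is the arithmetic heart of the argument; it runs exactly parallel to the class-number estimates of Sinnott--Han \cite{Han1991}, Kisilevsky \cite{Kisilevsky1997PJM}, and Ozaki \cite{Ozaki-padiclimit}, and I expect it, together with making the Fox-calculus identification of Step 2 uniform for an arbitrary compact $3$-manifold and an arbitrary $\Zp$-cover (not necessarily derived from a $\Z$-cover, whence the passage to a $\Z^s$-cover and the use of McMullen's theorem \cite{McMullen2002ASENS}), to be the only genuinely delicate point; see also \cite{UekiYoshizaki-plimits} for the $d=1$ treatment.
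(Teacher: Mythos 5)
Your Step 1 is correct and is the right opening move: the \'etale decomposition of $\Z_\ell[\Z/p^n\Z]$ into cyclotomic factors, the transfer argument showing the $\bm{\Phi}_{p^k}$-isotypic torsion $b_k$ stabilizes for $n\geq k$, and the reduction of the statement to $v(b_k-1)\to\infty$ are all sound and require nothing beyond finite generation of $H_1$. The gap is in Steps 2--3, and it is a real one rather than a technicality: for an arbitrary compact $3$-manifold and an arbitrary $\Zp$-cover, the module $\mca{H}=\varprojlim H_1(X_{p^n};\Zp)$ need not be $\Lambda$-torsion, its Fitting ideal need not be principal, and McMullen's theorem on the shape of ${\rm Fitt}\,H_1(X_{\rm ab})$ applies only to link exteriors with tori boundary; so there is in general no single polynomial $A$ with $b_k$ equal to the prime-to-$p$ part of $N_{\Q(\zeta_{p^k})/\Q}(A(\zeta_{p^k}))$. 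Even when such an $A$ exists, the identification holds only up to pseudo-isomorphism, and the prime-to-$p$ contribution of the pseudo-null error terms would itself have to be shown to converge --- which is the original problem again. The concluding norm estimate in Step 3 is likewise only sketched.

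What you are missing is that the proof is already essentially finished at the end of Step 1, with no Alexander polynomials needed. The group $\tor_{\Z[1/p]}B_k$ is a finite module over $\Z[1/p][\zeta_{p^k}]=\mathcal{O}_{\Q(\zeta_{p^k})}[1/p]$, hence decomposes over the primes $\mathfrak{l}\nmid p$ of $\Q(\zeta_{p^k})$; each local piece is a finite module over a discrete valuation ring whose residue field has cardinality $\ell^{f}$, where $f$ is the residue degree of $\ell$, i.e.\ the multiplicative order of $\ell$ modulo $p^k$. Thus every composition factor has cardinality $\equiv 1 \bmod p^k$, so $b_k\equiv 1 \bmod p^k$ and $v(b_k-1)\geq k\to\infty$ unconditionally. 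This purely module-theoretic congruence is what makes the cited result of Ueki--Yoshizaki hold for arbitrary compact $3$-manifolds and arbitrary $\Zp$-covers; the route through evaluated characteristic elements, as in \Cref{prop.KM} and \Cref{2260}, is only available under the extra hypotheses that the rest of the present paper has to impose.
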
 

If we apply this for the exterior of a link $L$ in $S^3$, then we obtain a similar result for branched $\Z/p^n\Z$-covers $(M_{p^n}\to S^3)_n$ of $L$. We may obtain the result also by combining the explicit formula for the $p$-adic limit of the cyclic resultants \cite[Theorem 5.7]{UekiYoshizaki-plimits} and the results of Mayberry--Murasugi \cite{MayberryMurasugi1982} and Porti \cite{Porti2004}. 

The $p$-adic limit of $|H_1(M_{p^n})_{\text{non-}p}|$ coincides with Kionke's $p$-adic torsion by \cite[Theorem 1.1]{Kionke2020JLMS}. 
Kionke's framework is for arbitrary pro-$p$ covers and the $p$-adic limits of $|H_1(M_{p^n})_{\text{non-}p}|$ in $\Zp^{\,d}$-covers of links also give examples of the $p$-adic torsions. Here we present an example for the case $c=d=2$. 

\begin{example} \label{eg.pAdicTorsion} 
Let $L=6^2_1$ in $S^3$ and let $p\neq 3$. Then, by \Cref{eg.L612}, 
the $\Zp^{\,2}$-cover $(M_{p^n}\to S^3)_n$ satisfies 
\[\lim_{n\to \infty} |H_1(M_{p^n})_{\text{non-}p}|=\lim_{n\to \infty}3^{p^n-1}=\xi/3,\]
where $\xi$ denotes the unique root of unity of order prime to $p$ satisfying $|\xi-3|_p<1$, that is, $\xi \equiv 3$ mod $p$ holds. 
We have $\xi/3\in \Q$ if and only if $p=2$, and in this case we have $\xi/3=1/3$. 
\end{example} 

\begin{proof} We give two proofs. (1) Fermat's small theorem yields $3^{p^n}\equiv 3$ mod $p$, and hence $\xi^{p^n}=\xi$. 
By \cite[Lemma 5.6 (1)]{UekiYoshizaki-plimits}, $\ds \lim_{n\to \infty} (3^{p^n}-\xi^{p^n})=0$ in $\Zp$. Thus we obtain the assertion. 

(2) Let us apply \cite[Theorem 5.7]{UekiYoshizaki-plimits} to verify the consistency. 
If $p\neq 2,3$, then 
\begin{align*}\ds \lim_{n\to \infty} 3^{p^n-1} 
&= \ds \lim_{n\to \infty} \big({\rm sgn}(1-3)\cdot{\rm Res}(t^{p^n}-1, t-3)+1\big)/3 \\
&=\big((-1)(-1)^p(\xi-1)+1\big)/3 \ %&
=\ \xi/3.
\end{align*} 
If $p=2$, then 
\begin{align*}
\ds \lim_{n\to \infty} 3^{p^n-1}
&=\ds \lim_{n\to \infty} \big({\rm sgn}((1-3)(-1-3))\cdot {\rm Res}(t^{p^n}-1, t-3)+1\big)/3\\
&=\big((-1)^2(-1)^2(\xi-1)+1\big)/3 \ =\ \xi/3. \qedhere%\\ & 
\end{align*} 
\end{proof}

\subsection{$\mu$ and $\lambda$-invariants for Rolfsen's table} \label{ss.Rolfsen}

We place below the table of the Alexander--Iwasawa polynomial $\Delta_L(1+T_1,\ldots)$ and their Iwasawa $\mu$ and $\lambda$-invariants in the sense of \Cref{def.mulambda} 
of all links $L$ in Rolfsen table \cite{Rolfsen1990},  which we calculated by using SageMath \cite{SageMath}. 
 
Here we also exhibit examples of detailed calculations of non-trivial $\lambda$'s:  

\begin{example} \label{eg.table} 
(1) 
If $L=6^2_1$ and $p=3$, then we have $\lambda=2$ by 
\begin{align*}
\Delta_L(1+X,1+Y)&=X^2Y^2 + 2X^2Y + 2XY^2 + X^2 + 5XY + Y^2 + 3X + 3Y + 3\\
&\equiv((1+X)(1+Y)-1)^2 \mod 3.
\end{align*}

(2) 
If $L=6^3_3$, then, for any $p$, we have $\lambda=1$ by 
\begin{align*}
\Delta_L(1+X,1+Y,1+Z)&=-XYZ-XY-XZ-YZ-X-Y-Z\\
&=-((1+X)(1+Y)(1+Z)-1).
\end{align*}

(3) 
If $L=8^4_3$, then, for any $p$, we have $\lambda=2$ by 
\begin{align*}
&\hspace{-1cm} \Delta_L(1+X,1+Y,1+Z,1+W)\\
&=WXYZ+WXY+WXZ+WYZ+XYZ+WY+XY+WZ+XZ\\
&=((1+X)(1+W)-1)((1+Y)(1+Z)-1).
\end{align*}

In the following table, for instance, ``1 if $p=2$'' implies ``$0$ if $p\neq 2$,'' if not otherwise mentioned. 
If $\Delta_L\neq 0$ on $(W\setminus \{1\})^c$, then, by Theorems \ref{thm.integral} and \ref{thm.integral.Delta} (1), the $\mu$ and $\lambda$ of $\Delta_L$ coincides with those of the $\Zp^{\,c}$-covers. 
\end{example} 

{\tiny
\begin{longtable}{|c|l|c|c|} 
\caption{\normalsize $\mu$ and $\lambda$-invariants} \label{table.mulambda}
\endfirsthead 
\hline 
link & Alexander--Iwasawa polynomial $\Delta_L(1+X, 1+Y,\ldots)$ & $\mu$ & $\lambda$ \\ \hline \hline 
\endhead 

\hline %表のフッタ 
\endfoot %最後のフッタ
\endlastfoot %このあとは表の中身の記述

\hline 
link & Alexander--Iwasawa polynomial $\Delta_L(1+X, 1+Y, \ldots)$ & $\mu$ & $\lambda$ \\ \hline 
\hline
$0^2_1$ & 0&-&-\\

\hline 

$2^2_1$ & 1&0&0\\ 

\hline 

$4^2_1$&$XY + X + Y + 2$&0&1 if $p=2$\\
\hline

$5^2_1$&$XY$&0&2\\
\hline

$6^2_1$&$X^2Y^2 + 2X^2Y + 2XY^2 + X^2 + 5XY + Y^2 + 3X + 3Y + 3$&0&2 if $p=3$\\
\hline

$6^2_2$&$X^2Y + XY^2 + X^2 + 3XY + Y^2 + 3X + 3Y + 3$&0&0\\
\hline

$6^2_3$&$2XY + X + Y + 2$&0&0\\
\hline

$7^2_1$&$X^2Y^2 + X^2Y + XY^2 + XY - X - Y - 1$&0&0\\
\hline

$7^2_2$&$X^2Y^2 + X^2Y + XY^2 + 3XY + X + Y + 1$&0&0\\
\hline

$7^2_3$&$2XY$&1 if $p=2$&2\\
\hline

\multirow{2}{*}{$7^2_4$}&\multirow{2}{*}{$X^3Y + 2X^2Y + 2XY$}& \multirow{2}{*}{0}&4 if $p=2$\\* 
&&&2 if not\\
\hline

$7^2_5$&$X^3Y + X^3 + X^2Y + 3X^2 + XY + 3X + Y + 2$&0&1 if $p=2$\\
\hline

$7^2_6$&$X^3Y + X^2Y + XY$&0&2\\
\hline

$7^2_7$&$X^3Y + X^3 + 3X^2Y + 3X^2 + 3XY + 3X + Y + 2$&0&1 if $p=2$\\
\hline

$7^2_8$&$XY$&0&2\\
\hline

\multirow{2}{*}{$8^2_1$}&$X^3Y^3 + 3X^3Y^2 + 3X^2Y^3 + 3X^3Y + 9X^2Y^2 + 3XY^3 + 2X^3 + 10X^2Y + 10XY^2$&\multirow{2}{*}{0}&\multirow{2}{*}{0}\\* 
&$+ Y^3 + 7X^2 + 13XY + 4Y^2 + 9X + 6Y + 4$&&\\
\hline

\multirow{2}{*} {$8^2_2$}&$X^3Y + X^2Y^2 + XY^3 + X^3 + 4X^2Y + 4XY^2 + Y^3 + 4X^2 + 7XY $&\multirow{2}{*}{0}&\multirow{2}{*}{0}\\* 
&$+ 4Y^2 + 6X + 6Y + 4$&&\\
\hline

$8^2_3$&$2X^2Y^2 + 3X^2Y + 3XY^2 + X^2 + 7XY + Y^2 + 3X + 3Y + 3$&0&0\\
\hline

\multirow{2}{*}{$8^2_4$}&$X^3Y^2 + X^2Y^3 + 2X^3Y + 4X^2Y^2 + 2XY^3 + X^3 + 7X^2Y + 7XY^2 + Y^3$& \multirow{2}{*}{0}& \multirow{2}{*}{3 if $p=2$}\\* 
&$+ 4X^2 + 10XY + 4Y^2 + 6X + 6Y + 4$&&\\
\hline

$8^2_5$&$X^2Y^2 - X^2 - XY - Y^2 - 3X - 3Y - 3$&0&0\\
\hline

$8^2_6$&$3XY + X + Y + 2$&0&1 if $p=2$\\
\hline

$8^2_7$&$X^2Y^2 - XY - X - Y - 1$&0&0\\
\hline

$8^2_8$&$X^2Y^2 + XY + X + Y + 1$&0&0\\
\hline

$8^2_9$&$-X^3 - 2X^2Y - X^2 + 3X + Y + 2$&0&1 if $p=2$\\
\hline

$8^2_{10}$&$X^3Y$&0&4\\
\hline

$8^2_{11}$&$-X^3Y + X^3 - X^2Y + 3X^2 - XY + 3X + Y + 2$&0&1 if $p=2$\\
\hline

$8^2_{12}$&$X^3Y$&0&4\\
\hline

$8^2_{13}$&$X^3Y - X^2Y - XY$&0&2\\
\hline

$8^2_{14}$&$X^3Y + X^3 - X^2Y + 3X^2 - XY + 3X + Y + 2$&0&1 if $p=2$\\
\hline

$8^2_{15}$&$XY$&0&2\\
\hline

$8^2_{16}$&$-X^3 - X^2 + 2XY + 3X + Y + 2$&0&1 if $p=2$\\
\hline

\multirow{2}{*} {$9^2_1$}&$X^3Y^3 + 2X^3Y^2 + 2X^2Y^3 + X^3Y + 4X^2Y^2 + XY^3 + X^2Y + XY^2 - X^2$& \multirow{2}{*} {0}& \multirow{2}{*} {0}\\* 
&$ - 2XY - Y^2 - 3X - 3Y - 2 $&&\\
\hline

$9^2_2$&$X^3Y + X^2Y^2 + XY^3 + 2X^2Y + 2XY^2 - X^2 - XY - Y^2 - 3X - 3Y -2$& 0& 0\\
\hline

$9^2_3$&$2X^2Y^2 + 2X^2Y + 2XY^2 + 3XY - X - Y - 1$&0&0\\
\hline

$9^2_4$&$X^3Y^2 + X^2Y^3 + X^3Y + 5X^2Y^2 + XY^3 + 5X^2Y + 5XY^2 + 5XY$&0&2\\
\hline

\multirow{2}{*}{$9^2_5$}&\multirow{2}{*}{$X^3Y + 2X^2Y^2 + XY^3 + 4X^2Y + 4XY^2 + 4XY$}&\multirow{2}{*}{0}&4 if $p=2$\\* 
&&&2 if not\\
\hline

\multirow{2}{*}{$9^2_6$}&$-X^3Y^2 - X^2Y^3 - X^3Y - 3X^2Y^2 - XY^3 - 2X^2Y - 2XY^2 + X^2 + XY$& \multirow{2}{*}{0}& \multirow{2}{*}{1 if $p=2$}\\
&$+ Y^2 + 3X + 3Y + 2$&&\\
\hline

\multirow{2}{*} {$9^2_7$}&$-X^3Y^2 - X^2Y^3 - X^3Y - 2X^2Y^2 - XY^3 - X^2Y - XY^2 + X^2 + 2XY $& \multirow{2}{*}{0}&\multirow{2}{*}{1 if $p=2$}\\
&$+ Y^2 +3X + 3Y + 2$&&\\
\hline

$9^2_8$&$2X^2Y + 2XY^2 + 3XY - X - Y - 1$&0&0\\
\hline

$9^2_9$&$X^3Y^2 - X^2Y^3 + X^3Y - 3X^2Y^2 - XY^3 - 3X^2Y - 3XY^2 - 2X^2 - 3XY- 2X$& 0&2\\ 
\hline

$9^2_{10}$&$3XY$&1 if $p=3$&2\\
\hline

$9^2_{11}$&$2X^2Y^2 + X^2Y + XY^2 + XY - X - Y - 1$&0&0\\
\hline

$9^2_{12}$&$X^2Y^2 - X^2Y - XY^2 - XY + X + Y + 1$&0&0\\
\hline

\multirow{2}{*}{$9^2_{13}$}&\multirow{2}{*}{$X^5Y + 4X^4Y + 7X^3Y + 6X^2Y + 3XY$}&\multirow{2}{*}{0}&4 if $p=3$\\* 
&&&2 if not\\
\hline

$9^2_{14}$&$X^5 + 2X^4Y + 5X^4 + 6X^3Y + 10X^3 + 8X^2Y + 10X^2 + 4XY + 5X + Y + 2$&0& 1 if $p=2$\\
\hline

\multirow{2}{*}{ $9^2_{15}$}& \multirow{2}{*}{ $2X^3Y + 3X^2Y + 3XY$}& \multirow{2}{*}{0}&4 if $p=3$\\*
&&&2 if not\\
\hline

$9^2_{16}$&$2X^3 + 4X^2Y + 5X^2 + 3XY + 3X + Y + 2$&0&1 if $p=2$\\
\hline

$9^2_{17}$&$2X^3 + 3X^2Y + 5X^2 + 2XY + 3X + Y + 2$&0&1 if $p=2$\\
\hline

$9^2_{18}$&$2X^3Y + 2X^2Y + 2XY$&1 if $p=2$&2\\
\hline

$9^2_{19}$&$X^4Y + X^3Y^2 + 5X^3Y + 2X^2Y^2 + X^3 + 8X^2Y + 2XY^2 + 2X^2 + 6XY+ 2X + Y + 1$& 0&0\\
\hline 

\multirow{2}{*} {$9^2_{20}$}&$X^4 + 2X^3Y + 2X^2Y^2 + 4X^3 + 7X^2Y + 2XY^2 + 7X^2 + 6XY + Y^2$&\multirow{2}{*}{0}& \multirow{2}{*}{0}\\* 
&$ + 6X + 3Y + 3$&&\\
\hline

\multirow{2}{*} {$9^2_{21}$}&$X^4Y^2 + X^4Y + 3X^3Y^2 + 5X^3Y + 4X^2Y^2 + X^3 + 8X^2Y + 2XY^2 + 2X^2$& \multirow{2}{*}{0}& \multirow{2}{*}{0}\\* 
&$ + 6XY + 2X + Y + 1$&&\\
\hline

\multirow{2}{*}{$9^2_{22}$}&$X^4Y + 2X^3Y^2 + X^4 + 5X^3Y + 4X^2Y^2 + 4X^3 + 10X^2Y + 3XY^2 + 7X^2 $& \multirow{2}{*}{0}& \multirow{2}{*}{0}\\* 
&$+ 8XY + Y^2 + 6X + 3Y + 3$&&\\
\hline

$9^2_{23}$&$X^3Y + 2X^2Y^2 + XY^3 + 3X^2Y + 3XY^2 - X^2 - Y^2 - 3X - 3Y - 2$&0&1 if $p=2$\\
\hline

$9^2_{24}$&$3X^2Y^2 + 3X^2Y + 3XY^2 + X^2 + 7XY + Y^2 + 3X + 3Y + 3$&0&2 if $p=3$\\
\hline

\multirow{2}{*}{$9^2_{25}$}&\multirow{2}{*}{$X^3Y - 2X^2Y - 2XY$}&\multirow{2}{*}{0}&4 if $p=2$\\* 
&&&2 if not\\
\hline

$9^2_{26}$&$X^3Y - X^3 - X^2Y - X^2 + XY + 3X + Y + 2$&0&1 if $p=2$\\
\hline

\multirow{2}{*}{$9^2_{27}$}&\multirow{2}{*}{$2X^3Y + 3X^2Y + 3XY$}& \multirow{2}{*}{0}&4 if $p=3$\\* 
&&&2 if not\\
\hline

$9^2_{28}$&$X^3Y - X^3 - X^2Y - 3X^2 - XY - 3X - Y - 2$&0&1 if $p=2$\\
\hline

$9^2_{29}$&$-2X^4Y - 5X^3Y - 6X^2Y - 3XY + 1$&0&0\\
\hline

$9^2_{30}$&$-X^3Y + 2X^3 + X^2Y + 5X^2 + 3X + Y + 2$&0&0\\
\hline

$9^2_{31}$&$X^5Y + 3X^4Y + 4X^3Y + 2X^2Y + XY$&0&2\\
\hline

$9^2_{32}$&$2X^3Y + X^2Y + XY$&0&2\\
\hline

$9^2_{33}$&$2X^3Y + X^2Y + XY$&0&2\\
\hline

$9^2_{34}$&$X^4Y^2 + X^4Y + 2X^3Y^2 + 3X^3Y + 2X^2Y^2 + 2X^2Y + XY^2 - X^2 - 2X- Y - 1$& 0&0\\
\hline

$9^2_{35}$&$X^4Y^2 + X^4Y + 2X^3Y^2 + 3X^3Y + 2X^2Y^2 + 4X^2Y + XY^2 + X^2 + 4XY+ 2X + Y + 1$&0& 0\\
\hline

$9^2_{36}$&$2X^3Y + 2X^2Y + 2XY$&1 if $p=2$&2\\
\hline

\multirow{2}{*}{$9^2_{37}$}&\multirow{2}{*}{$X^5Y + 3X^4Y + 5X^3Y + 4X^2Y + 2XY$}&0&4 if $p=2$\\* 
&&&2 if not\\
\hline

$9^2_{38}$&$2X^3Y + X^2Y + X^2 + 2XY + 3X + Y + 2$&0&1 if $p=2$\\
\hline

\multirow{2}{*} {$9^2_{39}$}&$-X^4Y - 2X^3Y^2 - X^4 - 4X^3Y - 4X^2Y^2 - 3X^3 - 7X^2Y - 3XY^2 - 4X^2 $& \multirow{2}{*}{0}& \multirow{2}{*}{0}\\* 
&$- 4XY - Y^2 - 2X - Y$&&\\
\hline

\multirow{2}{*}{$9^2_{40}$}&$X^4Y^2 + 2X^4Y + X^3Y^2 + X^4 + 5X^3Y + 4X^3 + 4X^2Y + XY^2 + 7X^2$&\multirow{2}{*}{0}&\multirow{2}{*}{2 if $p=3$}\\* 
&$+ 4XY + Y^2 + 6X + 3Y + 3$&&\\
\hline

\multirow{2}{*}{$9^2_{41}$}& \multirow{2}{*}{$X^3Y^3 + 2X^3Y^2 + X^2Y^3 + X^3Y + 5X^2Y^2 + 3X^2Y + 3XY^2 + 3XY$}&\multirow{2}{*}{0}&4 if $p=3$\\* 
&&&2 if not\\
\hline

$9^2_{42}$&$X^4Y^2 + X^4Y + X^3Y^2 + 2X^3Y - X^2Y - X^2 - 2XY - 2X - Y- 1$&0&0\\ 
\hline 

$9^2_{43}$&$X^5 + 5X^4 + 10X^3 + 10X^2 + 5X + Y + 2$&0&1 if $p=2$\\
\hline

\multirow{2}{*}{$9^2_{44}$}& \multirow{2}{*}{$X^3Y + 2X^2Y + 2XY$}& \multirow{2}{*}{0}&4 if $p=2$\\* 
&&&2 if not\\
\hline

$9^2_{45}$&$2X^3 + 5X^2 - XY + 3X + Y + 2$&0&1 if $p=2$\\
\hline

$9^2_{46}$&$2XY$&1 if $p=2$&2\\
\hline

$9^2_{47}$&$XY$&0&2\\
\hline

$9^2_{48}$&$2X^3 + X^2Y + 5X^2 + 3X + Y + 2$&0&1 if $p=2$\\
\hline

$9^2_{49}$&$X^4 + 4X^3 + X^2Y + 7X^2 + 2XY + Y^2 + 6X + 3Y + 3$&0& 2 if $p=3$\\
\hline

$9^2_{50}$&$-X^2Y - XY + Y^2 + X + Y + 1$&0&0\\
\hline

$9^2_{51}$&$X^4Y + X^4 + 3X^3Y + 4X^3 + 4X^2Y + XY^2 + 7X^2 + 4XY + Y^2 + 6X + 3Y + 3$&0&0\\ 
\hline 

$9^2_{52}$&$X^2Y^2 + X^2Y + XY - Y^2 - X - Y - 1$&0&0\\
\hline

$9^2_{53}$&$X^2Y^2 + X^3 + 2X^2Y + 2XY^2 + Y^3 + 4X^2 + 5XY + 4Y^2 + 6X + 6Y + 4$&0& 2 if $p=2$\\
\hline

$9^2_{54}$&$X^2Y + XY^2 + XY - X - Y - 1$&0&0\\
\hline

$9^2_{55}$&$X^3Y + X^2Y + XY$&0&2\\
\hline

$9^2_{56}$&$X^3Y + X^2Y + XY$&0&2\\
\hline

$9^2_{57}$&$X^3Y + 2X^2Y + X^2 + 3XY + 3X + Y + 2$&0&0\\
\hline

$9^2_{58}$&$X^3Y + X^2Y + X^2 + 2XY + 3X + Y + 2$&0&0\\
\hline

$9^2_{59}$&$X^5Y + X^5 + 5X^4Y + 5X^4 + 9X^3Y + 10X^3 + 8X^2Y + 10X^2 + 4XY+5X + Y + 2$&0&0\\ 
\hline

$9^2_{60}$&$X^3Y + 2X^3 + 2X^2Y + 5X^2 + XY + 3X + Y + 2$&0&0\\
\hline

\multirow{2}{*}{$9^2_{61}$}&$X^3Y^2 + 2X^3Y + 3X^2Y^2 + XY^3 + X^3 + 6X^2Y + 6XY^2 + Y^3 + 4X^2$& \multirow{2}{*}{0}& \multirow{2}{*}{2 if $p=2$}\\* 
&$+ 9XY + 4Y^2 + 6X + 6Y + 4$&&\\
\hline \hline 

$6^3_1$&$XY + XZ + YZ + X + Y + Z$&0&0\\
\hline

$6^3_2$&$-XYZ$&0&3\\
\hline

$6^3_3$&$-XYZ - XY - XZ - YZ - X - Y - Z$&0&1\\
\hline

$7^3_1$&$-XYZ + YZ - X + Y + Z$&0&0\\
\hline

\multirow{2}{*}{ $8^3_1$}&
$-X^2Y^2Z - 2X^2YZ - XY^2Z - X^2Y + XY^2 - X^2Z - 3XYZ - X^2 + Y^2$& \multirow{2}{*}{0}& \multirow{2}{*}{0}\\* 
&$- 2XZ - 2X + 2Y - Z$&&\\
\hline

\multirow{2}{*}{ $8^3_2$}&
$X^2Y^2 + X^2YZ + XY^2Z + 2X^2Y + 2XY^2 + X^2Z + 2XYZ + Y^2Z + X^2$& \multirow{2}{*}{0}& \multirow{2}{*}{0}\\*
&$ + 4XY + Y^2 + 2XZ + 2YZ + 2X + 2Y + Z$&&\\
\hline

$8^3_3$&$XYZ - XY - XZ - YZ - X - Y - Z$&0&1 if $p=2$\\
\hline

$8^3_4$&$X^3Y + 2X^2YZ + X^3 + 3X^2Y + X^2Z + 2XYZ + 4X^2 + 2XY + 2XZ+ 4X$&0&3 if $p=2$\\
\hline

\multirow{2}{*}{$8^3_5$}&
\multirow{2}{*}{$X^2Y^2Z + X^2YZ + XY^2Z + 2XYZ$}& \multirow{2}{*}{0}&4 if $p=2$\\* 
&&&3 if not\\
\hline

$8^3_6$&
$X^2Y^2Z + X^2YZ + XY^2Z + 3XYZ + XZ + YZ + Z$&0&1\\
\hline

\multirow{2}{*}{ $8^3_7$}&
$-X^2Y^2Z - X^2Y^2 - 2X^2YZ - 2XY^2Z - 2X^2Y - 2XY^2 - X^2Z - 4XYZ - Y^2Z$& \multirow{2}{*}{0}& \multirow{2}{*}{1}\\*  
&$ - X^2 - 4XY - Y^2 - 2XZ - 2YZ - 2X - 2Y - Z$&&\\
\hline

$8^3_8$&
$XY^2Z - X^2Y + XY^2 + XYZ + Y^2Z - X^2 + Y^2 + 2YZ - 2X + 2Y + Z$&0& 0\\
\hline

$8^3_9$&$XYZ$&0&3\\
\hline

\multirow{2}{*}{$8^3_{10}$}&$X^3YZ + X^3Y + X^3Z + 3X^2YZ + X^3 + 3X^2Y + 3X^2Z + 2XYZ + 4X^2$& \multirow{2}{*}{0}& 3 if $p=2$\\* 
&$ + 2XY + 2XZ + 4X$&&1 if not\\
\hline

$9^3_1$&
$X^2Y^2Z + X^2YZ + X^2Y - XY^2 - XZ - YZ - Z$&0&0\\
\hline

$9^3_2$&
$X^2Y^2Z + X^2YZ + X^2Y - XY^2 + 2XYZ + XZ + YZ + Z$&0&0\\
\hline

\multirow{2}{*}{ $9^3_3$}&$-X^3YZ - X^2YZ - X^3 + X^2Y + X^2Z - XYZ - X^2 + XY + XZ$& \multirow{2}{*}{0}& \multirow{2}{*}{0}\\* 
&$ + YZ - X + Y + Z$&&\\
\hline

\multirow{2}{*}{ $9^3_4$}&$X^3Y + X^3Z + 2X^2YZ + X^3 + 2X^2Y + 2X^2Z + 2XYZ + X^2 + 2XY$&\multirow{2}{*}{0}& \multirow{2}{*}{0}\\* 
&$ + 2XZ + YZ + X + Y + Z$&&\\
\hline

$9^3_5$&
$X^2YZ + XY^2Z + 2XYZ - Y^2Z + X^2 - Y^2 - 2YZ + 2X - 2Y - Z$& 0&0\\
\hline 

$9^3_6$&
$X^2Y^2Z + X^2YZ + X^2Y - XY^2 + XYZ - Y^2Z + X^2 - Y^2 - 2YZ+ 2X - 2Y - Z$& 0&0\\
\hline 

$9^3_7$&$2XYZ - YZ + X - Y - Z$&0&1 if $p=2$\\
\hline

\multirow{2}{*}{ $9^3_8$}& \multirow{2}{*}{ $X^3Y + 2X^2YZ + 2X^2Y + XYZ$}&\multirow{2}{*}{0}&3 if $p= 2,3$\\*
&&&2 if not\\
\hline

$9^3_9$&$X^3YZ + X^2YZ + XYZ$&0&3\\
\hline

$9^3_{10}$& $X^2Y^2Z$&0&5\\
\hline

$9^3_{11}$&
$X^2Y^2Z - 8XYZ - Y^2Z + X^2 - Y^2 - 8XZ - 10YZ + 2X - 2Y - 9Z$&0&0\\ 
\hline

$9^3_{12}$&$X^3YZ$&0&5\\
\hline

$9^3_{13}$&
$X^2Y^2Z + X^2Y^2 + X^2YZ + XY^2Z + X^2Y + XY^2 - XZ - YZ - Z$&0&0\\
\hline

$9^3_{14}$&
$X^2Y^2Z + X^2Y^2 + X^2YZ + XY^2Z + X^2Y + XY^2 + 2XYZ + XZ + YZ + Z$& 0&0\\ 
\hline 

 $9^3_{15}$&$X^3Y + X^3Z + X^3 + 2X^2Y + 2X^2Z + X^2 + 2XY + 2XZ + YZ + X + Y + Z$& 0&0\\
\hline 

$9^3_{16}$&$-2X^2YZ + X^3 - X^2Y - X^2Z - 2XYZ + X^2 - XY - XZ - YZ+ X - Y - Z$& 0&0\\
\hline 

$9^3_{17}$&$-X^3 + X^2Y + X^2Z - X^2 + XY + XZ + YZ - X + Y  + Z$& 0& 0\\
\hline 

$9^3_{18}$&$-XYZ$&0&3\\
\hline

$9^3_{19}$&$-X^3YZ - X^3Y - 2X^2YZ - 2X^2Y - XYZ$&0&3\\
\hline

\multirow{2}{*}{$9^3_{20}$}&\multirow{2}{*}{$-X^3Z - X^2YZ - 2X^2Z$}& \multirow{2}{*}{0}&4 if $p=2$\\* 
&&&3 if not\\
\hline \hline 

$8^4_1$&$-WXY - WXZ - WYZ - XYZ - WY - XY - WZ - XZ$&0&0\\
\hline

$8^4_2$&$-WXZ - WYZ - WY + XY - WZ - XZ$&0&0\\
\hline

$8^4_3$&$WXYZ + WXY + WXZ + WYZ + XYZ + WY + XY + WZ + XZ$&0&2\\
\hline

$8^4_4$&$WXYZ - WXZ - WYZ - WY + XY - WZ - XZ$&0&0\\
\hline

\end{longtable} 
}

\ \\ 

\printnoidxglossary[type=symbols, style=formel_altlong4colheader, title={Glossary of Symbols}]

\bibliographystyle{amsplain} %amsalpha} %plain}% jabbrv} 
\bibliography{TatenoUeki.bbl} 
%/Users/uekijun/Dropbox/refs1} 

\end{document}